\DeclareMathAlphabet{\mathcal}{OMS}{cmsy}{m}{n} 
\newcommand{\E}{\mathbb{E}}
\newcommand{\N}{\mathbb{N}}
\newcommand{\R}{\mathbb{R}}
\newcommand{\ignore}[1]{}
\newcommand{\HH}{\mathbb{H}}
\newcommand{\dd}{\mathrm{d}}
\newcommand{\pp}{\mathrm{p.p.}}
\newcommand{\n}{\mathbf{n}}
\newcommand{\m}{\mathbf{m}}
\newcommand{\length}[1]{| #1 |_{\prec}}
\newcommand{\0}{\mathbf{0}}
\newcommand{\z}{\mathsf{z}}
\theoremstyle{plain}
\newtheorem{theorem}{Theorem}[section]
\newtheorem*{theorem*}{Theorem}
\newtheorem{proposition}[theorem]{Proposition}
\newtheorem{lemma}[theorem]{Lemma}
\newtheorem{remark}[theorem]{Remark}
\theoremstyle{definition}
\newtheorem{assumption}[theorem]{Assumption}
\numberwithin{equation}{section}
\title{A diagram-free approach to the stochastic estimates \\ in regularity structures}
\author{Pablo Linares, Felix Otto, Markus Tempelmayr, Pavlos Tsatsoulis}
\date{}
\renewenvironment{abstract}
{
\begin{center}
\begin{minipage}{.9\textwidth}\small\textbf{Abstract}.\noindent \smallskip
}
{
\end{minipage}
\end{center}
}
\newenvironment{keywords}
{
\begin{center}
\begin{minipage}{.9\textwidth}\small\textbf{Keywords}:\noindent
}
{
\end{minipage}
\end{center}
}
\newenvironment{msc}
{
\begin{center}
\begin{minipage}{.9\textwidth}\small\textbf{MSC 2020}:\noindent
}
{
\end{minipage}
\end{center}
}
\begin{document}

\maketitle

\begin{abstract}

In this paper, we explore the version of Hairer's regularity structures based on
a greedier index set than trees, as introduced in \cite{OSSW21} and algebraically
characterized in \cite{LOT21}. More precisely, we construct and stochastically estimate
the renormalized model postulated in \cite{OSSW21}, avoiding the use of Feynman diagrams
but still in a fully automated, i.~e.~inductive way.
This is carried out for a class of quasi-linear parabolic PDEs driven by noise
in the full singular but renormalizable range.

\smallskip

We assume a spectral gap inequality on the (not necessarily Gaussian) noise ensemble.
The resulting control on the variance of the model naturally complements 
its vanishing expectation arising from the BPHZ-choice of renormalization.
We capture the gain in regularity on the level of the Malliavin derivative of the model
by describing it as a modelled distribution. 
Symmetry is an important guiding principle and built-in on the level of the renormalization Ansatz.
Our approach is analytic and top-down rather than combinatorial and bottom-up.

\end{abstract}


\begin{keywords} Singular SPDE, regularity structures, BPHZ renormalization, 
Malliavin calculus, quasi-linear PDE
\end{keywords}

\begin{msc} 
60H17; 60L30; 60H07; 81T16
\end{msc}

\tableofcontents

\section{Introduction}\label{Intro}

We continue the program started in \cite{OSSW21} of replacing trees 
by multi-indices as a more parsimonious but equally natural index set, 
within the framework of Hairer's regularity structures.
Like in \cite{LOT21}, we implement this for quasi-linear 
parabolic\footnote{treating (\ref{eq:sqle}) as an (anisotropic) elliptic equation, 
we denote by $x_1$ the space-like variable and by $x_2$ the time-like variable;
this allows us to use $t$ for the semi-group convolution (\ref{eq:semi_group}) parameter}
equations of the form 
\begin{align}\label{eq:sqle}
(\partial_2-\partial_1^2)u =a(u)\partial_1^2u+ \xi,
\end{align}
driven by a stationary\footnote{i.~e.~invariant in law under space-time shifts}
noise $\xi$ in the regime where the product $a(u)\partial_1^2u$ is singular
but renormalizable. 
This is the case when\footnote{
as a consequence of the single space dimension, there is an additional constraint} a general solution $u$ to (\ref{eq:sqle}) with $a\equiv0$
is H\"older continuous with an exponent $\alpha\in(0,1)$, 
which means that $\xi$ is in the (negative) H\"older space $C^{\alpha-2}$.
However, we believe that our program applies to a much larger class\footnote{In
particular, the restriction to a single spatial variable is just for convenience,
and has the advantage of making white noise renormalizable with $\alpha=\frac{1}{2}-$. 
However, as discussed after Assumption \ref{ass:spectral_gap} in Subsection \ref{S:SG}, there is 
a didactic advantage in allowing for general space dimension, 
replacing $\partial_1^2$ by the Laplacian, as done in \cite{OSSW21}.}  
of nonlinear PDEs.

\medskip

The investigation of (\ref{eq:sqle}) started in \cite{OW19}
sparked some activity, at first in the mildly singular range \cite{FG19,BDH19}
of $\alpha\in(\frac{2}{3},1)$, and then up to the white-noise level ($\alpha>\frac{1}{2}$) \cite{GH19,OSSW18},
and finally including the white-noise level:
\cite{GH19} actually deals with the regime $\alpha>0$, 
however the obtained counterterm is only shown to be local up to $\alpha>\frac{1}{2}$, 
which was extended in \cite{Ge20} to $\alpha>\frac{1}{3}$;
 \cite{BGN24} put the calculations performed in \cite{Ge20} on an algebraic footing 
as a step towards an extension to $\alpha>0$;
\cite{BM19} deals with the analytic part of the solution theory up to $\alpha>\frac{2}{5}$, 
and so does \cite{BHK23} down to $\alpha>0$, 
however again with the caveat that the obtained counterterm 
can only be shown to be local up to $\alpha>\frac{2}{5}$.
In \cite{OSSW21}, local Schauder estimates were established for $\alpha\in(0,1)$, based on the notion of
modelled distributions, postulating the existence and estimation of a suitable renormalized model.
In \cite{LOT21}, the Hopf-theoretic nature of the structure group based on multi-indices
was uncovered, which is rather Lie-geometric than combinatorial in the sense
that it provides a representation of natural actions on the solution manifold.
In this paper, we construct the BPHZ-renormalized model 
and provide its stochastic estimates, as the input to \cite{OSSW21}.
Our multi-index approach is analytic, meaning that it is based
on taking derivatives w.~r.~t.~the nonlinearity $a$ and the noise $\xi$, whereas the tree-based
approach is combinatorial, using Feynman diagrams. Our approach is fully automated 
since it proceeds by induction over the index set, with all induction
steps having the same structure\footnote{up to the distinction between singular
and regular multi-indices}.
Loosely speaking, \cite{OSSW21} can be seen as an analogue\footnote{however partial and restricted to (\ref{eq:sqle})} of\footnote{including aspects of \cite{BCCH21} in the sense that the notion
of the structure group is less generic than in \cite{Ha14}} 
\cite{Ha14} in the sense that it deals with the 
analytic solution theory, this paper corresponds to \cite{CH16}
by establishing the stochastic estimates on the model,
while \cite{LOT21} works out the Hopf-algebraic structure in the spirit of \cite{BHZ19,BCCH21}.
Let us stress that this paper is essentially self-contained w.~r.~t.~\cite{OSSW21}
and \cite{LOT21}, and can be read and appreciated independently:
It serves as an input for \cite{OSSW21}, while \cite{LOT21}
provides a deeper algebraic understanding not required in this paper\footnote{besides a few finiteness properties and identities explicitly spelled out}.

\medskip

In terms of renormalization, the multi-index approach is top-down
rather than bottom-up, in the sense that for the renormalized equation
\begin{align}\label{ao04}
(\partial_2-\partial_1^2)u+h=a(u)\partial_1^2u+\xi
\end{align}
we postulate on the counter term $h$:
\begin{itemize}
\item $h$ is local, i.~e.~it depends on the solution $u$ only through its value\footnote{this 
particularly simple form relies on the assumption that $\xi$
is invariant in law under spatial reflection} $u(x)$
at the current space-time point $x$ and
\item $h$ is homogeneous, i.~e.~not explicitly dependent on $x$, and
thus\footnote{in view of the stationarity of the noise} deterministic,
i.~e.~not explicitly dependent on $\xi$. Both conditions imply that
$h=h(u(x))$ for some deterministic nonlinearity $h$.
\item The most subtle postulate relates $a$ to $h$: If we replace $a$ by $a(\cdot+u)$ 
for some $u$-shift $u\in\mathbb{R}$, $h$ is replaced by $h(\cdot+u)$. This means that the
renormalization is independent of the choice of the origin in $u$-space.
It implies that 
\begin{align}\label{ao05}
h(u)=c[a(\cdot+u)]\quad\mbox{for}\;u\in\mathbb{R}
\end{align}
for some deterministic functional\footnote{here and in the sequel, $[\cdot]$
denotes the functional dependence on a field} $c$ on $a$-space,
which typically diverges as the regularization
of $\xi$ fades away.
 
\end{itemize}
As opposed to the tree-based treatment of quasi-linear equations by regularity
structures \cite{GH19,Ge20}, we thus do not have to show a posteriori 
that $h$ is local. Some symmetries are built-in, like the independence from the choice 
of the origin in $u$-space and $a$-space. Other symmetries, like the invariance in law
under $\xi\mapsto-\xi$, are easily seen to transmit
to the model\footnote{however, our approach is oblivious
to symmetries arising from a Gaussian nature of the noise}.
As a consequence, our more greedy approach based on multi-indices
rather than trees reduces the number of divergent constants contained in $c$.
The comparison for the quasi-linear equation (\ref{eq:sqle}) is complicated
by the non-local nature of the tree-based treatment in \cite{GH19,Ge20},
and the fact that the divergent constants are better treated as functions of
a place-holder $a_0$ for the elliptic coefficient $1+a$, 
see Subsection \ref{sec:OSSW}. The comparison is easier
for the semi-linear multiplicative stochastic heat 
equation $(\partial_2-\partial_1^2)u=a(u)\xi$, as treated by tree-based regularity structures
in \cite{HP15} for $\alpha=\frac{1}{2}-$. Here it is clear that for, e.~g., $\alpha=\frac{1}{4}+$, 
the number of divergent constants decreases from 85 to 30, see also \cite[Section 7]{LOT21} and \cite[Section 5]{BL23}.

\medskip

The two main conceptual merits of our approach are:
\begin{itemize}
\item Spectral gap (SG) inequality. Our main assumption on the law of $\xi$, 
next to invariance under translation\footnote{i.~e.~stationarity} 
and reflection\footnote{in the spatial variable(s)} is a spectral gap inequality. 
The SG inequality is specified by a Hilbert norm on $\xi$-space, 
which provides the analogue of the Cameron-Martin space from the Gaussian case,
and here is $L^2(\mathbb{R}^2)$-based. 
This Hilbert norm is chosen in agreement with $\xi\in C^{\alpha-2}$.
While this includes non-Gaussian ensembles, 
the main benefit is that the SG inequality 
naturally complements the BPHZ-choice of renormalization:
On the one hand, a SG inequality, which we apply to the negative-homogeneity part
$\Pi^{-}$ of the model, 
estimates the variance of a random variable. On the other hand, the BPHZ-choice
of renormalization is just made to annihilate the expectation $\mathbb{E}\Pi^{-}$. 
We refer to Subsections \ref{BPHZMall} and \ref{sec:Pi_minus_constr} for the details.
\item Malliavin derivative as modelled distribution. 
The use of a SG inequality requires the control of the
(first-order) Malliavin derivative of $\Pi^{-}$, 
which is the Fr\'echet derivative of $\Pi^{-}=\Pi^{-}[\xi]$ w.~r.~t.~the noise $\xi$. 
It is convenient to think of it in terms of the directional derivative
$\delta\Pi^{-}$ for some arbitrary infinitesimal noise 
perturbation\footnote{an element of what would be the Cameron-Martin space in the Gaussian case}
$\delta\xi$.
Since $\Pi^{-}$ is multi-linear in $\xi$, passing to $\delta\Pi^{-}$ amounts
to replacing one of the instances of $\xi$ by $\delta\xi$.
This leads only to a subtle gain in regularity, 
which is conveniently expressed after integration\footnote{in Hairer's jargon}, 
i.~e.~on the level of $\delta\Pi$.
It is captured by describing $\delta\Pi$ as a modelled distribution\footnote{we use
the notation ${\rm d}\Gamma^*$ to indicate that it has some structural similarities with
the Malliavin derivative $\delta\Gamma^*$ of the change-of-base-point transformation
$\Gamma^*$; the star is a reminder
of the fact that $\Gamma^*$ is the transpose of Hairer's, see the discussion at
the beginning of Subsection \ref{sec:postGamma}}
${\rm d}\Gamma^*$
w.~r.~t.~$\Pi$ itself. 
Hence surprisingly, the notion of a modelled distribution with its intrinsic continuity
property, which was introduced in \cite[Definition 3.1]{Ha14} 
for the deterministic Schauder theory given the model, 
here plays a role in the stochastic estimation of the model itself.
Crucially, as opposed to $\Pi^{-}$ itself, the representation for $\delta\Pi^{-}$, 
or rather of its rough-path increment $\delta\Pi^{-}-{\rm d}\Gamma^*\Pi^{-}$, 
in terms of $\Pi$ and $\delta\Pi-{\rm d}\Gamma^*\Pi$ does not involve the divergent $c$.
This ultimately allows for reconstruction of $\delta\Pi^{-}$.
We refer to Subsection \ref{thirdblock} for details.
\end{itemize}

\medskip

Two more technical merits of our approach are:
\begin{itemize}
\item Scaling as a guiding principle. In order not to break it, we
work on the whole space-time, which because of potential infra-red divergences is interesting 
in its own right. As a collateral damage, in order to avoid critical cases, 
we have to generalize from white noise to a more general noise $\xi$
with a fractional (negative) Sobolev norm playing the role of the Cameron-Martin space. Like
in \cite{CMW19}, this has the positive side effect of allowing to 
explore the limits of the approach. In order not to break scaling, 
we work with annealed instead of quenched estimates. By this jargon\footnote{from statistical
physics and ultimately from metallurgy} we mean that 
the inner norm is an $L^p$-norm in probability, 
while the outer norm is a H\"older norm in space-time; Subsection \ref{BPHZMall} is the
place where this transition is made on the level of $\delta\xi$ .
Estimates in annealed norms have the advantage of coming without a (marginal) 
loss in the exponent\footnote{as is well-known from Brownian motion 
and its law of iterated logarithm}. 
\item H\"older vs.~$L^2$-topologies.
The SG inequality and Malliavin calculus rely on $L^2$-based space-time norms 
(the analogue of the Cameron-Martin space is an $L^2$-norm of a negative fractional derivative 
of $\delta\xi$) 
whereas the Schauder calculus of modelled distributions like our ${\rm d}\Gamma^*$
is based on H\"older norms.
We introduce a weight that is singular (but integrable)
in a secondary base point $z$ into the
$L^2$-based norms to emulate a H\"older norm localized in $z$.
Averaging over $z$ recovers the original Cameron-Martin norm.
We refer to Subsection \ref{thirdblock} for details.
\end{itemize}

\medskip

We now comment on related work. 
Hairer's regularity structures triggered a rapid development in the field of singular SPDEs. 
They provide a framework for local well-posedness for a large class of semi-linear SPDEs, 
as worked out in \cite{Ha14, BHZ19, CH16, BCCH21}. 
As mentioned, this approach has been extended to quasi-linear SPDEs\footnote{
at the expense of loosing the local nature of the problem inside the proof} 
in \cite{GH19, Ge20}. 
Gubinelli, Imkeller and Perkowski's paracontrolled calculus \cite{GPI15} 
provides an alternative approach, based on Littlewood-Paley decomposition,
to (stochastic) estimates and renormalization. 
While it does not provide a general framework by itself, 
see however \cite{BH21},
paracontrolled calculus has been efficiently applied to a variety of singular SPDEs \cite{CC18, GP17}.
Furthermore, it naturally extends to quasi-linear equations, see \cite{FG19, BDH19},
and to dispersive equations, see \cite{GKO18}.
Kupiainen appealed to Wilsonian renormalization \cite{Ku16,KM17} to treat some semi-linear SPDEs.
Duch \cite{Du21} used Wilsonian renormalization in the continuum form of the Polchinski 
flow equation, see below. 
Our approach has also similarities with the one of Epstein and Glaser \cite[Subsection~3.1]{Sch95}
in the sense that it is inductive, and that it uses a coarser index set than trees,
meaning that it only monitors specific linear combinations of trees. 
With its modern version \cite{HW05}, it shares the
guiding principle of scaling and symmetries (covariance).

\medskip

Although the stochastic estimates on the centered model obtained in \cite{CH16} 
are identical to the ones in the present paper,
the assumption of cumulant bounds on the noise,
and the methodology to obtain stochastic estimates via multi-scale analysis 
differ radically from our approach.
As opposed to our method, 
the analysis in \cite{CH16} builds up on the well-established 
physics approach to Feynman diagrams 
into which it incorporates positive renormalization.
Problems like overlapping sub-divergences, 
or counter terms necessary to cure divergences at one scale 
that could potentially interfere at another scale,
do not come up in our recursive approach.

\medskip

The approach of \cite{Ku16, Du21}
is not in the framework of regularity structures,
with its conceptual separation between the task of constructing and stochastically estimating
the centered model on the one hand, and a deterministic fixed point argument to
solve a given initial/boundary value problem on the other.
It directly constructs solutions for given (periodic) boundary conditions, 
while the centered model constructed and estimated in \cite{BHZ19, CH16}
and in this paper describes the entire solution manifold. 
Nevertheless \cite{Du21} has similarities with the present work
in the sense that it recursively constructs and estimates multi-linear functionals of the noise
in such a way that overlapping sub-divergences do not play a role.
However, the assumption of cumulant bounds on the noise is more closely related to \cite{CH16}
than to the present work. 
The recursive structure in \cite{Du21} arises from a book-keeping parameter 
in front of the non-linearity, which leads to a formal power series expansion in this parameter,
into which polynomials are incorporated like in the present work.
This leads to an even more parsimonious index set than in the present work.

\medskip

In the use of Malliavin calculus, the spectral gap inequality, and annealed estimates, 
this paper is inspired by recent developments in quantitative stochastic homogenization 
\cite{GO11, DO20, JO22}. Malliavin calculus has been used, within the framework of regularity structures, 
in \cite{CFG17, GL20, Sch18}, however in its original purpose, namely for the existence of probability densities.
In the context of stochastic estimates for singular SPDEs, Malliavin calculus has been used in \cite{FG19ii} to 
estimate non-polynomial functionals of the noise through the Wiener chaos decomposition, 
and in combination with the spectral gap inequality in \cite{IORT20} to estimate the first non-linear term in case of a non-local operator.

\medskip

Since posting this work, the spectral gap inequality has been used
in several works to establish stochastic estimates. 
In \cite[Appendix C]{KT22} the authors obtained stochastic estimates for the simple case of $\Phi^4_2$.
First algebraic steps towards extending this work to the tree-based setting were 
made in \cite{BN23}, 
whereas \cite{HS24} obtained stochastic estimates in the tree-based setting for a large class of semi-linear equations. 
Also \cite{BH23} revisited the arguments of the present work and applied it to the tree-based setting, 
and \cite{BB23} obtained estimates in the tree-based setting for the generalized KPZ 
equation in one spatial dimension by appealing to a higher order version of the spectral gap inequality, 
however still relying on diagrammatic tools to estimate higher order Malliavin derivatives. 
The present work has been extended in \cite{GT23} and \cite{BOT24} (within the multi-index setting) 
to a fourth-order quasi-linear equation with multiplicative noise, and a semi-linear equation with polynomial non-linearity and additive noise, respectively.
Furthermore, \cite{Tem23} used Malliavin calculus and a spectral gap assumption to give a characterization of models.
The spectral gap inequality has also been applied in a more classical setting of rough paths \cite{GK23}.

\medskip

On the algebraic side, the approach based on multi-indices has been generalized to a large class of semi-linear SPDEs in \cite{BL23}, which also attempts to systematize the algebraic structure of the top-down approach to renormalization developed here (see also \cite{Lin23} for an algebraic construction based on multi-indices consistent with \cite{BL23} and connected to the algebraic renormalization of rough paths \cite{BCFP19}). Moreover, algebraic structures based on multi-indices have triggered the study of post-Lie and Novikov algebras in the context of regularity structures \cite{BD23, BK23, JZ23} and, more recently, they have been introduced in numerical analysis \cite{BEFH24}.

\section{Assumptions and statement of result}

\subsection{Spectral gap inequality}\label{S:SG}

In this subsection, we motivate and state our assumptions on the law of 
random Schwartz distributions $\xi$ on space-time $\mathbb{R}^2$.
The crucial assumption is that of a spectral gap (SG) inequality. 
The structure underlying a SG inequality is a Hilbert
norm on the space of space-time fields, which plays the role of the Cameron-Martin norm
from the Gaussian case. In the same way white noise has $L^2(\mathbb{R}^2)$
as Cameron-Martin space, our norm will be an $L^2(\mathbb{R}^2)$-based norm. 
Because our law is shift-invariant, it is natural to choose a translation-invariant norm. 
Since we are aiming at $\xi$'s that are almost surely in the negative H\"older space
$C^{\alpha-2-}$, by Kolmogorov's criterion, it should be an $L^2$-based Sobolev norm of
the fractional order $\frac{D}{2}+\alpha-2$, where $D$ is the effective dimension 
(see \cite[Lemma 10.2]{Ha14}).
In view of the parabolic nature, both H\"older and Sobolev
norms need to be anisotropic: If the spatial variable $x_1$ sets the unit,
the time variable $x_2$ is worth two units. In particular, we have for the effective dimension $D=1+2=3$ so that the order
of fractional derivative should be $\alpha-\frac{1}{2}$.
For the H\"older norm anisotropy 
means that it is based on the parabolic Carnot-Carath\'eodory distance
\begin{align}\label{ao18}
|y-x|:=|y_1-x_1|+\sqrt{|y_2-x_2|}.
\end{align}
It is convenient to express the anisotropic
version of the Sobolev norm in terms of the space-time elliptic operator $\partial_1^4-\partial_2^2$,
which is of order four:
\begin{align}\label{ao01}
\big(\int_{\mathbb{R}^2}dx\big((\partial_1^4-\partial_2^2)
^{\frac{1}{4}(\alpha-\frac{1}{2})}\delta\xi\big)^2\big)^\frac{1}{2},
\end{align}
where here and in the sequel, we think of $\delta\xi$ as an infinitesimal perturbation of $\xi$.

\medskip

Having motivated the Hilbert norm (\ref{ao01}), we return to the notion of a
SG inequality. A SG inequality amounts to a
Poincar\'e inequality (with mean value zero)
on the space of space-time fields endowed with a probability measure and a Hilbert norm\footnote{
In analogy to the standard Poincar\'e inequality providing a lower bound on the spectral gap of the 
(Neumann) Laplacian, the SG inequality provides a lower bound on
the spectral gap of the generator of the stochastic process defined through the Dirichlet form
arising from the Hilbertian structure, for which the given ensemble is in detailed balance
by construction.}.
It is formulated in terms of a generic random variable $F$, 
which is an integrable function(al) on the space of $\xi$'s, i.~e.~$F=F[\xi]$. 
The notion of a gradient of $F$ and its (squared) norm relies on the
Hilbertian structure (\ref{ao01}). 
We momentarily consider $F=F[\xi]$ that are Fr\'echet 
differentiable w.~r.~t.~(\ref{ao01});
meaning that the differential $dF[\xi]$ in a configuration $\xi$, which is
a linear form on the space of infinitesimal perturbation
$\delta\xi$'s, is bounded w.~r.~t.~(\ref{ao01}). Representing
the differential $dF[\xi]$ in terms of 
\begin{align}\label{ao22}
\delta F:=dF[\xi].\delta\xi=\int_{\mathbb{R}^2}dx\,\,\delta\xi\,\,\frac{\partial F}{\partial\xi}[\xi],
\end{align}
this means that the $L^2(\mathbb{R}^2)$-dual norm of
the Malliavin derivative $\frac{\partial F}{\partial\xi}=\frac{\partial F}{\partial\xi}[\xi](x)$
is finite:\footnote{To ease the notation we suppress the dependence of $\|\cdot\|_*$ on $\alpha$.} 
\begin{align}\label{as02}
\|\frac{\partial F}{\partial\xi}[\xi]\|_*:= \left(\int_{\mathbb{R}^2}dx\big((\partial_1^4-\partial_2^2)
^{\frac{1}{4}(\frac{1}{2}-\alpha)}\frac{\partial F}{\partial\xi}[\xi]\big)^2\right)^{\frac{1}{2}}<\infty.
\end{align}

A functional-analytic subtlety arises from the fact that Fr\'echet differentiability of $F$
is not enough to give an a priori meaning to (\ref{as02}) for almost every
realization\footnote{consider $F[\xi]=\frac{1}{2}\int dx\xi^2$;
on the one hand, $F$ is Fr\'echet differentiable for $\alpha=\frac{1}{2}$ 
with $\frac{\partial F}{\partial\xi}[\xi]=\xi$;
on the other hand, white noise almost surely has infinite $L^2(\mathbb{R}^2)$-norm.} $\xi$.
Therefore one restricts to cylindrical functionals 
\begin{align}\label{eq:cyl_fnct}
F[\xi]=\bar F((\xi,\zeta_1),\dots,(\xi,\zeta_N))
\end{align}
for some smooth function $\bar F$ on $\R^N$ and Schwartz functions $\zeta_1,\dots,\zeta_N$, where 
$(\cdot,\cdot)$ here stands for the pairing between a Schwartz distribution and a Schwartz function.
These cylindrical functionals are obviously Fr\'echet differentiable (even on the
space of Schwartz distributions) with
\begin{align}\label{cw56}
\frac{\partial F}{\partial\xi}[\xi]
=\sum_{n=1}^N\partial_n\bar F((\xi,\zeta_1),\dots,(\xi,\zeta_N))\zeta_n.
\end{align}

\begin{assumption}\label{ass:spectral_gap} 
The law $\mathbb{E}$ of the Schwartz distribution $\xi$ is invariant under space-time shift
and spatial reflection\footnote{meaning $x_1\mapsto -x_1$}.
It is centered\footnote{meaning $\mathbb{E}\xi(x)=0$, 
an assumption just made for convenience and w.~l.~o.~g.} and for an 
$\alpha\in(\frac{1}{4},\frac{1}{2})-\mathbb{Q}$ it satisfies the spectral
gap inequality
\begin{equs}\label{ao03}
\E(F- \E F)^2 
\leq \E\|\frac{\partial F}{\partial\xi}\|_*^2,
\end{equs}
for all integrable cylindrical functionals $F$. In addition, we assume that the
operator (\ref{cw56}) is closable\footnote{in the Gaussian case, this is automatic
\cite[Proposition~1.2.1]{Nu06}} with respect to the topologies\footnote{meaning that
the closure of the graph of $F\mapsto \frac{\partial F}{\partial\xi}$ 
on the space of cylindrical functionals
w.~r.~t.~the product topology remains a graph}
of $\mathbb{E}^\frac{1}{2}|\cdot|^2$ and $\mathbb{E}^\frac{1}{2}\|\cdot\|_*^2$; here and in the sequel we use for $p\geq 1$ the shorthand notation 
$\E^\frac{1}{p}|\cdot|^p$ to denote the stochastic $\mathbb{L}^p$-norm.
\end{assumption}

We learn from a (parabolic) rescaling of space-time that there is no loss in generality
in assuming that the constant in (\ref{ao03}) is unity.
We note that any Gaussian ensemble with a Cameron-Martin norm that dominates (\ref{ao01})
satisfies (\ref{ao03}), see \cite[Theorem 5.5.1, eq.~(5.5.2)]{Bo98}.
In particular, this applies to any stationary Gaussian ensemble 
with a covariance function of which the Fourier transform satisfies
${\mathcal F}c(k)\le(k_1^4+k_2^2)^{\frac{1}{2}(\frac{1}{2}-\alpha)}$.
Let us comment on the constraints on $\alpha$: Recall that white noise
corresponds to $\alpha=\frac{1}{2}$; however, because of the Schauder theory
involved in integration, we need to avoid rational $\alpha$. For convenience, we restrict
ourselves to the more singular side by assuming $\alpha<\frac{1}{2}$.
This does include white noise, provided it is tamed by an infra-red cut-off, 
which can be achieved by cutting off the large-scale Fourier modes to satisfy
the above-mentioned estimate (while preserving stationarity).
In the case of rational $\alpha$, 
like $\alpha=\frac{1}{3}$, and without an infra-red cut-off, logarithms in the
estimates are unavoidable; these are not captured in this work. 
Reconstruction imposes $\alpha>\frac{1}{4}$,
a constraint already present on the level of the first counter term,
see \cite[Proposition 4.2]{OW19}, and specific to the single space dimension;
a similar restriction arises already in rough path theory, where fractional Brownian 
motion can be (canonically) lifted to a rough path only for Hurst parameter $H>\frac{1}{4}$ \cite{CQ02}.
We will discuss in Subsection~\ref{thirdblock} that
this is the only reason for restricting the $\alpha$-range away from $\alpha=0$
and thus the limit of renormalizability.

\medskip

We note that assumption (\ref{ao03})
implies that $\xi$ has an annealed (parabolic)
H\"older regularity of exponent $\alpha-2$, as expressed by (\ref{eq:pi_minus_generic}) 
for $\beta=0$, and thus almost every realization $\xi$ has a quenched local H\"older regularity 
for any exponent $<\alpha-2$, but not better. In order to give a classical sense
to the model, $\xi$ will enter its definition only after mollification\footnote{
the resulting pathwise smoothness is not used}, 
see (\ref{eq:Pi_minus_def}).
The key insight is that the estimates of Theorem~\ref{thm:main}
do not depend on the scale of this ultra-violet cut-off. 
The companion work \cite{Tem23} builds upon the tools developed here 
to also pass to the limit of vanishing mollification,
and to give an independent characterization of the latter. 
This uniqueness, which relies on the assumption $\alpha\not\in\mathbb{Q}$, 
amounts to universality in the spirit of \cite[Proposition~1.9]{IORT20}: 
The limit is independent of the specific regularization.
The assumption of reflection invariance in law is crucial for the (simple) form
of the renormalized equation; if omitted, we expect an additional counter term
of the form $\tilde h(u)\partial_1u$. 


\subsection{Definition of the model \texorpdfstring{$(\Pi_{x},\Gamma_{yx})$}{Pi x, Gamma yx}}
\label{sec:model}

In this subsection, we motivate and define the objects of
our main result,
Theorem \ref{thm:main}.
We first develop an algebraic perspective on the counter-term,
then introduce coordinates on the solution manifold,
then define the centered model $\Pi_x$ proper, as a linear
map on the abstract model space $\mathsf{T}$, 
introduce the grading of $\mathsf{T}$ by the homogeneity $|\cdot|$,
and finally the re-centering transformations $\Gamma_{yx}\in{\rm End}(\mathsf{T})$.
We follow the concepts, language, and notation of regularity structures.
A more in-depth treatment is provided in \cite{LOT21},
more motivations are provided in \cite{LO22, OST23}, but this text is self-contained.

\subsubsection{The counter term \texorpdfstring{$c$}{c} as an element of 
\texorpdfstring{$\mathbb{R}[[\mathsf{z}_k]]$}{R[[z k]]}}\label{sss}
On the space of analytic functions $a$ in the variable $u$,
coordinates are given by
\begin{align}\label{ao07a}
\mathsf{z}_{k}[a]=\frac{1}{k!}\frac{d^ka}{du^k}(0)\quad\mbox{for}\;k\in\mathbb{N}_0.
\end{align}
If $a$ is a polynomial, as denoted by $a\in\mathbb{R}[u]$, we obtain by Taylor
\begin{align}\label{Taylor_for_a}
a(u)=\sum_{k\ge 0}u^k\mathsf{z}_k[a].
\end{align}
For a multi-index $\beta$, which associates a frequency $\beta(k)\in\mathbb{N}_0$
to every $k\ge 0$ such that all but finitely many $\beta(k)$'s vanish,
the monomial $\mathsf{z}^\beta$ $=\prod_{k\ge 0}\mathsf{z}_k^{\beta(k)}$ 
defines a (nonlinear) functional on the space of analytic $a$'s via (\ref{ao07a}).
In fact, (\ref{ao07a}) 
naturally extends to the space $\mathbb{R}[[u]]$ of formal power series $a$ in $u$.
Hence we may identify the algebra $\mathbb{R}[\mathsf{z}_k]$ 
of polynomials $\sum_\beta \pi_\beta \z^\beta$ in the variables $\{\mathsf{z}_k\}_{k\ge 0}$ 
with a sub-algebra of the algebra of function(al)s on $\mathbb{R}[[u]]$. 

\medskip

In view of (\ref{ao05}), we are interested in the action of $u$-shift
$a\mapsto a(\cdot+v)$ for $v\in\mathbb{R}$, which is also well-defined as an endomorphism of $\mathbb{R}[[u]]$. 
By pull back, this endomorphism of $\mathbb{R}[[u]]$
lifts to an endomorphism of the algebra of functionals $\pi$ on $\mathbb{R}[[u]]$.
We consider its infinitesimal generator $D^{({\bf 0})}$ defined 
on the sub-algebra $\mathbb{R}[\mathsf{z}_k]$ through $(D^{({\bf 0})}\pi)[a]$ 
$:=\frac{d}{dv}_{|v=0}\pi[a(\cdot+v)]$
and claim that
\begin{align}\label{eq:def_Dnull}
D^{({\bf 0})}=\sum_{k\ge 0}(k+1)\mathsf{z}_{k+1}\partial_{\mathsf{z}_k},
\end{align}
noting that the sum is effectively finite when applied to $\pi\in\mathbb{R}[\mathsf{z}_k]$.
The elementary argument is given in Section \ref{sec:algebraic_aspects}.

\medskip

Returning to (\ref{ao05}), we note that the pull back of 
$a\mapsto a(\cdot+v)$ can be expressed in terms of
its infinitesimal generator $D^{({\bf 0})}$ via the exponential formula
\begin{align}\label{expfirst}
c[a(\cdot+v)]=\big(\sum_{l\ge 0}\frac{1}{l!}v^l(D^{({\bf 0})})^lc\big)[a].
\end{align}
In view of (\ref{eq:def_Dnull}), 
$\sum_{l\ge 0}\frac{1}{l!}v^l(D^{({\bf 0})})^l$ maps $\mathsf{z}_k$ onto the 
{\it infinite}\footnote{which is still effectively
finite when evaluated at an $a\in\mathbb{R}[u]$}
linear combination $\sum_{l\ge 0}$ $\binom{l+k}{l}$ $v^l$ $\mathsf{z}_{l+k}$. This
motivates to pass from the algebra of polynomials $\mathbb{R}[\mathsf{z}_k]$ 
to the algebra of formal power series $\mathbb{R}[[\mathsf{z}_k]]$.
The matrix representation of (\ref{eq:def_Dnull}) w.~r.~t.~the monomial basis $\{\z^\beta\}_\beta$ is given by\footnote{$e_k$ denotes the multi-index with $e_k(l)=\delta_k^l$} 
\begin{align}\label{def_Dnull_rig}
(D^{({\bf 0})})^\gamma_\beta
=(D^{(\0)}\z^\gamma)_\beta
=\sum_{k\ge 0}\left\{
\begin{array}{cc}(k+1)\gamma(k)&\mbox{provided}\;\gamma+e_{k+1}=\beta+e_k\\
0&\mbox{else}\end{array}\right\},
\end{align}
and has the property that for every multi-index $\beta$, it vanishes
for all but finitely many multi-indices $\gamma$. Hence (\ref{eq:def_Dnull})
extends to an endomorphism\footnote{in fact, a derivation} on $\mathbb{R}[[\mathsf{z}_k]]$. Moreover, for the scaled length 
$[\beta]=\sum_{k\ge 0}k\beta(k)$ we learn from (\ref{def_Dnull_rig}) by induction in $l$ that
\begin{align}\label{Dopop}
((D^{({\bf 0})})^l)_\beta^\gamma=0\quad\mbox{unless}\quad[\beta]=[\gamma]+l,
\end{align}
so that the sum
\begin{align}\label{expsecond}
\sum_{l\ge 0}\frac{1}{l!}v^l(D^{({\bf 0})})^lc
\end{align}
is effectively finite, meaning that it is finite on the level of a component $\beta$. 
A collateral damage of this extension from 
$\mathbb{R}[\mathsf{z}_k]$ to $\mathbb{R}[[\mathsf{z}_k]]$ is that 
$c \in\mathbb{R}[[\mathsf{z}_k]]$ can no longer be identified with a functional on 
$\mathbb{R}[u]$, so that (\ref{expfirst}) becomes formal.

\subsubsection{Coordinates \texorpdfstring{$\mathsf{z}_k$}{z k} and \texorpdfstring{$\mathsf{z}_{\bf n}$}{z n} for the solution manifold}
The next (heuristic) task is to endow the solution manifold for (\ref{ao04}) with coordinates.
In case of $a\equiv0$, which by (\ref{ao05}) entails $h\equiv const$,
the manifold of solutions $u$ of (\ref{ao04}) obviously is an affine space over the linear space of
space-time functions $p$ with $(\partial_2-\partial_1^2)p=0$; 
those functions $p$ are analytic\footnote{in fact, entire}. 
It is convenient to free oneself from the constraint $(\partial_2-\partial_1^2)p=0$
by extending the manifold to all space-time functions $u$ that satisfy (\ref{ao04})
up to a space-time analytic function\footnote{it turns out that this extension is not explored, see (\ref{eq:model})}. 
In view of the Cauchy-Kovalevskaya theorem, one expects that for analytic $a$,
the space of analytic space-time functions $p$ still provides a 
parameterization of the (extended) nonlinear solution manifold
-- at least for sufficiently small $a$ and locally near a base-point $x\in\mathbb{R}^2$.

\medskip

According to (\ref{ao05}), if $u$ solves (\ref{ao04}), then for any constant $v$, $u-v$ solves
(\ref{ao04}) with $a$ replaced by $a(\cdot+v)$.
Hence the manifold of all space-time functions $u$ modulo constants
that satisfy (\ref{ao04}) up to a space-time analytic function -- 
for some analytic nonlinearity $a$ -- is parameterized by the tuple $(a,p)$ 
with $p$ modulo constants. We think of $p$ as providing a germ at the
base-point $x=0$ so that
\begin{align}\label{ao07p}
\mathsf{z}_{\bf n}[p]=\frac{1}{{\bf n}!}\frac{\partial^{\bf n}p}{\partial y^{\bf n}}(0)
\quad\mbox{for}\;{\bf n}\in\mathbb{N}_0^2-\{(0,0)\},
\end{align}
which are coordinates for $\{p\in\mathbb{R}[[x_1,x_2]]\,|\,p(0)=0\}$,
are natural for the parameterization near $x=0$.
Hence the union of (\ref{ao07a}) and (\ref{ao07p}) is expected
to provide coordinates for the above solution manifold.


\subsubsection{Definition of \texorpdfstring{$\Pi_x$}{Pi x}}
The coordinates allow us to identify the general solution $u$
with an element $\Pi_x$ $\in C^2[[\mathsf{z}_k,\mathsf{z}_{\bf n}]]$, where
$C^2[[\mathsf{z}_k,\mathsf{z}_{\bf n}]]$ denotes the space of formal power series
in $\mathsf{z}_k$, $\mathsf{z}_{\bf n}$ with coefficients given by space-time
functions that are twice continuously differentiable in $y_1$ and  
continuously differentiable in $y_2$.
On a formal level, the relationship between $u$ and $\Pi_x$ is the following:
On the one hand, $u$ can be recovered from $\Pi_x$ via the series\footnote{which has
no reason to converge; and with $p$ depending on $x$}
\begin{align}\label{formal}
u(\cdot)-u(x)=\sum_{\beta}\Pi_{x\beta}(\cdot)\mathsf{z}^\beta[a,p],
\end{align}
where the sum runs over all multi-indices $\beta$
that associate a frequency to both $k\ge 0$ and ${\bf n}\not={\bf 0}$,
and the monomial $\z^\beta=\prod_{k\geq0, \n\neq\0}\z_k^{\beta(k)}\z_\n^{\beta(\n)}$ is evaluated at $(a,p)$ according to \eqref{ao07a} and \eqref{ao07p}.
On the other hand, $\Pi_{x\beta}$ can be recovered from $u$ by taking 
the partial derivative  w.~r.~t.~the variables $\mathsf{z}_k$, $\mathsf{z}_{\bf n}$ corresponding to
the multi-index $\beta$, and then evaluating at $\mathsf{z}_k=\mathsf{z}_{\bf n}=0$.
In particular, we have
\begin{align}\label{new}
\Pi_{x\beta}(x)=0.
\end{align}
The expansion (\ref{formal}) can be seen as a PDE version of a Butcher series\footnote{
with a more parsimonious index set, which arises from combining terms that depend on the
nonlinearity $a$ in the same way}
as extended to rough paths in \cite[Section 5]{Gu10}. 

\medskip

We note that by (\ref{Taylor_for_a}),
$a(u)\partial_1^2u$ formally corresponds to
$\sum_{k\ge 0}\mathsf{z}_k\Pi_x^k\partial_1^2\Pi_x$, 
an effectively finite sum.
Likewise, in view of (\ref{ao05}) and (\ref{expfirst}),
the counter term $h$ formally corresponds to
$\sum_{l\ge 0}\frac{1}{l!}\Pi_x^l(D^{({\bf 0})})^lc$. 
For any $c\in\mathbb{R}[[\mathsf{z}_k]]$, 
this sum is effectively finite according to (\ref{Dopop}).
As announced in Subsection \ref{S:SG}, we replace $\xi$ by a mollified version 
$\xi_\tau\in C^0$.
Hence to $\Pi_x\in C^2[[\mathsf{z}_k,\mathsf{z}_{\bf n}]]$ we associate the r.~h.~s.
\begin{align}\label{eq:Pi_minus_def}
\Pi^{-}_{x\beta}:=\big(\sum_{k\ge 0}\mathsf{z}_k\Pi^k_x\partial_1^2\Pi_x
-\sum_{l\ge 0}\frac{1}{l!}\Pi^{l}_x(D^{({\bf 0})})^{l} c
+\xi_\tau\mathsf{1}\big)_\beta\;\in\;C^0,
\end{align}
where $\mathsf{1}$ is the
neutral element of the algebra $\mathbb{R}[[\mathsf{z}_k,\mathsf{z}_{\bf n}]]$.

\subsubsection{Definition of \texorpdfstring{$\mathsf{\bar T}$}{T bar} and \texorpdfstring{$\mathsf{T}$}{T}, purely polynomial and populated multi-indices \texorpdfstring{$\beta$}{beta}}
A special role is played by the multi-indices of the form
\begin{align}\label{defpp}
\beta=e_{\bf n}\quad\mbox{for some}\;{\bf n}\not={\bf 0},
\quad\mbox{which we call ``purely polynomial''}.
\end{align}
In view of (\ref{ao07p}), the corresponding linear subspace
\begin{align}\label{defTbar}
\mathsf{\bar T}^*=\{\;\pi\in\mathbb{R}[[\mathsf{z}_k,\mathsf{z}_{\bf n}]]\;|\;
\pi_\beta=0\;\mbox{unless $\beta$ is purely polynomial}\;\}
\end{align}
is the algebraic dual of $\mathsf{\bar T} \cong \mathbb{R}[y_1,y_2] / \mathbb{R}$, the space of space-time polynomials (i.~e. the polynomial sector in \cite[Remark 2.23]{Ha14}) modulo constants. By definitions (\ref{ao07a}) \& (\ref{ao07p}), for $a\equiv0$,
(\ref{formal}) collapses to $u(\cdot)-u(x)$ $=\Pi_{x0}(\cdot)$
$+\sum_{{\bf n}\not={\bf 0}}$ $\Pi_{xe_{\bf n}}(\cdot)$ $\frac{1}{{\bf n}!}$
$\frac{\partial^{\bf n}p}{\partial y^{\bf n}}(0)$.
Specifying the affine parameterization\footnote{which is consistent with $p(0)=0$ and (\ref{new})} to be $u=u(x)+p(\cdot-x)+\Pi_{x0}$ 
for $a\equiv0$, this implies that $p(\cdot-x)$ 
$=\sum_{{\bf n}\not={\bf 0}}\Pi_{xe_{\bf n}}(\cdot)
\frac{1}{{\bf n}!}\frac{\partial^{\bf n}p}{\partial y^{\bf n}}(0)$.
Reproducing \cite[Assumption 5.3]{Ha14}, this leads us to postulate
\begin{align}\label{eq:pi_purely_pol}
\Pi_{x\beta}=(\cdot-x)^{\bf n}\quad\mbox{for $\beta$ of the form (\ref{defpp})}.
\end{align}

\medskip

A special role is played by the additive function
\begin{align}\label{def[]}
[\beta]:=\sum_{k\ge 0}k\beta(k)-\sum_{{\bf n}\not={\bf 0}}\beta({\bf n})
\end{align}
and by the subset of multi-indices
\begin{align}\label{ao09}
&\beta=e_{\bf n}\;\mbox{for some ${\bf n}\not={\bf 0}$}\quad\mbox{or}\quad[\beta]\ge 0,\\
&\mbox{which we subsume by saying ``$\beta$ is populated''}.\nonumber
\end{align}
Indeed, 
we claim that from (\ref{eq:Pi_minus_def}) and (\ref{eq:pi_purely_pol}) we obtain
\begin{align}\label{mapping}
\lefteqn{\Pi_{x\gamma}\equiv 0\;\mbox{unless $\gamma$ is populated}}\nonumber\\
&\quad\Longrightarrow\quad
\Pi_{x\beta}^{-}\in{\mathbb{R}[y_1,y_2]}\;\mbox{unless $[\beta]\ge 0$}.
\end{align}
For the reader's convenience,
the elementary argument for (\ref{mapping}) is provided in Section \ref{sec:algebraic_aspects}.
Since 
we impose the PDE (\ref{ao04}) only up to analytic space-time functions, we learn from
(\ref{mapping}) that it is self-consistent to postulate the l.~h.~s.~of (\ref{mapping}).
Hence the space-time function $\Pi_x$ will have values in 
\begin{align}\label{defT}
\mathsf{T}^*=\{\;\pi\in\mathbb{R}[[\mathsf{z}_k,\mathsf{z}_{\bf n}]]\;|\;
\pi_\beta=0\;\mbox{unless $\beta$ is populated}\;\},
\end{align}
the (algebraic) dual of 
the direct sum indexed by all populated $\beta$'s, 
which we assimilate to Hairer's abstract model space $\mathsf{T}\supset\mathsf{\bar T}$.


\subsubsection{Homogeneity \texorpdfstring{$|\beta|$}{|beta|} of a multi-index \texorpdfstring{$\beta$}{beta}}
The homogeneity $|\beta|$ of a populated
multi-index $\beta$ is motivated by a scaling invariance in law of the
manifold of solutions to (\ref{ao04}): We start with a parabolic rescaling of
space and time according to $x_1=\lambda\hat x_1$ and $x_2=\lambda^2\hat x_2$.
Our assumption (\ref{ao03}) on the noise ensemble is consistent with\footnote{
which for $\alpha=\frac{1}{2}$ turns into the well-known invariance of white noise} 
$\xi(x)=_{\rm law}\lambda^{\alpha-2}\hat{\xi}(\hat{x})$. 
This translates into 
the desired $u(x)=_{\rm law} \lambda^\alpha\hat{u}(\hat{x})$, provided we transform the nonlinearities
according to $a(u)=\hat a(\lambda^{-\alpha}u)$ and
$h(u)=\lambda^{\alpha-2}\hat h(\lambda^{-\alpha}u)$. On the level of the coordinates
(\ref{ao07a}) the former translates into $\mathsf{z}_k$ 
$=\lambda^{-\alpha k}\mathsf{\hat z}_k$. 
When it comes to the parameter $p$ it is consistent with the above\footnote{
in particular $u=u(x)+p(\cdot-x)+\Pi_{x 0}$ for $a\equiv 0$}
that it scales like $u$, i.~e.~$p(x)=\lambda^\alpha\hat{p}(\hat{x})$, 
so that the coordinates (\ref{ao07p}) transform
according to $\mathsf{z}_{\bf n}$ 
$=\lambda^{\alpha-|{\bf n}|}\mathsf{\hat z}_{\bf n}$, where 
\begin{align}\label{ao11}
|{\bf n}|=n_1+2n_2.
\end{align}
Hence we read off (\ref{formal}) that $\Pi_{x\beta}(y)$ 
$=_{\rm law}\lambda^{|\beta|}\hat\Pi_{\hat x\beta}(\hat{y})$, where
\begin{align}\label{ao12}
|\beta|:=\alpha(1+[\beta])+|\beta|_p\quad\mbox{with}\quad
|\beta|_p:=\sum_{{\bf n}\not={\bf 0}}|{\bf n}|\beta({\bf n}).
\end{align}

\medskip

In agreement with regularity structures \cite[Definition 2.1]{Ha14},
because of $\alpha>0$, the set $\mathsf{A}$ of homogeneities is locally finite and bounded from below, in fact by $\alpha$ in our range
of $\alpha< 1$. Moreover, (\ref{ao11}) and (\ref{ao12})
are consistent in the sense of
\begin{align}\label{ao20}
|e_{\bf n}|=|{\bf n}|\quad\mbox{for}\;{\bf n}\not={\bf 0}.
\end{align}
Thanks to our assumption of $\alpha\not\in\mathbb{Q}$, there is a reverse of (\ref{ao20}):
\begin{align}\label{irrational}
|\beta|\in\mathbb{N}\quad\Longrightarrow\quad\beta\;\mbox{is purely polynomial}.
\end{align}
Both reproduce \cite[Assumption 5.3]{Ha14}; 
the implication (\ref{irrational}), in its negated form, 
plays a crucial role in the Liouville argument Proposition~\ref{prop:change_of_basepointII}.

\subsubsection{Postulates on \texorpdfstring{$\Gamma_{yx}$}{Gamma yx}}\label{sec:postGamma}
While Hairer thinks of $\Pi_x$ as a linear map from the abstract model space $\mathsf{T}$
into the space of functions\footnote{in his case rather distributions, so
linear forms themselves} of space-time, we equivalently interpret $\Pi_x$ as a 
space-time function with values in $\mathsf{T}^*$. Next to $\Pi_x$, 
Hairer's notion of a model features $\Gamma_{yx}$ $\in{\rm End}(\mathsf{T})$
encoding the transformation from base-point $y$ to base-point $x$ in the sense of
$\Pi_x=\Pi_y\Gamma_{yx}$. We express this in terms of the (algebraic) transpose\footnote{
We note that since $\mathsf{T}$ is infinite-dimensional, not every element of
${\rm End}(\mathsf{T}^*)$ is a transpose of an element in ${\rm End}(\mathsf{T})$.
We will construct $\Gamma_{xy}^*$ via its matrix representation
with the property that for every $\beta$, $(\Gamma_{xy}^*)_\beta^\gamma$ $:= (\Gamma_{xy}^* \z^\gamma)_\beta$ $=0$
for all but finitely many $\gamma$'s, which means that we have access to 
$\Gamma_{yx}$ even if it is not used in our approach.}
$\Gamma_{xy}^*\in{\rm End}(\mathsf{T}^*)$ as\footnote{
Note the notational abuse of swapping $xy$ when transposing, which is done here to obtain more intuitive re-centering formulas.}
\begin{align}\label{eq:recenter_Pi}
\Pi_x=\Gamma^*_{xy}\Pi_y\quad\mbox{modulo a space-time constant},
\end{align} 
where the modulo is a consequence of (\ref{new}).

\medskip

In particular, (\ref{eq:recenter_Pi}) suggests transitivity $\Gamma_{yx}=\Gamma_{yz}\Gamma_{zx}$, 
which following \cite[Definition 2.17]{Ha14} we postulate:
\begin{align}\label{ao15}
\Gamma_{xy}^*=\Gamma_{xz}^*\Gamma_{zy}^*
\quad\mbox{and}\quad\Gamma_{xx}^*={\rm id}.
\end{align}
Likewise, following \cite[Assumption 3.20]{Ha16} we postulate 
that $\Gamma_{yx}$ preserves the polynomial sector 
$\mathsf{\bar T}$ $\cong\mathbb{R}[[x_1,x_2]]/\mathbb{R}$ and acts on it according to 
$(\cdot-y)^{\bf n}\Gamma_{yx}$ $=(\cdot-x)^{\bf n}$. We claim that this
postulate translates into
\begin{align}\label{ao21}
(\Gamma^*_{xy})_\beta^\gamma
&=\left\{\begin{array}{cl}
{\binom{\bf n}{\bf m}}(y-x)^{{\bf n}-{\bf m}}&
\mbox{if}\;\;\gamma=e_{\bf m}\;\;\mbox{for some}\;{\bf m}\not={\bf 0}\\
0&\mbox{else}\end{array}\right\},
\end{align}
provided that $\beta$ is of the form (\ref{defpp}),
with the understanding that ${\binom{\bf n}{\bf m}}$ vanishes
if the componentwise ${\bf m}\le {\bf n}$ is violated. 
We refer to Section 
\ref{sec:algebraic_aspects} for the argument. Note that (\ref{ao21})
is consistent with (\ref{eq:pi_purely_pol}) and (\ref{eq:recenter_Pi}).

\medskip

Finally, the strict triangularity
of $\Gamma_{yx}-{\rm id}$ with respect to the grading of $\mathsf{T}$
induced by the homogeneity (\ref{ao12}), as postulated in \cite[Definition 2.1]{Ha14},
amounts in our setting to
\begin{align}\label{triangle}
(\Gamma^*_{xy}-{\rm id})_{\beta}^\gamma=0\quad\mbox{unless}\;|\gamma|<|\beta|.
\end{align}
Note that in view of (\ref{ao20}), (\ref{ao21}) is consistent with (\ref{triangle}).


\subsection{Statement of main result: construction and estimates on \texorpdfstring{$(\Pi_x,\Gamma_{yx})$}{(Pi x, Gamma yx)}}\label{State}

Our main result Theorem \ref{thm:main} provides a construction 
of a model  $(\Pi_x,\Gamma_{yx})$ that satisfies 
all the postulates of the previous Subsection \ref{sec:model}.

\begin{theorem}\label{thm:main} Under Assumption~\ref{ass:spectral_gap} the following holds:

\medskip

For every populated $\beta$, there exists\footnote{the last condition amounts
	to $c\in\mathbb{R}[[\mathsf{z}_k]]$} 
\begin{equation}
\mbox{a deterministic}\,\, c_\beta\in\mathbb{R}\,\,
\mbox{satisfying}\quad c_\beta = 0 \quad  \text{unless}\quad|\beta|<2\quad
\text{and}\quad\beta({\bf n}) =0\;\;\text{for all}\;\;{\bf n}\neq{\bf 0}, \label{eq:c_pop_cond}
\end{equation}
and for every $x\in \mathbb{R}^2$ there exists a random 
$\Pi_{x\beta}\in C^2(\mathbb{R}^2)$ such that almost surely
\begin{align}
(\partial_2-\partial_1^2)\Pi_{x\beta}&=\Pi_{x\beta}^{-}\quad
\mbox{unless $\beta$ is purely polynomial},\label{eq:model}
\end{align}
with $\Pi_{x\beta}^{-}$ defined in (\ref{eq:Pi_minus_def}),
and which is given by (\ref{eq:pi_purely_pol}) for $\beta$ purely polynomial.

\medskip

Moreover, for every $x,y\in \mathbb{R}^2$ 
there exists a random 
$\Gamma_{yx}\in{\rm End}(\mathsf{T})$
such that almost surely we have (\ref{eq:recenter_Pi}), (\ref{ao15}),
(\ref{ao21}), and (\ref{triangle}).

\medskip

Finally, we have for all $p<\infty$
\begin{align}
\mathbb{E}^\frac{1}{p}|\Pi_{x\beta}(y)|^p & \lesssim|y-x|^{|\beta|},\label{eq:pi_generic}\\
\mathbb{E}^\frac{1}{p}|(\Gamma^*_{xy})_{\beta}^{\gamma}|^p & \lesssim |y-x|^{|\beta|-|\gamma|}
\quad\mbox{for all populated}\;\gamma,
\label{eq:gamma}
\end{align}
where here and in the sequel, $\lesssim$ means $\le C$ with a constant $C$
only depending\footnote{note that there is no dependence on the law, since
we normalized the constant in (\ref{ao03}); the dependence
on $\alpha$ could be subsumed into the one on $\beta$} on $\alpha$, $\beta$ and $p$.
\end{theorem}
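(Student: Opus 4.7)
The strategy is an induction over the countable set of populated multi-indices, ordered by homogeneity $|\beta|$ with a secondary ordering (e.g.~by $|\beta|_p$ or by $\sum_k \beta(k)$) to break ties among multi-indices of equal homogeneity. At each stage one simultaneously constructs and estimates, for a single new $\beta$, the quantities $\Pi_{x\beta}^-$, $c_\beta$, $\Pi_{x\beta}$, and the relevant matrix entries $(\Gamma_{xy}^*)_\beta^\gamma$ for populated $\gamma$ with $|\gamma|<|\beta|$. The inductive hypothesis comprises not only the bounds \eqref{eq:pi_generic}--\eqref{eq:gamma} for lower homogeneities, but also parallel bounds on the Malliavin derivatives $\delta\Pi$, $\delta\Pi^-$, and the ``modelled-distribution'' increment $\delta\Pi - \mathrm{d}\Gamma^*\Pi$, measured in annealed $L^2$-based norms weighted so as to emulate H\"older norms localized at a secondary base point $z$.

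The inductive step proceeds in four substeps. First, \emph{algebraic construction}: one reads off $\Pi_{x\beta}^-$ from \eqref{eq:Pi_minus_def}; thanks to the mapping property \eqref{mapping} and the strict triangularity in $[\cdot]$ encoded by \eqref{Dopop}, only lower multi-indices appear on the right-hand side, and one chooses $c_\beta$ by the BPHZ prescription to enforce $\mathbb{E}\Pi_{x\beta}^- = 0$ whenever $|\beta|<2$ (the condition $\beta(\mathbf{n})=0$ for $\mathbf{n}\neq\mathbf{0}$ in \eqref{eq:c_pop_cond} follows from shift-covariance of the ansatz (\ref{ao05}) and stationarity). Second, \emph{Malliavin derivative as a modelled distribution}: differentiating \eqref{eq:Pi_minus_def} with respect to $\xi$ produces an expression for $\delta\Pi^-_\beta$ in which, crucially, the divergent counter-term $c$ cancels out of the rough-path increment $\delta\Pi_\beta^- - \mathrm{d}\Gamma^*\Pi_\beta^-$; this is the mechanism outlined in Subsection \ref{secondblock}. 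Third, \emph{spectral gap estimate}: applying Assumption~\ref{ass:spectral_gap} to (a suitable smoothing of) $\Pi_{x\beta}^-(y)$, combined with the just-obtained control of $\frac{\partial}{\partial\xi}\Pi_{x\beta}^-$ in $\|\cdot\|_*$, yields the variance bound corresponding to \eqref{eq:pi_generic} via the expectation-free BPHZ choice; then equivalence of moments (Nelson-type, following from iterated SG) upgrades this to the $L^p$-estimate for arbitrary $p<\infty$.

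Fourth, \emph{reconstruction/integration and recentering}: one recovers $\Pi_{x\beta}$ from $\Pi^-_{x\beta}$ by inverting $\partial_2-\partial_1^2$ against the heat semigroup \eqref{eq:semi_group}, modulo a space-time polynomial fixed by the vanishing condition \eqref{new}; a Liouville argument, using the irrationality \eqref{irrational} to exclude polynomial growth at rate $|\beta|$, selects the unique solution of \eqref{eq:model} with the correct growth. The matrix entries of $\Gamma^*_{xy}$ are then defined by expanding the difference $\Pi_{x\beta}-\Pi_{y\beta}$ modulo constants in the basis $\{\Pi_{x\gamma}\}_\gamma$ supplied by the inductive hypothesis for $|\gamma|<|\beta|$; the algebraic identity \eqref{eq:recenter_Pi} thereby holds by construction, \eqref{ao21} is verified against \eqref{eq:pi_purely_pol}, transitivity \eqref{ao15} follows from uniqueness, and \eqref{eq:gamma} is obtained from \eqref{eq:pi_generic} and the reconstruction estimate.

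The principal obstacle is the second substep: showing that $\delta\Pi_\beta - \mathrm{d}\Gamma^*\Pi_\beta$ genuinely enjoys the H\"older-type regularity of a modelled distribution of order $|\beta|$, and that the $c$-cancellation in $\delta\Pi^-_\beta - \mathrm{d}\Gamma^*\Pi^-_\beta$ is clean enough to permit reconstruction without knowing the divergent constants. This requires a delicate interplay between the $L^2$-topology natural to Malliavin calculus and the H\"older topology natural to the Schauder machinery, handled by the singular-but-integrable $z$-weight described at the end of the introduction. All other substeps are, by comparison, standard consequences of the induction hypothesis, once this modelled-distribution representation of $\delta\Pi$ is in place.
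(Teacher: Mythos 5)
The principal gap is in your induction ordering. You propose to induct on the homogeneity $|\beta|$, breaking ties by a secondary key such as $|\beta|_p$ or $\sum_k\beta(k)$. But the heart of your own Substep~2---the modelled-distribution representation of $\delta\Pi$ via the operator ${\rm d}\Gamma^*$---is incompatible with any ordering that refines $|\cdot|$, because ${\rm d}\Gamma^*$ is \emph{not} triangular with respect to the homogeneity. Concretely (see Subsection~\ref{induct}), $({\rm d}\Gamma^*_{xz})_{e_1}^{e_1+e_{(1,0)}}=\partial_1\delta\Pi_{x0}(z)\neq 0$ while $|e_1|=2\alpha<\alpha+1=|e_1+e_{(1,0)}|$. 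This is a \emph{strict} inequality in homogeneity, not a tie, so no secondary tie-break can repair it: at the step for $\beta=e_1$, the rough-path increment $\delta\Pi^-_x-{\rm d}\Gamma^*_{xz}\Pi^-_z$ would already require control of $\Pi^-_{z\gamma}$ with $\gamma=e_1+e_{(1,0)}$, which under your ordering has not yet been constructed. The paper instead inducts on $|\beta|_\prec:=[\beta]+\tfrac12|\beta|_p+\tfrac14\beta(0)$, which reverses the order on this pair ($|e_1+e_{(1,0)}|_\prec=\tfrac12<1=|e_1|_\prec$), makes ${\rm d}\Gamma^*$ strictly triangular (Lemma~\ref{lem:triangular}), and is coercive---the last point being essential because stochastic integrability degrades at each inductive step. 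Without a change of ordering along these lines your Substeps~2--3 cannot be executed and the induction does not close.

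Two smaller points. You obtain the $L^p$-estimates via ``equivalence of moments (Nelson-type, following from iterated SG)''; the paper uses no hypercontractivity---a genuinely Gaussian fact, unavailable under Assumption~\ref{ass:spectral_gap}---but only the $\mathbb{L}^p$-version (\ref{eq:spectral_gap_var}) of the spectral gap inequality, which is obtained from the $\mathbb{L}^2$-version by an elementary power argument. Also, $\Gamma^*_{xy}$ is not obtained by ``expanding $\Pi_{x\beta}-\Pi_{y\beta}$ in the basis $\{\Pi_{x\gamma}\}$''; it is built from the tilting coefficients $\pi^{(\mathbf{n})}_{xy}$ via the exponential formula (\ref{exp}), with $\pi^{(\mathbf{n})}_{xy\beta}$ read off from the polynomial (\ref{eq:new_pi_n}) after a Liouville argument. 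These are more minor, but the exponential structure of $\Gamma^*$ is what supplies the multiplicativity and the composition rule used to establish transitivity (\ref{ao15}), so the construction you sketch would still need to land there.
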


The crucial point is that the estimates (\ref{eq:pi_generic}) and (\ref{eq:gamma})
are independent of the mollification $\xi_\tau$ of $\xi$ in (\ref{eq:Pi_minus_def}).
For pure convenience, we fix the mollification to be
the semi-group convolution $(\cdot)_\tau$, 
so that the ultra-violet cut-off scale is given by $\sqrt[4]{\tau}$,
see Subsection \ref{sec:semi_group}.
Our proof extends to more general classes of
ultra-violet cut-off: For instance, instead of the mollification, one could strengthen
the norm (\ref{ao01}) by adding, on the level of the squared norm, the term
$\tau\int_{\mathbb{R}^2}dx((\partial_1^4-\partial_2^2)^{\frac{1}{4}(\alpha+\frac{3}{2})}
\delta\xi)^2$. It would even be sufficient to impose 
the annealed H\"older regularity (\ref{cw06}) on the level of $\xi$ itself 
by means of an additional approximation argument.
Since $\Pi_{x\beta}\in C^2$ almost surely, it follows from (\ref{eq:pi_generic})
together with $|\beta|\ge\alpha>0$ that (\ref{new}) holds.


\subsection{BPHZ-choice of \texorpdfstring{$c$}{c} and divergent bounds}\label{divc}

In this subsection, we argue that the infra-red part of (\ref{eq:pi_generic})
enforces a canonical choice of $c$, for given regularization parameter $\tau$. 
In fact, our inductive construction algorithm for $(\Pi_x,c)$ is unique, as a benefit
of working on the whole space-time and avoiding a non-canonical
infra-red truncation of the heat kernel $(\partial_2-\partial_1^2)^{-1}$. 
See \cite[Theorem~1.3]{Tem23} for a much stronger uniqueness statement.

\medskip

For the rest of this subsection we fix a populated 
and not purely polynomial $\beta$. 
We start by observing that 
the PDE (\ref{eq:model}) combined with the estimate (\ref{eq:pi_generic}) 
uniquely determines $\Pi_{x\beta}$ given $\Pi_{x\beta}^{-}$.
Indeed, the difference $w$ of two solutions satisfies $(\partial_2-\partial_1^2)w=0$.
Appealing to estimate (\ref{eq:pi_generic}) for $|y-x|\uparrow\infty$,
a Liouville argument shows that $w$ is a (random) polynomial of (parabolic) degree $\le|\beta|$. 
By (\ref{irrational}),
this strengthens to being a polynomial of degree $<|\beta|$.
Appealing to (\ref{eq:pi_generic}) for $|y-x|\downarrow0$, we learn that
this polynomial must vanish. We refer to the proof of Proposition \ref{prop:change_of_basepointII} for details on an annealed version of this Liouville argument.

\medskip

Our inductive algorithm is based on an ordering $\prec$ on multi-indices,
see Subsection \ref{induct}; we now argue that $c_\beta$ is unique given 
$(\Pi_{x\gamma},c_\gamma)$ for $\gamma\prec\beta$. 
Indeed, we obtain from the main estimate (\ref{eq:pi_generic}), 
with help of the kernel estimate (\ref{cw61}), that the ensemble and space-time average
$\lim_{t\uparrow\infty}\mathbb{E}(\partial_2-\partial_1^2)\Pi_{x\beta\,t}(x)$ vanishes
for $|\beta|<2$, where $2$ is the order of the differential operator $\partial_2-\partial_1^2$; 
here $(\cdot)_t$  denotes the semi-group convolution, see Subsection
	\ref{sec:semi_group}. 
By the PDE (\ref{eq:model}) this implies 
\begin{align}\label{eq:BPHZ}
\lim_{t\uparrow\infty}\mathbb{E}\Pi_{x\beta\,t}^{-}(x)=0\quad\mbox{for}\;|\beta|<2.
\end{align}
Writing $\Pi_{x\beta}^{-}=-c_\beta+\tilde\Pi_{x\beta}^{-}$, where 
$\tilde\Pi_{x\beta}^{-}$ does not contain the $(l=0)$-term in (\ref{eq:Pi_minus_def}),
we learn from (\ref{eq:BPHZ}) and the fact that $c_\beta$ is deterministic (and
a space-time constant)
\begin{align}\label{use}
c_\beta=\lim_{t\uparrow\infty}\mathbb{E}\tilde\Pi_{x\beta\,t}^{-}(x)
\quad\mbox{for}\;|\beta|<2.
\end{align}
Note that by the first part of the population condition (\ref{eq:c_pop_cond}),
we don't have to consider $|\beta|\ge 2$.
It thus remains to note that
$\tilde\Pi_{x\beta}^{-}$ depends on $(\Pi_{x\gamma},c_\gamma)$ only through $\gamma\prec\beta$, 
which we shall establish in Section~\ref{sec:algebraic_aspects}.
This shows that our algorithm uniquely determines $c$, in the spirit
of a BPHZ-choice of renormalization, 
see \cite[Theorem 6.18 and eq. (6.25)]{BHZ19} for the form BPHZ renormalization
takes within regularity structures.

\medskip

Let us sketch why (\ref{use}) is consistent with the
second part of the population condition (\ref{eq:c_pop_cond}) on $c$,
referring to Subsection \ref{sec:Pi_minus_constr} for details.
The shift invariance in law of $\xi$, see Assumption \ref{ass:spectral_gap},
which by the above uniqueness transmits
to $\Pi_{x\beta}$, ensures that the r.~h.~s.~of (\ref{use}) is independent of $x$.
The reflection parity in law of $\xi$, which transmits to $\Pi_{x\beta}$, 
ensures that the r.~h.~s.~of (\ref{use}) vanishes for odd 
$\sum_{{\bf n}\not={\bf 0}}n_1\beta({\bf n})$, which in view of $|\beta|_p\le|\beta|<2$,
cf.~(\ref{ao12}), implies the second population constraint in (\ref{eq:c_pop_cond}).

\medskip

Theorem \ref{thm:main} only states those estimates that are independent 
of the ultra-violet cut-off provided by the convolution
of the driver $\xi$ in (\ref{eq:Pi_minus_def}).
In particular, $c$ is expected to diverge as the cut-off scale\footnote{as always, measured
in units of $x_1$} $\sqrt[4]{\tau}$ goes to zero. 
The following proposition provides the scaling-wise natural estimate on $c_\beta$, an annealed
and weighted $C^{2,\alpha}$-estimate on $\Pi_{x\beta}$, and a similar
$C^{\alpha}$-estimate on $\Pi_{x\beta}^{-}$. By Kolmogorov's criterion,
see for instance \cite[proof of Lemma 4.1]{OW19}, 
the latter ensures that $\Pi_{x\beta}\in C^2$ almost surely, 
in line with Theorem~\ref{thm:main}.
We stress that these estimates are not used in a quantitative way in the proof of Theorem~\ref{thm:main} 
-- the estimates of Theorem~\ref{thm:main} are orthogonal to the divergence of $c$.
However, qualitative boundedness of $c$ plays a role when addressing analyticity in $\mathsf{z}_0$, 
see Subsection~\ref{sec:OSSW}, and qualitative continuity is used in reconstruction, 
see Proposition~\ref{prop:reconstr_reg} and Proposition~\ref{prop:recIII}. Moreover, 
qualitative boundedness of $\partial_1^2 \Pi_x$ is convenient when rigorously establishing Malliavin differentiability of $\Pi^-_x$, see Subsection~\ref{sec:reconstr_approx}.
\begin{proposition}[Divergent bounds I]\label{rem:1} Under Assumption~\ref{ass:spectral_gap} the following holds for every populated $\beta$:
\begin{align}
|c_\beta|&\lesssim(\sqrt[4]{\tau})^{|\beta|-2},\label{cw03}\\
\mathbb{E}^\frac{1}{p}|\partial_1^2\Pi_{x\beta}(y)|^p + \mathbb{E}^\frac{1}{p}|\partial_2\Pi_{x\beta}(y)|^p
&\lesssim(\sqrt[4]{\tau})^{\alpha-2}(\sqrt[4]{\tau}+|y-x|)^{|\beta|-\alpha}.\label{cw60}
\end{align}
Furthermore, we have
\begin{align}
&\mathbb{E}^\frac{1}{p}|\partial_1^2\Pi_{x\beta}(y)-\partial_1^2\Pi_{x\beta}(z)|^p
+\mathbb{E}^\frac{1}{p}|\partial_2\Pi_{x\beta}(y)-\partial_2\Pi_{x\beta}(z)|^p \label{cw60_cont}\\
&+\mathbb{E}^\frac{1}{p}|\Pi_{x\beta}^-(y)-\Pi_{x\beta}^-(z)|^p\label{cw60_minus_cont}\\
&\lesssim(\sqrt[4]{\tau})^{-2}(\sqrt[4]{\tau}+|y-z|+|z-x|)^{|\beta|-\alpha} 
|y-z|^\alpha.\nonumber
\end{align}
\end{proposition}
The proof of Proposition~\ref{rem:1} is given in Section~\ref{sec:proof_rem}.


\subsection{Exponential formula for \texorpdfstring{$\Gamma_{xy}^*$}{Gamma* xy} and structure group \texorpdfstring{$\mathsf{G}$}{G}, definition of \texorpdfstring{$\pi_{xy}^{({\bf n})}$}{pi n xy}}

In Subsection \ref{sec:Gamma_construct}, we will construct the change-of-basepoint 
transformation $\Gamma^*_{xy}$ $\in{\rm End}(\mathsf{T}^*)$,
see (\ref{eq:recenter_Pi}), by inductively tilting the $\mathsf{T}^*$-valued model $\Pi_x$
with help of a space-time polynomial in order
to achieve the appropriate order of vanishing of $\Pi_y$ 
in $y$. The coefficients of these space-time polynomials are collected in 
$\{\pi_{xy}^{({\bf n})}\}_{\bf n}\subset\mathsf{T}^*$,
so that the ${\bf n}$'s range\footnote{we will always state explicitly 
if we only mean ${\bf n}\not={\bf 0}$, unless it is used in 
$\mathbb{R}[[\mathsf{z}_k,\mathsf{z}_{\bf n}]]$} over $\mathbb{N}_0^2$.

\medskip

Following Hairer we adopt a more abstract point of view 
on the purely algebraic map $\{\pi^{(\n)}\}_{{\bf n}}$ $\mapsto\Gamma^*$ 
$\in{\rm End}(\mathsf{T}^*)$. It is given by the exponential-type formula
\begin{align}\label{exp}
\Gamma^*=\sum_{k\geq0}\tfrac{1}{k!}\sum_{\n_1,\dots,\n_k}
\pi^{(\n_1)}\cdots\pi^{(\n_k)} D^{(\n_1)}\cdots D^{(\n_k)},
\end{align}
where we have set for convenience
\begin{align}\label{eq:def_Dn}
D^{({\bf n})}:=\partial_{\mathsf{z}_{\bf n}}\quad\mbox{for}\;{\bf n}\not={\bf 0},
\end{align}
which like $D^{({\bf 0})}$ defines a derivation on 
the algebra $\mathbb{R}[[\mathsf{z}_k,\mathsf{z}_{\bf n}]]$. We note that 
(\ref{exp}) is an extension of (\ref{expfirst}) from $\mathbb{R}[[\mathsf{z}_k]]$ to
$\mathbb{R}[[\mathsf{z}_k,\mathsf{z}_{\bf n}]]$ and from $v\in\mathbb{R}$ 
to $\pi^{({\bf 0})}\in\mathsf{T}^*$. A collateral damage of the latter is that (\ref{exp}) 
can no longer be interpreted as a standard matrix exponential, since multiplication by
$\pi^{({\bf 0})}$ and the derivation $D^{({\bf 0})}$ do not commute\footnote{however,
the derivations commute among themselves, like of course the multiplication operators do}.

\medskip

In line with the order of vanishing of $\Pi_{y\beta}$ to be achieved, and thus the 
degree of the tilting space-time polynomial, one imposes the population condition 
\begin{align}\label{eq:pi_n_pop}
\pi^{({\bf n})}_{\beta}=0\quad\mbox{unless}\;|{\bf n}|<|\beta|.
\end{align}
The constraint (\ref{eq:pi_n_pop}) 
also ensures that the sum (\ref{exp})
over $k$ and $\n_1, \ldots,\n_k$ is effectively\footnote{i.~e.~on 
the level of the matrix entries} finite, which is not obvious,
see \cite[Subsection 5.1 and eq. (5.16)]{LOT21} 
or \cite[Lemma 3.12]{LO22}.

\medskip

As shown in \cite[Subsection 5.1]{LOT21}, 
provided the purely polynomial part of the $\pi^{({\bf n})}$'s is
of the form\footnote{where ${\bf m}>{\bf n}$ means ($\m\ge\n$ and ${\bf m}\not={\bf n}$),
so that (\ref{eq:def_pi_n_poly}) is consistent with (\ref{eq:pi_n_pop}) in view of
(\ref{ao20})}
\begin{align}\label{eq:def_pi_n_poly}
\pi_{e_{\bf m}}^{({\bf n})}=\left\{\begin{array}{cl}
{\binom{\bf m}{\bf n}}\,h^{{\bf m}-{\bf n}}&
\mbox{provided}\;{\bf m}>{\bf n}\\
0&\mbox{else}\end{array}\right\}
\end{align}
for some space-time shift vector $h\in\mathbb{R}^2$, the corresponding set of
transposed endomorphisms $\Gamma\in{\rm End}(\mathsf{T})$
can be assimilated to the structure group 
$\mathsf{G}\subset{\rm Aut}(\mathsf{T})$ in the sense of Hairer.
In particular, such $\Gamma$'s meet the postulates of regularity structures:
They are strictly triangular in the sense of \eqref{triangle}, see \cite[eq. (5.10)]{LOT21},
and they respect the polynomial sector $\mathsf{\bar T}$ 
in the sense of \cite[Assumption 3.20]{Ha16}, i.~e.~
$\Gamma (\cdot)^\n = (\cdot +h)^\n$ (modulo constants) with the $h$ from (\ref{eq:def_pi_n_poly}), 
see \cite[eq. (5.11)]{LOT21}; in particular we have
\begin{equation}\label{mt44}
\Gamma\mathsf{\bar{T}} \subset \mathsf{\bar T}.
\end{equation}

\medskip

Our dual perspective has the advantage of revealing that
$\Gamma^*$ is multiplicative\footnote{due to the presence of
the polynomial sector $\mathsf{\bar T}$, cf.~(\ref{defTbar}),
$\mathsf{T}^*$ is not closed under multiplication, hence $\Gamma^*$ cannot be called
an algebra endomorphism.}:
\begin{align}\label{eq:mult}
\Gamma^*\pi\pi'=(\Gamma^*\pi)(\Gamma^*\pi')\quad\mbox{provided}\;\;\pi\pi'\in\mathsf{T}^*,
\quad\mbox{and}\quad\Gamma^*\mathsf{1}=\mathsf{1},
\end{align}
see \cite[Proposition 5.1 (ii)]{LOT21}. Hence\footnote{also appealing to
the finiteness properties that ensure the existence of the transpose} 
$\Gamma^*$ is determined by its value
on the coordinates $\mathsf{z}_k,\mathsf{z}_{\bf n}$;
it is straightforward to check from (\ref{eq:def_Dnull})
and (\ref{eq:def_Dn}) that those are given by\footnote{a version of (\ref{eq:Gamma_z_k}) 
already appeared in the discussion of Subsubsection \ref{sss}}
\begin{align}
\Gamma^*\mathsf{z}_{k}&=\sum_{l\ge 0} \binom{k+l}{k} (\pi^{({\bf 0})})^{l}
\mathsf{z}_{k+l}\quad\mbox{for}\;k\ge 0,\label{eq:Gamma_z_k}\\
\Gamma^*\mathsf{z}_{\bf n}&=\mathsf{z}_{\bf n}+\pi^{({\bf n})}\quad\mbox{for}\;{\bf n}\not={\bf 0}.
\label{eq:Gamma_z_n}
\end{align}
Moreover, the group structure becomes more apparent on the dual level\footnote{we recall that
by $\{\pi^{(\n)}\}_\n \mapsto \Gamma^*$ we understand that 
$\{\pi^{(\n)}\}_\n$ gives rise to $\Gamma^*$ via \eqref{exp}}:
\begin{align}\label{eq:monoid}
\{\pi^{(\n)}+\Gamma^*\bar\pi^{(\n)}\}_\n \mapsto \Gamma^*\bar\Gamma^*
\quad \mbox{provided} \quad
\{\pi^{(\n)}\}_\n \mapsto \Gamma^* 
\mbox{ and }
\{\bar\pi^{(\n)}\}_\n \mapsto \bar\Gamma^*,
\end{align}
see \cite[Proposition 5.1 (iii)]{LOT21}. For the purpose of establishing
transitivity (\ref{ao15}) we retain that 
because of the obvious $\{0\}_\n\mapsto{\rm id}$, we obtain from
(\ref{eq:monoid}) and (qualitative) invertibility in a first stage that
$\{-\Gamma^{-*}\pi^{(\n)}\}_\n \mapsto\Gamma^{-*}$, and in a second stage that
\begin{align}\label{eq:monoid_inverse}
\{\pi^{(\n)}-\Gamma^*\bar\Gamma^{-*}\bar\pi^{(\n)}\}_\n \mapsto \Gamma^*\bar\Gamma^{-*}
\quad \mbox{provided} \quad
\{\pi^{(\n)}\}_\n \mapsto \Gamma^* 
\mbox{ and }
\{\bar\pi^{(\n)}\}_\n \mapsto \bar\Gamma^*.
\end{align}

\medskip

The construction of $\{\pi_{xy}^{(\n)}\}_\n$ that gives rise to $\Gamma^*_{xy}$ 
which satisfy \eqref{eq:recenter_Pi} and \eqref{ao15} is carried out in 
Subsection~\ref{sec:Gamma_construct}. 
The purely polynomial part of $\pi_{xy}^{(\n)}$ is forced upon us:
By \eqref{eq:def_pi_n_poly} only $h$ needs to be chosen, 
and by \eqref{mt44} and \eqref{eq:Gamma_z_n} we see that the choice $h=y-x$ 
is necessary and sufficient to obtain \eqref{ao21}:
\begin{align}\label{ho06}
\pi_{xy e_{\bf m}}^{({\bf n})}=\left\{\begin{array}{cl}
{\binom{\bf m}{\bf n}}\,(y-x)^{{\bf m}-{\bf n}}&
\mbox{provided}\;{\bf m}>{\bf n}\\
0&\mbox{else}\end{array}\right\}.
\end{align}
Our estimate (\ref{eq:pi_generic})
of $\Gamma_{xy}^*$ will be derived from an estimate of $\{\pi^{(\n)}_{xy}\}_{\bf n}$, 
see the upcoming proposition, which 
crucially uses the effective finiteness of the sum in (\ref{exp}).

\begin{proposition}\label{rem:pi_n}
Under Assumption \ref{ass:spectral_gap} the following holds for every populated $\beta$:
\begin{align}\label{eq:pi_n}
\mathbb{E}^\frac{1}{p}|\pi_{xy\beta}^{(\mathbf{n})}|^p
\lesssim|y-x|^{|\beta|-|{\bf n}|}.
\end{align}
\end{proposition}

Note that \eqref{ho06} is consistent with (\ref{eq:pi_n}) in view of (\ref{ao20}).


\subsection{\texorpdfstring{Augmenting the model space with $\mathsf{\tilde T}$ and the model with $\Pi_x^-$}{Augmenting the model space with tildeT and the model with Pi-}}\label{sec:Hairer}
There are essentially only semantic differences between our results
and Hairer's postulates, as stated in \cite[Definition 2.17]{Ha14}.
The algebraic aspects of this are worked out in \cite[Section 5.3]{LOT21},
of which we now give a synopsis: 
Hairer's abstract model space actually corresponds to what in our notation is
\begin{align}\label{eq:direct_sum}
\mathbb{R}\oplus\mathsf{T}\oplus\mathsf{\tilde T}
=\mathbb{R}\oplus\mathsf{\bar T}\oplus\mathsf{\tilde T}\oplus\mathsf{\tilde T},
\end{align}
where $\mathsf{\tilde T}$ is the direct sum over all populated,
not purely polynomial multi-indices,
cf.~(\ref{ao09}), a linear complement to $\mathsf{\bar T}$ in $\mathsf{T}$.
Hairer's abstract integration operator ${\mathcal I}$, cf.~\cite[Definition 5.7]{Ha14},
is in our notation
given by the identification of the second $\mathsf{\tilde T}$-component with the first
$\mathsf{\tilde T}$-component in (\ref{eq:direct_sum}),
see also \cite[(5.36)]{LOT21}.
Endowing the $\beta$-component of the second $\mathsf{\tilde T}$-contribution with the
homogeneity $|\beta|-2$  meets \cite[Definition 5.7]{Ha14}
for our second-order integration operator $(\partial_2-\partial_1^2)^{-1}$.
The $\mathbb{R}$-component, which is endowed with homogeneity $0$,
is the placeholder for the constant functions factored out in our approach
to $\mathsf{\bar T}\cong\mathbb{R}[y_1,y_2]/\mathbb{R}$.
Loosely speaking, our set-up is minimalistic.

\medskip

To be consistent with (\ref{eq:direct_sum}), 
our model $\Pi_x$ needs to be extended by the constant function
of value 1, and by $\Pi_{x}^{-}$, which in agreement with \eqref{eq:Pi_minus_def} is
given by\footnote{
with the abuse of notation $\Pi_x^-\in\mathsf{\tilde T}^*$, meaning that $\Pi_x^-$ is a $\mathsf{\tilde T}^*$-valued function} 
\begin{align}\label{eq:Pi_minus_def_alt}
\Pi^{-}_{x}=P\sum_{k\ge 0}\mathsf{z}_k\Pi^k_x\partial_1^2\Pi_x
-\sum_{l\ge 0}\frac{1}{l!}\Pi^l_x(D^{({\bf 0})})^lc
+\xi_\tau\mathsf{1} \ \in \ \mathsf{\tilde T}^*,
\end{align}
where $P$ denotes the projection\footnote{
note that \eqref{eq:Tbar_def} differs from
\cite{OSSW21} and \cite{LOT21} where $P$ denotes the projection
onto $\mathsf{\bar T}^*$}
\begin{align}\label{eq:Tbar_def}
P\quad\mbox{of}\;
\mathbb{R}[[\mathsf{z}_k,\mathsf{z}_{\bf n}]]\;\mbox{on}\;\mathsf{\tilde T}^*,
\end{align}
on the algebraic dual\footnote{
which as a linear space is the direct product over the 
populated not purely polynomial multi-indices}
$\mathsf{\tilde T}^*$ of $\mathsf{\tilde T}$.
Let us point out that the role of $P$ in \eqref{eq:Pi_minus_def_alt} is rather 
to project $\mathsf{z}_k\Pi_x^k\partial_1^2\Pi_x$ from 
$\mathbb{R}[[\mathsf{z}_k,\mathsf{z}_\n]]$ into $\mathsf{T}^*$;
the further restriction to $\mathsf{\tilde T}^*$ is automatic 
due to the presence of $\mathsf{z}_k$.
The second and third contributions to \eqref{eq:Pi_minus_def_alt} 
belong to $\mathsf{\tilde T}^*$,
which is obvious for the third contribution and 
follows from \eqref{ord19} for the second contribution. 
For later use, we note that by \eqref{mt44} and the definitions (\ref{eq:def_Dnull}) \& (\ref{eq:def_Dn})
\begin{align}\label{eq:algIII5}
\Gamma^*\mathsf{\tilde T}^*\subset\mathsf{\tilde T}^*
\quad\mbox{and}\quad
D^{({\bf n})}\mathsf{T}^*\subset\mathsf{\tilde T}^*.
\end{align}

\medskip

Like \eqref{eq:recenter_Pi} for $\Pi_x$ we obtain for $\Pi_x^-$ that
\begin{align}\label{eq:recenter_Pi_minus}
(\Pi_{x}^{-}-\Gamma_{xy}^*\Pi_{y}^{-})_\beta\quad
\mbox{is a random polynomial of degree}\;\le|\beta|-2,
\end{align}
see \eqref{eq:recenter_Pi_minus_specific} below for a more specific form.
To be consistent with (\ref{eq:direct_sum}), also $\Gamma$ needs to be
extended to an endomorphism of (\ref{eq:direct_sum}).
The extension of $\Gamma$ in \cite[(5.34)]{LOT21} precisely
incorporates the polynomial correction terms in
(\ref{eq:recenter_Pi}) \& (\ref{eq:recenter_Pi_minus}), in now full agreement with
\cite[Definition 2.17]{Ha14}.

\begin{proposition}\label{rem:6}
The transformations (\ref{eq:recenter_Pi}) and (\ref{eq:recenter_Pi_minus}) specify to
\begin{align}
\Pi_x&=\Gamma_{xy}^*\Pi_y+\pi^{({\bf 0})}_{xy},\label{eq:recenter_Pi_specific}\\
\Pi_x^{-}&=\Gamma_{xy}^*\Pi_y^{-}
+P\sum_{k\ge 0}\mathsf{z}_k\big(\Gamma_{xy}^*({\rm id}-P)\Pi_y+\pi^{({\bf 0})}_{xy}\big)^k
\partial_1^2\big(\Gamma_{xy}^*({\rm id}-P)\Pi_y+\pi^{({\bf 0})}_{xy}\big).
\label{eq:recenter_Pi_minus_specific}
\end{align}
where $\pi^{({\bf n})}_{xy}$ is related to $\Gamma_{xy}^*$ via (\ref{exp}).
\end{proposition}
The re-centering properties \eqref{eq:recenter_Pi_specific} and \eqref{eq:recenter_Pi_minus_specific} are established
in Propositions \ref{prop:change_of_basepointII} and \ref{prop:change_of_basepoint}, respectively.

\medskip

An alternative strengthening of \eqref{eq:recenter_Pi_minus}, 
which we will not make any use of, is given in \cite[(5.29)]{LOT21} in the form
\begin{align*}
\Pi_x^{-}&=\Gamma_{xy}^*\Pi_y^{-}
+\sum_{\n\neq\0}(P\pi_{xy}^{(\n)})(\partial_2-\partial_1^2)(\cdot -y)^{\n} .
\end{align*}
To show this, first, in view of (\ref{eq:pi_purely_pol}), 
(\ref{eq:model}) may be extended to all multi-indices:
\begin{equation*}
	(\partial_2-\partial_1^2)\Pi_{y}
	=\Pi_{y}^{-}+\sum_{{\bf n}\not={\bf 0}}\mathsf{z}_{\bf n}
	(\partial_2-\partial_1^2)(\cdot-y)^{\bf n}. 
\end{equation*}
We apply $\Gamma^*_{xy}$ to this identity, and use (\ref{eq:recenter_Pi}) on the l.~h.~s.~and 
\eqref{eq:Gamma_z_n} on the r.~h.~s. We then apply $P$ to the resulting identity, and use (\ref{eq:model}) on the l.~h.~s.~and 
the first item of (\ref{eq:algIII5}) on the r.~h.~s.

\medskip

We learn from evaluating (\ref{eq:recenter_Pi_specific}) and using \eqref{new} that
\begin{align}\label{eq:Pipi}
\Pi_x(y)=\pi^{({\bf 0})}_{xy},
\end{align}
which will play a major role in the proof. 
With help of (\ref{eq:model}),
it is easy to extend (\ref{eq:pi_generic}) from $\Pi_x$ to\footnote{In fact, the actual proof proceeds the other way around, namely passing from $\Pi_x^-$ to $\Pi_x$, cf. Proposition \ref{prop:int_pi_minus} below.} $\Pi_x^{-}$:

\begin{proposition}\label{rem:5}
Under Assumption \ref{ass:spectral_gap} the following holds for every populated $\beta$:
\begin{equation}\label{eq:pi_minus_generic}
\mathbb{E}^{\frac{1}{p}}|\Pi_{x\beta\,t}^-(y)|^p\lesssim
(\sqrt[4]{t})^{\alpha-2}(\sqrt[4]{t}+|y-x|)^{|\beta|-\alpha}.
\end{equation}
\end{proposition}

The estimate (\ref{eq:pi_minus_generic}) actually holds
for any convolution kernel in the sense of \cite[Definition~2.17]{Ha14}, 
as can be seen from the upcoming  argument.
The setting of \cite[Definition~2.17]{Ha14} corresponds to $|y-x|\le\sqrt[4]{t}$ here, 
in which case the r.~h.~s.~of \eqref{eq:pi_minus_generic} is $\sim(\sqrt[4]{t})^{|\beta|-2}$, 
in line with the postulated homogeneity $|\beta|-2$ 
of the $\beta$-component of $\mathsf{\tilde T}$ at the beginning of this subsection.
For (\ref{eq:pi_minus_generic}) (and thus $\beta$ not purely polynomial thanks
to $P$ in (\ref{eq:Pi_minus_def_alt})), we appeal to (\ref{eq:model}), to which
we apply the convolution operator and then $\mathbb{E}^\frac{1}{p}|\cdot|^p$:
\begin{align*}
\mathbb{E}^\frac{1}{p}|\Pi_{x\beta\,t}^{-}(y)|^p
\le\int dz|(\partial_2-\partial_1^2)\psi_t(y-z)|\mathbb{E}^\frac{1}{p}|\Pi_{x\beta}(z)|^p.
\end{align*}
Hence (\ref{eq:pi_minus_generic}) follows from (\ref{eq:pi_generic})
via the moment bounds (\ref{cw61}) on $\psi_t$.


\subsection{Relation to model in \texorpdfstring{\cite{OSSW21}}{OSSW21}}\label{sec:OSSW}

In this subsection we connect our notion of model constructed in Theorem \ref{thm:main} to the one postulated in \cite{OSSW21}. The following is specific to the quasi-linear problem and unrelated to the proof of our main result, and thus may be skipped.

\medskip

The quasi-linear equation (\ref{ao04}) differs from a semi-linear equation,
like the multiplicative heat equation $(\partial_2-\partial_1^2)u+h(u)=a(u)\xi$, 
by the absence of the
invariance in law $u=\lambda\hat u$, $a=\lambda\hat a$, $h=\lambda\hat h$.
This scale invariance would be encoded by the tighter population condition $\sum_{k}(k-1)\beta(k)$ 
$+\sum_{{\bf n}}\beta({\bf n})=-1$, compare with (\ref{ao09}), see \cite[eq. (7.2)]{LOT21} for details
on the latter. 
However, this lack of the tighter population condition for the quasi-linear equation (\ref{ao04})
is compensated by the special role of $\mathsf{z}_0$:
A priori only formal power series in $\mathsf{z}_0$ are convergent power series.
Indeed, in view of (\ref{ao07a}), changing the origin of $a$-space from $0$ to some $a_0-1$ 
amounts to the actual replacement
\begin{align}\label{ud01}
\partial_2-\partial_1^2\quad\leadsto\quad\partial_2-a_0\partial_1^2
\end{align}
next to the formal replacement $\mathsf{z}_0\leadsto\mathsf{z}_0+(a_0-1)$.
Hence we expect analyticity as long as the real part of $a_0\in\mathbb{C}$ is positive.
Replacing the formal variable $\mathsf{z}_0$ by the actual variable $a_0-1$
will allow us to eventually restrict to multi-indices $\beta$ with
\begin{align}\label{cw14}
\beta(k=0)=0.
\end{align}

\medskip
 
Let us be more precise: 
Any formal power series $\pi$ in the variables $\{\mathsf{z}_k\}_{k\ge 1}$ and
$\{\mathsf{z}_{\bf n}\}_{{\bf n}\not={\bf 0}}$
with coefficients that are analytic functions in $a_0\in\mathbb{C}$ with positive real part 
can be identified with an element of $\mathbb{C}[[\{\mathsf{z}_k\}_{k\ge 0},
\{\mathsf{z}_{\bf n}\}_{{\bf n}\not={\bf 0}}]]$ via
\begin{align}\label{cw42}
\pi_{\beta+le_0}=\frac{1}{l!}\frac{d^l\pi_\beta}{da_0^l}(a_0=1)\quad\mbox{for}\;\beta\;
\mbox{satisfying}\;(\ref{cw14})\;\mbox{and}\;l\ge 0.
\end{align}
Hence this new algebra
is canonically (and strictly) embedded into our $\mathbb{C}[[\mathsf{z}_k,\mathsf{z}_{\bf n}]]$.
Hence we will pass from $\mathsf{T}^*$ to its intersection with 
this new algebra. As a linear space, this smaller $\mathsf{T}^*$ amounts to
the direct product of the space of analytic functions in $a_0$, indexed by populated
multi-indices satisfying (\ref{cw14}). 
Under the identification (\ref{cw42}), the derivation $D^{({\bf 0})}$ defined in 
(\ref{eq:def_Dnull}) restricts to a derivation on the new algebra (formally) given by
\begin{align}\label{cw24}
D^{({\bf 0})}=\mathsf{z}_1\partial_{a_0}+\sum_{k\ge 1}(k+1)\mathsf{z}_{k+1}\partial_{\mathsf{z}_k}.
\end{align}
Analogous to \eqref{def_Dnull_rig}, 
as an endomorphism on the tighter version of $\mathsf{T}^*$, it is (rigorously)
given through its matrix entries
\begin{align}\label{cw43}
(D^{({\bf 0})})_{\beta}^\gamma
&=\left\{\begin{array}{cc}
\partial_{a_0}&\mbox{provided}\;\gamma+e_1=\beta\\
0&\mbox{else}\end{array}\right\}\nonumber\\
&+\sum_{k\ge 1}\left\{\begin{array}{cc}
(k+1)\gamma(k)&\mbox{provided}\;\gamma+e_{k+1}=\beta+e_k\\
0&\mbox{else}\end{array}\right\},
\end{align}
where both $\beta$ and $\gamma$ satisfy (\ref{cw14}). Given elements $\pi^{({\bf n})}$ 
of the tighter version of $\mathsf{T}^*$ satisfying the population condition (\ref{eq:pi_n_pop}),
(\ref{exp}) then defines a triangular automorphism  on the tighter version of $\mathsf{T}^*$.
This gives rise to a re-interpretation of $\mathsf{G}^*$ $\in{\rm End}(\mathsf{T}^*)$
as the dual of the structure group. 


\begin{proposition}\label{rem:2}
When replacing (\ref{eq:model}) by 
\begin{align}\label{cw20}
(\partial_2-a_0\partial_1^2)\Pi_{x\beta}=\Pi_{x\beta}^{-}
\quad\mbox{for $\beta$ not purely polynomial}
\end{align}
with $a_0\in\mathbb{C}$ of positive real part,
all statements of Theorem \ref{thm:main} hold locally uniformly in $a_0$.
Also the estimates (\ref{cw03}) \& (\ref{cw60}) on $c$ \& $\partial_1^2\Pi_x$,
as well as the estimate   (\ref{eq:pi_n}) on $\pi^{({\bf n})}_{xy}$ hold locally uniformly.

\medskip

For any base point $x,y$, any ${\bf n}$ and any populated multi-index $\beta$, 
\begin{align}
c_{\beta}&\;\mbox{is analytic},\label{cw58ter}\\ 
\Pi_{x\beta}&\;\mbox{is analytic w.~r.~t.~the norm given by 
(\ref{eq:pi_generic})}\label{cw58bis},\\
\pi^{({\bf n})}_{xy\beta}&\;\mbox{is analytic w.~r.~t.~the norm given by (\ref{eq:pi_n})},
\label{cw59bis}
\end{align}
all w.~r.~t.~the variable $a_0$.

\medskip

For these three objects $\pi=c,\pi^{({\bf n})}_{xy},\Pi_x$, (\ref{cw42})
holds for all $a_0$:
\begin{align}\label{cw44}
\pi_{\beta+le_0}=\frac{1}{l!}\frac{d^l\pi_\beta}{da_0^l}\quad\mbox{for}\;\beta\;
\mbox{satisfying}\;(\ref{cw14})\;\mbox{and}\;l\ge 0.
\end{align}
\end{proposition}


Noting that (\ref{eq:Pi_minus_def}) takes the form
\begin{align}\label{eq:def_Pi_minus_a0}
\Pi_{x\beta}^{-}=\big(\sum_{k\ge 1}\mathsf{z}_k\Pi_{x}^k\partial_1^2\Pi_{x}
-\sum_{k\ge 0}\frac{1}{k!}\Pi_x^k(D^{({\bf 0})})^kc + \xi_\tau\mathsf{1}\big)_\beta
\quad\mbox{for}\;\beta\;\mbox{satisfying}\;(\ref{cw14}),
\end{align}
we learn from (\ref{cw44}) that 
when equipped with the interpretation (\ref{cw43}) of $D^{({\bf 0})}$ and
the corresponding interpretation of $\Gamma_{xy}^*$ via (\ref{exp}), 
the restriction of the $a_0$-dependent model to multi-indices of the form
(\ref{cw14}) is closed. Together with (\ref{cw20}),
this is the input for \cite{OSSW21}.

\medskip

On the above class (\ref{cw14})
of $\beta$'s, the homogeneity $|\cdot|$ defined in (\ref{ao12}) is actually coercive. 
In the case of $\alpha=\frac{1}{2}-$
relevant for white noise, we are left with 10 multi-indices satisfying $|\beta|<2$.
Ordered by increasing homogeneity they are given in Figure~\ref{fig:sing_mult}.
\begin{figure}[H]
\begin{center}
\begin{tabular}{c|l}
homogeneity & multi-indices \\
\hline
$\alpha$ & $0$ \\
$2\alpha$ & $e_1$ \\
$3\alpha$ & $e_2,\;2e_1$ \\
$\alpha+1$ & $e_1+e_{(1,0)}$ \\
$4\alpha$ & $e_3,\;e_1+e_2,\;3e_1$ \\
$2\alpha+1$ & $e_2+e_{(1,0)},\;2e_1+e_{(1,0)}$
\end{tabular}
\end{center}
\caption{Multi-indices $\beta$ with $|\beta|<2$ for $\alpha\in(\frac{2}{5},\frac{1}{2})$. \label{fig:sing_mult}}
\end{figure}

For the 10 multi-indices in Figure~\ref{fig:sing_mult}, the combination of (\ref{cw20}) and (\ref{eq:def_Pi_minus_a0})
takes the form
\begin{align*}
(\partial_2-a_0\partial_1^2)\Pi_{x0}&=\xi_\tau-c_0,\\
(\partial_2-a_0\partial_1^2)\Pi_{xe_1}&=\Pi_{x0}\partial_1^2\Pi_{x0}-c_{e_1},\\
(\partial_2-a_0\partial_1^2)\Pi_{x e_2} &= \Pi_{x0}^2\partial_1^2\Pi_{x0} 
- (c_{e_2}+ 2\Pi_{x0} c_{e_1}),\\
(\partial_2-a_0\partial_1^2)\Pi_{x 2e_1} &= \Pi_{x0}\partial_1^2\Pi_{xe_1}+\Pi_{xe_1}\partial_1^2\Pi_{x0} 
- (c_{2e_1}+ \Pi_{x0} \partial_{a_0}c_{e_1}),\\
(\partial_2-a_0\partial_1^2)\Pi_{x e_3} &= \Pi_{x0}^3\partial_1^2\Pi_{x0} 
- (c_{e_3}+ 3\Pi_{x0} c_{e_2}+3\Pi_{x0}^2c_{e_1}), \\
(\partial_2-a_0\partial_1^2)\Pi_{x e_1+e_2} &= \Pi_{x0}\partial_1^2\Pi_{xe_2}+\Pi_{xe_2}\partial_1^2\Pi_{x0} +2\Pi_{x0}\Pi_{xe_1}\partial_1^2\Pi_{x0}+\Pi_{x0}^2\partial_1^2\Pi_{xe_1} \\
&\quad- (c_{e_1+e_2}+ 2\Pi_{xe_1} c_{e_1}+\Pi_{x0}\partial_{a_0}c_{e_2}+4\Pi_{x0}c_{2e_1}+3\Pi_{x0}^2\partial_{a_0}c_{e_1}),\\
(\partial_2-a_0\partial_1^2)\Pi_{x 3e_1} &= \Pi_{x0}\partial_1^2\Pi_{x2e_1}+\Pi_{x2e_1}\partial_1^2\Pi_{x0} +\Pi_{xe_1}\partial_1^2\Pi_{xe_1} \\
&\quad-(c_{3e_1}+ \Pi_{xe_1} \partial_{a_0}c_{e_1}+\Pi_{x0}\partial_{a_0}c_{2e_1}+\Pi_{x0}^2\tfrac{1}{2}\partial_{a_0}^2c_{e_1}),\\
(\partial_2-a_0\partial_1^2)\Pi_{xe_1+e_{(1,0)}}&=\Pi_{xe_{(1,0)}}\partial_1^2\Pi_{x0},\\
(\partial_2-a_0\partial_1^2)\Pi_{xe_2+e_{(1,0)}}&=2\Pi_{xe_{(1,0)}}\Pi_{x0}\partial_1^2\Pi_{x0}-2\Pi_{xe_{(1,0)}}c_{e_1},\\
(\partial_2-a_0\partial_1^2)\Pi_{x2e_1+e_{(1,0)}}&=\Pi_{x0}\partial_1^2\Pi_{xe_1+e_{(1,0)}} + \Pi_{xe_1+e_{(1,0)}}\partial_1^2\Pi_{x0}+\Pi_{xe_{(1,0)}}\partial_1^2\Pi_{xe_1}
- \Pi_{xe_{(1,0)}} \partial_{a_0}c_{e_1}.
\end{align*}
Together with the BPHZ-choice of renormalization contained in the large-$\sqrt[4]{t}$
behavior imposed on $\Pi_{x\beta}$ through (\ref{eq:pi_generic}), this inductively
determines the functions $c_{\beta}(a_0)$. Equipped with these, (\ref{ao05})
takes the form
\begin{align*}
h(u)=\sum_{\beta:|\beta|<2}c_{\beta}(a(u))
\prod_{k\ge 1}(\frac{1}{k!}\frac{d^ka}{du^k}(u))^{\beta(k)},
\end{align*}
which reproduces \cite[eq. (15)]{OSSW21} in the present paper's notation.

\medskip

It is in this form we may connect to \cite[Assumptions 1 and 2]{OSSW21}. 
Loosely speaking, 
the assumptions in \cite{OSSW21} are contained in the output of Theorem \ref{thm:main},
as upgraded through Proposition \ref{rem:2}.
More precisely, \cite[eq. (5)]{OSSW21} is covered by (\ref{eq:model}) in the form of
(\ref{cw20}), and \cite[eq. (6)]{OSSW21} is covered by (\ref{eq:pi_purely_pol}).
The estimates \cite[eq. (7) and (8)]{OSSW21} are covered by (\ref{eq:pi_minus_generic})
and (\ref{eq:pi_generic}), 
with the difference that in \cite{OSSW21} (like in \cite{Ha14}), 
they are formulated in terms of a general (though fixed) convolution kernel,
and that they are pathwise, with a constant absorbed into a single scaling factor $N_0$ and, 
as mentioned above, locally uniform in $a_0$, cf.~\cite[eq. (30)]{OSSW21}. 
The transformation \cite[eq. (9)]{OSSW21} is reproduced by (\ref{eq:recenter_Pi_specific}).
Estimate \cite[eq. (10)]{OSSW21} is covered by (\ref{eq:gamma}); however, in view of
(\ref{cw24}), the entries of $\Gamma_{xy}$ are differential operators in $a_0$.
Finally \cite[eq. (11)]{OSSW21} follows from evaluating (\ref{eq:def_Pi_minus_a0}) at $x$
while appealing to (\ref{new}). The crucial population condition (\ref{eq:c_pop_cond})
on $c$ is contained in the text just above \cite[eq. (11)]{OSSW21}, and re-formulated  
as $D^{({\bf n})}c=0$ for ${\bf n}\not={\bf 0}$. 
The polynomial correction in (\ref{eq:recenter_Pi_minus}) 
does not appear in \cite{OSSW21}, since there, the model is (implicitly) truncated beyond $|\beta|<2$.
Because of this truncation, only ${\bf n}={\bf 0},(1,0)$ matter in \cite{OSSW21};
however, since \cite{OSSW21} considers $d$ space dimensions, there are $d$ versions
of ${\bf n}=(1,0)$.
There are some differences in notation: When it comes to $\Gamma$, \cite{OSSW21} omits the $*$ 
but exchanges the order in $xy$, while $c$ is called $q$.


\section{Structure of proof}\label{sec:structure}

In this section, all multi-indices $\beta,\beta',\gamma$ 
are implicitly assumed to be populated, cf.~(\ref{ao09}). 
The induction runs over populated multi-indices $\beta$ which are not purely polynomial. 
The purely polynomial multi-indices are treated before the inductive proof: 
If $\beta=e_{\bf n}$ for ${\bf n}\not={\bf 0}$, we must set $c_\beta=0$ 
and $\Pi_{x\beta}^{-}=0$ by (\ref{eq:c_pop_cond}) and (\ref{eq:Pi_minus_def_alt}), 
respectively, and define $\Pi_{x\beta}$ by (\ref{eq:pi_purely_pol}) and
$\pi_{xy\beta}^{(\n)}$ by (\ref{ho06}). 
We note that these definitions are consistent with covariance under shift (\ref{eq:Pi_shift})
and reflection (\ref{eq:Pi_parity}). Finally, 
(\ref{eq:pi_generic}) and (\ref{eq:pi_n}) are satisfied by (\ref{ao20}).  

\subsection{Intertwining of estimates and constructions in induction proof}

Working on the whole space-time $\mathbb{R}^2$ instead of a torus, as we do, has many advantages.
The most obvious is that we do not introduce an artificial scale, namely the size of
the torus, that would break scaling. Another advantage is that 
the inversion of $(\partial_2-\partial_1^2)$
does not require a Fredholm alternative and thus a Lagrange parameter\footnote{
which is polynomial when working on the tensor space of periodic functions and polynomials,
as in \cite[Subsection 3.1]{LO22}}.
However, an inconvenience is that we can't separate the construction from the estimates:
Because of possible infra-red divergences, we need the large-$\sqrt[4]{t}$ part of the estimate
(\ref{eq:pi_minus_generic}) on $\Pi_x^{-}$ to uniquely solve (\ref{eq:model})
for $\Pi_x$ within the growth and anchoring expressed by (\ref{eq:pi_generic}).
In fact, it is not clear whether one can
construct a pre-model in the sense of \cite[Subsection 4.2]{Ha16}
on the whole space.

\medskip

While construction and estimates are logically intertwined, as explained in
Subsection \ref{sec:order} (see also Appendix~\ref{app:tables}), we choose to separate them
in presentation: Section \ref{sec:anal_est} contains the uniform in the ultra-violet cut-off estimates and their proofs,
while Section \ref{sec:model_construction} contains the construction. Section~\ref{sec:proof_rem} 
contains the proof of the divergent (in the ultra-violet cut-off) estimates of Proposition~\ref{rem:1}
and the proof of Proposition~\ref{rem:2}. In fact, also the proof of Malliavin differentiability
is intertwined logically;
it is given in Section~\ref{sec:mall_approx}, while definitions and properties of the
Malliavin–Sobolev spaces can be found in Appendix~\ref{ss:Malle}.
Moreover, the order we present the estimates within the Sections~\ref{sec:anal_est} and \ref{sec:proof_rem}
and the Malliavin differentiability in Section~\ref{sec:mall_approx}
is not strictly by logical order, but according to the objects estimated. 
We explain this inherent structure in Subsections
\ref{sec:loop} and \ref{sec:tasks}. 
Section \ref{sec:algebraic_aspects} establishes the various triangular structures
important for the inductive construction. 


\subsection{The five loops of an induction step: original quantities, expectation, Malliavin de\-rivative, modelled distribution, and back}
\label{sec:loop}

The structure of an induction step requires the distinction of two 
cases:\footnote{even if we were just
interested in singular $\beta$'s, the structure of ${\rm d}\Gamma^*$ requires the estimate
of regular $\beta$'s}
\begin{itemize}
 \item the regular case of $|\beta|\geq 2$, and
 \item the singular case of $|\beta|<2$. 
\end{itemize}
It is thus convenient to introduce the following projection
\begin{align}\label{eq:defQ}
Q\quad\mbox{is the projection onto the direct product indexed by $\beta$'s with $|\beta|<2$}.
\end{align}
The proof of the estimates in Section \ref{sec:anal_est} 
is structured into five subsections, each having the structure of a loop,
and which we order by increasing complexity,
\begin{itemize}
\item Original quantities: 
In Subsection \ref{firstblock}, we estimate $\Gamma_{xy}^*$,
$\Pi_{x}^{-}$, and $\Pi_{x}$, assuming control of $Q\Pi_{x}^{-}$. 
\item Expectation: 
In Subsection \ref{BPHZMall}, we show that
the BPHZ-choice of renormalization gives control of $\mathbb{E}Q\Pi_{x}^{-}$. 
By the SG inequality, this gives control of $\Pi_{x}^{-}$, 
assuming control of the (directional) Malliavin derivative $Q\delta\Pi_{x}^{-}$.
\item Malliavin derivatives: 
In Subsection \ref{secondblock},
we estimate the Malliavin derivatives $Q\delta\Gamma_{xy}^*P$ and $Q\delta\Pi_{x}$, 
assuming control of $Q\delta\Pi_{x}^{-}$.
\item Modelled distribution: 
In Subsection \ref{thirdblock}, we introduce ${\rm d}\Gamma^*_{xz}$ 
$\in{\rm Hom}(\mathsf{T}^*,\mathsf{\tilde T}^*)$, of which we show 
that it is continuous in\footnote{what we call the secondary base point} $z$
in the sense of controlling $Q({\rm d}\Gamma^*_{xy}-{\rm d}\Gamma^*_{xz}\Gamma_{zy}^*)PQ$,
that it describes $Q\delta\Pi^{-}_x$ in terms of $Q\Pi_z^{-}$ in the sense of controlling 
$Q(\delta\Pi^{-}_x-{\rm d}\Gamma^*_{xz}Q\Pi^{-}_z)$,
and that it describes $Q\delta\Pi_x$ in terms of $Q\Pi_z$ in the sense of controlling  
the rough-path increments $Q(\delta\Pi_x-\delta\Pi_x(z)-{\rm d}\Gamma^*_{xz}Q\Pi_z)$.
This subsection is the core of the proof.
\item Back to the estimate of $Q\delta\Pi^{-}_x$ itself.
In Subsection \ref{forthblock}, we provide control of $Q{\rm d}\Gamma^*_{xz}P$
and then of $Q\delta\Pi^{-}_x$, assuming control of
$Q(\delta\Pi^{-}_x-{\rm d}\Gamma^*_{xz}Q\Pi^{-}_z)$.
\end{itemize}


\subsection{The four types of tasks in a loop: algebraic argument, reconstruction, integration, three-point argument}\label{sec:tasks}

Subsections \ref{firstblock}, \ref{secondblock}, \ref{thirdblock}, and,
to some extent, Subsection \ref{forthblock} and Section~\ref{sec:mall_approx} involve the same type of tasks, arranged in a 
similar type of loop (see Figure \ref{fig:four_tasks}). 
An important role is played by the $\pi_{xy}^{({\bf n)}}$'s that determine the $\Gamma_{xy}^*$
via the exponential formula (\ref{exp}), and their counterparts ${\rm d}\pi_{xz}^{({\bf n})}$ for
${\rm d}\Gamma_{xz}^*$, see (\ref{eq:def_dGamma}). The four tasks are:
\begin{itemize}
\item Algebraic argument. All four subsections 
start\footnote{and for Subsection \ref{firstblock} ends} 
with an ``algebraic argument'' 
(called like this because it is based on an exponential-type formula)
to estimate $\Gamma^*_{xy}P$, $Q\delta\Gamma^*_{xy}P$, 
$Q({\rm d}\Gamma^*_{xy}-{\rm d}\Gamma^*_{xz}\Gamma^*_{zy})PQ$, and $Q{\rm d}\Gamma^*_{xz}P$, 
in terms of $\pi_{xy}^{({\bf n})}$, $Q\delta\pi_{xy}^{({\bf n})}$, 
$Q({\rm d}\pi_{xy}^{({\bf n})}-{\rm d}\pi_{xz}^{({\bf n})}
-{\rm d}\Gamma_{xz}^*\pi_{zy}^{({\bf n})})$, and $Q{\rm d}\pi^{({\bf n})}_{xz}$,
respectively. 
\item Reconstruction. Subsections \ref{firstblock} and \ref{thirdblock} feature 
a reconstruction argument
in order to control $({\rm id}-Q)\Pi_x^{-}$ 
and $Q(\delta\Pi_x^{-}-{\rm d}\Gamma_{xz}^*Q\Pi_z^{-})$.
By a reconstruction argument, we understand that for a family $\{F_x\}_{x\in\mathbb{R}^2}$ 
of Schwartz distributions on $\mathbb{R}^2$ that satisfy a continuity condition in 
the base point $x$, 
we estimate $F_{xt}(x)$ in terms of the diagonal\footnote{
in the sense of $\lim_{t\downarrow0} F_{xt}(x)$ with the understanding that this limit exists} 
$F_{x}(x)$.
\item Integration. Subsections \ref{firstblock}, \ref{secondblock}, \ref{thirdblock} involve
an integration argument to pass from $\Pi_x^{-}$, $Q\delta\Pi_x^{-}$, 
and $Q(\delta\Pi_x^{-}-{\rm d}\Gamma_{xz}^*Q\Pi_z^{-})$,
to $\Pi_x$, $Q\delta\Pi_x$, and $Q(\delta\Pi_x-\delta\Pi_x(z)-{\rm d}\Gamma_{xz}^*Q\Pi_z)$,
respectively. By an integration argument, we mean that we pass
an annealed H\"older norm anchored in a base point $x$ through an integral representation.
It amounts to a Schauder estimate.
\item Three-point argument.
All four subsections appeal to a ``three-point argument'' (called like this because
it involves varying an additional third point in order to control polynomial coefficients)
to pass from the estimate
of $\Pi_x$, $Q\delta\Pi_x$, $Q(\delta\Pi_x-\delta\Pi_x(z)-{\rm d}\Gamma_{xz}^*Q\Pi_z)$, 
and $Q{\rm d}\Gamma_{xz}^*P$,
to the estimate of
$\pi_{xy}^{({\bf n})}$, $Q\delta\pi_{xy}^{({\bf n})}$, 
$Q({\rm d}\pi_{xy}^{({\bf n})}-{\rm d}\pi_{xz}^{({\bf n})}
-{\rm d}\Gamma_{xz}^*\pi_{zy}^{({\bf n})})$, and $Q{\rm d}\pi^{({\bf n})}_{xz}$,
respectively.
\end{itemize}


\subsection{The logical order of loops and tasks in one induction step}\label{sec:order}

In the course of Sections \ref{sec:anal_est} and \ref{sec:model_construction}, 
we will add a fairly large number of auxiliary statements.
Some have to be logically included into the induction statement,
because we refer to them as part of the induction hypothesis.
For the convenience of the reader, we list all statements of the induction hypothesis here:
\begin{itemize}
		\item The transitivity of $\Gamma_{xy}^*$ \eqref{ao15} and $\pi_{xy}^{(\mathbf{n})}$ \eqref{rec01}, and the recentering of $\Pi^-_x$ \eqref{eq:recenter_Pi_minus} and $\Pi_x$ \eqref{eq:recenter_Pi_specific},
		\item the estimates of the main objects $\Pi_x$ \eqref{eq:pi_generic}, $\Gamma_{xy}^*$ \eqref{eq:gamma}, $\pi_{xy}^{(\mathbf{n})}$ \eqref{eq:pi_n}, and $\Pi_x^-$ \eqref{eq:pi_minus_generic},
		\item the boundedness of Malliavin derivatives $\delta \Pi_x^-$ \eqref{eq:mal_dual_annealed}, $\delta \pi_{xy}^{(\mathbf{n})}$ \eqref{eq:delta_pi_n}, $\delta\Gamma_{xy}^*$ \eqref{eq:delta_gamma}, and $\delta \Pi_x$ \eqref{eq:delta_pi_generic}, and the modeledness of the latter \eqref{eq:delta_pi_incr_generic},
		\item the boundedness \eqref{eq:d_pi}, \eqref{eq:form_bound}, and continuity \eqref{eq:delta_pi_d_pi_incr}, \eqref{eq:form_cont} of the modelled distribution ${\rm d}\Gamma_{xz}^*$,
		\item the divergent bounds of $c$ \eqref{cw03}, $\partial^\mathbf{m} \Pi_x$ \eqref{cw60}, \eqref{cw60_cont}, \eqref{cw63_generic}, $\Pi^-_x$ \eqref{cw60_minus_cont}, and $\partial^\mathbf{m} \delta \Pi_x$ \eqref{cw60_mal_dual}, \eqref{cw60_cont_mal_dual}, \eqref{cw63_generic_mal_dual},
		\item symmetries, i.~e. shift \eqref{eq:Pi_shift} and reflection \eqref{eq:Pi_parity} covariances of $\Pi_x$,
		\item and the Malliavin differentiability of $\Pi_x$, $\partial_1^2 \Pi_x$, $\partial_2 \Pi_x$, $\Gamma_{xy}^*$, and $\pi_{xy}^{(\mathbf{n})}$. 
\end{itemize}
The remaining additional statements are just used inside one induction step and are
therefore not listed above, for example:
the estimates (\ref{eq:limit}) and (\ref{eq:expect_est}) on the expectation,
the rough-path estimates on $\delta\Pi^{-}$ (\ref{eq:delta_pi_minus_generic}),
the shift and reflection covariances (\ref{eq:Pi_minus_shift}) and
(\ref{eq:Pi_minus_parity}) on the level of $\Pi^{-}$ and Malliavin differentiability of the singular components 
of $\Pi^-$, see Item~\ref{it:1} in Section~\ref{sec:mall_approx} .

\medskip

The logical order of one induction step depends on whether $\beta$ is regular or
singular. For regular $\beta$, we just run through the first Subsection 
\ref{firstblock} (but still most of the construction):
\begin{enumerate}
		\item By the induction hypothesis, $\Gamma_{xy}^* P$ is constructed and estimated via an algebraic argument, see Proposition \ref{prop:gamma_npp}.
		\item Because of $({\rm id} - Q)c = 0$, we construct $\Pi_x^-$ and show
		\begin{enumerate}[a)]
			\item its continuity, see \eqref{cw60_minus_cont} in Proposition \ref{rem:1},
			\item its shift and reflection covariance, see the first part of Proposition \ref{prop:BPHZ},
			\item and the recentering property \eqref{eq:recenter_Pi_minus_specific}, see Proposition \ref{prop:change_of_basepoint}.
		\end{enumerate}
		\item We then estimate $\Pi_x^-$ by regular reconstruction, see Proposition \ref{prop:reconstr_reg}.
		\item We then may 
		\begin{enumerate}[a)]
			\item construct and estimate $\Pi_x$ in an integration step, see Proposition \ref{prop:int_pi_minus},
			\item and show shift and reflection covariance, see Subsection \ref{sec:Pi_construct}.
		\end{enumerate}
		\item Next we construct $\pi_{xy}^{(\mathbf{n})}$, and thus the full $\Gamma_{xy}^*$,
		\begin{enumerate}[a)]
			\item obtain the recentering property \eqref{eq:recenter_Pi_specific}, see Proposition \ref{prop:change_of_basepointII},
			\item  establish \eqref{rec01}, and therefore \eqref{ao15}, see Proposition \ref{prop:change_of_basepointIII},
			\item and estimate $\pi_{xy}^{(\mathbf{n})}$ via a three-point argument, see Proposition \ref{prop:pi_n_three_point}, and consequently $\Gamma_{xy}^*$, see Proposition \ref{prop:gamma_pp}.
		\end{enumerate}
		\item We conclude showing boundedness and continuity of $\partial_1^2 \Pi_x$ and $\partial_2 \Pi_x$, see \eqref{cw60} and \eqref{cw60_cont} in Proposition \ref{rem:1}.		 
	\end{enumerate}
The logical order is much more complex for singular $\beta$:
\begin{enumerate}
		\item Algebraic arguments. By induction hypothesis:
		\begin{enumerate}[a)]
			\item $\Gamma_{xy}^* P$ is constructed and estimated, see Proposition \ref{prop:gamma_npp}.
			\item We establish the Malliavin differentiability of $\Gamma_{xy}^* P$, see Subsection \ref{sec:gamma_three_point_approx}.
			\item This allows to estimate $Q \delta\Gamma_{xy}^* P$, see Proposition \ref{prop:delta_gamma_npp},
			\item $Q {\rm d} \Gamma_{xy}^* P$, see Proposition \ref{prop:form_bound},
			\item and $Q ({\rm d}\Gamma_{xy}^* - {\rm d}\Gamma_{xz}^* \Gamma_{zy}^*) PQ$, see Proposition \ref{prop:algIII}.
		\end{enumerate}
		\item Next we establish some properties which are independent of the specific value of $c$, and thus can be shown before the BPHZ choice of renormalization constants. In particular, we show that for any $c$ satisfying \eqref{eq:c_pop_cond},
		\begin{enumerate}[a)]
			\item $Q\Pi_x^-$ satisfies the recentering property \eqref{eq:recenter_Pi_minus_specific}, see Proposition \ref{prop:change_of_basepoint},
			\item $Q\Pi_x^-$ is shift and reflection covariant, see Subsection \ref{sec:Pi_minus_constr},
			\item and $\frac{d}{dt} \E Q\Pi_{xt}^-$ is estimated in Proposition \ref{prop:limit}.
		\end{enumerate}
		\item We now choose the BPHZ renormalization constant $c$ and estimate $\E \Pi_x^-$.
		\begin{enumerate}[a)]
			\item We show that $\lim_{t\uparrow \infty} \E \Pi^-_{xt}(x)$ exists and therefore we can choose $c$ so that \eqref{eq:BPHZ} holds, see Proposition \ref{prop:BPHZ},
			\item and show the divergent bound \eqref{cw03}, see Step 2 of the proof of Proposition \ref{rem:1}.
			\item We then estimate $\E Q\Pi_x^-$, see Proposition~\ref{prop:expect}.
		\end{enumerate}
		\item Next we study the Malliavin derivative of $\Pi_x^-$.
		\begin{enumerate}[a)]
			\item By the induction hypothesis, we show Malliavin differentiability of $Q\Pi_x^-$, see Subsection~\ref{sec:reconstr_approx}.
			\item We show continuity of $Q \delta \Pi_x^-$, see \eqref{cw60_minus_cont_mal} in Proposition \ref{rem:1_mal}.
			\item Next we estimate $Q(\delta \Pi_x^- - {\rm d}\Gamma_{xz}^* Q \Pi_z^-)$, see Proposition \ref{prop:recIII},
			\item and finally estimate $Q \delta \Pi_x^-$ itself, see Proposition \ref{prop:delta_pi_minus_est}.
		\end{enumerate}
		\item Equipped with the estimates of  $\E Q\Pi_x^-$ and $Q \delta \Pi_x^-$, we use the SG inequality to gain control of $Q \Pi_x^-$. 
		\item At this stage we may show the results of our main theorem.
		\begin{enumerate}[a)]
			\item We construct and estimate $\Pi_x$, see Proposition \ref{prop:int_pi_minus},
			\item and show its shift and reflection covariance, see Subsection \ref{sec:Pi_construct}. 
			\item Next we construct $\pi_{xy}^{(\mathbf{n})}$ and thus the full $\Gamma_{xy}^*$, obtaining the recentering property \eqref{eq:recenter_Pi_specific}, see Proposition \ref{prop:change_of_basepointII},
			\item establish the recentering property \eqref{rec01}, and therefore \eqref{ao15}, see Proposition \ref{prop:change_of_basepointIII},
			\item and finally estimate $\pi_{xy}^{(\mathbf{n})}$, see Proposition \ref{prop:pi_n_three_point}, and consequently $\Gamma_{xy}^*$, see Proposition \ref{prop:gamma_pp}.
		\end{enumerate}
		\item Next we turn to the Malliavin derivatives of our main objects.
		\begin{enumerate}[a)]
			\item We establish Malliavin differentiability of $\Pi_x$, see Subsection \ref{sec:int_approx},
			\item and estimate  $Q\delta\Pi_x$, see Proposition \ref{prop:intII}.
			\item We show that $\pi_{xy}^{(\mathbf{n})}$ is Malliavin differentiable,  see Subsection \ref{sec:gamma_three_point_approx}, and so is $\Gamma_{xy}^*$,
			\item and estimate $\delta \pi_{xy}^{(\mathbf{n})}$, see Proposition \ref{prop:three_pointII}, which in turn implies the estimate of the full $\delta \Gamma_{xy}^*$.
		\end{enumerate}
		\item Next we show the bounds divergent in the ultra-violet cut-off.
		\begin{enumerate}[a)]
			\item We first show the continuity of $\Pi_x^-$, see \eqref{cw60_minus_cont} in Proposition \ref{rem:1}.
			\item Next we establish boundedness and continuity of $\partial_1^2 \Pi_x$ and $\partial_2 \Pi_x$, see \eqref{cw60} and \eqref{cw60_cont} in Proposition \ref{rem:1}.
			\item Finally we obtain boundedness and continuity of $\partial_1^2 \delta \Pi_x$ and $\partial_2 \delta \Pi_x$, see \eqref{cw60_mal_dual} and \eqref{cw60_cont_mal_dual} in Proposition \ref{rem:1_mal}.
		\end{enumerate}
		\item Finally we establish the modeledness estimates.
		\begin{enumerate}[a)]
			\item We construct ${\rm d}\pi_{xy}^{(1,0)}$ and ${\rm d}\Gamma_{xy}^*$, see Subsection \ref{sec:dGamma_construction}.
			\item We estimate $Q (\delta \Pi_x - \delta \Pi_x(z) - {\rm d}\Gamma_{xz}^* Q \Pi_z)$, see Proposition \ref{prop:intIII},
			\item $Q({\rm d}\pi_{xy}^{(\mathbf{n})} - {\rm d} \pi_{xz}^{(\mathbf{n})} - {\rm d}\Gamma_{xz}^* \pi_{zy}^{(\mathbf{n})})$, see Proposition \ref{prop:delta_pi_incr_three_point}, which in turn implies the continuity estimate of the full ${\rm d}\Gamma_{xz}^*$,
			\item and ${\rm d} \pi_{xz}^{(1,0)}$, see Proposition \ref{prop:d_pi}, which in turn implies the boundedness estimate of the full ${\rm d}\Gamma_{xz}^*$.
		\end{enumerate}

	\end{enumerate}	
The logical order of the core estimates, from Proposition~\ref{prop:gamma_npp} to
Proposition \ref{prop:delta_pi_minus_est}, corresponding to the five loops of an induction step
is indicated in Figure \ref{fig:four_tasks} by the small number in the lower right corner of each field.
A detailed overview of the logical order in the regular and singular case including precise input and output of each statement is given in Figure~\ref{fig:regular} and Figure~\ref{fig:singular}, respectively; see Appendix \ref{app:tables}.

\begin{figure}[H]
\small
\begin{center}
\begin{tabular}{|c|c|@{\hspace{0.2ex}}c@{\hspace{0.2ex}}|@{\hspace{0.2ex}}c@{\hspace{0.2ex}}|c|}
\hline 
{\bf Original quantities} & {\bf Expectation} & {\bf Malliavin derivatives} & {\bf Path increments} & {\bf Back to $\delta\Pi^-_{x}$}  \\ 
\shortstack{~\\$\Gamma^*_{xy},\,\Pi^-_x,\,\Pi_x,\,\pi^{(\n)}_{xy}$\\[1.3ex] } & \shortstack{~\\$\mathbb{E}\Pi_x^-$\\[1.3ex]} & \shortstack{$\delta\Gamma^*_{xy},\,\delta\Pi^-_x,\,\delta\Pi_x,$\\[-0.5ex]$\delta\pi^{(\n)}_{xy}$\\[0.ex] } & {\small\shortstack{${\rm d}\Gamma^*_{xy}-{\rm d}\Gamma^*_{xz}\Gamma^*_{zy},$\\ $\delta\Pi^-_x-{\rm d}\Gamma^*_{xz}\Pi^-_z,$\\ $\delta\Pi_x-\delta\Pi_x(z)-{\rm d}\Gamma^*_{xz}\Pi_z$}} & {\small\shortstack{${\rm d}\Gamma^*_{xy},\,{\rm d}\pi^{(\n)}_{xy},$ \\ and averaging\\[0.ex]}}  \\ 
Subsection \ref{firstblock} & Subsection \ref{BPHZMall} & Subsection \ref{secondblock} & Subsection \ref{thirdblock} & Subsection \ref{forthblock} \\
\hhline{|=====|}
Algebraic arg. I\,(i) & & Algebraic arg. II & Algebraic arg. III & Algebraic arg. IV \\ 
\multicolumn{1}{|r@{\hspace{0.5ex}}|}{Proposition \ref{prop:gamma_npp} \hspace{0.4ex} {\scriptsize 1}} & &
\multicolumn{1}{r@{\hspace{0.5ex}}|@{\hspace{.2ex}}}{Proposition \ref{prop:delta_gamma_npp}\hspace{1.3ex} {\scriptsize 2}} & 
\multicolumn{1}{r@{\hspace{0.5ex}}|}{Proposition \ref{prop:algIII}\hspace{1.3ex} {\scriptsize 3}} & 
\multicolumn{1}{r@{\hspace{0.5ex}}|}{Proposition \ref{prop:form_bound}\hspace{0.8ex} {\scriptsize 4}} \\
Algebraic arg. I\,(ii) & & & & \\
\multicolumn{1}{|r@{\hspace{0.5ex}}|}{Proposition \ref{prop:gamma_pp} \hspace{-0.4ex} {\scriptsize 11}} & & & & \\
\hline 
Reconstruction I & & & Reconstruction III &  \\ 
Proposition \ref{prop:reconstr_reg} & & & 
\multicolumn{1}{r@{\hspace{0.5ex}}|}{Proposition \ref{prop:recIII}\hspace{1.3ex} {\scriptsize 7}} & \\
\hline 
Integration I & & Integration II & Integration III & \\ 
\multicolumn{1}{|r@{\hspace{0.5ex}}|}{Proposition \ref{prop:int_pi_minus}\hspace{1.5ex} {\scriptsize 9}} & &
\multicolumn{1}{r@{\hspace{0.5ex}}|@{\hspace{.2ex}}}{Proposition \ref{prop:intII}\hspace{0.5ex} {\scriptsize 12}} & 
\multicolumn{1}{r@{\hspace{0.5ex}}|}{Proposition \ref{prop:intIII}\hspace{0.5ex} {\scriptsize 14}} & \\
\hline 
Three-point arg. I & & Three-point arg. II & Three-point arg. III & Three-point arg. IV \\ 
\multicolumn{1}{|r@{\hspace{0.5ex}}|}{Proposition \ref{prop:pi_n_three_point}\hspace{0.5ex} {\scriptsize 10}} & &
\multicolumn{1}{r@{\hspace{0.5ex}}|@{\hspace{.2ex}}}{Proposition \ref{prop:three_pointII}\hspace{0ex} {\scriptsize 13}} & 
\multicolumn{1}{r@{\hspace{0.5ex}}|}{Proposition \ref{prop:delta_pi_incr_three_point}\hspace{0.5ex} {\scriptsize 15}} & 
\multicolumn{1}{r@{\hspace{0.5ex}}|}{Proposition \ref{prop:d_pi}\hspace{0.1ex} {\scriptsize 16}} \\
\hline 
 & \multicolumn{1}{r@{\hspace{0.5ex}}|@{\hspace{0.2ex}}}{Proposition \ref{prop:limit}\hspace{0.2ex} {\scriptsize 5}} 
  & & & Averaging  \\ 
 & \multicolumn{1}{r@{\hspace{0.5ex}}|@{\hspace{0.2ex}}}{Proposition \ref{prop:expect}\hspace{0.2ex} {\scriptsize 6}} & & & \multicolumn{1}{r@{\hspace{0.5ex}}|}{
Proposition \ref{prop:delta_pi_minus_est} \hspace{0.2ex} {\scriptsize 8} } \\
\hline 
\end{tabular} 
\end{center}
\caption{Columns correspond to the five loops of an induction step, 
cf. Subsection \ref{sec:loop}, rows correspond the four types of tasks in a loop, 
cf. Subsection \ref{sec:tasks}. 
The numbers in the lower right corner indicate the logical order 
within an induction step for singular multi-indices, 
cf. Subsection \ref{sec:order}. The minor tasks of Malliavin differentiability and
divergent bounds in the ultra-violet cut-off are not included.} \label{fig:four_tasks}
\end{figure}


\subsection{The ordering relation \texorpdfstring{$\prec$}{prec} for the induction}\label{induct}

At first sight, one would hope for an induction in the homogeneity $|\beta|$; 
the set $\mathsf{A}$ of homogeneities, being locally finite and bounded from below,
lends itself to an induction argument. 
However, this is not possible because 
as opposed to $\Gamma_{xy}^*$, or the structurally closer $\delta\Gamma_{xy}^*$, 
${\rm d}\Gamma_{xz}^*$ is {\it not} triangular w.~r.~t.~$|\cdot|$. 
As we shall see in Subsection \ref{thirdblock}, this lies in the nature of ${\rm d}\Gamma_{xz}^*$:
$\delta\Pi_{x\beta}$ is modelled (almost) to order $\frac{3}{2}+\alpha$, independently of $\beta$.
Here is a simple example for the failure of triangularity: 
On the one hand we have\footnote{as a consequence of the definition \eqref{eq:def_dGamma} 
of ${\rm d}\Gamma_{xz}$, and using ${\rm d}\pi^{(1,0)}_{xz0}$ $=\partial_1\delta\Pi_{x0}(z)$, 
as a consequence of the definition (\ref{dpi}) of ${\rm d}\pi^{(1,0)}_{xz0}$
and the triangular structure \eqref{eq:triangular_d_Gamma} of ${\rm d}\Gamma^*_{xz}$ 
w.~r.~t.~$\prec$}
$({\rm d}\Gamma^*_{xz})_{e_1}^{e_1+e_{(1,0)}}$ $=\partial_1\delta\Pi_{x0}(z)$,
which does not vanish for generic $z$, 
while on the other hand, $|e_1|=2\alpha<\alpha+1=|e_1+e_{(1,0)}|$.
Even block triangularity with respect to the threshold homogeneity $2$ fails:
$({\rm d}\Gamma^*_{xz})_{2e_1+e_{(1,0)}}^{2e_1+2e_{(1,0)}}$ 
$=2\partial_1\delta\Pi_{x0}(z)\neq0$, while $|2e_1+e_{(1,0)}|$ $=2\alpha+1$
$<2+\alpha$ $=|2e_1+2e_{(1,0)}|$.
This however does not create any problems in the induction: 
Since ${\rm d}\Gamma^*$ is never applied to the objects $\delta\Pi$, $\delta\Pi^-$, $\delta\pi^{(\n)}$, ${\rm d}\pi^{(\n)}$ and $\delta\Gamma^*$, 
we will never appeal to statements involving them for multi-indices $|\beta|\geq2$.

\medskip

Fortunately, it turns out that ${\rm d}\Gamma_{xz}^*$ does have a triangular structure
w.~r.~t.~an ordering that involves $[\beta]$, cf.~(\ref{ao09}), as a ``first digit'' and
$|\beta|_p$, cf.~(\ref{ao12}), as a second digit,
in the spirit of \cite[Subsection 3.5]{LOT21}. In the reconstruction argument,
based on the structure of the term $\sum_{k\ge 0}\mathsf{z}_k\Pi_x^k\partial_1^2\Pi_x$
(or rather its Malliavin derivative), we need a finer ordering, which involves
the component $\beta(0)$ (to which the two other digits are oblivious) as a third digit.
We shall argue in Section \ref{sec:algebraic_aspects} 
that at least the triangular effect of this ordering can be captured by the following ordinal
\begin{align}\label{ord01}
|\beta|_{\prec}:= [\beta] + \frac{1}{2}|\beta|_p+\frac{1}{4}\beta(0),
\end{align}
where the weights in this combination are fixed for convenience but could be
replaced by any three strictly ordered positive numbers.

\medskip

For notational convenience, we define
\begin{align}\label{eq:recIII26}
\beta'\prec\beta\quad&\Leftrightarrow\quad|\beta'|_{\prec}<|\beta|_{\prec},\\
\beta'\preccurlyeq\beta\quad&\Leftrightarrow\quad(\beta'\prec\beta\;\;\mbox{or}\;\;\beta'=\beta).
\end{align}
A further benefit of (\ref{ord01}) is that $|\cdot|_{\prec}$ is coercive, meaning that
the set $\{\,\beta\,|\,|\beta|_{\prec}\le M\,\}$ is finite for every finite $M$
(which would not be true when $|\cdot|_{\prec}$ is replaced by $|\cdot|$ because
both $[\cdot]$ and $|\cdot|_p$ are oblivious to the $\beta(0)$ component).
This is crucial in the induction since at every step, the stochastic integrability
deteriorates due to the unavoidable use of H\"older's inequality in probability
when estimating products of random variables. 

\medskip

While in general, we only\footnote{consider $\beta=e_{(1,0)}$} have
\begin{align}\label{eq:infprec}
[\beta]\ge -1\quad\mbox{and}\quad|\beta|_{\prec}\ge-\frac{1}{2},
\end{align}
it follows from (\ref{ao09}) that
$|\beta|_{\prec}\ge 0$ for all $\beta$ not purely polynomial with equality iff $\beta=0$.
In view of (\ref{eq:recIII26}) and the additivity of $|\cdot|_{\prec}$
this implies compatibility of $\prec$ and summation:
\begin{align}\label{eq:recIII25}
\beta_1\preccurlyeq\beta_1+\beta_2\quad\mbox{provided}\;\beta_2\;
\mbox{is not purely polynomial},
\end{align}
which we shall repeatedly use. In fact, the ordering is de facto irrelevant on the purely
polynomial $\beta$'s, which have been treated.
Among the non-purely polynomial $\beta$'s, the base case is given by $\beta=0$.

\medskip

While $|\cdot|_{\prec}$ is additive (but negative on some purely polynomial indices),
the homogeneity $|\cdot|$ is not (but it is strictly positive, in fact, $\ge\alpha$). We will
often use that
\begin{align}\label{eq:hom_add}
|\cdot|-\alpha\stackrel{\eqref{ao12}}{=}
\alpha[\cdot]+|\cdot|_p\quad\mbox{is additive and non-negative}
\end{align}
on all populated multi-indices.

\medskip

In order to make the value of the multi-index explicit when referring to a
statement like \eqref{eq:pi_generic}, we write $\eqref{eq:pi_generic}_{\beta}$
with the understanding that we refer to the corresponding statement
for the multi-index $\beta$.
When a statement involves two multi-indices like \eqref{eq:gamma},
we write $\eqref{eq:gamma}_{\beta}^\gamma$ when we want to specify also the second multi-index, and $\eqref{eq:gamma}_{\beta}^{\gamma \neq \pp}$ when we only mean it for $\gamma$'s which are not purely polynomial.
All statements of the induction hypothesis will be implicitly assumed to hold
for every integrability exponent $p<\infty$, for every space-time points $x,y,z\in\mathbb{R}^d$,
and every convolution parameter $t\in(0,\infty)$, if applicable.
For example, when we state $\eqref{eq:pi_generic}_{\prec\beta}$,
we mean the estimate for every $p<\infty$ and $x,y\in\mathbb{R}^2$
and for all multi-indices $\beta'\prec\beta$.


\subsection{The base case \texorpdfstring{$\beta=0$}{beta = 0}}

In fact, the argument for the base case w.~r.~t.~the ordering $\prec$, which
reduces to $\beta=0$, is contained in the argument for the induction step,
as we shall explain now, referring to the logical order of the induction 
in the singular case outlined in Subsection \ref{sec:order}.

\medskip

First note that, due to the triangularity properties \eqref{eq:triangular_Gamma}, \eqref{eq:triangular_delta_Gamma}, \eqref{eq:triangular_d_Gamma} and \eqref{eq:dGamma_inc_triangular}, all estimates of Item 1 (namely $\eqref{eq:gamma}_0^{\gamma \neq \pp}$, $\eqref{eq:delta_gamma}_0^{\gamma \neq \pp}$, $\eqref{eq:form_bound}_0^{\gamma \neq \pp}$ and $\eqref{eq:form_cont}_0^{\gamma \neq \pp}$) are void for $\beta = 0$.
	
\medskip
	
In Item 2, the recentering property \eqref{eq:recenter_Pi_minus_specific} is trivial for $\xi_\tau + c_0$ for any $c_0$ $\in$ $\R$, and so is the shift $\eqref{eq:Pi_minus_shift}_0$ and reflection $\eqref{eq:Pi_minus_parity}_0$ covariance. By stationarity, which is contained in Assumption \ref{ass:spectral_gap}, $\E \xi_t (y)$ is constant in $t$ and $y$, and by centeredness (also in Assumption \ref{ass:spectral_gap}) it is equal to $0$; in particular, estimate $\eqref{eq:expect_est}_0$ is trivially satisfied. We choose $c_0 = 0$, so that $\eqref{eq:BPHZ}_0$ holds (Item 3). Note that this choice is consistent with the population condition \eqref{eq:c_pop_cond} and makes the estimate $\eqref{cw03}_0$ hold trivially.

\medskip

Next we turn to Item 4, i.~e. the Malliavin derivative $\delta \xi_\tau$ (see Subsection \ref{sec:base_case_approx} for Malliavin differentiability). Continuity $\eqref{cw60_minus_cont_mal}_0$ is contained in Step~1 of the proof of Proposition~\ref{rem:1_mal},
	modeledness $\eqref{eq:delta_pi_minus_generic}_0$ is established in the proof of Proposition~\ref{prop:recIII} in form of \eqref{eq:base1},
	and boundedness $\eqref{eq:mal_dual_annealed}_0$ is a consequence of modeledness and contained in the proof of Proposition~\ref{prop:delta_pi_minus_est}. The combination of $\E\xi_t(y) = 0$ with $\eqref{eq:mal_dual_annealed}_0$ via the SG inequality yields $\eqref{eq:pi_minus_generic}_0$ (Item 5), which takes the form
	\begin{equation*}
		\E^\frac{1}{p} |\xi_t(y)|^p \lesssim (\sqrt[4]{t})^{\alpha - 2}.
	\end{equation*}
	Moreover, the divergent bound $\eqref{cw60_minus_cont}_0$, i.~e. the continuity of $\xi_\tau$ (Item 8.~(a)), is shown in Step 1 of the proof of Proposition \ref{rem:1}, cf.~Subsection \ref{sec:proof_div} and in particular \eqref{cw06}.
	
	\medskip
	
	Equipped with the estimates of $\xi$ and $\delta \xi$, and the triangularity properties, the rest of the base case follows from the same procedure as in any induction step, namely Items 6 to 9.

\medskip

\section{Estimates} \label{sec:anal_est}

In this section, we establish the stochastic estimates 
(\ref{eq:pi_generic}), (\ref{eq:gamma}), \eqref{eq:pi_n} and (\ref{eq:pi_minus_generic}) for a fixed non-purely 
polynomial multi-index $\beta$.


\subsection{Semi-group convolution}\label{sec:semi_group}
Following \cite[Section 2]{OW19}, we use the space-time elliptic operator
$\partial_1^4-\partial_2^2$ to introduce the family $\{(\cdot)_t\}_{t\in(0,\infty)}$
of convolution operators that respect the parabolic scaling and satisfy the
semi-group property
\begin{align}\label{eq:semi_group}
f_{t+s}=(f_t)_s
\end{align}
convenient for the dyadic nature of reconstruction arguments.
It is given by the convolution with the Schwartz kernel $\psi_t$ defined through
\begin{align}\label{eq:kernel}
\partial_t\psi_t+(\partial_1^4-\partial_2^2)\psi_t=0\quad\mbox{and}\quad
\psi_{t=0}=\mbox{Dirac at origin}.
\end{align}
Note the scaling $\psi_t(x)$ $=(\sqrt[4]{t})^{-3}\,\psi_{t=1}(\frac{x_1}{\sqrt[4]{t}},
\frac{x_2}{\sqrt[2]{t}})$, so that the $x_1$-scale is $\sqrt[4]{t}$,
which explains the appearance of $\sqrt[4]{\tau}$ in Proposition~\ref{rem:1}.  
One reads off (\ref{eq:kernel}) that the Fourier transform of $\psi_{t=1}$ is given by
the Schwartz function $\exp(-k_1^4-k_2^2)$, so that $\psi_{t=1}$ is a Schwartz function
itself. In view of the scaling of $\psi_t$ in terms of $t$, this implies the moment bound
\begin{align}\label{cw61}
\int dz|\partial^{\bf n}\psi_t(z-y)|(\sqrt[4]{t}+|y-z|+|z-x|)^{\theta}\lesssim(\sqrt[4]{t})^{-|{\bf n}|}
(\sqrt[4]{t}+|y-x|)^{\theta},
\end{align}
where we recall that $|z-y|$ is the anisotropic distance function (\ref{ao18}).
We will need that (\ref{cw61}) holds for all $\theta>-3$.

\medskip

Finally, because of the factorization
\begin{align}\label{eq:factorization}
\partial_1^4-\partial_2^2=-(\partial_2-\partial_1^2)(\partial_2+\partial_1^2),
\end{align}
the convolution $(\cdot)_t$ will also be helpful for integration by
providing the kernel representation in form of
\begin{align}\label{eq:representation}
(\partial_2-\partial_1^2)^{-1}f=-\int_0^\infty dt (\partial_2+\partial_1^2)f_t.
\end{align}
%


\subsection{Estimate of the original quantities \texorpdfstring{$\Gamma_{xy}^*$, $\Pi^{-}_x$, $\Pi_x$, and $\pi^{(\bf n)}_{xy}$}{Gamma* xy, Pi minus x, Pi x, and pi n xy}}
\label{firstblock}

The first task is estimate $\eqref{eq:gamma}$ on $\Gamma_{xy}^*$,
based on the exponential formula \eqref{exp}, estimate \eqref{eq:pi_n} of $\pi_{xy}^{(\n)}$ and the population constraint (\ref{eq:pi_n_pop}).
We split this first task into a first half, where we
treat $\Gamma_{xy}^*P$, see Proposition \ref{prop:gamma_npp},
and a second half, where we tackle the full $\Gamma_{xy}^*$, see Proposition \ref{prop:gamma_pp}.
The reason for this splitting is that according to (\ref{ord12}), 
provided $\gamma$ is not purely polynomial, the matrix entry
$(\Gamma_{xy}^*)_{\beta}^\gamma$ depends on $\pi_{xy}^{({\bf n})}$
only through $\pi_{xy\beta'}^{({\bf n})}$ with $\beta'\prec\beta$.
Following the elementary proof of \cite[Lemma 4.3]{LO22}
we obtain from H\"older's inequality in probability:

\begin{proposition}[Algebraic argument I, first half]\label{prop:gamma_npp} Assume that 
$\eqref{eq:pi_n}_{\prec\beta}$ holds. Then 
$\eqref{eq:gamma}_{\beta}^\gamma$ holds for all $\gamma$ not purely polynomial.
\end{proposition}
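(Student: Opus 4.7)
The plan is to reduce the matrix-entry estimate to H\"older's inequality in probability applied to a finite combinatorial expansion of \eqref{exp}. Combining the multiplicativity \eqref{eq:mult} with the explicit action on generators \eqref{eq:Gamma_z_k}--\eqref{eq:Gamma_z_n}, we have
\begin{equation*}
\Gamma_{xy}^*\,\mathsf{z}^\gamma \;=\; \prod_{k\ge 0}\Bigl(\sum_{l\ge 0}\binom{k+l}{k}(\pi^{(\mathbf{0})}_{xy})^l\mathsf{z}_{k+l}\Bigr)^{\gamma(k)}\prod_{\mathbf{n}\neq \mathbf{0}}\bigl(\mathsf{z}_{\mathbf{n}}+\pi^{(\mathbf{n})}_{xy}\bigr)^{\gamma(\mathbf{n})},
\end{equation*}
so $(\Gamma_{xy}^*)^\gamma_\beta$ is obtained as the $\mathsf{z}^\beta$-coefficient and is therefore a sum of monomials of the shape $c\prod_{i=1}^k \pi^{(\mathbf{n}_i)}_{xy\beta_i}$ with deterministic combinatorial constants $c=c(\beta,\gamma)$.

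The three structural facts I will need are: \textbf{(i)} \emph{effective finiteness} of the monomial sum for fixed $\beta,\gamma$, which follows from the population condition \eqref{eq:pi_n_pop} together with the effective finiteness of \eqref{exp} noted right after that constraint; \textbf{(ii)} \emph{$\prec$-triangularity}: since $\gamma$ is not purely polynomial, each $\beta_i$ arising in a non-vanishing monomial satisfies $\beta_i\prec\beta$, which is precisely the content of the relation referred to around \eqref{ord12} and established in Section~\ref{sec:algebraic_aspects}; \textbf{(iii)} \emph{homogeneity balance}: additivity of $|\cdot|-\alpha$ on populated multi-indices (cf.~\eqref{eq:hom_add}) together with $|e_{\mathbf{n}}|=|\mathbf{n}|$ from \eqref{ao20} gives, monomial by monomial,
\begin{equation*}
\sum_{i=1}^{k}\bigl(|\beta_i|-|\mathbf{n}_i|\bigr)\;=\;|\beta|-|\gamma|.
\end{equation*}

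Granted (i)--(iii), H\"older's inequality in probability applied with exponent $kp$ to each factor, followed by the inductive estimate $\eqref{eq:pi_n}_{\prec\beta}$ (available by (ii)), yields
\begin{equation*}
\mathbb{E}^{\frac{1}{p}}\Bigl|\prod_{i=1}^{k}\pi^{(\mathbf{n}_i)}_{xy\beta_i}\Bigr|^{p}\;\lesssim\;\prod_{i=1}^{k}\mathbb{E}^{\frac{1}{kp}}\bigl|\pi^{(\mathbf{n}_i)}_{xy\beta_i}\bigr|^{kp}\;\lesssim\;\prod_{i=1}^{k}|y-x|^{|\beta_i|-|\mathbf{n}_i|}\;\stackrel{(\mathrm{iii})}{=}\;|y-x|^{|\beta|-|\gamma|},
\end{equation*}
and summing the finitely many monomials provided by (i) yields $\eqref{eq:gamma}_\beta^\gamma$ after absorbing the coefficients $c(\beta,\gamma)$ into $\lesssim$.

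The only real obstacle is bookkeeping: verifying (ii) and (iii) for every monomial in the above expansion. Both rest on purely algebraic identities --- \eqref{ord12}, \eqref{eq:hom_add}, \eqref{ao20} --- and no new analytic or stochastic input enters beyond H\"older and the induction hypothesis, which is why the argument can be read off from the elementary proof of \cite[Lemma~4.4]{LO22}.
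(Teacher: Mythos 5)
Your proposal is correct and takes essentially the same approach as the paper, which itself defers to the elementary proof of \cite[Lemma~4.4]{LO22}: expand $(\Gamma_{xy}^*)_\beta^\gamma$ via the exponential formula \eqref{exp} into finitely many monomials in the $\pi^{(\mathbf{n})}_{xy\beta'}$, verify the $\prec$-triangularity \eqref{ord12} and the homogeneity balance using \eqref{eq:hom_add}, and conclude by H\"older's inequality in probability. The one small caveat is notational rather than mathematical: the identity $\Gamma^*\mathsf{z}^\gamma=\prod_k(\Gamma^*\mathsf{z}_k)^{\gamma(k)}\prod_{\mathbf{n}}(\Gamma^*\mathsf{z}_{\mathbf{n}})^{\gamma(\mathbf{n})}$ does not follow verbatim from \eqref{eq:mult} as stated (which requires intermediate products to lie in $\mathsf{T}^*$), but rather from the fact that \eqref{exp} defines an algebra morphism on all of $\mathbb{R}[[\mathsf{z}_k,\mathsf{z}_{\mathbf{n}}]]$; equivalently one can simply expand \eqref{exp} component-wise as in the proof of \eqref{ord12}, which yields the same monomials.
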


The second task is the estimate $\eqref{eq:pi_minus_generic}$ of $\Pi_{x}^{-}$,
based on the output of Proposition \ref{prop:gamma_npp}.
Note that by (\ref{new}), definition (\ref{eq:Pi_minus_def_alt}),
when evaluated at the base point $x$, collapses to
\begin{align}\label{eq:Pi_minus_anchor}
\Pi_{x}^{-}(x)=P\mathsf{z}_0\partial_1^2\Pi_x(x)-c+\xi_\tau(x)\mathsf{1}.
\end{align}
From \eqref{eq:pi_generic} we obtain with help
of the semi-group property \eqref{eq:semi_group} and the moment bounds \eqref{cw61}
\begin{equation}\label{mt98}
\mathbb{E}^{\frac{1}{p}}|\partial^\n\Pi_{x\beta\,t}(x)|^p\lesssim(\sqrt[4]{t})^{|\beta|-|\n|}.
\end{equation}
Using this for $\n=(2,0)$ and $\beta-e_0\prec\beta$ we learn from the (annealed)
continuity \eqref{cw60_cont} of $\partial_1^2\Pi_x$
that $\partial_1^2\Pi_{x\,\beta-e_0}(x)=0$ (almost surely) for $|\beta-e_0|=|\beta|>2$. 
Since $c_\beta=0$ for $|\beta|\geq2$, cf.~(\ref{eq:c_pop_cond}), and $|0|=\alpha\le 2$, we
get from (\ref{eq:Pi_minus_anchor}) that $\Pi_{x\beta}^-(x)=0$. 
By the continuity $\eqref{cw60_minus_cont}_\beta$ of
$\Pi_{x\beta}^-$ we may convert this back into the more robust
\begin{align}\label{mt99bis}
\lim_{t\downarrow 0}\E^{\frac{1}{p}}|\Pi_{x\beta\, t}^-(x)|^p = 0\quad\mbox{provided}\;|\beta|>2.
\end{align}
\begin{proposition}[Reconstruction I] \label{prop:reconstr_reg} Assume $|\beta|>2$,
that $\eqref{eq:pi_generic}_{\prec\beta}$, $\eqref{cw60_cont}_{\prec\beta}$, $\eqref{cw60_minus_cont}_{\beta}$, $\eqref{eq:pi_n}_{\prec\beta}$, $\eqref{eq:recenter_Pi_minus_specific}_\beta$ and $\eqref{eq:pi_minus_generic}_{\prec\beta}$ hold,
and that $\eqref{eq:gamma}_{\beta}^\gamma$ holds for all $\gamma$ not purely polynomial.
Then $\eqref{eq:pi_minus_generic}_{\beta}$ holds.
\end{proposition}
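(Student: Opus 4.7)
The plan is a reconstruction argument, bootstrapping the qualitative anchor
$\lim_{t\downarrow 0}\mathbb{E}^{\frac{1}{p}}|\Pi_{x\beta\,t}^-(x)|^p = 0$
(established in the text preceding the proposition, by virtue of $|\beta|>2$, Remark~\ref{rem:1} and the PDE \eqref{eq:model}) into the quantitative estimate $\eqref{eq:pi_minus_generic}_\beta$.

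I would start by re-centering at base point $y$: applying \eqref{eq:recenter_Pi_minus_specific} (in preference to \eqref{eq:recenter_Pi_minus_specific_new}, since $\pi_{xy\beta}^{(\mathbf{n})}$ is not yet available at this stage of the induction, whereas the second term of \eqref{eq:recenter_Pi_minus_specific} only calls upon $\Pi_x$-data at multi-indices $\prec\beta$), convolving with $\psi_t$ at $y$, then isolating the $\gamma=\beta$ contribution (for which $(\Gamma^*_{xy})_\beta^\beta = 1$) and discarding purely polynomial $\gamma$ (on which $\Pi^-_y$ vanishes), one arrives at
\begin{equation*}
\Pi_{x\beta\,t}^-(y) = \Pi_{y\beta\,t}^-(y) + \sum_{\gamma \ne \beta}(\Gamma^*_{xy})_\beta^{\gamma}\,\Pi_{y\gamma\,t}^-(y) + R_\beta(x,y,t),
\end{equation*}
the sum running over non-purely-polynomial $\gamma$. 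The polynomial remainder $R_\beta$ unfolds as a finite sum of products of $\Pi_{x\beta_i}(y)$ and $\partial_1^2\Pi_{x\beta_i}(y)$ with all $\beta_i \prec \beta$ by \eqref{eq:recIII25}, and is estimated via $\eqref{eq:pi_generic}_{\prec\beta}$, Remark~\ref{rem:1}, H\"older's inequality in probability, and the moment bound \eqref{cw61}. The off-diagonal sum is dispatched by the same H\"older argument: combining $\eqref{eq:gamma}_{\beta}^\gamma$ with the inductive $\eqref{eq:pi_minus_generic}_\gamma$ yields a per-term bound of order $|y-x|^{|\beta|-|\gamma|}(\sqrt[4]{t})^{|\gamma|-2}$, which aggregates into the target RHS $(\sqrt[4]{t})^{\alpha-2}(\sqrt[4]{t}+|y-x|)^{|\beta|-\alpha}$ since $\alpha \le |\gamma| < |\beta|$.

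The main obstacle is the diagonal term $\Pi_{y\beta\,t}^-(y)$. By stationarity \eqref{eq:Pi_shift}, its $L^p$-norm $f(t)$ depends only on $t$, and the anchor provides merely $f(t)\to 0$ as $t\downarrow 0$; one must sharpen this to $f(t)\lesssim(\sqrt[4]{t})^{|\beta|-2}$. I would use a dyadic telescoping of $\Pi_{y\beta\,t}^-(y)$, writing each increment $\Pi_{y\beta\,2s}^-(y)-\Pi_{y\beta\,s}^-(y)$ via the semi-group as $\int\psi_s(y-z)[\Pi_{y\beta\,s}^-(z)-\Pi_{y\beta\,s}^-(y)]\,\mathrm{d}z$ and then re-centering $\Pi^-_{y\beta}$ at the intermediate point $z$; the resulting decomposition features $\Pi^-_{z\gamma,s}$ for $\gamma\prec\beta$ (inductively controlled together with their spatial increments via $\eqref{eq:pi_minus_generic}_{\prec\beta}$) plus polynomial corrections of size $(\sqrt[4]{s})^{|\mathbf{n}|-2}$. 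Summing the resulting geometric series (which converges thanks to $|\beta|>2$) delivers the sharp rate $(\sqrt[4]{t})^{|\beta|-2}$; cleanly closing this dyadic loop against the $\prec$-inductive hypotheses, while absorbing the polynomial corrections scale-by-scale, is the technical heart of the proof.
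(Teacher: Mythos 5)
The first half of your argument --- expanding $\Pi_{x\beta t}^-(y)$ via $\eqref{eq:recenter_Pi_minus_specific}$, isolating the $\gamma = \beta$ diagonal, and estimating the off-diagonal sum and the polynomial remainder by H\"older's inequality in probability, $\eqref{eq:gamma}$, $\eqref{eq:pi_n}$ and the moment bounds $\eqref{cw61}$ --- is a correct rearrangement of the paper's derivation of the base-point continuity estimate $\eqref{eq:Pi_minus_cont_base_reg}$ (you evaluate at $y$ rather than at $x$, which is cosmetic). The paper then invokes a general reconstruction lemma (\cite[Proposition~1]{OSSW18} or \cite[Lemma~4.8]{LO22}) as a black box to pass from this continuity, together with the anchor $\eqref{mt99bis}$, to $\eqref{eq:pi_minus_generic}_{\beta}$. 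You instead attempt to rederive that lemma by a dyadic telescoping of the diagonal $f(s):=\mathbb{E}^{1/p}|\Pi_{y\beta s}^-(y)|^p$, and that is where the proposal breaks.

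Re-centering $\Pi_{y}^-=\Gamma_{yz}^*\Pi_{z}^-+\mathrm{poly}$ inside the increment $\int\psi_s(y-z)\,[\Pi_{y\beta s}^-(z)-\Pi_{y\beta s}^-(y)]\,dz$ does \emph{not} reduce matters to components with $\gamma\prec\beta$. Since $(\Gamma_{yz}^*)_\beta^\beta=1$, the $\gamma=\beta$ part survives as $\int\psi_s(y-z)\,[\Pi_{z\beta s}^-(z)-\Pi_{z\beta s}^-(y)]\,dz$ if one re-centers both evaluations, or as $\int\psi_s(y-z)\,[\Pi_{z\beta s}^-(z)-\Pi_{y\beta s}^-(y)]\,dz$ if one re-centers only the first --- and neither is inductively controlled. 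Stationarity $\eqref{eq:Pi_shift}$ equates the \emph{laws} of $\Pi_{z\beta s}^-(z)$ and $\Pi_{y\beta s}^-(y)$, not the random variables themselves; re-centering once more simply reproduces the same quantity with $y$ and $z$ interchanged, so the loop does not close. The crude triangle-inequality bound $\lesssim f(s)$ on the surviving diagonal yields an inequality of the form $f(2s)\lesssim f(s)+(\sqrt[4]{s})^{|\beta|-2}$ with a constant at least $1$ in front of $f(s)$, which together with the soft anchor $f(s)\to 0$ does not telescope to $f(t)\lesssim(\sqrt[4]{t})^{|\beta|-2}$. So the sentence ``the resulting decomposition features $\Pi^-_{z\gamma,s}$ for $\gamma\prec\beta$'' is false as stated; the cited reconstruction lemma is doing genuine work that the plain re-centering-plus-telescoping you sketch does not replace. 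Either invoke it as a black box, as the paper does, or supply its proof in full.
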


\begin{proof}
By general reconstruction, 
see e.~g.~\cite[Proposition 1]{OSSW18} or \cite[Lemma 4.8]{LO22},
the estimate on $\Pi_{x\beta}^{-}$ follows from (\ref{mt99bis}) (which as explained above is a consequence of $\eqref{eq:pi_generic}_{\prec\beta}$, $\eqref{cw60_cont}_{\prec\beta}$ and $\eqref{cw60_minus_cont}_{\beta}$) once we establish
its continuity in the base point $x$ to the order $|\beta|-2>0$:
\begin{align}\label{eq:Pi_minus_cont_base_reg}
\mathbb{E}^\frac{1}{p}|(\Pi^{-}_{y}-\Pi^{-}_{x})_{\beta t}(x)|^p
\lesssim(\sqrt[4]{t})^{\alpha-2}(\sqrt[4]{t}+|y-x|)^{|\beta|-\alpha}.
\end{align}
Estimate (\ref{eq:Pi_minus_cont_base_reg}) in turn relies on 
(\ref{eq:recenter_Pi_minus_specific}), which we rearrange to
\begin{align}\label{eq:Pi_minus_cont_base_inc}
\Pi^{-}_x-\Pi^{-}_y&=(\Gamma_{xy}^*-{\rm id})\Pi^{-}_y
+P \sum_{k\ge 0}\mathsf{z}_k\big(\Gamma_{xy}^*({\rm id}-P)\Pi_y+\pi^{({\bf 0})}_{xy}\big)^k
\partial_1^2\big(\Gamma_{xy}^*({\rm id}-P)\Pi_y+\pi^{({\bf 0})}_{xy}\big),\nonumber\\
&\mbox{where}\quad\Gamma_{xy}^*({\rm id}-P)\Pi_y+\pi^{({\bf 0})}_{xy}
\stackrel{\eqref{eq:pi_purely_pol},\eqref{eq:Gamma_z_n}}{=}
\sum_{{\bf n}}(\mathbf{1}_{{\bf n}\not=\0}\mathsf{z}_{\bf n}+\pi^{({\bf n})}_{xy})
(\cdot-y)^{\bf n}.
\end{align}
Note that by the strict triangularity (\ref{ord12}) of 
$\Gamma_{xy}^*-{\rm id}$ w.~r.~t.~$\prec$,
the first r.~h.~s.~term of (\ref{eq:Pi_minus_cont_base_inc}) involves $\Pi_{y\beta'}^{-}$
only for $\beta'\prec\beta$. We observe that by H\"older's inequality in probability,
the $\mathbb{E}^\frac{1}{p}|\cdot|^p$-norm of each constituent
$(\Gamma_{xy}^*-{\rm id})_{\beta}^{\beta'}\Pi^{-}_{y\beta' t}(x)$ to the matrix-vector
product is estimated by $|y-x|^{|\beta|-|\beta'|}(\sqrt[4]{t})^{\alpha-2}  (\sqrt[4]{t}+|y-x|)^{|\beta'|-\alpha}$,
which because of $|\beta|-|\beta'|\ge0$ (by the triangularity (\ref{ord12}) of
$\Gamma_{xy}^*$ w.~r.~t.~$|\cdot|$) is dominated by the r.~h.~s.~of
(\ref{eq:Pi_minus_cont_base_reg}). By the structure (\ref{eq:triangular_product}) 
of the expression $\sum_{k\ge0}\mathsf{z}_k\pi^k\pi'$,
the second r.~h.~s.~term of (\ref{eq:Pi_minus_cont_base_inc}) 
involves $\mathbf{1}_{{\bf n}\not={\bf 0}}\mathsf{z}_{\bf n}+\pi_{xy\beta'}^{({\bf n})}$
only for $\beta'\prec\beta$. Note that both the population condition (\ref{eq:pi_n_pop}) and 
the estimate (\ref{eq:pi_n})
extend from $\pi^{({\bf n})}_{xy}$ to $\mathbf{1}_{{\bf n}\not={\bf 0}}\mathsf{z}_\n
+\pi^{({\bf n})}_{xy}$,
provided one relaxes $|\beta|>|{\bf n}|$ to $|\beta|\geq|{\bf n}|$.
Hence as a function of the active variable, 
the second r.~h.~s.~of $\eqref{eq:Pi_minus_cont_base_inc}_{\beta}$ 
is a linear combination of monomials $(\cdot-y)^{\bf n}$ with $|{\bf n}|\le|\beta|-2$
with a coefficient estimated by $|y-x|^{|\beta|-|{\bf n}|-2}$, where we used (\ref{ord18}).

\medskip

We now apply the convolution $(\cdot)_t$ to this polynomial in
(\ref{eq:Pi_minus_cont_base_inc}) and evaluate in $x$. 
Using \eqref{cw61} 
in form of $|((\cdot-y)^{\bf n})_t(x)|\lesssim(\sqrt[4]{t}+|y-x|)^{|{\bf n}|}$,
we see that each summand\footnote{summed over $k\ge 0$ and ${\bf n}_1,\dots,{\bf n}_k$} 
of the second r.~h.~s.~term of (\ref{eq:Pi_minus_cont_base_inc})
is estimated by $|y-x|^{|\beta|-|{\bf n}|-2}
(\sqrt[4]{t}+|y-x|)^{|{\bf n}|}$, which obviously is dominated by the
r.~h.~s.~of (\ref{eq:Pi_minus_cont_base_reg}). 
\end{proof}

The third task is the estimate $\eqref{eq:pi_generic}$ of $\Pi_{x}$,
based on the output of Proposition \ref{prop:reconstr_reg}. It relies on 
the construction of $\Pi_{x\beta}$ in terms of $\Pi_{x\beta}^{-}$
with help of the semi-group kernel, see (\ref{eq:representation}):
\begin{align}\label{eq:Pi_minus_to_Pi}
\Pi_{x\beta}=-\int_0^\infty dt(1-{\rm T}_x^{|\beta|})
(\partial_2+\partial_1^2)\Pi^-_{x\beta\,t}.
\end{align}
Here and in the sequel, for $\theta > 0$ and $x \in \mathbb{R}^2$, ${\rm T}_x^{\theta}$ denotes the operation of taking the Taylor polynomial of degree $<\theta$ in the base point $x$, i.~e. ${\rm T}_x^{\theta} f (y)$ $=$ $\sum_{|\mathbf{n}|<\theta} \tfrac{1}{\mathbf{n}!} \partial^\mathbf{n} f(x) (y-x)^\mathbf{n}$.

\medskip

As specific to integration in regularity structures,
the estimate $\eqref{eq:pi_generic}_{\beta}$
can be seen as a Schauder estimate anchored in the base point $x$, 
here on an annealed level. As is typical for Schauder theory, one has to
avoid integer values, which is ensured by (\ref{irrational}).
Incidentally, (\ref{eq:Pi_minus_to_Pi}) reproduces Hairer's form 
of integration \cite[eq.~(8.19)]{Ha14}.
However, since we are working on the whole space instead of the torus so 
that there is no a priori decay in $t$ of the integrand, the polynomial
${\rm T}_x^{|\beta|}\int_0^\infty dt (\partial_2+\partial_1^2)\Pi^-_{x\beta\,t}$
may not be well-defined by itself. 

\begin{proposition}[Integration I]\label{prop:int_pi_minus}
Suppose that $\mbox{(\ref{eq:pi_minus_generic})}_{\beta}$ holds.
Then (\ref{eq:Pi_minus_to_Pi}) defines\footnote{By which we mean that the r.~h.~s.~of \eqref{eq:Pi_minus_to_Pi} makes sense in the Banach space with norm $\sup_{x}|y-x|^{-|\beta|}\mathbb{E}^\frac{1}{p}|\cdot|^p$.} a\footnote{By a Liouville argument similar to the one of Proposition~\ref{prop:change_of_basepointII} this solution can be seen to be unique, cf.~\cite[Lemma~2.1]{OST23}, which however we will not make any use of.} solution of (\ref{eq:model})
and $\mbox{(\ref{eq:pi_generic})}_{\beta}$ holds.
\end{proposition}

\begin{proof}
We start with some preliminary estimates:
Appealing to the semi-group property (\ref{eq:semi_group}) in form of
$\partial^{\bf n}\Pi_{x\beta\,t}^{-}(y)$ $=\int dz\partial^{\bf n}\psi_{\frac{t}{2}}(y-z)
\Pi_{x\beta\,\frac{t}{2}}^{-}(z)$, we gather from $\eqref{eq:pi_minus_generic}_{\beta}$ and (\ref{cw61})
\begin{align}\label{cw64}
\mathbb{E}^\frac{1}{p}|\partial^{\bf n}\Pi_{x\beta\,t}^{-}(y)|^p
\lesssim(\sqrt[4]{t})^{\alpha-2-|{\bf n}|}(\sqrt[4]{t}+|y-x|)^{|\beta|-\alpha}.
\end{align}
Using (\ref{cw64}) for $y=x$ we infer for the Taylor polynomial
of (parabolic) order $<\theta$ that
\begin{align}\label{cw75}
\mathbb{E}^\frac{1}{p}|{\rm T}_x^{\theta}
\partial^{\bf n}\Pi_{x\beta\,t}^{-}(y)|^p\lesssim
\sum_{|{\bf m}|<\theta}(\sqrt[4]{t})^{|\beta|-2-|{\bf n}+{\bf m}|}
|y-x|^{|{\bf m}|}.
\end{align}
Note that by definition, $(1-{\rm T}_x^\theta)\partial^{\bf n}\Pi_{x\beta\,t}^-$
vanishes to order $\theta$ in $s=0$ along the
``parabolic'' curve $[0,1]\ni s\mapsto (sy_1+(1-s)x_1,s^2y_2 + (1-s^2)x_2)$
connecting $x$ to $y$. Hence the ensuing integral representation formula \eqref{eq:Pi_minus_to_Pi}
only involves derivatives of order ${\bf m}$ with $|{\bf m}|\ge\theta$
of $\partial^{\bf n}\Pi_{x\beta\,t}^{-}$, next to being constrained by $m_1+m_2\le\theta+1$.
Using (\ref{cw64}) with $y$ replaced by a point along this curve and ${\bf n}$
replaced by ${\bf n}+{\bf m}$ we obtain
\begin{align}\label{cw76}
\lefteqn{\mathbb{E}^\frac{1}{p}|(1-{\rm T}_x^{\theta})
\partial^{\bf n}\Pi_{x\beta\,t}^{-}(y)|^p}\nonumber\\
&\lesssim
\sum_{\substack{|{\bf m}|\ge\theta \\ m_1+m_2\leq\theta+1}}(\sqrt[4]{t})^{\alpha-2-|{\bf n}+{\bf m}|}
(\sqrt[4]{t}+|y-x|)^{|\beta|-\alpha}
|y-x|^{|{\bf m}|}.
\end{align}

\medskip

With help of these three auxiliary estimates we now derive the central one, namely
\begin{align}\label{cw77}
\int_0^\infty dt\, \mathbb{E}^\frac{1}{p}|(1-{\rm T}_x^{|\beta|})
(\partial_2+\partial_1^2)\Pi_{x\beta\,t}^{-}(y)|^p\lesssim|y-x|^{|\beta|}.
\end{align}
To this purpose, we split the integral into the near-field $t\le|y-x|^4$ and
the far-field $t\ge|y-x|^4$. For the latter, we use (\ref{cw76}) with $\theta=|\beta|$
and ${\bf n}=(0,1),(2,0)$ (and thus $|{\bf n}|=2$), 
so that the growth rate of its r.~h.~s.~in $\sqrt[4]{t}$ is given by  
$-4-|{\bf m}|+|\beta|$. Since by Assumption \ref{ass:spectral_gap} $\alpha$
is irrational, $|\beta|$ is not an integer according to (\ref{irrational}), 
so that the sum in (\ref{cw76}) effectively restricts to $|{\bf m}|>|\beta|$.
Hence the growth rate in $t$ itself is $<-1$, so that 
the integral converges, and is estimated by the r.~h.~s.~of (\ref{cw77}). 

\medskip

We use the triangle inequality to split the near-field part of (\ref{cw77}) into
its contribution from ${\rm T}_x^{|\beta|}$ and from $1$.
On the former, we apply (\ref{cw75}), again for $\theta=|\beta|$
and with $|{\bf n}|=2$. The order of vanishing of its r.~h.~s.~in $\sqrt[4]{t}$ is given by
$-4-|{\bf m}|+|\beta|$. Since the sum is restricted to $|{\bf m}|<|\beta|$,
the order of vanishing in $t$ is $>-1$, so that 
the integral converges at $t=0$, and is also estimated by the r.~h.~s.~of (\ref{cw77}). 
For the contribution from $1$, we directly use (\ref{cw64}) with $|{\bf n}|=2$; the order
of vanishing is now given by $\alpha-4$, so that by $\alpha>0$ the same reasoning applies.

\medskip

We finally argue that (\ref{eq:Pi_minus_to_Pi}) satisfies (\ref{eq:model}).
Estimate (\ref{cw77}) implies that (\ref{eq:Pi_minus_to_Pi}) 
is well-defined in the Banach space
with norm $\sup_{x}|y-x|^{-|\beta|}\mathbb{E}^\frac{1}{p}|\cdot|^p$.
It also implies that the version of (\ref{eq:Pi_minus_to_Pi}) where $\int_0^\infty dt$
is replaced by $\int_s^Tdt$ converges when $s\downarrow0$ and $T\uparrow\infty$
in that topology. On the level of $\int_s^Tdt$, we may exchange the differential
operator $(\partial_2-\partial_1^2)$ with the integration, so that by (\ref{eq:factorization}),
\begin{align*}
-(\partial_2-\partial_1^2)\int_s^T dt(1-{\rm T}_x^{|\beta|})
(\partial_2+\partial_1^2)\Pi_{x\beta\,t}^{-}= 
(1-{\rm T}_x^{|\beta|-2})\Pi_{x\beta\,s}^{-}-(1-{\rm T}_x^{|\beta|-2})\Pi_{x\beta\,T}^{-}.
\end{align*}
We learn from (\ref{cw75}) and (\ref{cw76}), both for 
$\theta=|\beta|-2\not\in\mathbb{Z}$ and ${\bf n}={\bf 0}$,
that ${\rm T}_x^{|\beta|-2}\Pi_{x\beta\,s}^{-}$ vanishes for $s\downarrow 0$,
and that $(1-{\rm T}_x^{|\beta|-2})\Pi_{x\beta\,T}^{-}$ vanishes for $T\uparrow \infty$.
\end{proof}

We now turn to the fourth task of estimating $\pi^{(\n)}_{xy}$. This relies on the identity
\begin{align}\label{eq:three_point}
\sum_{{\bf n}}\pi_{xy}^{({\bf n})}(z-y)^{\bf n}=\Pi_{x}(z)-\Pi_y(z)-(\Gamma_{xy}^*-{\rm id})P\Pi_{y}(z)
\end{align}
involving the three points $x$, $y$, and $z$. Identity (\ref{eq:three_point}) 
follows from (\ref{eq:recenter_Pi_specific}), using (\ref{eq:Gamma_z_n})
and (\ref{eq:pi_purely_pol}) for ${\bf n}\not={\bf 0}$, and (\ref{eq:Pipi}) for ${\bf n}={\bf 0}$. Choosing $\#\{\n\,|\,|\n|<|\beta|\}$ pairwise distinct space-time points in place of $z$ we may interpret the $\beta$ component of the l.~h.~s. of \eqref{eq:three_point} as a Vandermonde matrix applied to the vector $\{\pi_{xy \beta}^{(\mathbf{n})}\}_{|\mathbf{n}|<|\beta|}$. Then the  invertibility of the Vandermonde matrix  yields the equivalence of 
annealed norms
\begin{align*}
\max_{{\bf n}:|{\bf n}|<|\beta|}|y-x|^{|{\bf n}|}\mathbb{E}^\frac{1}{p}|\pi^{({\bf n})}_{xy\beta}|^p
\sim\sup_{z:|z-x|\le|y-x|}\mathbb{E}^\frac{1}{p}\big|\sum_{\bf n}\pi^{({\bf n})}_{xy\beta}
(z-y)^{\bf n}\big|^p;
\end{align*}
formula $\eqref{eq:three_point}_\beta$ allows to estimate $\pi_{xy\beta}^{(\n)}$ 
by the outputs of Propositions~\ref{prop:gamma_npp} and \ref{prop:int_pi_minus}:

\begin{proposition}[Three-point argument I]\label{prop:pi_n_three_point} 
Assume that $\eqref{eq:pi_generic}_{\preccurlyeq\beta}$ and $\eqref{eq:recenter_Pi_specific}_\beta$ hold
and that $\eqref{eq:gamma}_{\beta}^{\gamma}$ holds for all $\gamma$ not purely polynomial. Then
$\eqref{eq:pi_n}_{\beta}$ holds.
\end{proposition}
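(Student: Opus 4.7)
The plan is to extract each coefficient $\pi^{(\mathbf{n})}_{xy\beta}$ from the three-point identity \eqref{eq:three_point} by viewing its left-hand side as a polynomial in $z-y$ and inverting Vandermonde. At the level of the $\beta$-component, \eqref{eq:three_point} reads
\begin{equation*}
\sum_{\mathbf{n}:|\mathbf{n}|<|\beta|}\pi^{(\mathbf{n})}_{xy\beta}(z-y)^{\mathbf{n}} = \Pi_{x\beta}(z)-\Pi_{y\beta}(z)-\bigl((\Gamma^*_{xy}-\mathrm{id})P\Pi_{y}(z)\bigr)_{\beta},
\end{equation*}
where the population condition \eqref{eq:pi_n_pop} truncates the left-hand sum and the triangularity \eqref{triangle} of $\Gamma^*_{xy}-\mathrm{id}$ further restricts the matrix-vector product on the right to indices $\gamma$ with $|\gamma|<|\beta|$.

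The next step is to bound the right-hand side in $\mathbb{E}^{1/p}|\cdot|^{p}$-norm uniformly in $z$ subject to $|z-x|\le|y-x|$ by a constant multiple of $|y-x|^{|\beta|}$. The contributions $\Pi_{x\beta}(z)$ and $\Pi_{y\beta}(z)$ are immediate from $\eqref{eq:pi_generic}_{\beta}$, using the parabolic triangle inequality $|z-y|\lesssim|y-x|$ to control the second. For the third term, the projection $P$ restricts the sum to $\gamma$ not purely polynomial, and H\"older's inequality in probability combined with $\eqref{eq:gamma}_{\beta}^{\gamma}$ and $\eqref{eq:pi_generic}_{\gamma}$, the latter provided by the hypothesis $\eqref{eq:pi_generic}_{\preccurlyeq\beta}$ for every such $\gamma$, controls each summand by $|y-x|^{|\beta|-|\gamma|}|z-y|^{|\gamma|}\lesssim|y-x|^{|\beta|}$, where the last step uses $|\gamma|\ge 0$.

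The conclusion is then a direct application of the Vandermonde equivalence of annealed norms displayed just above the statement of the proposition: since the left-hand side of the displayed identity is a polynomial in $z-y$ of parabolic degree strictly less than $|\beta|$, that equivalence converts the uniform $|y-x|^{|\beta|}$-bound on the right-hand side into the coefficient-wise estimates $\mathbb{E}^{1/p}|\pi^{(\mathbf{n})}_{xy\beta}|^{p}\lesssim|y-x|^{|\beta|-|\mathbf{n}|}$, which is precisely $\eqref{eq:pi_n}_{\beta}$. Since all three ingredients (reconstruction-type estimates on $\Pi_x$ and $\Pi_y$, the algebraic estimate on $\Gamma^*_{xy}P$, and the identity \eqref{eq:three_point} itself) are in hand, the argument is essentially formal; the one point that repays attention is the bookkeeping in the second step, to confirm that every non-purely-polynomial $\gamma$ appearing in the matrix-vector product indeed falls within the scope of the induction hypothesis $\eqref{eq:pi_generic}_{\preccurlyeq\beta}$.
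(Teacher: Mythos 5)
Your proposal is correct and follows precisely the route the paper intends: take the $\beta$-component of the three-point identity \eqref{eq:three_point}, bound the right-hand side in $\mathbb{E}^{1/p}|\cdot|^p$ uniformly over $|z-x|\le|y-x|$ by $|y-x|^{|\beta|}$, and invoke the Vandermonde equivalence displayed just above the statement.

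One citation-level imprecision is worth fixing. You invoke the triangularity \eqref{triangle}, which only gives $|\gamma|<|\beta|$ for the indices appearing in $((\Gamma^*_{xy}-\mathrm{id})P\Pi_y)_\beta$, and then flag — without resolving — the need to check that each such $\gamma$ lies in the range $\preccurlyeq\beta$ so that the hypothesis $\eqref{eq:pi_generic}_{\preccurlyeq\beta}$ applies. But $|\gamma|<|\beta|$ does not by itself imply $\gamma\prec\beta$; the two orderings $|\cdot|$ and $|\cdot|_\prec$ are genuinely different (the paper makes a point of this in Subsection \ref{induct}). The correct ingredient is \eqref{eq:triangular_Gamma} from Lemma \ref{lem:triangular}, which gives both $\gamma\prec\beta$ \emph{and} $|\gamma|<|\beta|$ whenever $(\Gamma^*_{xy}-\mathrm{id})_\beta^\gamma\neq0$; with this substitution, the bookkeeping you identified as the delicate point closes completely, and the rest of the proof is exactly as the paper intends.
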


Equipped with the output of Proposition \ref{prop:pi_n_three_point}, we now may
complete our first task. By the same argument as for Proposition \ref{prop:gamma_npp},
we have

\begin{proposition}[Algebraic argument I, second half]\label{prop:gamma_pp}
$\eqref{eq:pi_n}_{\preccurlyeq\beta}$ implies $\eqref{eq:gamma}_{\beta}$.
\end{proposition}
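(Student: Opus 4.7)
The plan is to rerun the algebraic argument of Proposition~\ref{prop:gamma_npp}, now fuelled by the stronger hypothesis $\eqref{eq:pi_n}_{\preccurlyeq\beta}$ that has just become available through Proposition~\ref{prop:pi_n_three_point}. Since Proposition~\ref{prop:gamma_npp} already establishes $\eqref{eq:gamma}_\beta^\gamma$ for every non-purely polynomial $\gamma$, and since $\beta$ itself is non-purely polynomial throughout this section, the only remaining case is $\gamma = e_{\bf m}$ for some ${\bf m} \neq {\bf 0}$.

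For this case, I would compute $(\Gamma^*_{xy})_\beta^{e_{\bf m}}$ directly from (\ref{eq:Gamma_z_n}), which reads $\Gamma^*_{xy}\mathsf{z}_{\bf m} = \mathsf{z}_{\bf m} + \pi^{({\bf m})}_{xy}$. Extracting the coefficient of $\mathsf{z}^\beta$ and using $\beta \neq e_{\bf m}$ (since $\beta$ is not purely polynomial) to discard the Kronecker contribution, one gets $(\Gamma^*_{xy})_\beta^{e_{\bf m}} = \pi^{({\bf m})}_{xy\beta}$. The diagonal instance ($\beta' = \beta$) of the hypothesis $\eqref{eq:pi_n}_{\preccurlyeq\beta}$, combined with the identity $|e_{\bf m}| = |{\bf m}|$ from (\ref{ao20}), then yields $\mathbb{E}^{1/p}|\pi^{({\bf m})}_{xy\beta}|^p \lesssim |y-x|^{|\beta|-|{\bf m}|} = |y-x|^{|\beta|-|e_{\bf m}|}$, which is exactly $\eqref{eq:gamma}_\beta^{e_{\bf m}}$.

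The hard work is in fact not hidden in this proposition at all but in the preceding Proposition~\ref{prop:pi_n_three_point}, whose output provides the diagonal estimate $\eqref{eq:pi_n}_\beta$ that was unavailable at the time of proving Proposition~\ref{prop:gamma_npp}. The present proposition is essentially bookkeeping: it records that once $\pi^{({\bf n})}_{xy\beta}$ has been estimated at the diagonal, the extension of Proposition~\ref{prop:gamma_npp} from non-purely polynomial $\gamma$ to arbitrary $\gamma$ is immediate, because via (\ref{eq:Gamma_z_n}) the exponential formula (\ref{exp}) collapses to a single monomial when acting on a purely polynomial coordinate $\mathsf{z}_{\bf m}$, with the sole surviving factor $\pi^{({\bf m})}_{xy\beta}$ being exactly what the new hypothesis covers.
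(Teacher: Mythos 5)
Your proof is correct. It is essentially the paper's argument — the paper simply says ``by the same argument as for Proposition~\ref{prop:gamma_npp}'', meaning: run the exponential formula (\ref{exp}) through H\"older's inequality in probability, now with the stronger hypothesis $\eqref{eq:pi_n}_{\preccurlyeq\beta}$ covering the dependency pattern (\ref{ord12b}) for arbitrary $\gamma$. Your version is a mildly streamlined presentation of the same idea: you isolate the genuinely new case as $\gamma=e_{\bf m}$ (which is indeed the only remaining case, since $\beta$ is not purely polynomial throughout the induction and Proposition~\ref{prop:gamma_npp} has already handled all non-purely-polynomial $\gamma$), observe via (\ref{eq:Gamma_z_n}) that there the matrix entry reduces to the single factor $(\Gamma^*_{xy})_\beta^{e_{\bf m}}=\pi^{({\bf m})}_{xy\beta}$ with no product and hence no H\"older step needed, and then the diagonal instance of $\eqref{eq:pi_n}_{\preccurlyeq\beta}$ together with (\ref{ao20}) gives the claim. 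This makes transparent where the upgrade from $\prec$ to $\preccurlyeq$ is actually used, which the paper leaves implicit.
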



\subsection{Estimate of the expectation: BPHZ-choice of renormalization, SG inequality, and dualization of Malliavin derivative estimate}\label{BPHZMall}

For this and the next three subsections, we restrict to singular and not purely
polynomial $\beta$
and start addressing the challenging part of the proof, 
namely the estimate $\eqref{eq:pi_minus_generic}_\beta$
of $\Pi_{x\beta}^-$ in this singular case. 
We will use what is called the $\mathbb{L}^p$-version\footnote{In the sequel $\mathbb{L}^p$ denotes the space of
$p$-integrable random variables w.~r.~t.~$\mathbb{E}$.}, for $p\ge 2$, of the SG inequality
\begin{equs}
\E^\frac{1}{p} |F|^p \lesssim |\E F| + \E^{\frac{1}{p}}
\|\frac{\partial F}{\partial\xi}\|_*^p.
\label{eq:spectral_gap_var}
\end{equs}
for all cylindrical functionals $F$ as in \eqref{eq:cyl_fnct}.
It extends by continuity to the classical Malliavin--Sobolev space $\mathbb{H}^p$, see Appendix~\ref{ss:Malle}.
This $\mathbb{L}^p$-version is a simple consequence of \eqref{ao03}, using the chain rule for the Malliavin derivative
and H\"older's estimate in probability, 
and is oblivious to the nature of the underlying Hilbert norm (\ref{as02}), 
see for instance \cite[Step 2 in the proof of Lemma 3.1]{JO22}; the result
is classical in the Gaussian case \cite[Theorem 5.5.11]{Bo98}.
As we argue in Subsection~\ref{sec:reconstr_approx},
for fixed $t$ and $x$, $F=\Pi_{x\beta\,t}^{-}(y)\in\mathbb{H}^p$, to which we will apply
(\ref{eq:spectral_gap_var}).

\medskip

We now argue that the first r.~h.~s.~term $\mathbb{E}\Pi_{x\beta\,t}^-(y)$ 
in (\ref{eq:spectral_gap_var})
is estimated as a consequence of the BPHZ-choice of renormalization
from Subsection \ref{sec:Pi_minus_constr}, cf.~\eqref{eq:expect_est}.
In order to pass from the limit $\lim_{t\uparrow\infty}$ in Proposition \ref{prop:BPHZ}
to a finite value $\sqrt[4]{t}$ of the (spatial) convolution scale,
we need the following proposition; note that the statement (and the proof) is oblivious to the specific value of $c_\beta$ and thus it can be shown before the BPHZ choice $\eqref{eq:BPHZ}_\beta$ has been made.
\begin{proposition}\label{prop:limit}
For $|\beta|<2$, 
suppose that $\eqref{eq:recenter_Pi_minus}_\beta$, $\eqref{eq:pi_minus_generic}_{\prec\beta}$ and 
$\eqref{eq:Pi_minus_shift}_\beta$ holds,
and that $\eqref{eq:gamma}_{\beta}^\gamma$ holds for all $\gamma$ not purely polynomial.
Then we have
\begin{align}\label{eq:limit}
\int_{T}^\infty dt|\frac{d}{dt}\mathbb{E}\Pi_{x\beta \,t}^{-}(y)|
\lesssim (\sqrt[4]{T})^{\alpha-2}(\sqrt[4]{T}+|y-x|)^{|\beta|-\alpha}.
\end{align}
\end{proposition}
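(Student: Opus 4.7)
The plan is to reduce $\partial_t\E\Pi^-_{x\beta t}(y)$ to quantities controlled by the induction hypothesis via the re-centering identity, then to bound each piece. Starting from \eqref{eq:recenter_Pi_minus_specific_new}, we apply $-(\partial_1^4-\partial_2^2)$ in the active variable (which by \eqref{eq:kernel} coincides with $\partial_t$ after convolution with $\psi_t$), convolve with $\psi_t$, and evaluate at the secondary base point $y$. The polynomial correction term $(\partial_2-\partial_1^2)(\cdot-y)^{\n}$ becomes, after application of the fourth-order differential operator, a polynomial of parabolic degree at most $|\n|-6$. Since by the population condition \eqref{eq:pi_n_pop} any contributing $\n$ satisfies $|\n|<|\beta|<2<6$, this polynomial vanishes identically. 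The resulting key identity reads
\begin{align*}
\partial_t\Pi^-_{x\beta t}(y) = \sum_\gamma (\Gamma^*_{xy})^\gamma_\beta\,\partial_t\Pi^-_{y\gamma t}(y).
\end{align*}

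Taking expectation and splitting off the diagonal $\gamma=\beta$ (for which $(\Gamma^*_{xy})^\beta_\beta=1$ by \eqref{triangle}), we exploit the shift covariance $\eqref{eq:Pi_minus_shift}_\beta$: the quantity $\partial_t A(t):=\E\partial_t\Pi^-_{y\beta t}(y)$ is independent of $y$. Hence
\begin{align*}
\partial_t\E\Pi^-_{x\beta t}(y) - \partial_t A(t) = \sum_{\gamma\prec\beta}\E\bigl[(\Gamma^*_{xy})^\gamma_\beta\,\partial_t\Pi^-_{y\gamma t}(y)\bigr].
\end{align*}
Each summand will be controlled by H\"older's inequality in probability, the hypothesis $\eqref{eq:gamma}_\beta^\gamma$, the induction hypothesis $\eqref{eq:pi_minus_generic}_\gamma$, and the semigroup moment bound \eqref{cw61} applied to $(\partial_1^4-\partial_2^2)\psi_{t/2}$, yielding the pointwise bound $|y-x|^{|\beta|-|\gamma|}(\sqrt[4]{t})^{|\gamma|-6}$. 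Because \eqref{triangle} restricts contributing $\gamma\prec\beta$ to $|\gamma|<|\beta|<2$, the time integral converges with $\int_T^\infty(\sqrt[4]{t})^{|\gamma|-6}dt\sim(\sqrt[4]{T})^{|\gamma|-2}$; distinguishing the regimes $|y-x|\lessgtr\sqrt[4]{T}$ and using $|\gamma|\geq\alpha$, the resulting contribution is dominated by the right-hand side of \eqref{eq:limit}.

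The remaining task, which is the main obstacle, is to bound $\int_T^\infty|\partial_t A(t)|\,dt$ by $(\sqrt[4]{T})^{|\beta|-2}$ with a constant independent of the regularization scale $\sqrt[4]{\tau}$. Since $c_\beta$ does not depend on $t$, we may replace $\Pi^-$ by $\tilde\Pi^-:=\Pi^-+c$ in the definition of $A$; the formula \eqref{eq:Pi_minus_def_alt} then exhibits $\tilde\Pi^-_{x\beta}$ as a sum of products of $\Pi_{x\gamma_i}$-factors together with a single $\partial_1^2\Pi_{x\beta'}$-factor, all indexed by $\gamma_i,\beta'\prec\beta$. Substituting $\partial_1^2\Pi_{x\beta'} = \partial_2\Pi_{x\beta'} - \Pi^-_{x\beta'}$ from \eqref{eq:model} and performing integration by parts against $\psi_t$ to transfer the remaining spatial derivatives onto the semigroup kernel, the inductive bounds $\eqref{eq:pi_generic}_{\prec\beta}$ and $\eqref{eq:pi_minus_generic}_{\prec\beta}$ should yield the $\tau$-independent pointwise estimate $|\partial_t A(t)|\lesssim(\sqrt[4]{t})^{|\beta|-6}$, which integrates to the required $(\sqrt[4]{T})^{|\beta|-2}$ precisely because $|\beta|<2$. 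Combining this with the correction estimate from the previous paragraph completes the proof of \eqref{eq:limit}.
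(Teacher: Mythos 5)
Your derivation of the identity $\partial_t\Pi^-_{x\beta t}(y)=\sum_\gamma(\Gamma^*_{xy})^\gamma_\beta\,\partial_t\Pi^-_{y\gamma t}(y)$ from \eqref{eq:recenter_Pi_minus_specific_new} is correct (and the observation that the polynomial defect vanishes for singular $\beta$ matches the paper), and your estimate of the off-diagonal contributions $\gamma\prec\beta$ via H\"older, $\eqref{eq:gamma}_\beta^\gamma$, $\eqref{eq:pi_minus_generic}_{\prec\beta}$ and \eqref{cw61} is sound. The genuine gap is the diagonal term $\partial_t A(t)$, which you correctly identify as the main obstacle but for which your proposed argument does not work.

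The paper avoids this quantity altogether by applying the re-centering at the \emph{variable integration point} rather than at the fixed secondary base point $y$: writing $\Pi^-_{x\beta\,t}(y)=\int dz\,\psi_{t-s}(y-z)\Pi^-_{x\beta\,s}(z)$ via the semigroup property, differentiating in $t$ via \eqref{eq:kernel}, and substituting $\Pi^-_{x\beta\,s}(z)=(\Gamma^*_{xz}\Pi^-_{z\,s})_\beta(z)$. After expectation the diagonal contribution $\mathbb{E}\Pi^-_{z\beta\,s}(z)$ is constant in $z$ by shift covariance, and $\int dz\,(\partial_1^4-\partial_2^2)\psi_{t-s}(y-z)\cdot\text{const}=0$ because $(\partial_1^4-\partial_2^2)\psi$ is a total derivative. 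The diagonal is thereby annihilated exactly, leaving only $(\Gamma^*_{xz}-\mathrm{id})\Pi^-_z$, which by strict triangularity involves only $\Pi^-_{z\beta'}$ with $\beta'\prec\beta$.

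By re-centering at $y$ you lose this cancellation, and you must bound $\partial_t A(t)$ independently. Your proposed route — expand $\tilde\Pi^-_{y\beta}$ into products, substitute $\partial_1^2\Pi=\partial_2\Pi-\Pi^-$, integrate by parts against $\psi_t$ — does not give $\tau$-independent constants. The expansion involves $c_{\beta'}$, bounded only $\tau$-dependently via \eqref{cw03}, and $\partial_1^2\Pi_{y\beta'}$ (or $\partial_2\Pi_{y\beta'}$ after substitution), bounded only $\tau$-dependently via \eqref{cw60}; the integration by parts produces Leibniz terms in which $\partial_2$ falls on the other $\Pi$-factors and brings the $\tau$-dependence back. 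Already for $\beta=e_1$ your substitution produces a term $\mathbb{E}[\Pi_{y0}\xi_\tau]$, and the pointwise pairing of $\Pi_{y0}$ with the mollified noise has no $\tau$-independent pointwise bound. The bound $|\partial_t A(t)|\lesssim(\sqrt[4]{t})^{|\beta|-6}$ is in fact true, but proving it for $x=y$ requires exactly the re-centering-at-$z$ device; your reduction to the $x=y$ case is a detour that ends up facing the same obstacle it was meant to resolve.
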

\begin{proof}
For $|\beta|<2$, $\eqref{eq:recenter_Pi_minus}_\beta$ reduces to	\begin{equation*}
	{\Pi}_{x\beta}^- =  (\Gamma_{xz}^* \Pi_{z}^-)_\beta.
	\end{equation*}	
Moreover, by $\eqref{eq:Pi_minus_shift}_\beta$, $\E \Pi_{z\beta s}^- (z)$ is independent of $z$. These two facts combined yield
\begin{align}\label{eq:dt_expect}
\frac{d}{dt}\mathbb{E}\Pi_{x\beta \,t}^{-}(y)
=-\int_{\mathbb{R}^2}dz(\partial_1^4-\partial_2^2)\psi_{t-s}(y-z)
\mathbb{E}\big((\Gamma_{xz}^*-{\rm id})\Pi_{z\,s}^{-}\big)_\beta(z),
\end{align}
for all $s\in(0,t)$. 
The merit is that as a consequence of the
strict triangularity (\ref{ord12}) of $\Gamma_{xy}^*-{\rm id}$ w.~r.~t.~$\prec$,
(\ref{eq:dt_expect}) only features $\{\Pi_{z\beta'}^{-}\}_{\beta'\prec\beta}$.
Since by definition \eqref{eq:Pi_minus_def_alt}, $\Pi^{-}_{z}\in\mathsf{\tilde T}^*$, (\ref{eq:dt_expect}) only features
$(\Gamma_{xz}^*-{\rm id})_{\beta}^{\beta'}$ for $\beta'$ not purely polynomial.

\medskip

We use (\ref{eq:dt_expect}) with $s=\frac{t}{2}$. By the Cauchy-Schwarz inequality
in probability,
the contribution from $(\Gamma_{xz}^*-{\rm id})_{\beta}^{\beta'}\Pi_{z\beta' \frac{t}{2}}(z)$ 
is estimated in expectation by $|z-x|^{|\beta|-|\beta'|}(\sqrt[4]{t})^{|\beta'|-2}$
$\lesssim (|y-z|+|y-x|)^{|\beta|-|\beta'|}$ $(\sqrt[4]{t})^{|\beta'|-2}$.
After integration in $z$, by the moment bounds (\ref{cw61}) on $\psi_s$, this contribution
to (\ref{eq:dt_expect}) is controlled by $t^{-1}
(\sqrt[4]{t}+|y-x|)^{|\beta|-|\beta'|}(\sqrt[4]{t})^{|\beta'|-2}$.
Since $|\beta|<2$, integration in $t\ge T$ yields control by
$(\sqrt[4]{T}+|y-x|)^{|\beta|-|\beta'|}(\sqrt[4]{T})^{|\beta'|-2}$.
By $|\beta'|\ge\alpha$, (\ref{eq:limit}) follows.
\end{proof}

Equipped with Proposition \ref{prop:limit} and the choice
$\eqref{eq:BPHZ}_\beta$ of $c_\beta$, the qualitative Proposition \ref{prop:BPHZ}
instantly  
upgrades to 
an estimate of $\mathbb{E}\Pi_{x\beta\,t}^-(x)$;
using $\eqref{eq:recenter_Pi_minus}_{\beta}$
together with H\"older's inequality and $\eqref{eq:gamma}_\beta^\gamma$ for $\gamma$ not purely polynomial and $\eqref{eq:pi_minus_generic}_{\prec\beta}$
yields the desired estimate of $\mathbb{E}\Pi_{x\beta\,t}^-(y)$:

\begin{proposition}\label{prop:expect}
For $|\beta|<2$, suppose
that $\eqref{eq:BPHZ}_\beta$, $\eqref{eq:recenter_Pi_minus}_\beta$, $\eqref{eq:pi_minus_generic}_{\prec\beta}$, and $\eqref{eq:limit}_\beta$ hold,
and that $\eqref{eq:gamma}_{\beta}^\gamma$ holds for all $\gamma$ not purely polynomial. 
Then we have
\begin{align}\label{eq:expect_est}
|\mathbb{E}\Pi_{x\beta\,t}^-(y)| 
\lesssim (\sqrt[4]{t})^{\alpha-2}(\sqrt[4]{t}+|y-x|)^{|\beta|-\alpha}.
\end{align}
\end{proposition}

The remaining task of this and the next three subsections
is thus to estimate the Malliavin derivative of $\Pi_{x\beta\,t}^-(y)$, 
in the norm given by (\ref{eq:spectral_gap_var}), by the r.~h.~s.~of (\ref{eq:expect_est}):
\begin{align}\label{eq:expect2} 
\mathbb{E}^{\frac{1}{p}}\Big|\int_{\mathbb{R}^2}
\big((\partial_1^4- \partial_2^2)^{\frac{1}{4}(\frac{1}{2}-\alpha)}
\frac{\partial}{\partial \xi} \Pi_{x\beta\,t}^-(y)\big)^2\Big|^\frac{p}{2} 
\lesssim(\sqrt[4]{t})^{\alpha-2}(\sqrt[4]{t}+|y-x|)^{|\beta|-\alpha}.
\end{align}
It is convenient to undo the Riesz representation (\ref{ao22}) and to return to the 
derivative $\delta\Pi_{x\beta t}^{-}(y)$ of $\Pi_{x\beta t}^{-}(y)$
in direction of the space-time field $\delta\xi$. 
It is a straightforward consequence of $\mathbb{L}^p$-duality,
with $q$ denoting the conjugate exponent of $p$,
that (\ref{eq:expect2}) is equivalent to
\begin{align*}
|\E\delta \Pi_{x\beta \,t}^-(y)|\lesssim (\sqrt[4]{t})^{\alpha - 2}  
(\sqrt[4]{t}+|y-x|)^{|\beta| - \alpha} \E^{\frac{1}{q}}
\Big|\int_{\mathbb{R}^2}\big(
(\partial_1^4- \partial_2^2)^{\frac{1}{4}\left(\alpha-\frac{1}{2}\right)}\delta\xi\big)^2
\Big|^\frac{q}{2},
\end{align*}
provided that this is established for an arbitrary $\delta\xi$
that is allowed to be random in order to pull the supremum over $\delta\xi$
out of the $\mathbb{L}^q$-norm.
For the base case and when introducing a weight, both in Subsection \ref{thirdblock}, 
it will be convenient to
replace the $L^2(\mathbb{R}^2)$-based fractional Sobolev norm of $\delta\xi$
by its equivalent $L^2(\mathbb{R}^2)$-based Besov norm. This equivalence 
is obvious when the Besov side is formulated in terms of our semi-group,
since it then follows by Plancherel from the elementary scaling identity
$(k_1^4+k_2^2)^{\frac{1}{2}(\alpha-\frac{1}{2})}$
$\sim\int_0^\infty\frac{ds}{s}s^{\frac{1}{2}(\frac{1}{2}-\alpha)}\exp(-2s(k_1^4+k_2^2))$
in terms of the wave vector $k=(k_1,k_2)$ (and relies on $\alpha<\frac{1}{2}$):
\begin{align*}
|\E\delta \Pi_{x\beta t}^-(y)|\lesssim (\sqrt[4]{t})^{\alpha - 2}
(\sqrt[4]{t}+|y-x|)^{|\beta| - \alpha} \E^{\frac{1}{q}}
\Big|\int_0^\infty\frac{ds}{s}(\sqrt[4]{s})^{2(\frac{1}{2}-\alpha)}
\int_{\mathbb{R}^2}(\delta\xi_s)^2
\Big|^\frac{q}{2}.
\end{align*}
In fact, we will establish a stronger version of this estimate: It is strengthened
on the l.~h.~s.~by replacing the expectation by a $\mathbb{L}^{q'}$-norm, and
it is strengthened on the r.~h.~s.~by exchanging the spatial and probabilistic norm
(which by Minkowski's inequality is a strengthening due to $q\le 2$)
\begin{align}\label{eq:mal_dual_annealed}
 \E^{\frac{1}{q'}} |\delta \Pi_{x\beta \,t}^-(y)|^{q'} \lesssim  (\sqrt[4]{t})^{\alpha - 2}  
(\sqrt[4]{t}+|y-x|)^{|\beta| - \alpha}\bar w,
\end{align}
where we introduced the following abbreviation for a norm of $\delta\xi$ 
\begin{align}\label{ao26}
\bar w:=\left(\int_0^\infty \frac{d s}{s} (\sqrt[4]{s})^{2(\frac{1}{2}-\alpha)} \int_{\mathbb{R}^2}\E^{\frac{2}{q}}
|\delta\xi_s|^q \right)^{\frac{1}{2}}.
\end{align}
Estimate (\ref{eq:mal_dual_annealed}) is an annealed estimate, 
where an annealed norm of $\delta\Pi_{x\beta}^{-}$
is controlled by the annealed norm $\bar w$ of $\delta\xi$.
We shall establish (\ref{eq:mal_dual_annealed}) for all $q'<q\le 2$. Hence 
$\lesssim$ now also acquires a dependence on $q'<q\le 2$ (next to $\alpha$ and $\beta$) when
it comes up in the estimate of a Malliavin derivative. 
As for the integrability exponent $2\le p<\infty$, 
all estimates will be implicitly assumed to hold for all $q'<q\le 2$.
The strengthening from $q'=1$ to $q'>1$
is important when using (\ref{eq:mal_dual_annealed}) in the induction:
One often needs to estimate products where (at most) one of the factors comes from a (directional) 
Malliavin derivative of the model 
(as indicated by the appearance of the symbol $\delta$ or ${\rm d}$) 
whereas the other factors are one of the model components (i.~e.~$\Pi_x^{-},\Pi_x,\Gamma_{xy}^*$).
Since the other factors ask for a stochastic $\mathbb{L}^p$-norm with $p<\infty$, 
by H\"older's inequality, we need a stochastic $\mathbb{L}^{q'}$-norm with $q'>1$ on the Malliavin factor,
see for instance (\ref{eq:Hoelder})
in the proof of Proposition \ref{prop:delta_gamma_npp} below. This reflects a deterioration
in the stochastic integrability, which is unavoidable since the
homogeneity of $\Pi_{x\beta}^{-}$ in $\xi$ is $[\beta]+1$. 


\subsection{Estimate of Malliavin derivatives: \texorpdfstring{$\delta\Gamma_{xy}^*$, $\delta\Pi^{-}_x$, $\delta\Pi_x$, and $\delta\pi^{(\bf n)}_{xy}$}{delta Gamma* xy, delta Pi minus x, delta Pi x, and delta pi n xy}}
\label{secondblock}

For this and the next two subsections, we fix a random $\delta\xi$.
This subsection is an interlude: As will become clear only in Subsection \ref{forthblock},
next to $Q\delta\Pi^{-}_{x}$, we also need to estimate 
the directional Malliavin derivatives $Q\delta\Pi_{x}$ and $Q\delta\Gamma_{xy}^*P$. 
In fact, in this subsection, we shall proceed like in Subsection \ref{firstblock} 
and assume the estimate (\ref{eq:mal_dual_annealed})
on $Q\delta\Pi^{-}_{x}$ in order to (inductively) derive the estimates
on the remaining objects $Q\delta\Gamma_{xy}^*P$,
$Q\delta\Pi_{x}$, and $Q\delta\pi^{(\bf n)}_{xy}$.

\medskip

Taking the (directional) Malliavin derivative of the exponential formula (\ref{exp})
applied to $\Gamma^*=\Gamma_{xy}^*$, we obtain by Leibniz' rule, see
Subsection~\ref{sec:gamma_three_point_approx},
\begin{align}\label{deltaGamma}
\delta\Gamma^*_{xy}=\sum_{\bf n}\delta\pi^{(\n)}_{xy}\Gamma^*_{xy} D^{({\bf n})},
\end{align}
which motivates to include the estimates
\begin{align}\label{eq:delta_pi_n}
\mathbb{E}^{\frac{1}{q'}}|\delta\pi_{xy\beta}^{(\n)}|^{q'}
\lesssim |y-x|^{|\beta|-|{\bf n}|} \bar w\quad \text{for all} \ |{\bf n}|<|\beta|.
\end{align}
Note that in view of (\ref{eq:def_pi_n_poly}) we have
\begin{align}\label{eq:algIII7}
\delta\pi_{xy}^{({\bf n})}\in\mathsf{\tilde T}^*.
\end{align}
In analogy to Proposition \ref{prop:gamma_npp} we have

\begin{proposition}[Algebraic argument II]\label{prop:delta_gamma_npp} 
Assume that
$\eqref{eq:delta_pi_n}_{\prec\beta}$ holds,
and that $\eqref{eq:gamma}_{\preccurlyeq\beta}^{\gamma}$ 
holds for all $\gamma$ not purely polynomial.
Then we have for all $\gamma$ not purely polynomial
\begin{equation}\label{eq:delta_gamma}
\E^\frac{1}{q'} |(\delta\Gamma^*_{xy})_{\beta}^{\gamma}|^{q'}
\lesssim|y-x|^{|\beta|-|\gamma|} \bar w.
\end{equation}
\end{proposition}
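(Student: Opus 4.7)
The plan is to mimic the proof of Proposition~\ref{prop:gamma_npp}, starting from the Leibniz-type identity \eqref{deltaGamma} in place of the exponential formula \eqref{exp}. For $\gamma$ not purely polynomial, the matrix entry expands as
\begin{align*}
(\delta\Gamma^*_{xy})_\beta^\gamma = \sum_\n \sum_{\substack{\beta_1+\beta_2=\beta\\ \gamma_1}} (\delta\pi^{(\n)}_{xy})_{\beta_1}\,(\Gamma^*_{xy})_{\beta_2}^{\gamma_1}\,(D^{(\n)})_{\gamma_1}^\gamma,
\end{align*}
where the convolution $\beta_1+\beta_2=\beta$ encodes the multiplication by $\delta\pi^{(\n)}_{xy}$ in $\mathbb{R}[[\mathsf{z}_k,\mathsf{z}_\n]]$. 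This sum is effectively finite by the population constraint \eqref{eq:pi_n_pop} (forcing $|\n|<|\beta_1|$), the triangularity of $\Gamma^*_{xy}$ with respect to $|\cdot|$ (forcing $|\gamma_1|\le|\beta_2|$), and the sparseness of $(D^{(\n)})_{\gamma_1}^\gamma$ for each fixed $\n$.

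To reduce to the induction hypothesis, I invoke the sector-preservation properties \eqref{eq:algIII5} and \eqref{eq:algIII7}. First, $\delta\pi^{(\n)}_{xy}\in\mathsf{\tilde T}^*$ forces $\beta_1$ not to be purely polynomial. Second, combining $D^{(\n)}\mathsf{T}^*\subset\mathsf{\tilde T}^*$ and $\Gamma^*\mathsf{\tilde T}^*\subset\mathsf{\tilde T}^*$, the element $\Gamma^*_{xy}D^{(\n)}\pi$ lies in $\mathsf{\tilde T}^*$, so only non-purely-polynomial $\beta_2$ give non-vanishing contributions, and within such summands $\gamma_1$ is also not purely polynomial. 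With both $\beta_1$ and $\beta_2$ non-purely-polynomial, additivity of $|\cdot|_\prec$ together with the observation that $|\beta_2|_\prec>0$ for any non-zero non-purely-polynomial multi-index yields $\beta_1\prec\beta$ strictly; the degenerate case $\beta_2=0$ is excluded since it would force $\gamma_1=0$ by strict triangularity of $\Gamma^*_{xy}-\mathrm{id}$, whereas a direct check from \eqref{def_Dnull_rig} and $D^{(\n)}=\partial_{\mathsf{z}_\n}$ ($\n\neq\0$) shows $(D^{(\n)})_0^\gamma=0$ for $\gamma$ not purely polynomial. Hence the induction hypothesis $\eqref{eq:delta_pi_n}_{\prec\beta}$ applies to $(\delta\pi^{(\n)}_{xy})_{\beta_1}$, and the assumed bound $\eqref{eq:gamma}_{\beta_2}^{\gamma_1}$ applies to $(\Gamma^*_{xy})_{\beta_2}^{\gamma_1}$ since $\beta_2\preccurlyeq\beta$ and $\gamma_1$ is not purely polynomial.

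For each summand I choose exponents $q'_1\in(q',q)$ and $p<\infty$ satisfying $1/q'=1/q'_1+1/p$ and apply H\"older's inequality in probability to get
\begin{align*}
\mathbb{E}^{1/q'}\big|(\delta\pi^{(\n)}_{xy})_{\beta_1}(\Gamma^*_{xy})_{\beta_2}^{\gamma_1}\big|^{q'}\lesssim|y-x|^{|\beta_1|-|\n|}\,\bar w\cdot|y-x|^{|\beta_2|-|\gamma_1|}.
\end{align*}
The exponent balance then follows from two identities: additivity of $|\cdot|-\alpha$ on non-purely-polynomial multi-indices (cf.~\eqref{eq:hom_add}) gives $|\beta_1|+|\beta_2|=|\beta|+\alpha$, while a direct computation from the matrix representation of $D^{(\n)}$ yields the uniform relation $|\gamma|-|\gamma_1|=|\n|-\alpha$ (with the convention $|\0|=0$); combining them produces the desired exponent $|\beta|-|\gamma|$, establishing \eqref{eq:delta_gamma} after summation over the finitely many contributing $(\n,\beta_1,\gamma_1)$. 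The principal obstacle is precisely this bookkeeping: securing strict $\beta_1\prec\beta$ through the two sector constraints on $\delta\pi^{(\n)}_{xy}$ and on $\Gamma^*_{xy}D^{(\n)}\pi$, while simultaneously verifying that the homogeneity shift effected by $D^{(\n)}$ exactly compensates the $+\alpha$ arising in the additivity of $|\cdot|$.
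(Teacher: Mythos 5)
Your proof is correct and takes essentially the same route as the paper: expand \eqref{deltaGamma} componentwise, use $\delta\pi^{(\n)}_{xy}\in\mathsf{\tilde T}^*$ and $\Gamma^*_{xy}D^{(\n)}\mathsf{T}^*\subset\mathsf{\tilde T}^*$ to secure $\beta_1\prec\beta$ and $\beta_2\preccurlyeq\beta$, apply H\"older in probability, and balance exponents via \eqref{eq:hom_add}. The only cosmetic difference is that you unify the $\n=\0$ and $\n\neq\0$ cases via the single relation $|\gamma|-|\gamma_1|=|\n|-\alpha$ (with $|\0|=0$), whereas the paper treats them separately via the explicit indices $\gamma-e_k+e_{k+1}$ and $\gamma-e_\n$.
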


\begin{proof} 
We distinguish the contributions ${\bf n}={\bf 0}$
and ${\bf n}\not={\bf 0}$ to (\ref{deltaGamma}). By definition (\ref{eq:def_Dnull}) of
$D^{({\bf 0})}$, all contributions to $\eqref{deltaGamma}_\beta^\gamma$
from ${\bf n}={\bf 0}$ are
of the form\footnote{with the implicit understanding that this term vanishes
if $\gamma(k)=0$} 
\begin{align}\label{eq:algII3}
\delta\pi^{({\bf 0})}_{xy\beta_1}(\Gamma_{xy}^*)_{\beta_2}^{\gamma-e_k+e_{k+1}}
\end{align}
for some $k\ge 0$ and (populated) multi-indices $\beta_1,\beta_2$ with 
$\beta_1+\beta_2=\beta$ and $k\ge 0$. Note that by (\ref{eq:algIII7}),
$\beta_1$ is not purely polynomial; likewise, since $\gamma-e_k+e_{k+1}$ is 
obviously neither purely polynomial nor vanishing, 
this transmits to $\beta_2$ by \eqref{eq:algIII5} and (\ref{ord23}).
Hence we may apply (\ref{eq:recIII25}) to the desired effect of
\begin{align}\label{eq:algII4}
\beta_1\prec\beta\;\mbox{and}\;\beta_2\preccurlyeq\beta.
\end{align}
By H\"older's inequality in probability, we estimate
the $\mathbb{E}^\frac{1}{q'}|\cdot|^{q'}$-norm of the product (\ref{eq:algII3}) by the
product of the $\mathbb{E}^\frac{1}{q}|\cdot|^{q}$-norm of the first factor 
and the $\mathbb{E}^\frac{1}{p}|\cdot|^{p}$-norm of the second factor; recall that 
\begin{align}\label{eq:Hoelder}
\frac{1}{q}=\frac{1}{q'}+\frac{1}{p}\quad\mbox{and thus requires}\;q'<q\;
\mbox{because of}\;p<\infty.
\end{align}
By $\eqref{eq:delta_pi_n}_{\prec\beta}$
and $\eqref{eq:gamma}_{\preccurlyeq\beta}^{\gamma}$ for $\gamma$ not purely polynomial,
we thus obtain an estimate by $|y-x|^{|\beta_1|}\bar w$ $|y-x|^{|\beta_2|-|\gamma-e_k+e_{k+1}|}$.
Since $|\gamma-e_k+e_{k+1}|$ $=|\gamma|+\alpha$ by definition (\ref{ao12}),
we learn from (\ref{eq:hom_add}) that as desired
\begin{align}\label{eq:algII1}
|\beta_1|+|\beta_2|-|\gamma-e_k+e_{k+1}|=|\beta|-|\gamma|.
\end{align}

\medskip

We now address the contributions to $\eqref{deltaGamma}_\beta^\gamma$
from some ${\bf n}\not={\bf 0}$, which in view of definition (\ref{eq:def_Dn}) of $D^{({\bf n})}$
are of the form 
\begin{align}\label{eq:algII5}
\delta\pi^{({\bf n})}_{xy\beta_1}(\Gamma_{xy}^*)_{\beta_2}^{\gamma-e_{\bf n}}.
\end{align}
Note that once more, $\gamma-e_{\bf n}$ is neither purely polynomial nor,
by assumption, vanishing. Like above, this transmits to $\beta_2$, and 
yields (\ref{eq:algII4}).
Now (\ref{eq:algII5}) is estimated by $|y-x|^{|\beta_1|-|{\bf n}|} \bar w$ 
$|y-x|^{|\beta_2|-|\gamma-e_{\bf n}|}$. 
Since $|\gamma-e_{\bf n}|=|\gamma|+\alpha-|{\bf n}|$ by definition (\ref{ao12}),
we get once more from (\ref{eq:hom_add})
\begin{align}\label{eq:algII2}
|\beta_1|-|{\bf n}|+|\beta_2|-|\gamma-e_{\bf n}|=|\beta|-|\gamma|.
\end{align}
\end{proof}

We now pass from $\delta\Pi_{x\beta}^{-}$ to $\delta\Pi_{x\beta}$.
To this purpose, we take the Malliavin derivative of (\ref{eq:Pi_minus_to_Pi}), see Subsection~\ref{sec:int_approx}.
By an almost identical 
integration argument to Proposition~\ref{prop:int_pi_minus}, we obtain

\begin{proposition}[Integration II]\label{prop:intII} 
Assume that $\eqref{eq:mal_dual_annealed}_{\beta}$ holds.
Then we have
\begin{align}\label{eq:delta_pi_generic}
\E^{\frac{1}{q'}} |\delta\Pi_{x\beta}(y)|^{q'}\lesssim |y-x|^{|\beta|} \bar w.
\end{align}
\end{proposition}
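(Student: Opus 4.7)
The plan is to mirror Proposition~\ref{prop:int_pi_minus} verbatim, with the stochastic $\mathbb{L}^p$-norm replaced by the stochastic $\mathbb{L}^{q'}$-norm and $\bar w$ factored out. Concretely, since $\Pi_{x\beta}$ is given by the integral representation \eqref{eq:Pi_minus_to_Pi}, applying the directional Malliavin derivative (and interchanging $\delta$ with the $t$-integral, the subtraction of the Taylor polynomial ${\rm T}_x^{|\beta|}$, and the spatial derivatives) yields the analogous formula
\begin{align*}
\delta\Pi_{x\beta}=-\int_0^\infty dt\,(1-{\rm T}_x^{|\beta|})(\partial_2+\partial_1^2)\delta\Pi^-_{x\beta\,t}.
\end{align*}
That these commutations are licit on the level of Malliavin derivatives is precisely the contents of Subsection~\ref{sec:int_approx}, which I would cite rather than rework.

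From this representation the estimate is obtained by copying the three auxiliary bounds used in the proof of Proposition~\ref{prop:int_pi_minus}. First, combining $\eqref{eq:mal_dual_annealed}_{\beta}$ with the semi-group property \eqref{eq:semi_group} and the moment bound \eqref{cw61} gives
\begin{align*}
\mathbb{E}^{1/q'}|\partial^{\bf n}\delta\Pi^-_{x\beta\,t}(y)|^{q'}\lesssim (\sqrt[4]{t})^{\alpha-2-|{\bf n}|}(\sqrt[4]{t}+|y-x|)^{|\beta|-\alpha}\,\bar w,
\end{align*}
in exact analogy with \eqref{cw64}. The two Taylor-polynomial estimates \eqref{cw75} and \eqref{cw76} follow from this pointwise bound in the same way, each gaining a factor $\bar w$ on the right.

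With these tools in hand, the central estimate
\begin{align*}
\int_0^\infty dt\,\mathbb{E}^{1/q'}|(1-{\rm T}_x^{|\beta|})(\partial_2+\partial_1^2)\delta\Pi^-_{x\beta\,t}(y)|^{q'}\lesssim |y-x|^{|\beta|}\,\bar w
\end{align*}
is established by splitting the $t$-integral at $|y-x|^4$ exactly as in Proposition~\ref{prop:int_pi_minus}. On the far-field $t\ge |y-x|^4$ one uses the analogue of \eqref{cw76} with $\theta=|\beta|$ and $|{\bf n}|=2$; by \eqref{irrational} the sum is effectively restricted to $|{\bf m}|>|\beta|$, so the exponent of $t$ is strictly less than $-1$ and the integral converges at $\infty$. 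On the near-field $t\le |y-x|^4$, the ${\rm T}_x^{|\beta|}$-contribution is controlled via \eqref{cw75} (again with $\theta=|\beta|,|{\bf n}|=2$), where $|{\bf m}|<|\beta|$ gives an exponent of $t$ strictly greater than $-1$, while the $1$-contribution is controlled by the pointwise bound above, with integrable singularity of order $\alpha-4>-4$ thanks to $\alpha>0$.

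I do not anticipate a genuine obstacle here: once the Malliavin-calculus manipulations of Subsection~\ref{sec:int_approx} are in place, the only subtle points are the avoidance of integer $|\beta|$ (which is guaranteed by $\alpha\notin\mathbb{Q}$ together with $\beta$ not purely polynomial, via \eqref{irrational}) and the fact that $\bar w$ is deterministic with respect to the $\mathbb{L}^{q'}$-norm in the sense that it factors through cleanly. The resulting bound \eqref{eq:delta_pi_generic} then follows from the central estimate by the identical Banach-space argument as at the end of the proof of Proposition~\ref{prop:int_pi_minus}.
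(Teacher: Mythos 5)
Your proposal is correct and matches the paper's approach exactly: the paper itself states that Proposition~\ref{prop:intII} follows ``by an almost identical integration argument to Proposition~\ref{prop:int_pi_minus}'' after taking the Malliavin derivative of \eqref{eq:Pi_minus_to_Pi} (justified in Subsection~\ref{sec:int_approx}), which is precisely the near-field/far-field splitting and auxiliary bounds you lay out, with $\mathbb{L}^p$ replaced by $\mathbb{L}^{q'}$ and $\bar w$ carried along.
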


We finally return from $\delta\Pi_{x\beta}$ to $\delta\pi_{xy}^{({\bf n})}$,
by taking the Malliavin derivative of the three-point identity \eqref{eq:three_point} (which as shown above follows from \eqref{eq:recenter_Pi_specific}),
which by Leibniz' rule, see Subsection~\ref{sec:gamma_three_point_approx}, assumes the form
\begin{align}\label{eq:delta_three_point}
\sum_{{\bf n}}\delta\pi_{xy}^{({\bf n})}(z-y)^{\bf n}=\delta\Pi_{x}(z)
-\Gamma_{xy}^*P\delta\Pi_{y}(z)-\delta\Gamma_{xy}^*P\Pi_{y}(z).
\end{align}
We obtain quite analogously 
to Propositions \ref{prop:pi_n_three_point} and \ref{prop:gamma_pp}, using H\"older's inequality
in probability like in the proof of Proposition \ref{prop:delta_gamma_npp}
 
\begin{proposition}[Three-point argument II]\label{prop:three_pointII}
Assume that $\eqref{eq:gamma}_{\beta}^\gamma$ and $\eqref{eq:delta_gamma}_{\beta}^\gamma$ 
hold for $\gamma$ not purely polynomial, 
and that $\eqref{eq:pi_generic}_{\prec\beta}$, $\eqref{eq:recenter_Pi_specific}_\beta$, and 
$\eqref{eq:delta_pi_generic}_{\preccurlyeq\beta}$ hold. 
Then $\eqref{eq:delta_pi_n}_{\beta}$ and $\eqref{eq:delta_gamma}_\beta$ hold.
\end{proposition}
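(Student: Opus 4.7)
The plan is to run the three-point argument exactly as in Proposition~\ref{prop:pi_n_three_point}, but now in an $\mathbb{L}^{q'}$-topology and with H\"older's inequality in probability inserted whenever a product involves a Malliavin factor, just as in the proof of Proposition~\ref{prop:delta_gamma_npp}. First I would start from the Malliavin-differentiated three-point identity (\ref{eq:delta_three_point}); note that its validity rests on the fact that $(\mathrm{id}-P)\Pi_y$ is deterministic by (\ref{eq:pi_purely_pol}), so that Malliavin-differentiating the original identity (\ref{eq:three_point}) kills the middle term and leaves the stated three-term decomposition.

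Next, using the invertibility of the parabolic Vandermonde matrix of order $<|\beta|$, together with the population condition (\ref{eq:pi_n_pop}) for $\delta\pi^{(\mathbf{n})}_{xy\beta}$, the conclusion $\eqref{eq:delta_pi_n}_\beta$ is equivalent to the annealed bound
\begin{equation*}
\sup_{z:\,|z-x|\le|y-x|}\mathbb{E}^{\frac{1}{q'}}\Big|\sum_{\mathbf{n}}\delta\pi^{(\mathbf{n})}_{xy\beta}(z-y)^{\mathbf{n}}\Big|^{q'}\lesssim |y-x|^{|\beta|}\,\bar w.
\end{equation*}
For such $z$ we have $|z-y|\lesssim|y-x|$, so via (\ref{eq:delta_three_point}) it suffices to prove the same bound for each of $\delta\Pi_{x\beta}(z)$, $(\Gamma^*_{xy}P\delta\Pi_y(z))_\beta$ and $(\delta\Gamma^*_{xy}P\Pi_y(z))_\beta$.

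The first term is directly $\eqref{eq:delta_pi_generic}_\beta$. For the second, I would expand componentwise as a sum over $\gamma$ not purely polynomial of $(\Gamma^*_{xy})_\beta^\gamma\,\delta\Pi_{y\gamma}(z)$; the triangularity (\ref{ord12}) of $\Gamma^*_{xy}$ with respect to $\prec$ restricts the sum to $\gamma\preccurlyeq\beta$, so that $\eqref{eq:delta_pi_generic}_\gamma$ is available by the induction hypothesis $\eqref{eq:delta_pi_generic}_{\preccurlyeq\beta}$. H\"older's inequality in probability at exponents $1/q'=1/p+1/\tilde q$ with $\tilde q<q$, used exactly as in the proof of Proposition~\ref{prop:delta_gamma_npp}, combined with $\eqref{eq:gamma}_\beta^\gamma$ for the non-Malliavin factor and $\eqref{eq:delta_pi_generic}_\gamma$ for the Malliavin factor, contributes $|y-x|^{|\beta|-|\gamma|}|z-y|^{|\gamma|}\bar w\lesssim|y-x|^{|\beta|}\bar w$, thanks to the additivity of $|\cdot|-\alpha$ on populated multi-indices, cf.~(\ref{eq:hom_add}). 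For the third term, the \textbf{strict} triangularity (\ref{eq:triangular_delta_Gamma}) of $\delta\Gamma^*_{xy}$ with respect to $\prec$ restricts the sum to $\gamma\prec\beta$, and the symmetric H\"older estimate using $\eqref{eq:delta_gamma}_\beta^\gamma$ for the Malliavin factor and $\eqref{eq:pi_generic}_\gamma$ for the non-Malliavin factor produces the same bound.

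The sums over $\gamma$ are effectively finite by the coerciveness of $|\cdot|_{\prec}$ on non-purely-polynomial multi-indices, so summation preserves the estimate and the Vandermonde equivalence concludes. I do not expect a serious obstacle here: the identity (\ref{eq:delta_three_point}), the Vandermonde reduction, and the two H\"older estimates all mirror arguments already established. The only point requiring care is the book-keeping of integrability exponents, i.e.\ choosing $\tilde q<q$ strictly so that the Malliavin hypotheses $\eqref{eq:delta_pi_generic}$ and $\eqref{eq:delta_gamma}$ can be applied, which is harmless because these are assumed for every $\tilde q<q\le 2$.
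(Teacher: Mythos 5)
Your argument is correct and matches the paper's intended proof: pass through the Malliavin-differentiated three-point identity (\ref{eq:delta_three_point}) (which holds by Leibniz plus $\delta\Pi_y=P\delta\Pi_y$, since $({\rm id}-P)\Pi_y$ is deterministic), invoke the parabolic Vandermonde equivalence of annealed norms, and estimate each of the three terms via H\"older in probability with the Malliavin factor in an $\mathbb{L}^{\tilde q}$-norm for $\tilde q<q$ and the non-Malliavin factor in an $\mathbb{L}^p$-norm, using the triangular structure (\ref{eq:triangular_Gamma}) for $\Gamma^*_{xy}$ and the strict triangular structure (\ref{eq:triangular_delta_Gamma}) for $\delta\Gamma^*_{xy}$ to confine the sum to $\gamma\preccurlyeq\beta$ (respectively $\gamma\prec\beta$) and to ensure $|\gamma|\le|\beta|$. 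One small slip: where you restrict the $\gamma$-sum for the second term you cite (\ref{ord12}) (the dependency statement) rather than the triangularity statement (\ref{eq:triangular_Gamma}), and the final collapse $|y-x|^{|\beta|-|\gamma|}|z-y|^{|\gamma|}\lesssim|y-x|^{|\beta|}$ follows from $|z-y|\lesssim|y-x|$ together with $|\beta|-|\gamma|\ge0$ rather than from the additivity (\ref{eq:hom_add}); neither affects the validity of the argument.
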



\subsection{Estimate of modelled distributions: \texorpdfstring{${\rm d}\Gamma_{xy}^*-{\rm d}\Gamma_{xz}^*\Gamma_{zy}^*$, $\delta\Pi^{-}_x-{\rm d}\Gamma_{xz}^*\Pi^{-}_z$, $\delta\Pi_x-\delta\Pi_x(z)-{\rm d}\Gamma_{xz}^*\Pi_z$, and ${\rm d}\pi^{({\bf n})}_{xy}-{\rm d}\pi^{({\bf n})}_{xz}-{\rm d}\Gamma_{xz}^*\pi^{({\bf n})}_{zy}$}{dGamma* xy - dGamma* xz Gamma* zy, deltaPi minus x - dGamma* xz Pi minus z, deltaPi x - deltaPi x(z) - dGamma* xz Pi z, and dpi n xy - dpi n xz - dGamma* xz pin zy}}
\label{thirdblock}

This subsection is at the heart of our proof.
We return to the estimate $\eqref{eq:mal_dual_annealed}_\beta$ on $\delta\Pi^{-}_x$.
Because of a lack of regularity in the singular case, $\eqref{eq:mal_dual_annealed}$ 
cannot be inferred from the estimate
$\eqref{eq:delta_pi_generic}$ on $\delta\Pi_x$
via the Malliavin derivative of the formula \eqref{eq:Pi_minus_def}. 
In addition, such a formula would involve the divergent
constants $c_{\beta'}$, at least for $\beta'\prec\beta$.
Instead, we have to capitalize on the gain in regularity that comes
with the passage from $\Pi_x$ to $\delta\Pi_x$,
which arises from replacing one of the instances of $\xi$ 
in this multi-linear expression by a $\delta\xi$. 
However, this gain is subtle for two reasons:
\begin{itemize}
\item In terms of derivative count, 
the passage from $\xi$ to $\delta\xi$ amounts to a gain in regularity by
$\frac{D}{2}=\frac{3}{2}$, namely from $\alpha-2$ to $\alpha-\frac{1}{2}$. 
However, due to the presence of the other instances of $\xi$
in the multi-linear $\delta\Pi_x$, this does not translate into a plain gain of 
$\frac{3}{2}$ derivatives when passing from $\Pi_x$ to $\delta\Pi_x$.
Still, the degree of modeledness (cf.~\eqref{eq:delta_pi_incr_generic} below) of $\delta\Pi_x$
has a boost from $\alpha$-H\"older continuity to $(\alpha+\frac{3}{2})$-modeledness.
This modeledness w.~r.~t.~$\Pi_{z}$ is described by a modelled distribution\footnote{which
arises from ${\rm d}\pi_{xz}^{(1,0)}$} 
${\rm d}\Gamma_{xz}^*$. Indeed, we shall control the rough-path increment
$\delta\Pi_x(y)-\delta\Pi_x(z)-{\rm d}\Gamma_{xz}^*\Pi_z(y)$ 
and the continuity expression ${\rm d}\Gamma_{xy}^*-{\rm d}\Gamma_{xz}^*\Gamma^*_{zy}$ 
to order $\alpha+\frac{3}{2}-$, the former in the sense of Gubinelli's
controlled rough paths \cite[Definition 1]{Gu04}, see (\ref{eq:delta_pi_incr_generic}),
the latter in the sense of \cite[Definition 3.1]{Ha14}, see (\ref{eq:form_cont}).\footnote{We remark that this separation between a controlled
rough path condition (\ref{eq:delta_pi_incr_generic}) and the
continuity condition (\ref{eq:form_cont}) is once more due to the fact
that our abstract model space $\mathsf{T}$ needs to be complemented by a copy of $\mathbb{R}$
capturing constant functions in order to reproduce Hairer's abstract model space, 
see Subsection~\ref{sec:Hairer}. If this is done,
\cite[Definition 3.1]{Ha14} corresponds to the combination of 
(\ref{eq:form_cont}) and (\ref{eq:delta_pi_incr_generic}).}
\item In terms of scaling, there is -- actually by construction -- 
no difference between the $(\alpha-2)$-H\"older norm relevant for $\xi$ 
and the $L^2$-based Sobolev norm (\ref{ao01}) of (fractional) order $\alpha-2+\frac{D}{2}$ 
$=\alpha-\frac{1}{2}$ relevant for $\delta\xi$, 
or its (scaling-wise identical) annealed Besov version (\ref{ao26}).
Hence we will resort to a trick that appears like a cheat:
In order to control the rough-path increments 
$\delta\Pi_x(y)-\delta\Pi_x(z)-{\rm d}\Gamma_{xz}^*\Pi_z(y)$ 
in terms of the (parabolic) distance $|y-z|$ of the active variable $y$ 
to the secondary base point $z$ to the desired power of $\alpha+\frac{D}{2}$,
we will replace the norm (\ref{ao26}) of $\delta\xi$ by a norm that involves a
weight that diverges in $z$. In order to recover the full (nominal)
gain of order of derivatives of $\frac{D}{2}$,
one would be tempted to replace $\delta\xi_s$ in (\ref{ao26})
by its weighted version $|\cdot-z|^{-\frac{D}{2}}\delta\xi_s$, 
which after squaring would result in the weighted integral $\int_{\mathbb{R}^2}|\cdot-z|^{-D}$.
Recalling the definition (\ref{ao18}) of the Carnot-Carath\'eodory distance, 
we however see that this integral is borderline divergent.
This would, in Subsection \ref{forthblock}, make it impossible to return from the weighted 
to the unweighted norm $\bar w$ by averaging in the base point $z$.
Hence we have to marginally tame the weight by replacing $\frac{D}{2}=\frac{3}{2}$
by some exponent
\begin{align}\label{ao25}
\kappa<\frac{3}{2}\quad(\mbox{and thus in particular $\;\kappa+\alpha<2\;$
by $\;\alpha\le\frac{1}{2}$})
\end{align}
and define
\begin{align}
w(z):=\Big(\int_0^\infty\frac{ds}{s}(\sqrt[4]{s})^{2(\frac{1}{2}-\alpha)} 
\int_{\mathbb{R}^2}dy\,|y-z|^{-2\kappa}\,\mathbb{E}^{\frac{2}{q}}|\delta\xi_s(y)|^q\Big)^{\frac{1}{2}}.
\label{eq:weights}
\end{align}
On the one hand, $w(z)$ is strong enough to control an
only slightly negative H\"older norm (however quenched and localized in $z$) of $\delta\xi_\tau$
\begin{align}\label{eq:base1}
\mathbb{E}^\frac{1}{q}|(\delta\xi_\tau)_{t}(z)|^q
\lesssim(\sqrt[4]{t})^{\alpha-2+\kappa}w(z),
\end{align}
as we shall show in the proof of Proposition \ref{prop:recIII} in the context of the base case.
On the other hand, because of (\ref{ao25}) we have that (even square) averages of $w(z)$ 
reduce to $\bar w$:
\begin{align}\label{eq:3ptIV2}
\fint_{z:|z-x|\le\lambda}dz \, w(z)\le
\Big(\fint_{z:|z-x|\le\lambda}dz \, w^2(z)\Big)^\frac{1}{2}
\lesssim\lambda^{-\kappa}\bar w.
\end{align}
It is conceivable that one could carry out the tasks of this subsection
on the level of Besov spaces, possibly appealing to \cite{HL17}.\footnote{Note added in revision: this has been implemented in \cite{HS24, BOT24}.}
However working on the (positive) H\"older level has the advantage that it is well-behaved
under taking products, which is amply used in reconstruction.
\end{itemize}

This stronger norm \eqref{eq:weights} will indeed result in an (annealed) controlled rough-path estimate
of $\delta\Pi_x$ of order $\kappa+\alpha$, see Proposition \ref{prop:intIII}.
This provides sufficient regularity in reconstruction when passing from
the rough-path increments of $\delta\Pi_x$ to those of $\delta\Pi_x^{-}$,
see Proposition \ref{prop:recIII}.
When it comes to the above-mentioned presence of the divergent $c$ in the
formula relating $\delta\Pi_x^{-}$ to $\delta\Pi_x$, we are saved by the fact that
the $c$ drops out when relating the rough-path increment
$\delta\Pi_x^{-}-{\rm d}\Gamma_{xz}^*\Pi_z^{-}$ of $\delta\Pi^{-}_x$ to the (second derivative of the)
rough-path increment
$\partial_1^2(\delta\Pi_x-\delta\Pi_x(z)-{\rm d}\Gamma_{xz}^*\Pi_z)$ of $\delta\Pi_x$, 
see the crucial formula (\ref{magic}).
We refer the reader to \cite{BOT24} for a more geometric intuition of ${\rm d}\Gamma^*$.

\medskip

We defer the (inductive) construction of 
${\rm d}\Gamma_{xz}^*$ to Subsection \ref{sec:dGamma_construction}
and mention here just what is necessary to explain the estimates:
In terms of its form, ${\rm d}\Gamma_{xz}^*$ is quite similar to
$\delta\Gamma_{xy}^*$, see (\ref{deltaGamma}), but truncated beyond ${\bf n}={\bf 0},(1,0)$, and 
with the Malliavin derivative $\delta\pi^{(1,0)}_{xy}$ replaced by
some\footnote{like for ${\rm d}\Gamma_{xy}^*$, the pre-fix ${\rm d}$ does not refer
to an operation like the directional Malliavin derivative $\delta$, but is part
of the symbol} ${\rm d}\pi_{xz}^{(1,0)}\in Q\mathsf{\tilde T}^*$:
\begin{align}\label{eq:def_dGamma}
{\rm d}\Gamma_{xz}^*=\sum_{{\bf n}={\bf 0},(1,0)}
{\rm d}\pi^{({\bf n})}_{xz}\Gamma_{xz}^*D^{({\bf n})}\quad\mbox{with}\quad
{\rm d}\pi^{({\bf 0})}_{xz}:=\delta\pi^{({\bf 0})}_{xz}.
\end{align}
In line with ${\rm d}\pi_{xz}^{({\bf 0})}=\delta\pi_{xz}^{({\bf 0})}\in \mathsf{\tilde T}^*$,
see (\ref{eq:algIII7}), we impose
\begin{align}\label{eq:algIII6}
{\rm d}\pi_{xz}^{(1,0)}\in Q\mathsf{\tilde T}^*
\quad\mbox{so that by (\ref{eq:algIII5})}\quad
{\rm d}\Gamma_{xz}^*\mathsf{T}^*\subset\mathsf{\tilde T}^*.
\end{align}
In Subsection \ref{sec:dGamma_construction}, we argue that ${\rm d}\pi_{xz}^{(1,0)}$
is determined by imposing qualitative first-order vanishing on every singular component
\begin{align}\label{eq:controlled_path_qual}
\E^\frac{1}{q'}|Q\big(\delta\Pi_{x}(y)-\delta\Pi_{x}(z)-{\rm d}\Gamma_{xz}^* \Pi_{z}(y)\big)|^{q'}
= o(|y-z|).
\end{align}
We note that the population pattern of $({\rm d}\Gamma^*_{xz})_{\beta}^{\gamma}$ quickly 
gains in complexity as the homogeneity of $\beta$ increases, 
see Figure~\ref{fig:dGamma_expl}.
\begin{figure}[H]
\begin{center}
\begin{tabular}{c|l}
$\beta$ & $\gamma$'s for which $({\rm d}\Gamma^*)_\beta^\gamma\neq0$ \\ 
\hline 
$0$ & $e_{(1,0)}$ \\
$e_1$ & $e_0,\,e_{(1,0)},\,e_1+e_{(1,0)}$ \\
$2e_1$ & $e_0,\,2e_0,\,e_0+e_1,\,e_{(1,0)},\,e_1+e_{(1,0)},\,e_0+e_1+e_{(1,0)}, \,
2e_1+e_{(1,0)}$ \\
$3e_1$ & $e_0,\, 2e_0,\,3e_0,\,e_0+e_1,\,2e_0+e_1,\,e_{(1,0)},\,
e_0+2e_1,\,e_1+e_{(1,0)}$, \\
& $e_0+e_1+e_{(1,0)},\,2e_0+e_1+e_{(1,0)},\,2e_1+e_{(1,0)},\,e_0+2e_1+e_{(1,0)},\, 3e_1+e_{(1,0)}$ \\
\end{tabular}
\caption{Population pattern of $({\rm d}\Gamma^*)_\beta$ for $\beta = 0, e_1, 2e_1, 3e_1$.  \label{fig:dGamma_expl}}
\end{center}
\end{figure}

Up to these differences, the type and order of tasks will be as in Subsection \ref{firstblock}. 
The first task
is the algebraic argument relying on the following analogue of (\ref{deltaGamma}):
\begin{align}\label{eq:formula_form_cont}
({\rm d}\Gamma^*_{xy}-{\rm d}\Gamma^*_{xz}\Gamma^*_{zy})Q
&=\sum_{{\bf n}={\bf 0},(1,0)}\big({\rm d}\pi_{xy}^{({\bf n})}-{\rm d}\pi_{xz}^{({\bf n})}
-{\rm d}\Gamma^*_{xz}\pi_{zy}^{({\bf n})}\big)\Gamma^*_{xy} D^{({\bf n})} Q.
\end{align}
This formula, which will be established in the proof of the upcoming
Proposition \ref{prop:algIII}, suggests to introduce the following
estimate on the rough-path increments of ${\rm d}\pi_{xz}^{({\bf n})}$:
\begin{align}\label{eq:delta_pi_d_pi_incr}
\lefteqn{\mathbb{E}^\frac{1}{q'}\big|\big({\rm d}\pi_{xy}^{({\bf n})}-{\rm d}\pi_{xz}^{({\bf n})}
-{\rm d}\Gamma^*_{xz}\pi_{zy}^{({\bf n})}\big)_{\beta}\big|^{q'}}\nonumber\\
&\lesssim |y-z|^{\kappa+\alpha-|{\bf n}|} 
(|y-z|+|z-x|)^{|\beta|-\alpha}(w_x(y)+w_x(z))\quad\mbox{for}\;{\bf n}={\bf 0},(1,0),
\end{align}
which is the analogue of (\ref{eq:pi_n}). We note that (\ref{ao24}), which we need to impose below,
implies in particular\footnote{using $\alpha\le 1$}
\begin{align}\label{eq:algIII8}
\kappa>1-\alpha,
\end{align}
so that the first exponent in (\ref{eq:delta_pi_d_pi_incr}) is strictly positive.
As we shall discuss at the beginning of Subsection \ref{forthblock}, 
in (\ref{eq:delta_pi_d_pi_incr}) and in this subsection, 
we do not just need the norm $w(y)+w(z)$ with a singular weight at the two active points,
$y$ and $z$, but also a contribution from the unweighted norm $\bar w$.
We combine weighted and unweighted norms through
\begin{align}\label{eq:def_w_x_z}
 w_x(z) : = w(z) +|z-x|^{-\kappa} \bar w.
\end{align}
While this inclusion of $\bar w$ is dimensionally correct, it has the irritating
effect of introducing an artificial singularity at $z=x$, which however does
not create problems.

\begin{proposition}[Algebraic argument III]\label{prop:algIII}
Assume that $\eqref{eq:delta_pi_d_pi_incr}_{\prec\beta}$ and $\eqref{rec01}_{\prec\beta}$ hold, 
and that $\eqref{eq:gamma}_{\preccurlyeq\beta}^{\gamma}$ holds for all
$\gamma$ not purely polynomial.
Then we have for all $\gamma$ not purely polynomial
\begin{align}\label{eq:form_cont}
\lefteqn{\mathbb{E}^\frac{1}{q'}\big|\big(({\rm d}\Gamma^*_{xy}-{\rm d}\Gamma^*_{xz}\Gamma^*_{zy} 
)Q\big)_\beta^\gamma\big|^{q'}
\lesssim\Big(\mathbf{1}_{\gamma(1,0)=0}\, 
|y-z|^{\kappa+\alpha}(|y-z|+|z-x|)^{|\beta|-|\gamma|-\alpha}}\nonumber\\  
&+\mathbf{1}_{\gamma(1,0)=1}\, 
|y-z|^{\kappa+\alpha-1}(|y-z|+|z-x|)^{|\beta|-|\gamma|-\alpha+1}\Big)
(w_x(y)+w_x(z)),
\end{align}
with the implicit understanding that all exponents are non-negative unless the l.~h.~s.~vanishes.
\end{proposition}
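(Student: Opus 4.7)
The proof splits into two parts: (i) establish the algebraic identity \eqref{eq:formula_form_cont}, and (ii) derive \eqref{eq:form_cont} from it via H\"older's inequality in probability together with the induction hypotheses.

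For (i), we expand $({\rm d}\Gamma^*_{xy}-{\rm d}\Gamma^*_{xz}\Gamma^*_{zy})Q$ using the definition \eqref{eq:def_dGamma} and the transitivity $\Gamma^*_{xy}=\Gamma^*_{xz}\Gamma^*_{zy}$ from \eqref{ao15}. After rewriting $\Gamma^*_{xy}$ as $\Gamma^*_{xz}\Gamma^*_{zy}$ in the first summand, the discrepancy between the two contributions reduces to the commutators $[D^{(\n)},\Gamma^*_{zy}]$ for $\n\in\{{\bf 0},(1,0)\}$. These can be computed by evaluating on the coordinates $\mathsf{z}_k,\mathsf{z}_\m$, using \eqref{eq:Gamma_z_k}, \eqref{eq:Gamma_z_n}, and the fact that $\Gamma^*_{zy}$ is multiplicative \eqref{eq:mult} while $D^{(\n)}$ is a derivation. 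The commutators, combined with the cocycle relation $\pi^{(\n)}_{xy}=\pi^{(\n)}_{xz}+\Gamma^*_{xz}\pi^{(\n)}_{zy}$ (a consequence of \eqref{eq:monoid} applied to the factorisation $\Gamma^*_{xy}=\Gamma^*_{xz}\Gamma^*_{zy}$), precisely produce the correction ${\rm d}\Gamma^*_{xz}\pi^{(\n)}_{zy}$ appearing in \eqref{eq:formula_form_cont}.

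For (ii), we take the $(\beta,\gamma)$ entry of \eqref{eq:formula_form_cont} with $\gamma$ populated and not purely polynomial. A generic summand is of the form $({\rm d}\pi^{(\n)}_{xy}-{\rm d}\pi^{(\n)}_{xz}-{\rm d}\Gamma^*_{xz}\pi^{(\n)}_{zy})_{\beta_1}(\Gamma^*_{xy})_{\beta_2}^{\delta}(D^{(\n)}Q)_{\delta}^{\gamma}$ with $\beta_1+\beta_2=\beta$ and $\n\in\{{\bf 0},(1,0)\}$. By \eqref{eq:algIII7} and \eqref{eq:algIII6}, the first factor lies in $\mathsf{\tilde T}^*$, so $\beta_1$ is not purely polynomial; hence \eqref{eq:recIII25} delivers $\beta_1\prec\beta$ and $\beta_2\preccurlyeq\beta$, as required for the induction hypotheses. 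A short verification using the populated constraint on $\gamma$ shows that $\delta$ is likewise not purely polynomial, so $\eqref{eq:gamma}_{\beta_2}^{\delta}$ is applicable. H\"older's inequality in probability with conjugate exponents satisfying \eqref{eq:Hoelder} then controls the product by the induction hypotheses $\eqref{eq:delta_pi_d_pi_incr}_{\beta_1}$ and $\eqref{eq:gamma}_{\beta_2}^{\delta}$.

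The exponent bookkeeping uses the additivity of $|\cdot|-\alpha$ on populated multi-indices \eqref{eq:hom_add}, giving $|\beta_1|+|\beta_2|=|\beta|+\alpha$. The homogeneity shift through $D^{(\n)}$ satisfies $|\delta|-|\gamma|=\alpha$ for $\n={\bf 0}$ (since $\delta=\gamma-e_k+e_{k+1}$) and $|\delta|-|\gamma|=\alpha-1$ for $\n=(1,0)$ (since $\delta=\gamma-e_{(1,0)}$, which forces $\gamma(1,0)\ge 1$). Combining with $|y-x|\le|y-z|+|z-x|$, whose exponent $|\beta_2|-|\delta|$ is non-negative by the triangularity \eqref{triangle} of $\Gamma^*$, produces exactly the two terms of \eqref{eq:form_cont}. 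When $\gamma(1,0)=1$ both values of $\n$ contribute, but the $\n={\bf 0}$ bound is dominated by the $\n=(1,0)$ bound upon inserting the trivial $|y-z|\le|y-z|+|z-x|$, which justifies the compressed form. The main obstacle is part (i): the precise derivation of \eqref{eq:formula_form_cont} requires careful tracking of the non-commutativity of $D^{(\n)}$ with $\Gamma^*_{zy}$ and of the way it generates the corrective term ${\rm d}\Gamma^*_{xz}\pi^{(\n)}_{zy}$ with the correct sign and placement.
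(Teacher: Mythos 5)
Your proof is essentially correct and follows the same algebraic route as the paper's. For part~(i), both you and the paper derive \eqref{eq:formula_form_cont} from three ingredients: the Leibniz identity $D\Gamma^*=\Gamma^*D+\sum_{\n}(D\pi^{(\n)})\Gamma^*D^{(\n)}$ applied to $\Gamma^*_{zy}$ (which is what your commutator $[D^{(\m)},\Gamma^*_{zy}]$ amounts to), transitivity $\Gamma^*_{xy}=\Gamma^*_{xz}\Gamma^*_{zy}$, and multiplicativity of $\Gamma^*_{xz}$ on $\mathsf{\tilde T}^*$, the latter being needed (and justified via \eqref{eq:algIII5}) to collapse $\sum_\m{\rm d}\pi^{(\m)}_{xz}\Gamma^*_{xz}(D^{(\m)}\pi^{(\n)}_{zy})\Gamma^*_{zy}D^{(\n)}$ into $({\rm d}\Gamma^*_{xz}\pi^{(\n)}_{zy})\Gamma^*_{xy}D^{(\n)}$. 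Part~(ii)—the triangular decomposition with $\beta_1\prec\beta$, $\beta_2\preccurlyeq\beta$, the verification that $\delta\in\{\gamma-e_k+e_{k+1},\gamma-e_{(1,0)}\}$ is not purely polynomial, H\"older, \eqref{eq:hom_add}, and the absorption of the $\gamma(1,0)=0$ term into the other—matches the paper's exactly.

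One small inaccuracy: you attribute the appearance of the corrective term ${\rm d}\Gamma^*_{xz}\pi^{(\n)}_{zy}$ to the cocycle relation $\pi^{(\n)}_{xy}=\pi^{(\n)}_{xz}+\Gamma^*_{xz}\pi^{(\n)}_{zy}$ from \eqref{rec01}. That relation is never actually invoked in the derivation of \eqref{eq:formula_form_cont}; the corrective term is produced mechanically by the commutator plus multiplicativity, while the only transitivity input is the product identity $\Gamma^*_{xz}\Gamma^*_{zy}=\Gamma^*_{xy}$ itself. The cocycle is of course equivalent to this via \eqref{eq:monoid}, so no harm is done, but the statement that the cocycle ``produces'' the correction is misleading: even if you dropped the cocycle as an axiom and kept only $\Gamma^*_{xy}=\Gamma^*_{xz}\Gamma^*_{zy}$, the same algebra would go through.
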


Before embarking on the proof, let us comment on how to interpret \eqref{eq:form_cont}. For this, it is convenient to
	write the l.~h.~s.~in the component-wise fashion of $({\rm d}\Gamma^*_{xy})_{\beta}^{\gamma}$
	$-\sum_{\beta'}({\rm d}\Gamma^*_{xz})_{\beta}^{\beta'}(\Gamma^*_{zy})_{\beta'}^{\gamma}$,
	to fix $\beta$ and $x$, and to think of $\gamma,\beta'$ and $z,y$ as instances of
	the active index and variable, respectively. First let $y$ and $z$ be such that $|y-z|\ll |z-x|$. Then the r.~h.~s. of $\eqref{eq:form_cont}_\beta$ reduces, up to a multiplicative constant depending on $x$ and $z$, to
$\mathbf{1}_{\gamma(1,0)=0}\, 
	|y-z|^{\kappa+\alpha} +\mathbf{1}_{\gamma(1,0)=1}\, 
	|y-z|^{\kappa+\alpha-1}$, and therefore can be read as a modelled continuity condition \cite[(3.1)]{Ha14} of degree $\kappa + \alpha$, provided the degree of $\gamma$ is given by\footnote{Note that for $|\gamma|<2$, $|\gamma|_p$ $=$ $0$ if $\gamma(1,0) = 0$ and $|\gamma|_p=1$ for $\gamma(1,0) = 1$.} $|\gamma|_p$. There is a second more subtle interpretation which becomes apparent when setting $z=x$ (and ignoring that $w_x(z=x)=\infty$): Then the r.~h.~s. of $\eqref{eq:form_cont}_\beta$ reduces to $|y-x|^{\kappa + |\beta| - |\gamma|}$, which expresses a modelled continuity condition of order $\kappa + |\beta|$ provided $\gamma$ is graded by $|\gamma|$.
\begin{proof}
We start with the argument for formula (\ref{eq:formula_form_cont}): Applying
one of the commuting derivations $D\in\{D^{({\bf m})}\}_{\bf m}$ 
to (\ref{exp}) yields by Leibniz' rule the operator identity
\begin{align*}
D\Gamma^*=\Gamma^*D+\sum_{{\bf n}}(D\pi^{({\bf n})})\Gamma^*D^{({\bf n})}\quad\mbox{and thus}\quad
D\Gamma^*Q=\Gamma^*DQ+\sum_{{\bf n}={\bf 0},(1,0)}(D\pi^{({\bf n})})\Gamma^*D^{({\bf n})}Q.
\end{align*}
Using this for $\Gamma^*=\Gamma_{zy}^*$ and then applying $\Gamma^*_{xz}$ to it,
we obtain by multiplicativity (\ref{eq:mult}), 
which we may apply since $D$ and $\Gamma_{zy}^*D^{({\bf n})}$ 
map $\mathsf{T}^*$ into the sub-algebra $\mathsf{\tilde T}^*$ by (\ref{eq:algIII5}), 
and by transitivity
\eqref{ho03} (which can be applied since once more $D$ maps $\mathsf{T}^*$ into $\mathsf{\tilde T}^*$; furthermore we will need only $\eqref{ho03}_{\beta_2}$ for $\beta_2\preccurlyeq\beta$ as we shall see below, which follows from $\eqref{rec01}_{\prec\beta}$) 
\begin{align*}
\Gamma^*_{xz}D\Gamma^*_{zy}Q=\Gamma^*_{xy}DQ
+\sum_{{\bf n}={\bf 0},(1,0)}(\Gamma_{xz}^*D\pi^{({\bf n})}_{zy})\Gamma^*_{xy}D^{({\bf n})}Q.
\end{align*}
We now specify to $D=D^{({\bf m})}$, (left-)multiply by ${\rm d}\pi^{({\bf m})}_{xz}$,
and sum over ${\bf m}={\bf 0},(1,0)$ to obtain by definition (\ref{eq:def_dGamma}),
\begin{align*}
{\rm d}\Gamma^*_{xz}\Gamma^*_{zy}Q
&=\sum_{{\bf m}={\bf 0},(1,0)}{\rm d}\pi_{xz}^{({\bf m})}\Gamma^*_{xy}D^{({\bf m})}Q
+\sum_{{\bf n}={\bf 0},(1,0)}({\rm d}\Gamma_{xz}^*\pi^{({\bf n})}_{zy})
\Gamma^*_{xy}D^{({\bf n})}Q\nonumber\\
&=\sum_{{\bf n}={\bf 0},(1,0)}\big({\rm d}\pi_{xz}^{({\bf n})}
+({\rm d}\Gamma_{xz}^*\pi^{({\bf n})}_{zy})\big)\Gamma^*_{xy}D^{({\bf n})}Q.
\end{align*}
Subtracting this from (\ref{eq:def_dGamma}) (with $z$ replaced by $y$ and multiplied by $Q$
from the right) yields (\ref{eq:formula_form_cont}).

\medskip

We now turn to the estimate (\ref{eq:form_cont}) proper. Since up to the indicator functions, 
the first r.~h.~s.~term is dominated by the second one, it is enough to establish 
(\ref{eq:form_cont}) without the first indicator function $\mathbf{1}_{\gamma(1,0)=0}$. 
Like in the proof of Proposition \ref{prop:delta_gamma_npp}, we distinguish the contributions
${\bf n}={\bf 0}$ and ${\bf n}=(1,0)$.  
In particular, the $({\bf n}={\bf 0})$-contribution to 
$\eqref{eq:formula_form_cont}_{\beta}^\gamma$ gives rise to terms of the form
\begin{align}\label{eq:algIII2}
\big({\rm d}\pi^{({\bf 0})}_{xy}-{\rm d}\pi^{({\bf 0})}_{xz}
-{\rm d}\Gamma_{xz}^*\pi^{({\bf 0})}_{zy}\big)_{\beta_1}
(\Gamma_{xy}^*)_{\beta_2}^{\gamma-e_k+e_{k+1}}
\end{align}
for some $k\ge 0$ and $\beta_1+\beta_2=\beta$. Since by (\ref{eq:algIII7})
and (\ref{eq:algIII6}), the first factor vanishes when $\beta_1$ is purely polynomial,
like in the proof
of Proposition \ref{prop:delta_gamma_npp}, we effectively have $\beta_1\prec\beta$,
$\beta_2\preccurlyeq\beta$, and $\gamma-e_k+e_{k+1}$ not purely polynomial.
By H\"older's inequality in probability space, 
the $\mathbb{E}^\frac{1}{q'}|\cdot|^{q'}$-norm of (\ref{eq:algIII2}) is estimated by 
\begin{align*}
|y-z|^{\kappa+\alpha}(|y-z|+|z-x|)^{|\beta_1|-\alpha}(w_x(y)+w_x(z))\;
|y-x|^{|\beta_2|-|\gamma-e_k+e_{k+1}|}.
\end{align*}
Using $|y-x|\le|y-z|+|z-x|$ on the last factor, and appealing to (\ref{eq:algII1}), 
we see that this terms is contained in the first r.~h.~s.~term of (\ref{eq:form_cont}). 

\medskip

The terms coming from the $({\bf n}=(1,0))$-contribution to
$\eqref{eq:formula_form_cont}_{\beta}^\gamma$ are of the form
\begin{align}\label{eq:algIII}
\big({\rm d}\pi^{(1,0)}_{xy}-{\rm d}\pi^{(1,0)}_{xz}
-{\rm d}\Gamma_{xz}^*\pi^{(1,0)}_{zy}\big)_{\beta_1}
(\Gamma_{xy}^*)_{\beta_2}^{\gamma-e_{(1,0)}}
\end{align}
for some $\beta_1+\beta_2=\beta$. They are only present for $\gamma(1,0)\ge 1$;
the presence of $Q$ on the l.~h.~s.~of (\ref{eq:form_cont}) 
amounts to the restriction to $|\gamma|<2$, which in view of
(\ref{ao12}) only leaves $\gamma(1,0)=1$, 
giving rise to the characteristic function $\mathbf{1}_{\gamma(1,0)=1}$ 
in the second r.~h.~s.~contribution to (\ref{eq:form_cont}).
Again, as in the proof of Proposition \ref{prop:delta_gamma_npp}, 
we effectively have $\beta_1\prec\beta$, $\beta_2\preccurlyeq\beta$, and $\gamma-e_{\bf n}$ 
not purely polynomial.
By H\"older's inequality in probability space, 
the $\mathbb{E}^\frac{1}{q'}|\cdot|^{q'}$-norm of (\ref{eq:algIII}) is estimated by 
\begin{align*}
|y-z|^{\kappa+\alpha-1}(|y-z|+|z-x|)^{|\beta_1|-\alpha}(w_x(y)+w_x(z))\;
|y-x|^{|\beta_2|-|\gamma-e_{(1,0)}|}.
\end{align*}
As before, this time appealing to (\ref{eq:algII2}), 
we see that this term is contained in the second r.~h.~s.~term of (\ref{eq:form_cont}). 
\end{proof}

\medskip

The second task is to estimate the rough-path
increments of $\delta\Pi_{x}^{-}$ based on the estimate of the rough-path
increments of $\delta\Pi_{x}$ stated in Proposition \ref{prop:intIII}.
It relies on the $c$-free formula
\begin{align}\label{magic}
Q\big(\delta\Pi^-_x-{\rm d}\Gamma^*_{xz}Q\Pi^-_z\big)(z)
=Q\sum_{k\geq0}\mathsf{z}_k\Pi_x^k(z)
\partial_1^2\big(\delta\Pi_x-{\rm d}\Gamma^*_{xz}Q\Pi_z\big)(z)
+ \delta\xi_\tau(z)\mathsf{1},
\end{align}
which is the analogue of (\ref{eq:Pi_minus_anchor}), and the argument for
which will be given in the proof of Proposition \ref{prop:recIII}. 
In order to actually pass from $\eqref{eq:delta_pi_incr_generic}_{\prec\beta}$ to
$\eqref{eq:delta_pi_minus_generic}_\beta$, we need to free ourselves from the evaluation
at $z$ in $\eqref{magic}_\beta$, which will be done by a reconstruction argument.
Reconstruction requires that the sum of $\alpha$ 
(the bare regularity of the first factor $\Pi_{x}$)
and of $\kappa+\alpha$ (the degree of modeledness of the second factor $\delta\Pi_{x}$)
is larger than 2 (due to the presence of the second spatial derivatives). 
This enforces the lower bound 
\begin{align}\label{ao24}
\kappa>2-2\alpha,
\end{align}
which together with the upper bound (\ref{ao25})
is the sole reason for our assumption $\alpha>\frac{1}{4}$.
Incidentally, an identity analogous to (\ref{magic}) would hold for the non-centered model $\Pi$,
i.~e.~for the first base point $x$ omitted, and would presumably allow for reconstruction.
However, it would not allow us to derive the estimates of the right homogeneity, but rather the plain H\"older
estimates quite similar to \cite{Ha18}.

\medskip

In terms of $Q$,
there is a mismatch between the output of Proposition \ref{prop:algIII}
and the ideal input for the upcoming Proposition \ref{prop:recIII}. 
Handling the mismatch requires estimating ${\rm d}\Gamma_{xz}^*({\rm id}-Q)$,
which follows from the boundedness -- as opposed to continuity -- of
${\rm d}\Gamma_{xz}^*$ (see (\ref{eq:form_bound}) in Subsection \ref{forthblock}, 
where it plays a more important role).
Analogously to the second task of Subsection \ref{firstblock} we obtain

\begin{proposition}[Reconstruction III]\label{prop:recIII}
Assume that $\eqref{eq:pi_generic}_{\prec\beta}$, $\eqref{eq:gamma}_{\prec\beta}$, $\eqref{cw60_cont}_{\prec\beta}$, $\eqref{cw60_minus_cont}_{\prec\beta}$, $\eqref{eq:recenter_Pi_minus}_{\prec \beta}$, $\eqref{eq:recenter_Pi_specific}_{\prec\beta}$, 
$\eqref{eq:pi_minus_generic}_{\prec\beta}$, $\eqref{cw60_cont_mal_dual}_{\prec\beta}$, $\eqref{cw60_minus_cont_mal}_\beta$, and $\eqref{eq:delta_pi_incr_generic}_{\prec\beta}$  hold, 
assume that 
$\eqref{eq:form_cont}_{\preccurlyeq\beta}^\gamma$ and 
$\eqref{eq:form_bound}_{\preccurlyeq\beta}^\gamma$ hold, 
both for all $\gamma$ not purely polynomial.
Then we have
\begin{align}\label{eq:delta_pi_minus_generic}
\lefteqn{\mathbb{E}^\frac{1}{q'}|(\delta \Pi^-_{x}-{\rm d}\Gamma^*_{xz}Q\Pi^-_{z})_{\beta t}(y)|^{q'}
}\nonumber\\
&\lesssim(\sqrt[4]{t})^{\alpha-2} (\sqrt[4]{t}+|y-z|)^{\kappa}
(\sqrt[4]{t}+|y-z|+|z-x|)^{|\beta|-\alpha}(w_x(z) + w_x(y)).
\end{align}
\end{proposition}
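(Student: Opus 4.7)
The plan is to run a dyadic reconstruction argument on the germ $F_{xz} := \delta\Pi^-_x - {\rm d}\Gamma^*_{xz}Q\Pi^-_z$, viewed as a family indexed by the secondary base point $z$ with $x$ fixed. Two ingredients feed the reconstruction: a diagonal bound on $QF_{xz,t}(z)$ obtained from the $c$-free identity \eqref{magic}, and a continuity-in-$z$ bound on $QF_{xz,t}(y)$ obtained from the modelled-distribution estimate \eqref{eq:form_cont}.

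To establish \eqref{magic}, I would expand $\delta\Pi^-_x$ from \eqref{eq:Pi_minus_def_alt} using that the counter-term $c$ is deterministic (so only $\Pi_x$ and $\xi_\tau$ are differentiated by $\delta$), and expand ${\rm d}\Gamma^*_{xz}Q\Pi^-_z$ via \eqref{eq:def_dGamma}, the derivation property of $D^{(\n)}$, and the multiplicativity \eqref{eq:mult} of $\Gamma^*_{xz}$ on $\mathsf{\tilde T}^*$. Evaluating at $y=z$ and inserting ${\rm d}\pi^{(\0)}_{xz}=\delta\pi^{(\0)}_{xz}$ together with $\Pi_x(z)=\pi^{(\0)}_{xz}$ from \eqref{eq:Pipi}, the $c$-containing contributions from the two sides cancel, leaving \eqref{magic}. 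The diagonal bound at $y=z$ is then obtained by semi-group-convolving \eqref{magic} and applying H\"older's inequality as in \eqref{eq:Hoelder}: the rough-path increment factor $\partial_1^2(\delta\Pi_x - {\rm d}\Gamma^*_{xz}Q\Pi_z)(\cdot) = \partial_1^2(\delta\Pi_x - \delta\Pi_x(z) - {\rm d}\Gamma^*_{xz}\Pi_z)(\cdot)$ is controlled by $\eqref{eq:delta_pi_incr_generic}_{\prec\beta}$, the $\mathsf{z}_k \Pi^k_x$-type factors by $\eqref{eq:pi_generic}_{\prec\beta}$, and the $\delta\xi_\tau(z)\mathsf{1}$ piece (which handles the base case $\beta=0$, where the $\Pi^k$-product drops) by the weighted H\"older estimate \eqref{eq:base1}.

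For the continuity piece I would decompose
\[
F_{xz}(y) - F_{xz'}(y) = -\bigl(({\rm d}\Gamma^*_{xz} - {\rm d}\Gamma^*_{xz'}\Gamma^*_{z'z})Q\Pi^-_z\bigr)(y) - \bigl({\rm d}\Gamma^*_{xz'}(Q\Pi^-_z - \Gamma^*_{z'z}Q\Pi^-_z)\bigr)(y);
\]
the first term is estimated by combining $\eqref{eq:form_cont}_{\preccurlyeq\beta}^\gamma$ with $\eqref{eq:pi_minus_generic}_{\prec\beta}$ via H\"older, while the second uses the re-centering \eqref{eq:recenter_Pi_minus} to express $Q\Pi^-_z - \Gamma^*_{z'z}Q\Pi^-_z$ as a polynomial of degree $\le|\beta|-2$, whose image under ${\rm d}\Gamma^*_{xz'}({\rm id}-Q)$ is controlled via the boundedness estimate $\eqref{eq:form_bound}_{\preccurlyeq\beta}^\gamma$; this is precisely how the $Q$-mismatch between the $Q$ sitting inside the germ and the fact that $\Pi^-_{z'}\in\mathsf{\tilde T}^*$ is resolved. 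A standard dyadic reconstruction, in the spirit of Proposition \ref{prop:reconstr_reg}, then stitches the diagonal and continuity bounds together across scales to yield \eqref{eq:delta_pi_minus_generic}.

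The main obstacle is the exponent count in the diagonal step: since \eqref{magic} carries the second spatial derivatives $\partial_1^2$, the reconstruction sum of exponents requires $\alpha + (\kappa+\alpha) > 2$, which forces $\kappa > 2-2\alpha$ of \eqref{ao24} and, combined with the upper bound \eqref{ao25}, is the sole source of the restriction $\alpha > \tfrac14$. Book-keeping of the various factors $(\sqrt[4]{t}+|y-z|)^\kappa$, $(\sqrt[4]{t}+|y-z|+|z-x|)^{|\beta|-\alpha}$, and the two-point weight $w_x(y)+w_x(z)$ through the dyadic telescoping is delicate but parallel to the structure already displayed in the proof of Proposition \ref{prop:reconstr_reg}.
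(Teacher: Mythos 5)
You have correctly identified all the decisive ingredients — the $c$-free identity \eqref{magic}, the continuity afforded by \eqref{eq:form_cont}, the role of \eqref{eq:form_bound} in curing the $Q$-mismatch, and the exponent count $\alpha + (\kappa+\alpha) > 2$ behind \eqref{ao24} — and these do organize the proof exactly as you say. However, your choice of germ introduces a genuine gap in the reconstruction step.

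You propose to reconstruct the germ $F_{xz} := \delta\Pi^-_x - {\rm d}\Gamma^*_{xz}Q\Pi^-_z$, feeding in ``a diagonal bound on $QF_{xz,t}(z)$ obtained from \eqref{magic}.'' But \eqref{magic} is a \emph{pointwise} identity at the secondary base point $z$: it says that the value $F_{xz}(z)$ equals the value of the modelled expression $\sum_k \mathsf{z}_k\Pi_x^k(z)\partial_1^2(\delta\Pi_x-{\rm d}\Gamma^*_{xz}Q\Pi_z) + \delta\xi_\tau\mathsf{1}$ at $z$, and nothing more. You cannot ``semi-group-convolve \eqref{magic}'' to obtain $F_{xz,t}(z)$, because the convolution $F_{xz,t}(z) = \int \psi_t(z-y')F_{xz}(y')\,dy'$ probes $F_{xz}(y')$ for $y'\ne z$, where \eqref{magic} gives no information. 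Moreover, the standard reconstruction lemma (such as \cite[Lemma 4.8]{LO22}, which the paper invokes) upgrades a \emph{vanishing} diagonal $\lim_{t\downarrow 0}F_{xz,t}(z)=0$ to a scale-wise bound on $F_{xz,t}(z)$; your germ's diagonal equals the modelled expression, which is not zero, so the lemma does not apply as stated. Any attempt to run reconstruction with a non-vanishing diagonal would force you to control the pointwise value $F_{xz}(z)$ itself, which is $\tau$-dependent and therefore does not yield estimates uniform in the ultra-violet cut-off.

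The fix is the paper's: define the germ as the \emph{difference} between $\delta\Pi^-_x - {\rm d}\Gamma^*_{xz}Q\Pi^-_z$ and the modelled expression, as in \eqref{eq:recIII2}. Then \eqref{magic} amounts precisely to diagonal vanishing \eqref{eq:recIII16}, continuity in $z$ gives \eqref{eq:recIII15}, and the reconstruction lemma applies cleanly; the modelled expression is then estimated separately at positive $t$ via \eqref{eq:recIII17} (using exactly the ingredients $\eqref{eq:delta_pi_incr_generic}_{\prec\beta}$, $\eqref{eq:pi_generic}_{\prec\beta}$, and \eqref{eq:base1} you list), and the two pieces are added. You should also note that passing from the pointwise identity \eqref{magic} to the robust limit $\lim_{t\downarrow0}\E^{1/q'}|F_{xz\beta\,t}(z)|^{q'}=0$ requires weak continuity in the active variable, which is supplied by the qualitative estimates of Remarks \ref{rem:1} and \ref{rem:1_mal}; and that one must check that the $\beta\in\mathbb{N}_0 e_0$ components drop out so that the exponent $|\beta|+\kappa-\theta$ in the continuity estimate is non-negative. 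Finally, your continuity decomposition has a sign/typo: the second term should involve $\Gamma^*_{z'z}Q\Pi^-_z - Q\Pi^-_{z'} = ({\rm id}-Q)\Gamma^*_{z'z}Q\Pi^-_z$ (no nontrivial polynomial correction arises since $|\beta|-2<0$ in the singular range), not $Q\Pi^-_z - \Gamma^*_{z'z}Q\Pi^-_z$.
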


We remark that these three exponents are natural: The first exponent $\alpha-2$
captures the bare (distributional) regularity of $\delta \Pi^-_{x}-{\rm d}\Gamma^*_{xz} \Pi^-_{z}$
at an arbitrary point, which is not better than the one of $\delta \Pi^-_{x}$ or
$\Pi^{-}_z$, and does not depend on $\beta$. If $y=z$, the sum $\kappa+\alpha-2$
of the two first exponents emerges and describes the regularity of the
expression $\delta \Pi^-_{x}-{\rm d}\Gamma^*_{xz} \Pi^-_{z}$ near the secondary base point $z$,
which does not depend on $\beta$;
it makes the gain of $\kappa$ appear that arises from the weight in $w(z)$.
Finally, the sum $\kappa+|\beta|-2$ is dictated by scaling; 
passing from $w(z)$ back to $\bar w$ removes a length to the power $\kappa$,
leads to $|\beta|-2$, in line with (\ref{eq:mal_dual_annealed}) and ultimately
(\ref{eq:pi_minus_generic}). 

\medskip

As in the proof of Proposition~\ref{prop:reconstr_reg}, for the reconstruction 
of the rough-path increments of $\delta\Pi_x^-$ we require (weak) continuity in the active variable. 
Therefore, we need -- in a purely qualitative way -- the analogue of Proposition~\ref{rem:1} 
on the level of Malliavin derivatives, 
the proof of which follows along the same lines and is postponed to Section~\ref{sec:proof_rem}.

\begin{proposition}[Divergent bounds II]\label{rem:1_mal} Under Assumption~\ref{ass:spectral_gap} the following holds for every populated $|\beta|<2$:
\begin{equation}
 \mathbb{E}^\frac{1}{q'}|\partial_1^2\delta\Pi_{x\beta}(y)|^{q'} + \mathbb{E}^\frac{1}{q'}|\partial_2\delta\Pi_{x\beta}(y)|^{q'}
 \lesssim(\sqrt[4]{\tau})^{\alpha-2}(\sqrt[4]{\tau}+|y-x|)^{|\beta|-\alpha} \bar w.  \label{cw60_mal_dual}
\end{equation}
Furthermore, we have 
\begin{align}
&\mathbb{E}^\frac{1}{q'}|\partial_1^2\delta\Pi_{x\beta}(y)-\partial_1^2\delta\Pi_{x\beta}(z)|^{q'}
+\mathbb{E}^\frac{1}{q'}|\partial_2\delta\Pi_{x\beta}(y) - \partial_2\delta\Pi_{x\beta}(z)|^{q'}
\label{cw60_cont_mal_dual}\\
&+\mathbb{E}^\frac{1}{q'}|\delta\Pi_{x\beta}^-(y) - \delta\Pi_{x\beta}^-(z)|^{q'}\label{cw60_minus_cont_mal}\\
&\lesssim(\sqrt[4]{\tau})^{-2}(\sqrt[4]{\tau}+|y-x|+|z-x|)^{|\beta|-\alpha} |y-z|^\alpha \bar w.\nonumber
\end{align}
\end{proposition}

By duality, \eqref{cw60_cont_mal_dual} implies an annealed and weighted $C^{2,\alpha}$-estimate for the Malliavin derivate 
$\frac{\partial}{\partial \xi}\Pi_{x\beta}$. Kolmogorov's criterion\footnote{for random fields taking values in a Hilbert space} 
then ensures that $\frac{\partial}{\partial \xi}\Pi_{x\beta}\in C^2(H^*)$ almost surely, 
where $H^*$ denotes the Hilbert space with norm
$\|\cdot\|_*$ defined in \eqref{as02}. Likewise, we have 
$\frac{\partial}{\partial \xi}\Pi_{x\beta}^{-}$ $\in C^0(H^*)$.
This justifies evaluating 
$\delta\Pi^-_{x}$ and $\partial_1^2\delta\Pi_x$ in a point $z$, as done in (\ref{magic}).

\begin{proof}[Proof of Proposition \ref{prop:recIII}]
We start with the proof of (\ref{magic}).
On the one hand, we apply $Q$ to \eqref{eq:Pi_minus_anchor}; on the other hand, we take the Malliavin derivative of (\ref{eq:Pi_minus_def_alt}), see Subsection~\ref{sec:reconstr_approx}:
\begin{align}
& Q\Pi_z^{-}(z)=Q\big(P\mathsf{z}_0\partial_1^2\Pi_z-c+\xi_\tau\mathsf{1}\big)(z),\label{eq:recIII27}
\\
& \delta\Pi_x^{-}=P\big(\sum_{k\ge0}\mathsf{z}_k\Pi_x^k\partial_1^2\delta\Pi_x
+\sum_{k\ge 0}(k+1)\mathsf{z}_{k+1}\Pi_x^k\delta\Pi_x\partial_1^2\Pi_x\big)
-\sum_{k\ge 0}\frac{1}{k!}\Pi_x^k\delta\Pi_x(D^{({\bf 0})})^{k+1}c
+\delta\xi_{\tau}\mathsf{1}.\label{eq:recIII30}
\end{align}
Since by definition (\ref{ao12}),
the $\beta(k=0)$-component has no effect on $|\beta|$, we have $QP\mathsf{z}_0=P\mathsf{z}_0Q$, so that in view of
\begin{align}\label{eq:recIII20}
	Q\partial_1^2\Pi_x\in\mathsf{\tilde T}^*,
\end{align}
which follows from \eqref{eq:pi_purely_pol} and \eqref{ao20}, $P$ is inactive in (\ref{eq:recIII27}).
Moreover, by the first item in (\ref{eq:c_pop_cond}), $Q$ is inactive on $c$
and of course on $\mathsf{1}$.  
By (\ref{eq:algIII7}) and (\ref{eq:recIII29}), $P$ is also inactive in (\ref{eq:recIII30}).
Hence we learn (using also ${\rm d}\Gamma_{xz}^*\mathsf{1}=0$) 
that (\ref{magic}) follows from
\begin{align}
Q{\rm d}\Gamma^*_{xz}\mathsf{z}_0Q\partial_1^2\Pi_z(z)
&=Q\Big(\sum_{k\geq0}\mathsf{z}_k\Pi_x^k{\rm d}\Gamma^*_{xz}Q\partial_1^2\Pi_z
+\sum_{k\ge 0}(k+1)\mathsf{z}_{k+1}\Pi_x^k\delta\Pi_x\partial_1^2\Pi_x\Big)(z),\label{eq:recIII31}
\\
{\rm d}\Gamma^*_{xz}c
&=\big(\delta\Pi_x\sum_{k\ge 0}\frac{1}{k!}\Pi_x^k(D^{({\bf 0})})^{k+1}c\big)(z).\label{eq:recIII33}
\end{align}
We start by arguing that (\ref{eq:recIII31}) follows from
\begin{align}\label{eq:recIII32}
{\rm d}\Gamma^*_{xz}\mathsf{z}_0\pi'
&=\sum_{k\geq0}\mathsf{z}_k\Pi_x^k(z){\rm d}\Gamma^*_{xz}\pi'
+\delta\Pi_x(z)\sum_{k\ge 0}(k+1)\mathsf{z}_{k+1}\Pi_x^k(z)\Gamma_{xz}^*\pi'\quad
\mbox{for}\;\pi'\in\mathsf{\tilde T}^*.
\end{align}
Indeed, we use (\ref{eq:recenter_Pi_specific}) in form of $\partial_1^2\Pi_x$ 
$=\Gamma_{xz}^*\partial_1^2\Pi_z$ and the triangularity properties \eqref{ord18} 
and \eqref{eq:triangular_Gamma}
w.~r.~t.~\mbox{$|\cdot|$} to see 
$Q\mathsf{z}_{k+1}\Pi_x^k(z)\delta\Pi_x(z)\Gamma_{xz}^*\partial_1^2\Pi_z(z)$
$=Q\mathsf{z}_{k+1}\Pi_x^k(z)\delta\Pi_x(z)\Gamma_{xz}^*Q\partial_1^2\Pi_z(z)$.
Hence (\ref{eq:recIII31}) indeed follows from (\ref{eq:recIII32}) 
for $\pi'=Q\partial_1^2\Pi_z(z)$.
By (\ref{eq:Pipi}) and the second item in
(\ref{eq:def_dGamma}) in form of $\delta\Pi_x(z)={\rm d}\pi^{({\bf 0})}_{xz}$,
(\ref{eq:recIII33}) and (\ref{eq:recIII32}) take the form of
\begin{align}
{\rm d}\Gamma^*_{xz}\mathsf{z}_0\pi'
&=\sum_{k\geq0}\mathsf{z}_k(\pi^{({\bf 0})}_{xz})^k{\rm d}\Gamma^*_{xz}\pi'
+{\rm d}\pi^{({\bf 0})}_{xz}\sum_{k\ge 0}(k+1)\mathsf{z}_{k+1}
(\pi^{({\bf 0})}_{xz})^k\Gamma^*_{xz}\pi',\label{eq:recIII34}\\
{\rm d}\Gamma^*_{xz}c
&={\rm d}\pi^{({\bf 0})}_{xz}\sum_{k\ge 0}\frac{1}{k!}
(\pi^{({\bf 0})}_{xz})^k(D^{({\bf 0})})^{k+1}c.\label{eq:recIII35}
\end{align}
Because of the second item in the population condition (\ref{eq:c_pop_cond}),
which we may rewrite as $D^{({\bf n})}c=0$ for ${\bf n}\not={\bf 0}$,
it follows from (\ref{exp}) that
\begin{align}\label{eq:Gamma_D_c}
\Gamma^*(D^{({\bf 0})})^{k'}c
=\sum_{k\ge 0}\frac{1}{k!}(\pi^{({\bf 0})})^k(D^{({\bf 0})})^{k+k'}c.
\end{align}
Together with (\ref{eq:Gamma_z_k}) for $k=0,1$, we see that the two identities
(\ref{eq:recIII34}) \& (\ref{eq:recIII35})
can be written as
\begin{align*}
{\rm d}\Gamma^*_{xz}\mathsf{z}_0\pi'
=(\Gamma^*_{xz}\mathsf{z}_0){\rm d}\Gamma^*_{xz}\pi'
+{\rm d}\pi^{({\bf 0})}_{xz}(\Gamma^*_{xz}\mathsf{z}_1)\Gamma^*_{xz}\pi'
\quad\mbox{and}\quad{\rm d}\Gamma^*_{xz}c
={\rm d}\pi^{({\bf 0})}_{xz}\Gamma^*_{xz}D^{({\bf 0})}c.
\end{align*}
By definition (\ref{eq:def_dGamma}), the first identity follows from the fact that
$D^{({\bf n})}$ is a derivation, mapping $\mathsf{T}^*$ into $\mathsf{\tilde T}^*$, 
see (\ref{eq:algIII5}), from how it acts on $\mathsf{z}_k$, see (\ref{eq:def_Dnull}),
and the multiplicativity (\ref{eq:mult}) of $\Gamma_{xz}^*$ together with the fact that 
$\mathsf{\tilde T}^*$ is closed under multiplication. The second identity follows by definition (\ref{eq:def_dGamma})
using once more that $D^{(1,0)}c=0$.

\medskip

Before starting with the estimates, we note that not only $w(y)$, cf.~(\ref{eq:3ptIV2}),
but also $w_x(y)$ behaves well under (square) averaging in $y$, 
in particular by $(\sqrt[4]{t})^{|{\bf n}|}|\partial^{\bf n}\psi_t|$:
\begin{align}\label{eq:intIII9}
(\sqrt[4]{t})^{|{\bf n}|}\int dy'|\partial^{\bf n}\psi_t(y-y')|w_x^2(y')\lesssim w_x^2(z)
\quad\mbox{provided}\;|y-z|\le\sqrt[4]{t}.
\end{align}
Indeed, in view of the definitions (\ref{eq:weights}) and (\ref{eq:def_w_x_z}), (\ref{eq:intIII9})
follows from the elementary fact that also negative moments are preserved by 
averaging with $(\sqrt[4]{t})^{|{\bf n}|}|\partial^{\bf n}\psi_t|$:
\begin{align}\label{eq:3ptIII10}
(\sqrt[4]{t})^{|{\bf n}|}\int dy'|\partial^{\bf n}\psi_t(y-y')| |x'-y'|^{-2\kappa}
&\lesssim (\sqrt[4]{t}+|x'-y|)^{-2\kappa}\nonumber\\
&\sim (2\sqrt[4]{t}+|x'-y|)^{-2\kappa}
\stackrel{|y-z|\le\sqrt[4]{t}}{\lesssim} (\sqrt[4]{t}+|x'-z|)^{-2\kappa}.
\end{align}

\medskip

We now introduce the family $\{F_{xz}\}_{x,z}$ of random space-time Schwartz distributions
\begin{align}\label{eq:recIII2}
F_{xz}:=\big(\delta\Pi_x^{-}-{\rm d}\Gamma_{xz}^*Q\Pi_z^{-}\big)
-\Big(\sum_{k\ge0}\mathsf{z}_k\Pi_x^{k}(z)
\partial_1^2\big(\delta\Pi_x-{\rm d}\Gamma_{xz}^*Q\Pi_z\big)+\delta\xi_\tau\mathsf{1}\Big),
\end{align}
and note that (\ref{magic}) amounts to
\begin{align}\label{eq:recIII16}
F_{xz\beta}(z)=0.
\end{align}
In fact, we need \eqref{eq:recIII16} in the more robust form 
\begin{equs}
\lim_{t\downarrow 0}\E^{\frac{1}{q'}}|F_{xz\beta \, t} (z)|^{q'} = 0. \label{eq:recIII16_weak}
\end{equs}
In order to pass from (\ref{eq:recIII16}) to \eqref{eq:recIII16_weak} 
it is sufficient to argue that the $\beta$-component of the r.~h.~s.~of \eqref{eq:recIII2}
is continuous in the active variable w.~r.~t.~$\E^{\frac{1}{q'}}|\cdot|^{q'}$, 
which we do term by term.
The continuity of $\delta\Pi_{x\beta}^-$ is stated 
in $\eqref{cw60_minus_cont_mal}_\beta$; the continuity
of $\delta\xi_\tau$ amounts to $\eqref{cw60_minus_cont_mal}_{\beta=0}$. 
Appealing to \eqref{eq:triangular_d_Gamma}, the continuity of 
$({\rm d}\Gamma_{xz}^*Q\Pi_z^{-})_\beta$ follows from $\eqref{eq:form_bound}_\beta^{\gamma\neq \pp}$ 
and $\eqref{cw60_minus_cont}_{\prec\beta}$. 
For the term $\big(\sum_{k\ge0}\mathsf{z}_k\Pi_x^{k}(z)
\partial_1^2(\delta\Pi_x-{\rm d}\Gamma_{xz}^*Q\Pi_z)\big)_\beta$ 
we appeal to \eqref{ord24} \& \eqref{eq:triangular_product}. Hence, for the term involving $\partial_1^2\delta\Pi_x$, continuity follows
from boundedness $\eqref{eq:pi_generic}_{\prec\beta}$
and the continuity $\eqref{cw60_cont_mal_dual}_{\prec\beta}$.
For the term involving $\partial_1^2({\rm d}\Gamma_{xz}^*Q\Pi_z)$,
continuity follows from boundedness $\eqref{eq:pi_generic}_{\prec\beta}$, combined with
\eqref{eq:triangular_d_Gamma} \& $\eqref{eq:form_bound}_{\beta'}^{\gamma\neq \pp}$ and 
the continuity $\eqref{cw60_cont}_{\prec \beta}$.

\medskip

We shall establish the following continuity condition of $\{F_{xz}\}_{x,z}$
in the secondary base point $z$:
\begin{align}\label{eq:recIII15}
&\mathbb{E}^\frac{1}{q'}|(F_{xy}-F_{xz})_{\beta t}(y)|^{q'}\nonumber\\
&\lesssim(\sqrt[4]{t})^{\alpha-2}(\sqrt[4]{t}+|y-z|)^{\theta-\alpha}
(\sqrt[4]{t}+|y-z|+|z-x|)^{|\beta|+\kappa-\theta}(w_x(z)+w_x(y)),
\end{align}
where
\begin{align}\label{eq:recIII21}
\theta:=\min\{\kappa+2\alpha,\inf(\mathsf{A}\cap(2,\infty))\}
\stackrel{\eqref{ao24},\;\mathsf{A}\;\text{is locally finite}}{>}2.
\end{align}
Before proceeding, we note that the l.~h.~s.~of (\ref{eq:recIII15}) vanishes 
for $\beta\in\mathbb{N}_0e_0$.
Indeed, we start by observing that $k\not=0$ cannot contribute to such a component 
of (\ref{eq:recIII2}). Because of the difference on the l.~h.~s.~of (\ref{eq:recIII15}), 
only the contributions involving ${\rm d}\Gamma_{x\cdot}^*$ contribute.
Hence in view of (\ref{eq:recIII20}), the claim follows from \eqref{eq:dGamma_e0}.
In particular, by \eqref{eq:beta_e0}, 
the l.~h.~s.~of (\ref{eq:recIII15}) vanishes unless $|\beta|\ge2\alpha$,
which implies for the last exponent in (\ref{eq:recIII15})
that $|\beta|+\kappa-\theta\ge 0$ by definition (\ref{eq:recIII21}) of $\theta$.

\medskip

Since in view of (\ref{eq:recIII21}),
the sum of the first two exponents is positive, (\ref{eq:recIII15}) 
provides a continuity condition of positive order. Using the reconstruction argument in \cite[Lemma 4.8]{LO22} in combination with the averaging property of the weight \eqref{eq:intIII9}, \eqref{eq:recIII16_weak} 
and \eqref{eq:recIII15} upgrade to  
\begin{align}\label{ho27}
\E^\frac{1}{q'} |F_{xz\beta \,t}(z)|^{q'}
\lesssim (\sqrt[4]{t})^{\theta - 2}(\sqrt[4]{t} + |z-x|)^{|\beta|+\kappa -\theta} w_x(z).
\end{align}
Using once more \eqref{eq:recIII15}, this yields
\begin{align*}
\mathbb{E}^\frac{1}{q'}|F_{xz\beta \,t}(y)|^{q'}
&\lesssim
(\sqrt[4]{t})^{\alpha-2}(\sqrt[4]{t}+|y-z|)^{\theta-\alpha}
(\sqrt[4]{t}+|y-z|+|z-x|)^{|\beta|+\kappa-\theta}(w_x(z)+w_x(y)),
\end{align*}
which we just use in the weakened form of 
(since $\kappa\le\theta-\alpha$ 
by (\ref{ao25}) and (\ref{eq:recIII21}))
\begin{align*}
\mathbb{E}^\frac{1}{q'}|F_{xz\beta \,t}(y)|^{q'}
\lesssim(\sqrt[4]{t})^{\alpha-2}(\sqrt[4]{t}+|y-z|)^{\kappa}
(\sqrt[4]{t}+|y-z|+|z-x|)^{|\beta|-\alpha}(w_x(z)+w_x(y)).
\end{align*}
Once we establish the boundedness of the second contribution to (\ref{eq:recIII2}) in form of
\begin{align}\label{eq:recIII17}
\lefteqn{\mathbb{E}^\frac{1}{q'}\big|\big(\sum_{k\ge0}\mathsf{z}_k\Pi_x^{k}(z)
\partial_1^2\big(\delta\Pi_x-{\rm d}\Gamma_{xz}^*Q\Pi_z\big)
+\delta\xi_\tau\mathsf{1}\big)_{\beta \,t}(y)\big|^{q'}}\nonumber\\
&\lesssim(\sqrt[4]{t})^{\alpha-2}(\sqrt[4]{t}+|y-z|)^{\kappa}
(\sqrt[4]{t}+|y-z|+|z-x|)^{|\beta|-\alpha}(w_x(z)+w_x(y)),
\end{align}
we obtain the desired (\ref{eq:delta_pi_minus_generic}). 

\medskip

The remainder of the proof
is devoted to the estimates (\ref{eq:recIII15}) and (\ref{eq:recIII17}).
By the triangle inequality, we split (\ref{eq:recIII15}) into
\begin{align}
\lefteqn{\mathbb{E}^\frac{1}{q'}|({\rm d}\Gamma_{xy}^*Q\Pi_y^{-}
-{\rm d}\Gamma_{xz}^*Q\Pi_z^{-})_{\beta \,t}(y)|^{q'}}\nonumber\\
&\lesssim
(\sqrt[4]{t})^{\alpha-2}(\sqrt[4]{t}+|y-z|)^{\theta-\alpha}
(\sqrt[4]{t}+|y-z|+|z-x|)^{|\beta|+\kappa-\theta}(w_x(z)+w_x(y)),\label{eq:recIII6}\\
\lefteqn{\mathbb{E}^\frac{1}{q'}\big|\Big(\mathsf{z}_k\big(
 \Pi_x^k(y)\partial_1^2(\delta\Pi_x-{\rm d}\Gamma_{xy}^*Q\Pi_y)
-\Pi_x^k(z)\partial_1^2(\delta\Pi_x-{\rm d}\Gamma_{xz}^*Q\Pi_z)\big)\Big)_{\beta \,t}(y)\big|^{q'}
}\nonumber\\
&\lesssim
(\sqrt[4]{t})^{\alpha-2}(\sqrt[4]{t}+|y-z|)^{\theta-\alpha}
(\sqrt[4]{t}+|y-z|+|z-x|)^{|\beta|+\kappa-\theta}(w_x(z)+w_x(y)).\label{eq:recIII14}
\end{align}
Note that as opposed to (\ref{eq:recIII15}), (\ref{eq:recIII17})
does see $\delta\xi$, which we thus split into
\begin{align}\label{eq:recIII19}
\lefteqn{\mathbb{E}^\frac{1}{q'}\big|\big(\mathsf{z}_k\Pi_x^{k}(z)
\partial_1^2(\delta\Pi_x-{\rm d}\Gamma_{xz}^*Q\Pi_z)\big)_{\beta \,t}(y)\big|^{q'}}\nonumber\\
&\lesssim(\sqrt[4]{t})^{\alpha-2}(\sqrt[4]{t}+|y-z|)^{\kappa}
(\sqrt[4]{t}+|y-z|+|z-x|)^{|\beta|-\alpha}(w_x(z)+w_x(y))
\end{align}
and the base case, meaning (\ref{eq:recIII17}) for $\beta=0$.
Due to the presence
of $\mathsf{z}_k$, the l.~h.~s.~of $\eqref{eq:recIII17}_{\beta=0}$ collapses to $(\delta\xi_\tau)_{t}(y)$;
we shall establish the stronger version of (\ref{eq:base1}).
By the semi-group property \eqref{eq:semi_group} and $\alpha-2+\kappa<0$, see \eqref{ao25},
it is enough to establish \eqref{eq:base1} for $\tau=0$.
In order to do so, we appeal again to the semi-group property
in form of $\delta\xi_t(y)$ $=\int dz\psi_{t-s}(y-z)\delta\xi_s(z)$
for $s\in(0,t)$,
so that by the triangle inequality w.~r.~t.~$\mathbb{E}^\frac{1}{q}|\cdot|^q$
we have $\mathbb{E}^\frac{1}{q}|\delta\xi_t(y)|^q$ $\le\int dz|\psi_{t-s}(y-z)|
\mathbb{E}^\frac{1}{q}|\delta\xi_s(z)|^q$, and thus by Cauchy-Schwarz in $z$
(and in view of the obvious sup-bound $|\psi_{t-s}| \lesssim(\sqrt[4]{t-s})^{-3}$)
\begin{align*}
\mathbb{E}^\frac{2}{q}|\delta\xi_t(y)|^q
&\stackrel{\hphantom{\eqref{cw61}}}{\leq}\int dz\psi_{t-s}^2(y-z)|z-y|^{2\kappa}
\int dz|z-y|^{-2\kappa}\mathbb{E}^\frac{2}{q}|\delta\xi_s(z)|^q\nonumber\\
&\stackrel{\eqref{cw61}}{\lesssim}(\sqrt[4]{t-s})^{-3+2\kappa}\int dz|z-y|^{-2\kappa}
\mathbb{E}^\frac{2}{q}|\delta\xi_s(z)|^q.
\end{align*}
Applying $\int_{\frac{t}{4}}^{\frac{t}{2}}\frac{ds}{s}$, so that
$(\sqrt[4]{t-s})^{-3+2\kappa}$ $\sim(\sqrt[4]{t})^{2(\alpha-2+\kappa)}$
$(\sqrt[4]{s})^{2(\frac{1}{2}-\alpha)}$, we obtain the square of (\ref{eq:base1}) with $\tau=0$ by definition
(\ref{eq:weights}).

\medskip

Turning to (\ref{eq:recIII19}) we note that the l.~h.~s.
has the product structure $(\mathsf{z}_k\pi\pi')_\beta$
$=\sum_{\beta_1+\beta_2=\beta}(\mathsf{z}_k\pi)_{\beta_1}\pi'_{\beta_2}$.
In view of the presence of $\mathsf{z}_k$, $\beta_1$ is neither purely polynomial
nor $0$; by (\ref{eq:algIII7}) and (\ref{eq:algIII6}) also $\beta_2$ is not purely polynomial.
Hence by (\ref{eq:recIII25}) we have $\beta_1\preccurlyeq\beta$ and $\beta_2\prec\beta$.
Therefore $\eqref{eq:recIII19}_\beta$ follows from the two estimates 
$\eqref{eq:intIII11}_{\prec\beta}$ and $\eqref{eq:recIII12}_{\preccurlyeq\beta}$
stated below via H\"older's inequality in probability and (\ref{eq:hom_add}).

\medskip

We turn to (\ref{eq:recIII6}), which relies on $\eqref{eq:recenter_Pi_minus}_{\prec\beta}$ in form of
$Q\Pi^{-}_z=Q\Gamma_{zy}^*\Pi^{-}_y$. In preparation for the use of (\ref{eq:form_cont}), we use the triangularity property \eqref{triangle} in form of
\begin{align}\label{eq:recIII4a}
	Q\Gamma^*=Q\Gamma^*Q
\end{align} 
to split the increment:
\begin{align}\label{eq:recIII5}
({\rm d}\Gamma_{xy}^*Q\Pi_y^{-}-{\rm d}\Gamma_{xz}^*Q\Pi_z^{-})_t(y)
\stackrel{\eqref{eq:recIII4a}}{=}
({\rm d}\Gamma_{xy}^*-{\rm d}\Gamma_{xz}^*\Gamma_{zy}^*)Q\Pi_{y\,t}^{-}(y)
+{\rm d}\Gamma_{xz}^*({\rm id}-Q)\Gamma_{zy}^*Q\Pi_{y\,t}^{-}(y).
\end{align}
By the triangular structure \eqref{eq:triangular_Gamma} of $\Gamma^*$,
and the strict triangular structure \eqref{eq:triangular_d_Gamma} of ${\rm d}\Gamma^*$ and \eqref{eq:dGamma_inc_triangular} of its increments, 
we learn that in order to estimate $\eqref{eq:recIII5}_\beta$,
we only need $\eqref{eq:pi_minus_generic}_{\prec\beta}$ and $\eqref{eq:gamma}_{\prec\beta}$.
Because of the presence of $P$ in the definition (\ref{eq:Pi_minus_def_alt})
and by the consistency \eqref{mt44} of elements $\Gamma\in\mathsf{G}$ with the polynomial sector, which for later purpose we write in the compact form
\begin{align}\label{eq:recIII4b}
	\Gamma^*P=P\Gamma^*P,
\end{align}
we only need 
$\eqref{eq:gamma}_{\prec\beta}^{\gamma\neq\pp}$,
$\eqref{eq:form_cont}_{\beta}^{\gamma\neq\pp}$, and
$\eqref{eq:form_bound}_{\beta}^{\gamma\neq \pp}$. We thus have term-by-term
\begin{align}
\lefteqn{\mathbb{E}^\frac{1}{q'}|({\rm d}\Gamma_{xy}^*Q\Pi_y^{-}
-{\rm d}\Gamma_{xz}^*Q\Pi_z^{-})_{\beta \,t}(y)|^{q'}}\nonumber\\
&\lesssim\sum_{\substack{|\gamma|\in\mathsf{A}\cap[\alpha,|\beta|-\alpha] \\ \gamma(1,0)=0}}
|y-z|^{\kappa+\alpha}(|y-z|+|z-x|)^{|\beta|-|\gamma|-\alpha}
(w_x(z)+w_x(y))(\sqrt[4]{t})^{|\gamma|-2}\nonumber\\
&+\sum_{\substack{|\gamma|\in\mathsf{A}\cap[\alpha,|\beta|-\alpha+1] \\ \gamma(1,0)=1}}
|y-z|^{\kappa+\alpha-1}(|y-z|+|z-x|)^{|\beta|-|\gamma|-\alpha+1}
(w_x(z)+w_x(y))(\sqrt[4]{t})^{|\gamma|-2}\nonumber\\
&+\sum_{|\gamma'|\in\mathsf{A}\cap[2,\kappa+|\beta|]}
|z-x|^{\kappa+|\beta|-|\gamma'|}w_x(z)
\sum_{|\gamma|\in\mathsf{A}\cap[\alpha,2]}|y-z|^{|\gamma'|-|\gamma|}(\sqrt[4]{t})^{|\gamma|-2}
\nonumber\\
&\lesssim\Big(\sum_{|\gamma|\in\mathsf{A}\cap[\alpha,|\beta|-\alpha]}
(\sqrt[4]{t})^{|\gamma|-2}(\sqrt[4]{t}+|y-z|)^{\kappa+\alpha}
(\sqrt[4]{t}+|y-z|+|z-x|)^{|\beta|-|\gamma|-\alpha}\label{eq:recIII22}\\
&+\sum_{\substack{|\gamma|\in\mathsf{A}\cap[\alpha,|\beta|-\alpha+1] \\ \gamma(1,0)=1}}
(\sqrt[4]{t})^{|\gamma|-2}
(\sqrt[4]{t}+|y-z|)^{\kappa+\alpha-1}
(\sqrt[4]{t}+|y-z|+|z-x|)^{|\beta|-(|\gamma|-1)-\alpha}\label{eq:recIII23}\\
&+\sum_{|\gamma|\in\mathsf{A}\cap[\alpha,2]}
(\sqrt[4]{t})^{|\gamma|-2}\sum_{|\gamma'|\in\mathsf{A}\cap[2,\kappa+|\beta|]}
(\sqrt[4]{t}+|y-z|)^{|\gamma'|-|\gamma|}
(\sqrt[4]{t}+|y-z|+|z-x|)^{\kappa+|\beta|-|\gamma'|}
\Big)\label{eq:recIII24}\\
&\quad\times(w_x(z)+w_x(y)).\nonumber
\end{align}
The term (\ref{eq:recIII22}) is absorbed into the r.~h.~s.~of (\ref{eq:recIII6}) because 
the first exponent decreases from $|\gamma|-2$ to $\alpha-2$, because the sum of the first two 
exponent decreases from $|\gamma|-2+\kappa+\alpha$ to $\theta-2$ by definition 
(\ref{eq:recIII21}) of $\theta$, and because the sum $|\beta|-2+\kappa$ of the three 
exponents agrees.
The term (\ref{eq:recIII23}) is also absorbed because
once more, the first exponent decreases and the sum of all three exponents agrees,
and because for our not purely polynomial $\gamma$, the constraint $\gamma(1,0)=1$ 
implies $|\gamma|\ge1+\alpha$ by definition (\ref{ao12}) of $|\cdot|$, 
leading to $|\gamma|-2+\kappa+\alpha-1\ge\theta-2$ on the sum of the first two exponents.
Finally, the term (\ref{eq:recIII24}) is also absorbed, since for the not purely polynomial 
$\gamma'$, the constraint $|\gamma'|\ge 2$ implies $|\gamma'|>2$ by (\ref{irrational})
and thus $|\gamma'|\ge\theta$ by definition (\ref{eq:recIII21}) of $\theta$, 
leading once more to $|\gamma'|-2\ge\theta-2$.

\medskip

We now turn to (\ref{eq:recIII14}).
As usual, we will estimate this difference of a product by the sum of two products, 
where each summand contains a difference as one of its factors.
For the same reason as for (\ref{eq:recIII19}) we have the structure $(\mathsf{z}_k\pi\pi')_\beta$ 
$=\sum_{\beta_1+\beta_2=\beta}(\mathsf{z}_k\pi)_{\beta_1}\pi'_{\beta_2}$ with
$\beta_1\preccurlyeq\beta$ and $\beta_2\prec\beta$. 
Hence we see that $\eqref{eq:recIII14}_\beta$ follows from the four estimates
$\eqref{eq:recIII13}_{\prec\beta}$, 
$\eqref{eq:recIII10}_{\preccurlyeq\beta}$, 
$\eqref{eq:intIII11}_{\prec\beta}$, and $\eqref{eq:recIII12}_{\preccurlyeq\beta}$,
stated below,
by the triangle inequality and H\"older's inequality in probability
and (\ref{eq:hom_add}). 

\medskip

As the first ingredient to (\ref{eq:recIII14}), we estimate the components $\prec\beta$ of
${\rm d}\Gamma_{xy}^*Q\partial_1^2\Pi_y-{\rm d}\Gamma_{xz}^*Q\partial_1^2\Pi_z$.
In view of (\ref{eq:recIII20}), the argument is similar to $\eqref{eq:recIII6}$. 
More precisely, considering\footnote{where $\beta$ here is generic and in fact
will be applied to a preceding multi-index}
\begin{align}
\lefteqn{\mathbb{E}^\frac{1}{q'}|({\rm d}\Gamma_{xy}^*Q\partial_1^2\Pi_y
-{\rm d}\Gamma_{xz}^*Q\partial_1^2\Pi_z)_{\beta \,t}(y)|^{q'}}\nonumber\\
&\lesssim
(\sqrt[4]{t})^{\alpha-2}(\sqrt[4]{t}+|y-z|)^{\theta-\alpha}
(\sqrt[4]{t}+|y-z|+|z-x|)^{|\beta|+\kappa-\theta}
(w_x(z)+w_y(z)),\label{eq:recIII13}
\end{align}
$\eqref{eq:recIII13}_\beta$ follows from 
$\eqref{eq:pi_generic}_{\prec\beta}$, 
$\eqref{eq:gamma}_{\prec\beta}^{\gamma\neq \pp}$,
$\eqref{eq:form_cont}_{\beta}^{\gamma\neq \pp}$, and
$\eqref{eq:form_bound}_{\beta}^{\gamma\neq\pp}$.
As for (\ref{eq:recIII15}), we note that the l.~h.~s.~of $\eqref{eq:recIII13}_\beta$
vanishes unless $|\beta|\ge2\alpha$
because of (\ref{eq:dGamma_e0}), so that $|\beta|+\kappa-\theta>0$.

\medskip

As the second ingredient to (\ref{eq:recIII14}), 
we need to estimate all $\preccurlyeq\beta$-components of
the increment of the factor $\mathsf{z}_k\Pi_x^k$. Considering for all $k\ge0$
\begin{align}\label{eq:recIII10}
\mathbb{E}^\frac{1}{p}\big|\big(\mathsf{z}_k\Pi_x^k(y)-\mathsf{z}_k\Pi_x^k(z)
\big)_{\beta}\big|^p\lesssim|y-z|^\alpha(|y-z|+|z-x|)^{|\beta|-2\alpha},
\end{align}
we claim that $\eqref{eq:pi_generic}_{\prec\beta}$, $\eqref{eq:gamma}_{\prec\beta}^\gamma$
(including the purely polynomial $\gamma$'s) 
and $\eqref{eq:recenter_Pi_specific}_{\prec\beta}$ imply $\eqref{eq:recIII10}_\beta$.
We still have the implicit understanding that the l.~h.~s.~vanishes unless
$|\beta|-2\alpha\ge 0$. Indeed, in view of (\ref{eq:beta_e0}), $|\beta|-2\alpha<0$ 
implies $\beta\in\mathbb{N}_0e_0$; due to the presence of $\mathsf{z}_k$,
the term $(\mathsf{z}_k\Pi_x^k)_\beta$ vanishes unless $k=0$, 
in which case the l.~h.~s.~obviously vanishes.
In particular, in establishing (\ref{eq:recIII10}), 
we may restrict to $k\ge 1$ and write with the help of 
(\ref{eq:recenter_Pi_specific}) and (\ref{eq:Pipi})
\begin{align*}
\Pi_x^k(y)-\Pi_x^k(z)=\sum_{k'+k''=k-1}\Pi_x^{k'}(y)\Pi_x^{k''}(z)\Gamma_{xz}^*\Pi_z(y),
\end{align*}
so that we obtain componentwise
\begin{align*}
\lefteqn{\big(\mathsf{z}_k\Pi_x^k(y)-\mathsf{z}_k\Pi_x^k(z)
\big)_{\beta}}\nonumber\\
&=\sum_{k'+k''=k-1}\sum_{e_k+\beta_1+\cdots+\beta_k=\beta}
\Pi_{x\beta_1}(y)\cdots\Pi_{x\beta_{k'}}(y)\Pi_{x\beta_{k'+1}}(z)\cdots\Pi_{x\beta_{k-1}}(z)
\sum_{\gamma}(\Gamma_{xz}^*)_{\beta_k}^\gamma\Pi_{z\gamma}(y).
\end{align*}
By (\ref{eq:triangular_product}) we have $\beta_1,\dots,\beta_k\prec\beta$,
and then by (\ref{eq:triangular_Gamma}) also $\gamma\prec\beta$, so that
 $\eqref{eq:pi_generic}_{\prec\beta}$ and $\eqref{eq:gamma}_{\prec\beta}$ 
are indeed sufficient to conclude by H\"older's inequality in probability
\begin{align*}
\lefteqn{\mathbb{E}^\frac{1}{p}\big|\big(\mathsf{z}_k\Pi_x^k(y)-\mathsf{z}_k\Pi_x^k(z)
\big)_{\beta}\big|^p}\nonumber\\
&\lesssim\sum_{k'+k''=k-1}\sum_{e_k+\beta_1+\cdots+\beta_k=\beta}
|y-x|^{|\beta_1|+\cdots+|\beta_{k'}|}|z-x|^{|\beta_{k'+1}|+\cdots+|\beta_{k-1}|}
\sum_{|\gamma|\in\mathsf{A}\cap[\alpha,|\beta_k|]}|z-x|^{|\beta_k|-|\gamma|}|y-z|^{|\gamma|}.
\end{align*}
Using $|y-x|\le|y-z|+|z-x|$ and (\ref{eq:hom_add}), this collapses to (\ref{eq:recIII10}).

\medskip

As the third ingredient to (\ref{eq:recIII14}), and the first ingredient to (\ref{eq:recIII19}),
we estimate the $\prec\beta$-components of
$\partial_1^2(\delta\Pi_x-{\rm d}\Gamma_{xz}^*Q\Pi_z)_t(y)$. 
Introducing\footnote{once more $\beta$ here denotes a generic multi-index}
\begin{align}\label{eq:intIII11}
&\mathbb{E}^\frac{1}{q'}\big|\partial_1^2(\delta\Pi_x
-{\rm d}\Gamma_{xz}^*Q\Pi_z)_{\beta \,t}(y)\big|^{q'}\nonumber\\
&\stackrel{\hphantom{\eqref{eq:recIII21}}}{\lesssim}(\sqrt[4]{t})^{\alpha-2}(\sqrt[4]{t}+|y-z|)^{\kappa}
(\sqrt[4]{t}+|y-z|+|z-x|)^{|\beta|-\alpha}(w_x(z)+w_x(y))\\
&\stackrel{\eqref{eq:recIII21}}{\le}(\sqrt[4]{t})^{\alpha-2}(\sqrt[4]{t}+|y-z|)^{\theta-2\alpha}
(\sqrt[4]{t}+|y-z|+|z-x|)^{|\beta|+\kappa-\theta+\alpha}(w_x(z)+w_x(y)),\nonumber
\end{align}
we claim that $\eqref{eq:delta_pi_incr_generic}_{\beta}$ implies $\eqref{eq:intIII11}_{\beta}$.
Indeed by (\ref{eq:recIII13}) in the weakened form of
\begin{align*}
\lefteqn{\mathbb{E}^\frac{1}{q'}\big|({\rm d}\Gamma_{xy}^*Q\partial_1^2\Pi_y
-{\rm d}\Gamma_{xz}^*Q\partial_1^2\Pi_z)_{\beta \,t}(y)\big|^{q'}}\nonumber\\
&\lesssim
(\sqrt[4]{t})^{\alpha-2}(\sqrt[4]{t}+|y-z|)^{\kappa}
(\sqrt[4]{t}+|y-z|+|z-x|)^{|\beta|-\alpha}(w_x(z)+w_x(y)),
\end{align*}
it is enough to establish (\ref{eq:intIII11}) for $z=y$,
\begin{align}\label{eq:recIII18}
\mathbb{E}^\frac{1}{q'}\big|\partial_1^2(\delta\Pi_x
-{\rm d}\Gamma_{xy}^*Q\Pi_y)_{\beta \,t}(y)\big|^{q'}\lesssim
(\sqrt[4]{t})^{\alpha-2+\kappa}(\sqrt[4]{t}+|y-x|)^{|\beta|-\alpha}w_x(y).
\end{align}
Writing
\begin{align*}
\partial_1^2(\delta\Pi_x-{\rm d}\Gamma_{xy}^*Q\Pi_y)_{\beta \,t}(y)
=\int dy'\partial_1^2\psi_t(y-y')
\big(\delta\Pi_x-\delta\Pi_x(y)-{\rm d}\Gamma_{xy}^*Q\Pi_y)_{\beta}(y'),
\end{align*}
we obtain $\eqref{eq:recIII18}_\beta$
from $\eqref{eq:delta_pi_incr_generic}_{\beta}$
via H\"older's inequality
in $y'$ and the moment bounds (\ref{cw61}) and (\ref{eq:intIII9}) (both with ${\bf n}=(2,0)$ and the latter with $z=y$). 

\medskip

The last ingredient to (\ref{eq:recIII14}), and the second ingredient for (\ref{eq:recIII19}),
is the estimate of the $\preccurlyeq\beta$-components $\mathsf{z}_k\Pi_x^k(y)$.
Assuming just $\eqref{eq:pi_generic}_{\prec\beta}$ 
and with a subset of the arguments for (\ref{eq:recIII10}), 
we obtain $\eqref{eq:recIII12}_{\beta}$, where
\begin{align}\label{eq:recIII12}
\mathbb{E}^\frac{1}{p}\big|\big(\mathsf{z}_k\Pi_x^k(z)\big)_{\beta}|^p
\lesssim|z-x|^{|\beta|-\alpha}.
\end{align}
\end{proof}

The third task is to pass from the estimate (\ref{eq:delta_pi_minus_generic})
of the rough-path increment of
$\delta\Pi_{x}^{-}$ to the estimate of the rough-path increment of $\delta\Pi_{x}$.
The crucial ingredient is the representation
\begin{align}\label{eq:intIII1}
(\delta \Pi_{x}-\delta\Pi_{x}(z)-{\rm d}\Gamma^*_{xz}Q\Pi_{z})_{\beta}
=-\int_0^\infty dt(1-{\rm T}_z^{2})
(\partial_2+\partial_1^2)(\delta \Pi^-_{x}-{\rm d}\Gamma^*_{xz}Q\Pi^-_{z})_{\beta\,t},
\end{align}
which is the analog of \eqref{eq:Pi_minus_to_Pi} in the integration task of Proposition \ref{prop:int_pi_minus}.
Compared to this previous integration task,
there are now three length scales involved, namely $\sqrt[4]{t}$, $|y-z|$, and $|z-x|$.
In the near-field range $\sqrt[4]{t}\le|y-z|$, we split $1-{\rm T}_z^2$;
in the far-field range $\sqrt[4]{t}\ge\max\{|y-z|,|z-x|\}$, we split
$\delta\Pi_x^{-}-{\rm d}\Gamma_{xz}^*Q\Pi_z^{-}$. Only on the intermediate
range $|y-z|\le\sqrt[4]{t}\le\max\{|y-z|,|z-x|\}$ we use the cancellations
in both the Taylor remainder and the rough-path increment.

\begin{proposition}[Integration III]\label{prop:intIII}
Assume that $\eqref{eq:pi_generic}_{\prec\beta}$, $\eqref{eq:pi_minus_generic}_{\prec\beta}$, 
$\eqref{eq:mal_dual_annealed}_\beta$,
$\eqref{eq:delta_pi_generic}_\beta$,
and $\eqref{eq:delta_pi_minus_generic}_\beta$ hold,
and that $\eqref{eq:form_bound}_{\beta}^\gamma$ holds for all 
$\gamma$ not purely polynomial.
Then $\eqref{eq:intIII1}_\beta$ holds and we have
\begin{align}\label{eq:delta_pi_incr_generic}
\mathbb{E}^\frac{1}{q'} 
|(\delta\Pi_{x}-\delta\Pi_x(z)-{\rm d}\Gamma^*_{xz}Q\Pi_{z})_{\beta}(y)|^{q'}
\lesssim |y-z|^{\kappa+\alpha}(|y-z|+|z-x|)^{|\beta|-\alpha}(w_x(z)+w_x(y)).
\end{align}
\end{proposition}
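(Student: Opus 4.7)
The plan is to adapt the Schauder-type integration argument of Proposition \ref{prop:int_pi_minus} to the rough-path increment setting, passing from the modelled-distribution estimate $\eqref{eq:delta_pi_minus_generic}_\beta$ on $g:=\delta\Pi_x^{-}-{\rm d}\Gamma_{xz}^{*}Q\Pi_z^{-}$ to the estimate $\eqref{eq:delta_pi_incr_generic}_\beta$ on its primitive. The representation \eqref{eq:intIII1} itself is obtained by applying $(\partial_2-\partial_1^2)$ to its left-hand side: the constant $\delta\Pi_x(z)$ is annihilated, and the PDE \eqref{eq:model} extended to purely polynomial multi-indices (noting that the only purely polynomial $\gamma$ seen through $Q$, namely $e_{(1,0)}$, is annihilated by $(\partial_2-\partial_1^2)$) shows that ${\rm d}\Gamma_{xz}^{*}$ commutes past $(\partial_2-\partial_1^2)$ acting on $Q\Pi_z$, giving $(\partial_2-\partial_1^2)(\delta\Pi_x-\delta\Pi_x(z)-{\rm d}\Gamma_{xz}^{*}Q\Pi_z)=g$. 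Representation \eqref{eq:representation} then recovers the primitive up to an element of the kernel of $(\partial_2-\partial_1^2)$; the Taylor subtraction ${\rm T}_z^{2}$ removes this freedom, which is the right order since the target vanishing $\kappa+\alpha$ lies in $(3/2,2)$ by \eqref{ao24} and \eqref{ao25}.

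For the quantitative bound, I split $\int_0^{\infty}dt$ into three regimes according to the three scales $\sqrt[4]{t}$, $|y-z|$, $|z-x|$. In the near-field $\sqrt[4]{t}\le|y-z|$, I apply the triangle inequality to $1-{\rm T}_z^{2}$; the ``$1$''-part uses $\eqref{eq:delta_pi_minus_generic}_\beta$ at the active point $y$ (combined with the semi-group property and the moment bounds \eqref{cw61}, \eqref{eq:intIII9} to absorb $(\partial_2+\partial_1^2)$ and to dominate $w_x(\cdot)$ after averaging by $w_x(y)+w_x(z)$), while the ``${\rm T}_z^{2}$''-part uses the same estimate at $z$ with the Taylor coefficients $(y-z)^{\bf n}$, $|{\bf n}|<2$. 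The integrand is controlled by $(\sqrt[4]{t})^{\alpha-4}|y-z|^{\kappa}$ times the outer scale, and $\int_0^{|y-z|^{4}}dt(\sqrt[4]{t})^{\alpha-4}\sim|y-z|^{\alpha}$ converges at $t=0$ by $\alpha>0$, reproducing $|y-z|^{\alpha+\kappa}$. In the intermediate regime $|y-z|\le\sqrt[4]{t}\le|y-z|+|z-x|$ the cancellation of $1-{\rm T}_z^{2}$ is essential: applying the parabolic Taylor remainder as in the derivation of \eqref{cw76} produces a saving $|y-z|^{|{\bf m}|}$ with $|{\bf m}|\ge 2$, and $\eqref{eq:delta_pi_minus_generic}_\beta$ at intermediate points along the parabolic curve connecting $z$ to $y$ (where $(\sqrt[4]{t}+|\cdot-z|)^{\kappa}\sim(\sqrt[4]{t})^{\kappa}$) gives an integrand $\sim|y-z|^{|{\bf m}|}(\sqrt[4]{t})^{\alpha+\kappa-|{\bf m}|-4}$, whose $dt$-integral converges on its lower endpoint precisely because $\alpha+\kappa<2\le|{\bf m}|$ by \eqref{ao25} and yields $|y-z|^{\alpha+\kappa}(|y-z|+|z-x|)^{|\beta|-\alpha}$. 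Finally, in the far-field $\sqrt[4]{t}\ge|y-z|+|z-x|$, a direct use of $\eqref{eq:delta_pi_minus_generic}_\beta$ would diverge at $t=\infty$, so I split $g=\delta\Pi_x^{-}-{\rm d}\Gamma_{xz}^{*}Q\Pi_z^{-}$ and treat each term: the first via the unweighted bound $\eqref{eq:mal_dual_annealed}_\beta$ together with $|y-x|\le\sqrt[4]{t}$, the second via $\eqref{eq:form_bound}_{\beta}^{\gamma}$ for non-purely polynomial $\gamma$ combined with $\eqref{eq:pi_minus_generic}_{\prec\beta}$. The Taylor remainder of $1-{\rm T}_z^{2}$ again supplies a factor $|y-z|^{|{\bf m}|}$ with $|{\bf m}|\ge 2$, large enough for the $t$-integral to converge at $\infty$ and absorb any growth coming from the large-$t$ behaviour of the individual factors.

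The main obstacle is the bookkeeping of the annealed weight $w_x$: in the intermediate and far-field regimes, the Taylor remainder forces evaluation at points $y'$ along a parabolic curve between $z$ and $y$, and these points carry their own $w_x(y')$. To dominate $w_x(y')$ by $w_x(y)+w_x(z)$ uniformly, I smooth the data first, replacing $g_{\beta t}$ by $(g_{\beta t/2})_{t/2}$ via the semi-group property \eqref{eq:semi_group}, apply the Taylor remainder to the smoothing kernel $(\partial_2+\partial_1^2)\psi_{t/2}$ rather than to $g_{\beta t/2}$ itself, and then exploit \eqref{eq:intIII9} to absorb $w_x(y')^{2}$ under the $dy'$-integral against $|\partial^{\bf m}\psi_{t/2}(y-y')|$ by $w_x(y)^{2}$ (and symmetrically for $z$). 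A secondary subtlety is checking that the Taylor order $2$ in ${\rm T}_z^{2}$ is the correct choice: it is forced from above by $\kappa+\alpha<2$ (so the removed polynomial is genuinely in the kernel of the order-of-vanishing condition) and from below by $\alpha+\kappa>1$, which follows from \eqref{ao24} combined with $\alpha\le\tfrac{1}{2}$ and is needed for the Taylor coefficient of parabolic degree~$1$ to be integrable at $t=0$ in the near-field.
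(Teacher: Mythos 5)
Your proposal follows the same route as the paper: the three-regime split in $t$ (near-field, intermediate, far-field), the pre-processing of $\eqref{eq:delta_pi_minus_generic}$ by the semi-group property together with the weight-averaging bound \eqref{eq:intIII9}, the near-field split of $1-{\rm T}_z^2$, the intermediate use of the parabolic Taylor remainder, the far-field splitting of $g=\delta\Pi_x^- - {\rm d}\Gamma^*_{xz}Q\Pi_z^-$ using \eqref{eq:mal_dual_annealed} and $\eqref{eq:form_bound}$ with $\eqref{eq:pi_minus_generic}_{\prec\beta}$, and the correct identification of the exponent constraints $\alpha+\kappa\in(3/2,2)$. The only place where you are slightly loose is the claim that ``the Taylor subtraction ${\rm T}_z^{2}$ removes this freedom'': since the kernel of $\partial_2-\partial_1^2$ on all of space-time is infinite-dimensional, what is actually needed (and what the paper invokes) is the annealed Liouville argument of Proposition~\ref{prop:change_of_basepointII}, combining the sub-quadratic growth from \eqref{eq:pi_generic}, \eqref{eq:delta_pi_generic}, \eqref{eq:form_bound} with the first-order vanishing \eqref{eq:controlled_path_qual} to reduce the ambiguity to a polynomial of degree $\le 1$ vanishing to first order, hence zero.
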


\begin{proof}
We first show that the r.~h.~s.~of \eqref{eq:intIII1} 
is estimated by the r.~h.~s.~of \eqref{eq:delta_pi_incr_generic}.
In preparation for the near-field range $\sqrt[4]{t}\le|y-z|$, we pre-process
$\eqref{eq:delta_pi_minus_generic}_\beta$.
By the semi-group property (\ref{eq:semi_group}) followed by Jensen's inequality 
we have for any random space-time function $f$ that
$\mathbb{E}^\frac{1}{q'}|\partial^{\bf n}f_t(y)|^{q'}$
$\le\int dy'|\partial^{\bf n}\psi_\frac{t}{2}(y-y')\mathbb{E}^\frac{1}{q'}|f_\frac{t}{2}(y')|^{q'}$.
We use this for $f=(\delta \Pi^-_{x}-{\rm d}\Gamma^*_{xz}Q\Pi^-_{z})_{\beta}$,
insert $\eqref{eq:delta_pi_minus_generic}_\beta$ (with $t$ replaced by $\frac{t}{2}$), 
and appeal to H\"older's inequality on the  
the r.~h.~s.~of $\eqref{eq:delta_pi_minus_generic}_\beta$ (with $y'$ replacing $y$),
in order to use both the positive moment bounds (\ref{cw61}) and
the negative moment bounds (\ref{eq:intIII9}) (for $z=y$ and with $t$ replaced by $\frac{t}{2}$).
This leads to
\begin{align}\label{eq:intIII7}
\lefteqn{\mathbb{E}^\frac{1}{q'}|\partial^{\bf n}
(\delta \Pi^-_{x}-{\rm d}\Gamma^*_{xz}Q\Pi^-_{z})_{\beta \,t}(y)|^{q'}
}\nonumber\\
&\lesssim(\sqrt[4]{t})^{\alpha-2-|{\bf n}|} (\sqrt[4]{t}+|y-z|)^{\kappa}
(\sqrt[4]{t}+|y-z|+|z-x|)^{|\beta|-\alpha}(w_x(z) + w_x(y)).
\end{align}
We restrict (\ref{eq:intIII7}), once to $y=z$, and once to the near-field range:
\begin{align*}
\lefteqn{\mathbb{E}^\frac{1}{q'}|\partial^{\bf n}
(\delta \Pi^-_{x}-{\rm d}\Gamma^*_{xz}Q\Pi^-_{z})_{\beta \,t}(z)|^{q'}
\lesssim(\sqrt[4]{t})^{\alpha-2-|{\bf n}|+\kappa}
(\sqrt[4]{t}+|z-x|)^{|\beta|-\alpha}w_x(z),}\\
\lefteqn{\mathbb{E}^\frac{1}{q'}|\partial^{\bf n}
(\delta \Pi^-_{x}-{\rm d}\Gamma^*_{xz}Q\Pi^-_{z})_{\beta \,t}(y)|^{q'}
}\nonumber\\
&\lesssim(\sqrt[4]{t})^{\alpha-2-|{\bf n}|}|y-z|^{\kappa}
(|y-z|+|z-x|)^{|\beta|-\alpha}(w_x(z)+w_x(y))\quad\mbox{provided}\;
\sqrt[4]{t}\le|y-z|.
\end{align*}
We use this in two ways:
\begin{align}
\lefteqn{\mathbb{E}^\frac{1}{q'}|{\rm T}_z^2(\partial_1^2+\partial_2)
(\delta \Pi^-_{x}-{\rm d}\Gamma^*_{xz}Q\Pi^-_{z})_{\beta \,t}(y)|^{q'}
}\nonumber\\
&\lesssim t^{-1}\sum_{{\bf n}={\bf 0},(1,0)}|y-z|^{|{\bf n}|}
(\sqrt[4]{t})^{\alpha-|{\bf n}|+\kappa}
(\sqrt[4]{t}+|z-x|)^{|\beta|-\alpha}w_x(z),\label{aux20}\\
\lefteqn{\mathbb{E}^\frac{1}{q'}|(\partial_1^2+\partial_2)
(\delta \Pi^-_{x}-{\rm d}\Gamma^*_{xz}Q\Pi^-_{z})_{\beta \,t}(y)|^{q'}
}\nonumber\\
&\lesssim t^{-1}(\sqrt[4]{t})^{\alpha}|y-z|^{\kappa}
(|y-z|+|z-x|)^{|\beta|-\alpha}(w_x(z)+w_x(y))\quad\mbox{provided}\;
\sqrt[4]{t}\le|y-z|.\nonumber
\end{align}
Applying $\int_0^{|y-z|^4}dt$, using $\alpha-1+\kappa>0$
by (\ref{eq:algIII8}) on the first integral, and $\alpha>0$ on the second, we obtain
\begin{align}
\lefteqn{\mathbb{E}^\frac{1}{q'}\big|\int_0^{|y-z|^4}dt{\rm T}_z^2(\partial_1^2+\partial_2)
(\delta \Pi^-_{x}-{\rm d}\Gamma^*_{xz}Q\Pi^-_{z})_{\beta \,t}(y)\big|^{q'}}\nonumber\\
&\lesssim|y-z|^{\alpha+\kappa}
(|y-z|+|z-x|)^{|\beta|-\alpha}w_x(z),\label{eq:intIII3}\\
\lefteqn{\mathbb{E}^\frac{1}{q'}\big|\int_0^{|y-z|^4}dt(\partial_1^2+\partial_2)
(\delta \Pi^-_{x}-{\rm d}\Gamma^*_{xz}Q\Pi^-_{z})_{\beta \,t}(y)\big|^{q'}}\nonumber\\
&\lesssim |y-z|^{\alpha+\kappa}
(|y-z|+|z-x|)^{|\beta|-\alpha}(w_x(z)+w_x(y)),\label{eq:intIII4}
\end{align}
which takes care of the near-field contribution.

\medskip

We now turn to the far-field contribution $\sqrt[4]{t}\ge\max\{|y-z|,|z-x|\}$,
which we split into the one coming from
${\rm d}\Gamma_{xz}^* Q\Pi_z^{-}$ and the one from $\delta\Pi_x^{-}$.
For the first one, we note that by $\Pi^{-}_z\in\mathsf{\tilde T}^*$,
see (\ref{eq:Pi_minus_def_alt}), and the strict triangularity (\ref{eq:triangular_d_Gamma}) of
${\rm d}\Gamma^*_{xz}$, only 
$\eqref{eq:form_bound}_\beta^{\gamma\neq \pp}$ 
and $\eqref{eq:pi_minus_generic}_{\prec\beta}$ are needed for
\begin{align*}
\mathbb{E}^\frac{1}{q'}|({\rm d}\Gamma_{xz}^*Q\Pi_z^{-})_{\beta \,t}(y)|^{q'}
\lesssim\sum_{|\gamma|\in\mathsf{A}\cap(-\infty,2)\cap[\alpha,\kappa+|\beta|]}
|z-x|^{\kappa+|\beta|-|\gamma|}w_x(z)(\sqrt[4]{t})^{\alpha-2}
(\sqrt[4]{t}+|y-z|)^{|\gamma|-\alpha},
\end{align*}
which, by a similar but simpler argument as for (\ref{eq:intIII7}), we pre-process to 
\begin{align}\label{eq:intIII20}
\lefteqn{\mathbb{E}^\frac{1}{q'}|\partial^{\bf n}
({\rm d}\Gamma_{xz}^*Q\Pi_z^{-})_{\beta \,t}(y)|^{q'}}\nonumber\\
&\lesssim\sum_{|\gamma|\in\mathsf{A}\cap(-\infty,2)\cap[\alpha,\kappa+|\beta|]}
(\sqrt[4]{t})^{|\gamma|-2-|{\bf n}|}
|z-x|^{\kappa+|\beta|-|\gamma|}w_x(z)
\quad\mbox{provided}\;|y-z|\le\sqrt[4]{t}.
\end{align}
Representing Taylor's remainder in a way suitable for our parabolic scaling,
namely 
\begin{align}\label{eq:Taylor_re}
(1-{\rm T}_z^2)f(y)=\int_0^1ds(1-s)\frac{d^2h}{ds^2}(s)\quad
\mbox{with}\quad h(s)=f(sy_1+(1-s)z_1,s^2y_2+(1-s^2)z_2),
\end{align}
so that the l.~h.~s.~involves the four partial derivatives $\partial^{\bf n}f$
with $|{\bf n}|\ge 2$ and $n_1+n_2\le 2$. Applying this to
$f=(\partial_1^2+\partial_2)({\rm d}\Gamma_{xz}^*Q\Pi_z^{-})_{\beta \,t}$,
we learn from (\ref{eq:intIII20}) that
\begin{align*}
\lefteqn{\mathbb{E}^\frac{1}{q'}|(1-{\rm T}_z^2)(\partial_1^2+\partial_2)
({\rm d}\Gamma_{xz}^*Q\Pi_z^{-})_{\beta \,t}(y)|^{q'}}\nonumber\\
&\lesssim t^{-1}\sum_{\substack{|{\bf n}|\ge 2 \\ n_1+n_2\le 2}}
\sum_{|\gamma|\in\mathsf{A}\cap(-\infty,2)\cap[\alpha,\kappa+|\beta|]}
|y-z|^{|{\bf n}|}(\sqrt[4]{t})^{|\gamma|-|{\bf n}|}
|z-x|^{\kappa+|\beta|-|\gamma|}w_x(z)
\quad\mbox{provided}\;|y-z|\le\sqrt[4]{t}.
\end{align*}
Applying $\int_{\max\{|y-z|^4,|z-x|^4\}}^\infty dt$ we obtain because of $|\gamma|-|{\bf n}|<2-2=0$
\begin{align}\label{eq:intIII5}
\lefteqn{\mathbb{E}^\frac{1}{q'}\big|\int_{\max\{|y-z|^4,|z-x|^4\}}^\infty dt
(1-{\rm T}_z^2)(\partial_1^2+\partial_2)
({\rm d}\Gamma_{xz}^*Q\Pi_z^{-})_{\beta \,t}(y)\big|^{q'}}\nonumber\\
&\lesssim \sum_{\substack{|{\bf n}|\ge 2 \\ n_1+n_2\le 2}}
\sum_{|\gamma|\in\mathsf{A}\cap(-\infty,2)\cap[\alpha,\kappa+|\beta|]}
|y-z|^{|{\bf n}|}(|y-z|+|z-x|)^{|\gamma|-|{\bf n}|}
|z-x|^{\kappa+|\beta|-|\gamma|}w_x(z)\nonumber\\
&\lesssim 
|y-z|^{\kappa+\alpha}(|y-z|+|z-x|)^{|\beta|-\alpha}w_x(z)
\quad\mbox{by}\quad|{\bf n}|\ge 2\stackrel{\eqref{ao25}}{\ge}
\kappa+\alpha.
\end{align}

\medskip

For the second part of the integrand, we pre-process $\eqref{eq:mal_dual_annealed}_\beta$ to
\begin{align}\label{aux10}
\mathbb{E}^\frac{1}{q'}|\partial^{\bf n}\delta\Pi^{-}_{x\beta \,t}(y)|^{q'}
\lesssim(\sqrt[4]{t})^{\alpha-2-|{\bf n}|}(\sqrt[4]{t}+|y-x|)^{|\beta|-\alpha}\bar w,
\end{align}
which in turn implies by Taylor and $|y-x|+|z-x|\lesssim|y-z|+|z-x|$
\begin{align*}
\mathbb{E}^\frac{1}{q'}|(1-{\rm T}_z^2)(\partial_1^2+\partial_2)\delta\Pi^{-}_{x\beta \,t}(y)|^{q'}
\lesssim t^{-1}\sum_{\substack{|{\bf n}|\ge 2 \\ n_1+n_2\le 2}}
|y-z|^{|{\bf n}|}(\sqrt[4]{t})^{\alpha-|{\bf n}|}(\sqrt[4]{t}+|y-z|+|z-x|)^{|\beta|-\alpha}\bar w.
\end{align*}
Applying $\int_{\max\{|y-z|^4,|z-x|^4\}}^\infty dt$ we obtain because of
$|\beta|-|{\bf n}|<2-2=0$ and as in (\ref{eq:intIII5})
\begin{align}\label{eq:intIII6}
\lefteqn{\mathbb{E}^\frac{1}{q'}\big|\int_{\max\{|y-z|^4,|z-x|^4\}}^\infty dt
(1-{\rm T}_z^2)(\partial_1^2+\partial_2)\delta\Pi^{-}_{x\beta \,t}(y)\big|^{q'}}\nonumber\\
&\stackrel{\hphantom{\eqref{eq:def_w_x_z}}}{\lesssim}\sum_{\substack{|{\bf n}|\ge 2 \\ n_1+n_2\le 2}}
|y-z|^{|{\bf n}|}
(|y-z|+|z-x|)^{\alpha-|{\bf n}|}
(|y-z|+|z-x|)^{|\beta|-\alpha}\bar w\nonumber\\
&\stackrel{\eqref{eq:def_w_x_z}}{\lesssim}
|y-z|^{\kappa+\alpha}(|y-z|+|z-x|)^{|\beta|-\alpha} w_x(z).
\end{align}

\medskip

In view of (\ref{eq:intIII3}), (\ref{eq:intIII4}), (\ref{eq:intIII5}), 
and (\ref{eq:intIII6}), it remains to consider the case $|y-z|\le|z-x|$ and to estimate
the intermediate range
\begin{align}\label{eq:intIII8}
\mathbb{E}^\frac{1}{q'}\big|\int_{|y-z|^4}^{|z-x|^4}dt(1-{\rm T}_z^2)
(\partial_1^2+\partial_2)(\delta\Pi_{x}^- -{\rm d}\Gamma_{xz}^*Q\Pi^-_z)_{\beta \,t}(y)\big|^{q'}
\lesssim|y-z|^{\kappa+\alpha}|z-x|^{|\beta|-\alpha}w_x(z).
\end{align}
To this purpose, we pre-process (\ref{eq:intIII7}):
We apply the semi-group property (\ref{eq:semi_group})
in form of $\mathbb{E}^\frac{1}{q'}|f_t(y)|^{q'}$ $\le\int dy'|\psi_\frac{t}{2}(y-y')|
\mathbb{E}^\frac{1}{q'}|f_\frac{t}{2}(y')|^{q'}$ to
$f=\partial^{\bf n}(\delta\Pi^-_{x}-{\rm d}\Gamma^*_{xz}Q\Pi^-_{z})_{\beta}$,
insert (\ref{eq:intIII7}), use H\"older's inequality in $y'$ in order to access
(\ref{cw61}) and (\ref{eq:intIII9}) (both with ${\bf n}={\bf 0}$
and $t$ replaced by $\frac{t}{2}$), thereby obtaining
\begin{align}\label{eq:pre_process}
\lefteqn{\mathbb{E}^\frac{1}{q'}|\partial^{\bf n}
(\delta \Pi^-_{x}-{\rm d}\Gamma^*_{xz}Q\Pi^-_{z})_{\beta \,t}(y)|^{q'}
}\nonumber\\
&\lesssim(\sqrt[4]{t})^{\alpha-2-|{\bf n}|+\kappa}
(\sqrt[4]{t}+|z-x|)^{|\beta|-\alpha}w_x(z)\quad\mbox{provided}\;|y-z|\le\sqrt[4]{t}.
\end{align}
By Taylor, this implies
\begin{align*}
\lefteqn{\mathbb{E}^\frac{1}{q'}|(1-{\rm T}_z^2)(\partial_1^2+\partial_2)
(\delta \Pi^-_{x}-{\rm d}\Gamma^*_{xz}Q\Pi^-_{z})_{\beta \,t}(y)|^{q'}
}\nonumber\\
&\lesssim t^{-1}\sum_{\substack{|{\bf n}|\ge 2 \\ n_1+n_2\le 2}}
|y-z|^{|{\bf n}|}(\sqrt[4]{t})^{\alpha-|{\bf n}|+\kappa}
|z-x|^{|\beta|-\alpha}w_x(z)\quad\mbox{provided}\;
|y-z|\le\sqrt[4]{t}\le|z-x|.
\end{align*}
Integration gives (\ref{eq:intIII8}).

\medskip

We now argue in favor of \eqref{eq:intIII1}.
We obtain from $\eqref{eq:model}_{\preccurlyeq\beta}$ and its Malliavin derivative that
\begin{align}\label{ao23}
(\partial_2 - \partial_1^2)
Q(\delta \Pi_{x}-\delta\Pi_{x}(z)-\dd \Gamma^*_{xz}Q\Pi_{z})
=Q(\delta \Pi^-_{x}-\dd\Gamma^*_{xz}Q\Pi^-_{z}).
\end{align}
By (\ref{eq:controlled_path_qual}),
$Q(\delta\Pi_{x}-\delta\Pi_{x}(z)-{\rm d}\Gamma^*_{xz}Q\Pi_{z})_{\beta}$ 
vanishes to first order in $z$, where we could smuggle in the second $Q$ by \eqref{eq:pi_generic}.
Thanks to the presence of $Q$, and the control of $\eqref{eq:pi_generic}_{\prec\beta}$,
$({\rm d}\Gamma^*_{xz}Q\Pi_{z})_{\beta}$ grows sub-quadratically at infinity,
by the control $\eqref{eq:delta_pi_generic}_{\beta}$, 
the same applies to $\delta\Pi_{x\beta}$. 
By the above established estimate, 
the $t$-integral in \eqref{eq:intIII1} vanishes to first order in $z$.
Hence the annealed Liouville argument from the proof of Proposition \ref{prop:change_of_basepointII} yields \eqref{eq:intIII1}, 
provided the $t$-integral grows sub-quadratically at infinity as well, and is a solution to \eqref{ao23}.
To prove it is a solution to \eqref{ao23}, we proceed as in the proof of Proposition \ref{prop:int_pi_minus}, 
and replace $\int_0^\infty dt$ by $\int_s^T dt$ to obtain
\begin{align*}
-(\partial_2-\partial_1^2) \int_s^T dt(1-{\rm T}_z^{2})
(\partial_2+\partial_1^2)(\delta \Pi^-_{x}-{\rm d}\Gamma^*_{xz}Q\Pi^-_{z})_{\beta\,t}\\
= (\delta \Pi^-_{x}-{\rm d}\Gamma^*_{xz}Q\Pi^-_{z})_{\beta\,s}
- (\delta \Pi^-_{x}-{\rm d}\Gamma^*_{xz}Q\Pi^-_{z})_{\beta\,T}.
\end{align*}
The first term coincides with the r.~h.~s.~of \eqref{ao23} as $s\downarrow0$, 
the second term vanishes as $T\uparrow\infty$ by $\eqref{eq:mal_dual_annealed}_\beta$, 
$\eqref{eq:form_bound}_\beta^{\gamma\neq\pp}$ and $\eqref{eq:pi_minus_generic}_{\prec\beta}$ due to the presence of $Q$.

\medskip

The rest of the proof is dedicated to establish sub-quadratic growth of the $t$-integral in \eqref{eq:intIII1}, where we distinguish the near-field range $\sqrt[4]{t}\leq|y-z|$ 
from the far-field range $\sqrt[4]{t}\geq|y-z|$. 
In the far field, we break up $\delta\Pi^-_x$ and ${\rm d}\Gamma^*_{xz}Q\Pi^-_z$. 
The intermediate estimate in \eqref{eq:intIII5} shows growth in $y$ of order $|\gamma|<2$ of the latter, 
and the intermediate estimate in \eqref{eq:intIII6} shows growth in $y$ of order $|\beta|<2$ of the former. 
We turn to the near field, where we break up $1$ and ${\rm T}_z^2$. 
For the former, we further break up $\delta\Pi^-_x$ and ${\rm d}\Gamma^*_{xz}Q\Pi^-_z$.
By \eqref{aux10} we obtain 
\begin{align*}
\E^\frac{1}{q'}\big| \int_0^{|y-z|^4} dt\, (\partial_1^2+\partial_2)\delta\Pi^-_{x\beta\,t}(y)\big|^{q'}
&\lesssim \int_0^{|y-z|^4} dt\, (\sqrt[4]{t})^{\alpha-4}(\sqrt[4]{t}+|x-y|)^{|\beta|-\alpha}\bar{w}\\
&\lesssim |y-z|^\alpha (|y-z|+|x-y|)^{|\beta|-\alpha}\bar{w},
\end{align*}
which grows in $y$ with order $|\beta|<2$.
Similarly, by $\eqref{eq:form_bound}_\beta^{\gamma\neq \pp}$ 
and $\eqref{eq:pi_minus_generic}_{\prec\beta}$ (actually in its strengthened version $\eqref{cw64}_{\prec\beta}$) 
we obtain 
\begin{align*}
\E^\frac{1}{q'} & \big| \int_0^{|y-z|^4} dt\, (\partial_1^2+\partial_2) ({\rm d}\Gamma^*_{xz}\Pi^-_{z})_{\beta\,t}(y)\big|^{q'}\\
&\lesssim \int_0^{|y-z|^4} dt \sum_{|\gamma|\in\mathsf{A}\cap(-\infty,2)} |x-z|^{|\beta|-|\gamma|+\kappa} w_x(z) (\sqrt[4]{t})^{\alpha-4}(\sqrt[4]{t}+|y-z|)^{|\gamma|-\alpha} \\
&\lesssim \sum_{|\gamma|\in\mathsf{A}\cap(-\infty,2)} |x-z|^{|\beta|-|\gamma|+\kappa} w_x(z) |y-z|^{|\gamma|} ,
\end{align*}
which once more grows sub-quadratically in $y$. 
For the contributions from ${\rm T}_z^2$ we first consider the regime $\sqrt[4]{t}\leq|x-z|$ (recall that we are interested in large $y$ and we may therefore assume w.~l.~o.~g.~that $|x-z|<|y-z|$). 
By \eqref{aux20} we obtain 
\begin{align*}
\E^\frac{1}{q'} & \big| \int_0^{|x-z|^4} dt\, {\rm T}_z^2 (\partial_1^2+\partial_2) (\delta\Pi^-_x-{\rm d}\Gamma^*_{xz}\Pi^-_{z})_{\beta\,t}(y)\big|^{q'}\\
&\lesssim \int_0^{|x-z|^4} dt\, \sum_{\n=\0,(1,0)} |y-z|^{|\n|} (\sqrt[4]{t})^{\alpha-|\n|-4+\kappa}(\sqrt[4]{t}+|x-z|)^{|\beta|-\alpha}w_x(z)\\
&\lesssim  \sum_{\n=\0,(1,0)} |y-z|^{|\n|} |x-z|^{|\beta|-|\n|+\kappa}w_x(z) ,
\end{align*}
where in the last inequality we used $\kappa+\alpha-1>0$ (see \eqref{eq:algIII8}),
and hence obtain again sub-quadratic growth in $y$. 
In the regime $|x-z|\leq \sqrt[4]{t} \leq |y-z|$, we split $\delta\Pi^-_x$ and ${\rm d}\Gamma^*_{xz}\Pi^-_z$. 
On the one hand, we have by \eqref{aux10} 
\begin{align*}
\E^\frac{1}{q'} & \big| \int_{|x-z|^4}^{|y-z|^4} dt\, {\rm T}_z^2 (\partial_1^2+\partial_2) \delta\Pi^-_{x\beta\,t}(y)\big|^{q'} \\
&\lesssim \int_{|x-z|^4}^{|y-z|^4} dt\, \sum_{\n=\0,(1,0)} |y-z|^{|\n|} (\sqrt[4]{t})^{\alpha-|\n|-4}
((\sqrt[4]{t})^{|\beta|-\alpha}+|x-z|^{|\beta|-\alpha})\bar{w} \\
&\lesssim \sum_{\n=\0,(1,0)} |y-z|^{|\n|} (|y-z|^{|\beta|-|\n|}+ |x-z|^{|\beta|-|\n|}+|y-z|^{\alpha-|\n|}|x-z|^{|\beta|-\alpha})\bar{w},
\end{align*}
which grows again sub-quadratically in $y$.
On the other hand, by $\eqref{eq:form_bound}_\beta^{\gamma\neq \pp}$ 
and $\eqref{eq:pi_minus_generic}_{\prec\beta}$ (actually in its strengthened version $\eqref{cw64}_{\prec\beta}$), we have 
\begin{align*}
\E^\frac{1}{q'} & \big| \int_{|x-z|^4}^{|y-z|^4} dt\, {\rm T}_z^2 (\partial_1^2+\partial_2) ({\rm d}\Gamma^*_{xz}\Pi^-_{z})_{\beta\,t}(y)\big|^{q'} \\
&\lesssim \int_{|x-z|^4}^{|y-z|^4} dt \sum_{|\gamma|\in\mathsf{A}\cap(-\infty,2)} |x-z|^{|\beta|-|\gamma|+\kappa} w_x(z) \sum_{\n=\0,(1,0)} |y-z|^{|\n|} (\sqrt[4]{t})^{|\gamma|-4-|\n|} \\
&\lesssim \sum_{|\gamma|\in\mathsf{A}\cap(-\infty,2)} |x-z|^{|\beta|-|\gamma|+\kappa} w_x(z) \sum_{\n=\0,(1,0)} |y-z|^{|\n|} (|y-z|^{|\gamma|-|\n|}+|x-z|^{|\gamma|-|\n|}),
\end{align*}
which is of sub-quadratic growth in $y$.
\end{proof}

The fourth task is to pass from the output 
$\eqref{eq:delta_pi_incr_generic}_{\beta}$ of Proposition \ref{prop:intIII}
to $\eqref{eq:delta_pi_d_pi_incr}_{\beta}$.
The proof is based on formula
\begin{align}\label{eq:3ptIII2}
&\big({\rm d}\pi^{({\bf 0})}_{xy}-{\rm d}\pi^{({\bf 0})}_{xz}
-{\rm d}\Gamma^*_{xz}Q\pi^{({\bf 0})}_{zy}\big)
+\big({\rm d}\pi^{(1,0)}_{xy}-{\rm d}\pi^{(1,0)}_{xz}-{\rm d}\Gamma^*_{xz}Q\pi^{(1,0)}_{zy}\big)
(\cdot -y)_1\nonumber\\
&\quad=\big(\delta\Pi_x-\delta\Pi_x(z)-{\rm d}\Gamma^*_{xz}Q\Pi_z\big)
      -\big(\delta\Pi_x-\delta\Pi_x(y)-{\rm d}\Gamma^*_{xy}Q\Pi_y\big)
-\big({\rm d}\Gamma^*_{xy}Q-{\rm d}\Gamma^*_{xz}Q\Gamma^*_{zy}\big)P\Pi_{y},
\end{align}
which we shall establish in the proof of the upcoming Proposition
\ref{prop:delta_pi_incr_three_point}. Its merit is 
that it connects an affine polynomial with the coefficients 
given by the rough-path increments of $\{{\rm d}\pi^{({\bf n})}_{xz}\}_{{\bf n}={\bf 0},(1,0)}$ to
the rough-path increments of $\delta\Pi_x$ (in the secondary base points $y$ and $z$)
and the continuity expression for ${\rm d}\Gamma_{xz}^*$. 
In analogy to the fourth task in Subsection \ref{firstblock} we have

\begin{proposition}[Three-point argument III]\label{prop:delta_pi_incr_three_point} 
Assume that $\eqref{eq:pi_generic}_{\prec\beta}$,
$\eqref{eq:pi_n}_{\prec\beta}$, $\eqref{eq:recenter_Pi_specific}_{\prec\beta}$, and 
$\eqref{eq:delta_pi_incr_generic}_\beta$ hold,
and that $\eqref{eq:gamma}_{\prec\beta}^{\gamma}$, $\eqref{eq:form_cont}_\beta^{\gamma}$,
and $\eqref{eq:form_bound}_\beta^\gamma$
hold for all $\gamma$ not purely polynomial.
Then $\eqref{eq:delta_pi_d_pi_incr}_{\beta}$ holds.
\end{proposition}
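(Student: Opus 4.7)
The plan is a three-point argument in the spirit of Propositions \ref{prop:pi_n_three_point} and \ref{prop:three_pointII}, now at the level of rough-path increments, based on the identity (\ref{eq:3ptIII2}). I would first derive that identity by taking the directional Malliavin derivative of (\ref{eq:three_point}) as in (\ref{eq:delta_three_point}), applying (\ref{eq:three_point}) a second time for the basepoint pair $(z,y)$, and invoking the definition (\ref{eq:def_dGamma}) of ${\rm d}\Gamma^*_{xz}$; higher $|{\bf n}|$-contributions drop out because of (\ref{eq:algIII6}) combined with (\ref{eq:pi_n_pop}), so only ${\bf n}\in\{{\bf 0},(1,0)\}$ survive, while transitivity (\ref{ao15}) and multiplicativity (\ref{eq:mult}) close the algebra.

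Since the LHS of (\ref{eq:3ptIII2}), evaluated at an active point $u$, is an affine polynomial in $(u-y)_1$, a Vandermonde inversion on two evaluation points $u^{(1)},u^{(2)}$ with $(u^{(2)}-y)_1\sim|y-z|$ yields
\begin{align*}
\max_{{\bf n}\in\{{\bf 0},(1,0)\}}|y-z|^{|{\bf n}|}\,\mathbb{E}^{\frac{1}{q'}}\big|\big({\rm d}\pi^{({\bf n})}_{xy}-{\rm d}\pi^{({\bf n})}_{xz}-{\rm d}\Gamma^*_{xz}Q\pi^{({\bf n})}_{zy}\big)_\beta\big|^{q'}\lesssim\max_{i=1,2}\mathbb{E}^{\frac{1}{q'}}\big|\big(\text{RHS of }(\ref{eq:3ptIII2})\big)_\beta(u^{(i)})\big|^{q'},
\end{align*}
which exhibits the factor $|y-z|^{-|{\bf n}|}$ distinguishing the two cases in $\eqref{eq:delta_pi_d_pi_incr}_\beta$. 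For $u$ with $|u-y|\le|y-z|$, the first term of the RHS is bounded by the hypothesis $\eqref{eq:delta_pi_incr_generic}_\beta$ via $|u-z|\lesssim|y-z|$; the second term by $\eqref{eq:delta_pi_incr_generic}_\beta$ with the roles of $y,z$ swapped, together with $|y-x|\le|y-z|+|z-x|$; the continuity term $\big(({\rm d}\Gamma^*_{xy}Q-{\rm d}\Gamma^*_{xz}Q\Gamma^*_{zy})P\Pi_y\big)_\beta(u)$ componentwise by $\eqref{eq:form_cont}_\beta^\gamma$ for $\gamma$ not purely polynomial (as forced by $P$) together with $\eqref{eq:pi_generic}_{\prec\beta}$ and $\eqref{eq:pi_n}_{\prec\beta}$ via H\"older's inequality in probability, exponents balancing via the additivity (\ref{eq:hom_add}).

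The main technical hurdle is the weight $w_x(u)$ arising at the active point: $\eqref{eq:delta_pi_incr_generic}_\beta$ yields a bound involving $w_x(z)+w_x(u)$ (respectively $w_x(y)+w_x(u)$), whereas the target $\eqref{eq:delta_pi_d_pi_incr}_\beta$ asks only for $w_x(y)+w_x(z)$. Choosing $u^{(1)}=y$ handles one evaluation point trivially, and for $u^{(2)}$ one exploits the $x_2$-invariance of the affine LHS to average over a small ball in the $x_2$-direction, absorbing $w(u^{(2)})$ into a controlled multiple of $w_x(y)+w_x(z)$ through the negative-moment averaging behind (\ref{eq:intIII9}); alternatively, whenever $(z-y)_1\gtrsim|y-z|$ one simply picks $u^{(2)}=z$. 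With this in place, both bounds in (\ref{eq:delta_pi_d_pi_incr}) follow for ${\bf n}\in\{{\bf 0},(1,0)\}$, and the strict positivity of $\kappa+\alpha-|{\bf n}|$ guaranteed by (\ref{eq:algIII8}) ensures that the claimed estimate has non-negative exponent in $|y-z|$.
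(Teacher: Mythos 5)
Your plan follows the same three-point strategy as the paper (use identity (\ref{eq:3ptIII2}), evaluate at two active points, one of which is $u^{(1)}=y$), and you correctly identify the central technical difficulty: the weight $w_x(u)$ at the second evaluation point $u^{(2)}$ must be absorbed into $w_x(y)+w_x(z)$. However, the proposed resolution of that difficulty has a genuine gap.

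Averaging over a \emph{one-dimensional} interval in the $x_2$-direction (with $u_1$ held fixed so that the affine LHS stays constant) does \emph{not} tame the singular weight: since $\kappa>1$ by (\ref{eq:algIII8}), the kernel $|y'-u|^{-2\kappa}$, restricted to $y'_1=u_1$, becomes $|y'_2-u_2|^{-\kappa}$, which is non-integrable in the single variable $u_2$. Concretely, $\fint_{|u_2-u_{0,2}|\le\lambda^2}|y'-u|^{-2\kappa}\,du_2\sim\lambda^{-2}|y'_1-u_1|^{2-2\kappa}$ for small $|y'_1-u_1|$, and $2-2\kappa<0$, so the $1$D average of $w^2(u)$ is not controlled by $w_x(y)^2+w_x(z)^2$. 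The cited (\ref{eq:intIII9}) and (\ref{eq:3ptIV2}) are two-dimensional averaging statements and do not support a one-dimensional average; your fall-back of taking $u^{(2)}=z$ only works when $|(z-y)_1|\gtrsim|y-z|$ and fails when the separation is predominantly temporal. The paper instead \emph{averages the inequality over the full (parabolic) ball} $\{|\cdot-y|\le|y-z|\}$: the mean $\fint|(\cdot-y)_1|\sim|y-z|$ supplies exactly the factor needed for the linear coefficient, and the 2D average of $w_x(\cdot)$ is controlled via $\fint_{|\cdot-y|\le\lambda}|y'-\cdot|^{-2\kappa}\lesssim(\lambda+|y'-y|)^{-2\kappa}\le|y'-y|^{-2\kappa}$ together with (\ref{eq:3ptIII3}), yielding $\fint w_x(\cdot)\lesssim w_x(y)$. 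Your plan needs to be replaced by this 2D averaging; the affine mean-value property is still what makes the averaging compatible with the Vandermonde inversion, but in two dimensions, not one.

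Two smaller omissions: (i) you never treat the discrepancy between the $Q\pi^{({\bf n})}_{zy}$ appearing in (\ref{eq:3ptIII2}) and the plain $\pi^{({\bf n})}_{zy}$ in the target (\ref{eq:delta_pi_d_pi_incr}); this requires controlling ${\rm d}\Gamma^*_{xz}P({\rm id}-Q)\pi^{({\bf n})}_{zy}$ and ${\rm d}\Gamma^*_{xz}P({\rm id}-Q)\Gamma^*_{zy}$ via $\eqref{eq:form_bound}$, and these are precisely the additional terms the paper isolates before estimating. (ii) For the ${\bf n}=(1,0)$ coefficient one should first subtract off the already-established ${\bf n}={\bf 0}$ identity (\ref{eq:3ptIII}) so that the LHS of the resulting identity is purely linear in $(\cdot-y)_1$; only then is the average of $|(\cdot-y)_1|$ directly a lower bound proportional to the coefficient.
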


\begin{proof}
The formula (\ref{eq:3ptIII2}) follows from substituting $\Pi_z$
according to (\ref{eq:recenter_Pi_specific}) by $\Gamma_{zy}^*\Pi_y+\pi_{zy}^{({\bf 0})}$,
and then appealing to
\begin{align}
(\delta\Pi_x(y),\delta\Pi_x(z))&\stackrel{\eqref{eq:Pipi}}{=}
({\rm d}\pi_{xy}^{(\bf 0)},{\rm d}\pi_{xz}^{(\bf 0)}),\label{eq:deltaPipi}\\
Q\Gamma_{zy}^*({\rm id}-P)\Pi_y&
\stackrel{\eqref{eq:Gamma_z_n},\eqref{eq:pi_purely_pol}}{=}
(\mathsf{z}_{(1,0)}+Q\pi^{(1,0)}_{zy})(\cdot-y)_1,\nonumber\\
\big({\rm d}\Gamma^*_{xy}Q-{\rm d}\Gamma^*_{xz}Q\Gamma^*_{zy}\big)({\rm id}-P)\Pi_{y}
&\stackrel{\eqref{eq:def_dGamma}}{=}
\big({\rm d}\pi^{(1,0)}_{xy}-{\rm d}\pi^{(1,0)}_{xz}-{\rm d}\Gamma^*_{xz}Q\pi^{(1,0)}_{zy}\big)
(\cdot -y)_1.\nonumber
\end{align}
Evaluating at $y$ makes formula (\ref{eq:3ptIII2}) collapse to 
\begin{align}\label{eq:3ptIII}
{\rm d}\pi^{({\bf 0})}_{xy}-{\rm d}\pi^{({\bf 0})}_{xz}
-{\rm d}\Gamma^*_{xz}Q\pi^{({\bf 0})}_{zy}
=\delta\Pi_x(y)-\delta\Pi_x(z)-{\rm d}\Gamma^*_{xz}Q\Pi_z(y).
\end{align}
Since ${\rm d}\Gamma^*_{xz}$ vanishes on $\mathsf{z}_{\bf n}$ unless ${\bf n}=(1,0)$,
we may rewrite (\ref{eq:3ptIII}) as
\begin{align*}
{\rm d}\pi^{({\bf 0})}_{xy}-{\rm d}\pi^{({\bf 0})}_{xz}
-{\rm d}\Gamma^*_{xz}\pi^{({\bf 0})}_{zy}
\stackrel{\eqref{eq:deltaPipi}}{=}
\big(\delta\Pi_x(y)-\delta\Pi_x(z)-{\rm d}\Gamma^*_{xz}Q\Pi_z(y)\big)
-{\rm d}\Gamma^*_{xz}P({\rm id}-Q)\Pi_{z}(y).
\end{align*}
Taking the $\mathbb{E}^\frac{1}{q'}|\cdot|^{q'}$ of the $\beta$-component
of this identity, appealing to the strictly triangular structure (\ref{eq:triangular_d_Gamma})
of ${\rm d}\Gamma^*_{xz}$ w.~r.~t.~$\prec$, we obtain by our assumptions
$\eqref{eq:pi_generic}_{\prec\beta}$,
$\eqref{eq:delta_pi_incr_generic}_\beta$, and $\eqref{eq:form_bound}_\beta^{\gamma\not=\pp}$,
\begin{align*}
\mathbb{E}^\frac{1}{q'}\big|{\rm d}\pi^{({\bf 0})}_{xy}-{\rm d}\pi^{({\bf 0})}_{xz}
-{\rm d}\Gamma^*_{xz}\pi^{({\bf 0})}_{zy}\big|^{q'}
&\lesssim|y-z|^{\kappa+\alpha}(|y-z|+|z-x|)^{|\beta|-\alpha}(w_x(y)+w_x(z))\nonumber\\
&+\sum_{|\gamma|\in\mathsf{A}\cap[2,\infty)}|z-x|^{\kappa+|\beta|-|\gamma|}w_x(z)
|y-z|^{|\gamma|}.
\end{align*}
By the second item in (\ref{ao25}),
the second r.~h.~s.~term
can be absorbed into the first. This establishes $\eqref{eq:delta_pi_d_pi_incr}_\beta$ 
for ${\bf n}={\bf 0}$.

\medskip

We now address the ${\bf n}=(1,0)$ contribution to (\ref{eq:3ptIII2}). 
Appealing to (\ref{eq:recIII4a}), (\ref{eq:recIII4b}) and (\ref{eq:3ptIII}), we may rewrite (\ref{eq:3ptIII2}) as
\begin{align*}
&\big({\rm d}\pi^{(1,0)}_{xy}-{\rm d}\pi^{(1,0)}_{xz}-{\rm d}\Gamma^*_{xz}\pi^{(1,0)}_{zy}\big)
(\cdot -y)_1\nonumber\\
&=\big(\delta\Pi_x-\delta\Pi_x(z)-{\rm d}\Gamma^*_{xz}Q\Pi_z\big)
-\big(\delta\Pi_x-\delta\Pi_x(z)-{\rm d}\Gamma^*_{xz}Q\Pi_z\big)(y)
-\big(\delta\Pi_x-\delta\Pi_x(y)-{\rm d}\Gamma^*_{xy}Q\Pi_y\big)\nonumber\\
&-\big({\rm d}\Gamma^*_{xy}-{\rm d}\Gamma^*_{xz}\Gamma^*_{zy}\big)PQ\Pi_{y}
-{\rm d}\Gamma_{xz}^*P({\rm id}-Q)\Gamma^*_{zy}QP\Pi_y
-{\rm d}\Gamma^*_{xz}P({\rm id}-Q)\pi^{(1,0)}_{zy}(\cdot-y)_1.
\end{align*}
We then take the $\mathbb{E}^\frac{1}{q'}|\cdot|^{q'}$ of the $\beta$-component of this identity.
The first three r.~h.~s.~terms are controlled by $\eqref{eq:delta_pi_incr_generic}_{\beta}$.
For the fourth term we use the strict triangularity (\ref{eq:dGamma_inc_triangular}),
so that $\eqref{eq:pi_generic}_{\prec\beta}$ is sufficient,
next to $\eqref{eq:form_cont}_\beta^{\gamma\neq \pp}$.
For the fifth term  we use the strict triangularity of ${\rm d}\Gamma_{xz}^*$
and the triangularity of $\Gamma_{zy}^*$, so that 
$\eqref{eq:gamma}_{\prec\beta}^{\gamma\neq \pp}$ and 
once more $\eqref{eq:pi_generic}_{\prec\beta}$ are sufficient, 
next to $\eqref{eq:form_bound}_\beta^{\gamma\neq \pp}$.
For the sixth term we just use the strict triangularity of ${\rm d}\Gamma_{xz}^*$, so that $\eqref{eq:pi_n}_{\prec\beta}$ is sufficient,
once more next to $\eqref{eq:form_bound}_\beta^{\gamma\neq \pp}$.
We obtain term by term, using that $\gamma(1,0)\not=0$ implies $|\gamma|\ge 1$
on the fifth r.~h.~s.~term below,
\begin{align*}
\lefteqn{|(\cdot-y)_1|\mathbb{E}^\frac{1}{q'}\big|{\rm d}\pi^{(1,0)}_{xy}-{\rm d}\pi^{(1,0)}_{xz}
-{\rm d}\Gamma^*_{xz}Q\pi^{(1,0)}_{zy}\big|^{q'}}\nonumber\\
&\lesssim|\cdot-z|^{\kappa+\alpha}(|\cdot-z|+|z-x|)^{|\beta|-\alpha}
(w_x(\cdot)+w_x(z))\nonumber\\
&+|y-z|^{\kappa+\alpha}(|y-z|+|z-x|)^{|\beta|-\alpha}
(w_x(y)+w_x(z))\nonumber\\
&+|\cdot-y|^{\kappa+\alpha}(|\cdot-y|+|y-x|)^{|\beta|-\alpha}
(w_x(\cdot)+w_x(y))\nonumber\\
&+\sum_{|\gamma|\in\mathsf{A}\cap[0,|\beta|-\alpha]}
|y-z|^{\kappa+\alpha}
(|y-z|+|z-x|)^{|\beta|-|\gamma|-\alpha}(w_x(z)+w_x(y))|\cdot-y|^{|\gamma|}\nonumber\\
&+\sum_{|\gamma|\in\mathsf{A}\cap[1,|\beta|-\alpha+1]}
|y-z|^{\kappa+\alpha-1}
(|y-z|+|z-x|)^{|\beta|-|\gamma|-\alpha+1}(w_x(z)+w_x(y))|\cdot-y|^{|\gamma|}\nonumber\\
&+\sum_{|\gamma|\in\mathsf{A}\cap[2,\kappa+|\beta|]}
|z-x|^{\kappa+|\beta|-|\gamma|}w_x(z)
\sum_{|\gamma'|\in\mathsf{A}\cap[0,|\gamma|]}|y-z|^{|\gamma|-|\gamma'|}|\cdot-y|^{|\gamma'|}
\nonumber\\
&+\sum_{|\gamma|\in\mathsf{A}\cap[2,\kappa+|\beta|]}
|z-x|^{\kappa+|\beta|-|\gamma|}w_x(z)|y-z|^{|\gamma|-1}|\cdot-y|.
\end{align*}
Restricting the active variable to the (parabolic) ball $|\cdot-y|\le|y-z|$, 
and using the second item in (\ref{ao25}) on the last two terms, this estimate collapses to
\begin{align}\label{eq:3ptIII4}
\lefteqn{|(\cdot-y)_1|\mathbb{E}^\frac{1}{q'}\big|{\rm d}\pi^{(1,0)}_{xy}-{\rm d}\pi^{(1,0)}_{xz}
-{\rm d}\Gamma^*_{xz}Q\pi^{(1,0)}_{zy}\big|^{q'}}\nonumber\\
&\lesssim|y-z|^{\kappa+\alpha}(|y-z|+|z-x|)^{|\beta|-\alpha}
(w_x(\cdot)+w_x(y)+w_x(z)).
\end{align}
We now average the active variable over this ball in order to recover
$\eqref{eq:delta_pi_d_pi_incr}_\beta$ for ${\bf n}=(1,0)$. 
Indeed, for the l.~h.~s.~of (\ref{eq:3ptIII4}) we appeal to the obvious
$\fint_{|\cdot-y|\le|y-z|}|(\cdot-y)_1|$ $\sim|y-z|$. For the r.~h.~s.~of (\ref{eq:3ptIII4}),
by definition (\ref{eq:def_w_x_z}) of $w_x$, it suffices to establish
\begin{align}\label{eq:3ptIII3}
\fint_{|\cdot-y|\le\lambda}|\cdot-x|^{-\kappa}\lesssim|y-x|^{-\kappa}
\quad\mbox{for}\;\lambda=|y-z|,
\end{align}
which is an easy version of (\ref{eq:3ptIII10}).
\end{proof}


\subsection{From \texorpdfstring{$\delta\Pi^{-}_x-{\rm d}\Gamma^*_{xz}\Pi^{-}_z$}{delta Pi minus x - dGamma* xz Pi minus z} back to \texorpdfstring{$\delta\Pi^{-}_x$}{deltaPi minus x} via boundedness of \texorpdfstring{${\rm d}\Gamma^*_{xz}$ and ${\rm d}\pi_{xz}^{({\bf n})}$}{dGamma* xz and dpi n xz}, and by averaging in the secondary base point \texorpdfstring{$z$}{z}}
\label{forthblock}

The aim of this last subsection is to pass from the estimate
(\ref{eq:delta_pi_minus_generic}) of the rough-path increment of $\delta\Pi_{x}^{-}$
to the estimate (\ref{eq:mal_dual_annealed}) of $\delta\Pi_{x}^{-}$ itself. 
Clearly, in view of the structure of the rough-path increment, 
this will require an estimate of ${\rm d}\Gamma_{xz}^*P$, see Proposition \ref{prop:form_bound}.
The proof of Proposition \ref{prop:form_bound} will be similar to the first task of 
Subsections \ref{firstblock} and \ref{thirdblock},
and rely on the estimate of ${\rm d}\pi_{xz}^{(\bf n)}$ for ${\bf n}={\bf 0},(1,0)$.
By (\ref{eq:deltaPipi}) the estimate of ${\rm d}\pi_{xz}^{(\bf 0)}$ is already part of the 
induction hypothesis.
However, the estimate of ${\rm d}\pi_{xz}^{(1,0)}$ needs to be included into the induction:
\begin{align}\label{eq:d_pi}
\mathbb{E}^\frac{1}{q'}|{\rm d}\pi^{(1,0)}_{xz\beta}|^{q'}
\lesssim |z-x|^{\kappa+|\beta|-1} w_x(z).
\end{align}
Note that the exponent is strictly positive, see (\ref{eq:algIII8}).
The first task of this subsection is based on the formula 
(\ref{eq:def_dGamma}) for ${\rm d}\Gamma^*_{xz}$. 

\begin{proposition}[Algebraic argument IV]\label{prop:form_bound}
Assume that $\eqref{eq:delta_pi_generic}_{\prec\beta}$ and $\eqref{eq:d_pi}_{\prec\beta}$ hold,
and assume that $\eqref{eq:gamma}_{\preccurlyeq\beta}^{\gamma}$ holds
for all $\gamma$ not purely polynomial.
Then we have for $\gamma$ not purely polynomial\footnote{with the understanding that
the l.~h.~s.~vanishes when the exponent is non-positive}
\begin{align}\label{eq:form_bound}
\E^\frac{1}{q'}|({\rm d}\Gamma^*_{xz})_\beta^{\gamma}|^{q'}
\lesssim|z-x|^{\kappa+|\beta|-|\gamma|}w_x(z).
\end{align}
This includes the statement that $({\rm d}\Gamma^*_{xz}P)_\beta$ depends only on 
${\rm d}\pi^{(1,0)}_{xz\beta'}$ and $\delta\Pi_{x\beta'}$ with $\beta'\prec\beta$,
and on $(\Gamma_{xz}^*P)_{\beta'}$ with $\beta'\preccurlyeq\beta$.
\end{proposition}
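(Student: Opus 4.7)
The plan is to mimic the algebraic arguments of Propositions \ref{prop:gamma_npp} and \ref{prop:delta_gamma_npp}, now based on the defining formula \eqref{eq:def_dGamma} for ${\rm d}\Gamma_{xz}^*$. More precisely, I will expand
\begin{align*}
({\rm d}\Gamma_{xz}^*)_\beta^\gamma
=\sum_{{\bf n}={\bf 0},(1,0)}\sum_{\beta_1+\beta_2=\beta}
{\rm d}\pi^{({\bf n})}_{xz\,\beta_1}\big(\Gamma_{xz}^*D^{({\bf n})}\big)_{\beta_2}^{\gamma},
\end{align*}
so that by the explicit action of $D^{({\bf 0})}$ and $D^{(1,0)}$ on the matrix coordinates, see \eqref{def_Dnull_rig} and \eqref{eq:def_Dn}, each summand takes the form $\delta\Pi_{x\beta_1}(z)\,(\Gamma_{xz}^*)_{\beta_2}^{\gamma-e_k+e_{k+1}}$ (for some $k\ge 0$), respectively ${\rm d}\pi^{(1,0)}_{xz\,\beta_1}\,(\Gamma_{xz}^*)_{\beta_2}^{\gamma-e_{(1,0)}}$, where I have used ${\rm d}\pi^{({\bf 0})}_{xz}=\delta\pi^{({\bf 0})}_{xz}=\delta\Pi_x(z)$ via \eqref{eq:deltaPipi}.

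Next I will verify that the dependence is of the announced form. On the one hand, by \eqref{eq:algIII7} and \eqref{eq:algIII6}, $\beta_1$ is forced to be non-purely polynomial, so the additivity property \eqref{eq:recIII25} yields $\beta_1\prec\beta$ and $\beta_2\preccurlyeq\beta$. On the other hand, since $\gamma$ is not purely polynomial, the right indices $\gamma-e_k+e_{k+1}$ and $\gamma-e_{(1,0)}$ are not purely polynomial either (a short check: if either were $e_{\bf n}$ for some ${\bf n}\not={\bf 0}$, then $\gamma$ itself would reduce to a purely polynomial expression), so we access $\Gamma_{xz}^*$ only through $(\Gamma_{xz}^*P)_{\beta_2}$. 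This establishes the qualitative part of the proposition.

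For the quantitative estimate I apply H\"older's inequality in probability with the exponents $q,p$ satisfying \eqref{eq:Hoelder} to each summand. For ${\bf n}={\bf 0}$, the induction hypothesis $\eqref{eq:delta_pi_generic}_{\prec\beta}$ combined with the elementary reduction $\bar w\le|z-x|^{\kappa}w_x(z)$, cf.~\eqref{eq:def_w_x_z}, yields $\mathbb{E}^{1/q'}|\delta\Pi_{x\beta_1}(z)|^{q'}\lesssim|z-x|^{|\beta_1|+\kappa}w_x(z)$; multiplying by $\eqref{eq:gamma}_{\preccurlyeq\beta}^{\gamma-e_k+e_{k+1}}$ and using $|\gamma-e_k+e_{k+1}|=|\gamma|+\alpha$ together with the additivity \eqref{eq:hom_add} gives the target exponent $\kappa+|\beta|-|\gamma|$. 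For ${\bf n}=(1,0)$, the induction hypothesis $\eqref{eq:d_pi}_{\prec\beta}$ provides $\mathbb{E}^{1/q'}|{\rm d}\pi^{(1,0)}_{xz\,\beta_1}|^{q'}\lesssim|z-x|^{\kappa+|\beta_1|-1}w_x(z)$; combined with $\eqref{eq:gamma}_{\preccurlyeq\beta}^{\gamma-e_{(1,0)}}$ and $|\gamma-e_{(1,0)}|=|\gamma|+\alpha-1$, the same additivity \eqref{eq:hom_add} reproduces $\kappa+|\beta|-|\gamma|$.

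There is no genuine obstacle here: the proof is a clean variant of Algebraic argument II in which the unweighted stochastic Hilbert norm $\bar w$ is upgraded to the weighted $w_x(z)$ through the absorbing definition \eqref{eq:def_w_x_z}. The only point deserving care is keeping track of whether the intermediate indices are purely polynomial or not, in order to apply \eqref{eq:recIII25} and thus place $\beta_1$ strictly below $\beta$ with respect to $\prec$; this is what guarantees that every ingredient appearing on the right-hand side is controlled by the induction hypothesis.
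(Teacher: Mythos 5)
Your proof is correct and follows essentially the same route as the paper: you expand the exponential-type formula \eqref{eq:def_dGamma}, separate the $\mathbf{n}=\mathbf{0}$ and $\mathbf{n}=(1,0)$ contributions, identify the same products \eqref{eq:algIV1} and \eqref{eq:algIV2}, constrain the intermediate indices via \eqref{eq:algIII7}, \eqref{eq:algIII6} and \eqref{eq:recIII25}, and estimate each factor by H\"older's inequality, absorbing $\bar w$ into $w_x(z)$ via \eqref{eq:def_w_x_z}. The only cosmetic difference is that you unwind the exponent bookkeeping through \eqref{eq:hom_add} directly, where the paper cites the pre-packaged identities \eqref{eq:algII1}--\eqref{eq:algII2} from the proof of Proposition \ref{prop:delta_gamma_npp}.
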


\begin{proof} The structure is very similar to the proof of Proposition \ref{prop:algIII}:
Again, we distinguish the contributions from ${\bf n}={\bf 0}$ and ${\bf n}=(1,0)$
to $\eqref{eq:def_dGamma}_{\beta}^\gamma$. In view of (\ref{eq:deltaPipi}), all
terms in the $({\bf n}={\bf 0})$-contribution are of the form
\begin{align}\label{eq:algIV1}
\delta\Pi_{x\beta_1}(z)(\Gamma_{xz}^*)_{\beta_2}^{\gamma-e_k+e_{k+1}},
\end{align}
for some $k\ge 0$ and multi-indices $\beta_1,\beta_2$ 
constrained by $\beta_1\prec\beta$, $\beta_2\preccurlyeq\beta$, $\beta_1+\beta_2=\beta$,
and noting that $\gamma-e_k+e_{k+1}$ is not purely polynomial. 
Hence under our assumptions, the $\mathbb{E}^\frac{1}{q'}|\cdot|^{q'}$-norm of 
(\ref{eq:algIV1}) is estimated by
\begin{align*}
|z-x|^{|\beta_1|}\bar w\;|z-x|^{|\beta_2|-|\gamma-e_k+e_{k+1}|}
\stackrel{\eqref{eq:def_w_x_z}}{\le} 
|z-x|^{\kappa+|\beta_1|}w_x(z)\;|z-x|^{|\beta_2|-|\gamma-e_k+e_{k+1}|}.
\end{align*}
It follows from (\ref{eq:algII1}) that this is contained in the r.~h.~s.~of 
(\ref{eq:form_bound}).

\medskip

All terms in the $({\bf n}=(1,0))$-contribution are of the form
\begin{align}\label{eq:algIV2}
{\rm d}\pi_{xz\beta_1}^{(1,0)}(\Gamma_{xz}^*)_{\beta_2}^{\gamma-e_{(1,0)}},
\end{align}
for some multi-indices $\beta_1,\beta_2$
constrained by $\beta_1\prec\beta$, $\beta_2\preccurlyeq\beta$, $\beta_1+\beta_2=\beta$,
and noting that $\gamma-e_{(1,0)}$ is not purely polynomial.
Hence under our assumptions, the $\mathbb{E}^\frac{1}{q'}|\cdot|^{q'}$-norm of
(\ref{eq:algIV2}) is estimated by
\begin{align*}
|z-x|^{\kappa+|\beta_1|-1}w_x(z)\;|z-x|^{|\beta_2|-|\gamma-e_{(1,0)}|}.
\end{align*}
It follows from (\ref{eq:algII2}) that also this is contained 
in the r.~h.~s.~of (\ref{eq:form_bound}).
\end{proof}


We will establish (\ref{eq:d_pi}) based on the three-point formula
\begin{align}\label{eq:3ptIV}
(y-z)_1{\rm d}\pi_{xz}^{(1,0)}=
-\big(\delta\Pi_x(y)-\delta\Pi_x(z)-{\rm d}\Gamma_{xz}^*Q\Pi_z(y)\big)
+\delta\Pi_x(y)-\delta\Pi_x(z)-{\rm d}\Gamma_{xz}^*PQ\Pi_z(y),
\end{align}
which follows from (\ref{eq:pi_purely_pol}) in form of $({\rm id}-P)Q\Pi_z(y)$ 
$=(y-z)_1\mathsf{z}_{(1,0)}$ and (\ref{eq:dGamma_z_10}).
By an argument similar to the fourth task of Subsections 
\ref{firstblock} and \ref{thirdblock} (see Propositions~\ref{prop:pi_n_three_point}, \ref{prop:gamma_pp} and \ref{prop:delta_pi_incr_three_point}) we obtain

\begin{proposition}[Three-point argument IV]\label{prop:d_pi} 
Assume that $\eqref{eq:pi_generic}_{\prec\beta}$, $\eqref{eq:delta_pi_generic}_\beta$ 
and $\eqref{eq:delta_pi_incr_generic}_\beta$ hold,
and that $\eqref{eq:form_bound}_\beta^{\gamma}$ holds for all $\gamma$ not purely polynomial.
Then $\eqref{eq:d_pi}_\beta$ and $\eqref{eq:form_bound}_\beta$ hold.
\end{proposition}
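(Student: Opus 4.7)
The plan is to use the identity \eqref{eq:3ptIV} to express $(y-z)_1({\rm d}\pi_{xz}^{(1,0)})_\beta$ as a sum of two terms that admit separate bounds, then to isolate $({\rm d}\pi_{xz}^{(1,0)})_\beta$ by restricting/averaging $y$, in close analogy with Proposition~\ref{prop:delta_pi_incr_three_point}. The formula \eqref{eq:3ptIV} itself follows from the splitting $Q\Pi_z=PQ\Pi_z+({\rm id}-P)Q\Pi_z$, noting that $({\rm id}-P)Q\Pi_z(y)=(y-z)_1\mathsf{z}_{(1,0)}$ by \eqref{eq:pi_purely_pol} and that ${\rm d}\Gamma_{xz}^*\mathsf{z}_{(1,0)}={\rm d}\pi_{xz}^{(1,0)}$ by definition \eqref{eq:def_dGamma} and (\ref{eq:dGamma_z_10}); adding and subtracting $\delta\Pi_x(y)-\delta\Pi_x(z)$ produces the stated identity.

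Taking the $\beta$-component and the $\mathbb{E}^{1/q'}|\cdot|^{q'}$-norm, I would bound the first RHS contribution $\delta\Pi_x(y)-\delta\Pi_x(z)-{\rm d}\Gamma_{xz}^*Q\Pi_z(y)$ directly by $\eqref{eq:delta_pi_incr_generic}_\beta$, giving $|y-z|^{\kappa+\alpha}(|y-z|+|z-x|)^{|\beta|-\alpha}(w_x(y)+w_x(z))$. For the second contribution $\delta\Pi_x(y)-\delta\Pi_x(z)-{\rm d}\Gamma_{xz}^*PQ\Pi_z(y)$, I would use the triangle inequality: the two $\delta\Pi_x$ evaluations are controlled by $\eqref{eq:delta_pi_generic}_\beta$, while the term $({\rm d}\Gamma_{xz}^*PQ\Pi_z(y))_\beta$ expands as $\sum_{\gamma\text{ not purely polynomial},\,|\gamma|<2}({\rm d}\Gamma_{xz}^*)_\beta^\gamma\,\Pi_{z\gamma}(y)$ thanks to the projection $P$ onto $\mathsf{\tilde T}^*$. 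Each summand is controlled by Hölder's inequality in probability using $\eqref{eq:form_bound}_\beta^\gamma$ and $\eqref{eq:pi_generic}_\gamma$, yielding a contribution $|z-x|^{\kappa+|\beta|-|\gamma|}|y-z|^{|\gamma|}w_x(z)$; the sum is finite since $\mathsf{A}$ is locally finite. Using $\bar w\le|z-x|^\kappa w_x(z)$ (from \eqref{eq:def_w_x_z}) to convert the $\bar w$ terms arising from $\delta\Pi_x$ and restricting to $|y-z|\le|z-x|$, the second contribution is dominated by $|z-x|^{\kappa+|\beta|}w_x(z)$, while the first contribution is dominated by $|y-z|^{\kappa+\alpha}|z-x|^{|\beta|-\alpha}(w_x(y)+w_x(z))$.

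Finally, I would restrict $y$ to the parabolic region $|y-z|\sim|z-x|$ with $(y-z)_1\sim|y-z|$, so that dividing the combined estimate by $|(y-z)_1|\sim|z-x|$ yields
\begin{equation*}
\mathbb{E}^{1/q'}|({\rm d}\pi_{xz}^{(1,0)})_\beta|^{q'}\lesssim |z-x|^{\kappa+|\beta|-1}(w_x(y)+w_x(z)).
\end{equation*}
To eliminate the dependence on the auxiliary variable $y$, I would average over $y$ in that region and apply an averaging estimate for $w_x$ in the spirit of \eqref{eq:3ptIV2} and \eqref{eq:3ptIII3} (but centered at $z$ rather than $x$), using that $|y-x|\sim|z-x|$ on the region; this absorbs the $w_x(y)$ contribution into $w_x(z)$ and yields $\eqref{eq:d_pi}_\beta$. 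The main technical point will be this averaging step: concretely, verifying $\fint_{|y-z|\sim|z-x|}w_x(y)\,dy\lesssim w_x(z)$, which reduces via Cauchy-Schwarz on $\fint w^2$ to controlling the convolution of singular kernels $|\cdot-z|^{-2\kappa}$ against the parabolic ball of radius $|z-x|$, exactly as in \eqref{eq:3ptIII10}.
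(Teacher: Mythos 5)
Your proposal is correct and follows essentially the same route as the paper: the identity \eqref{eq:3ptIV}, the term-by-term estimate of its right-hand side using the triangularity \eqref{eq:triangular_d_Gamma}, and a restrict-then-average step in $y$ to isolate ${\rm d}\pi_{xz\beta}^{(1,0)}$ and absorb $w_x(y)$ into $w_x(z)$ via \eqref{eq:3ptIV2}/\eqref{eq:3ptIII3}. The only cosmetic difference is your choice of a sub-annulus $\{|y-z|\sim|z-x|,\,(y-z)_1\sim|y-z|\}$ rather than the paper's ball $|y-z|\le\frac{1}{2}|z-x|$ (where $\fint|(y-z)_1|\sim|z-x|$ and $|y-x|\ge\frac{1}{2}|z-x|$ hold automatically); just make sure the outer radius of your annulus is a fixed fraction strictly less than $|z-x|$, so that $|y-x|\sim|z-x|$ pointwise as you claim.
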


\begin{proof} Taking the $\mathbb{E}^\frac{1}{q'}|\cdot|^{q'}$ of
$\eqref{eq:3ptIV}_\beta$, and appealing to the strict triangularity (\ref{eq:triangular_d_Gamma})
of ${\rm d}\Gamma^*$, we obtain by our assumptions
\begin{align}\label{eq:3ptIV4}
|(y-z)_1|\mathbb{E}^\frac{1}{q'}|{\rm d}\pi_{xz\beta}^{(1,0)}|^{q'}
&\stackrel{\hphantom{(\ref{eq:def_w_x_z})}}{\lesssim}|y-z|^{\kappa+\alpha}(|y-z|+|z-x|)^{|\beta|-\alpha}(w_x(y)+w_x(z))\nonumber\\
&\stackrel{\hphantom{\eqref{eq:def_w_x_z}}}{+}|y-x|^{|\beta|}\bar w+|z-x|^{|\beta|}\bar w+
\sum_{|\gamma|\in\mathsf{A}\cap[|\alpha|,\kappa+|\beta|)}
|z-x|^{\kappa+|\beta|-|\gamma|}w_x(z)|y-z|^{|\gamma|}\nonumber\\
&\stackrel{\eqref{eq:def_w_x_z}}{\lesssim}
(|y-z|+|z-x|)^{\kappa+|\beta|}\big(w(y)+|y-x|^{-\kappa}\bar w+w_x(z)\big).
\end{align}
We now average over all $y$ with $|y-z|\le\frac{1}{2}|z-x|$, where the factor
of $\frac{1}{2}$ ensures that $|y-x|\ge\frac{1}{2}|z-x|$, so that in this range 
(\ref{eq:3ptIV4}) simplifies to
\begin{align}\label{eq:3ptIV3}
|(y-z)_1|\mathbb{E}^\frac{1}{q'}|{\rm d}\pi_{xz\beta}^{(1,0)}|
\lesssim|z-x|^{\kappa+|\beta|}\big(w(y)+|z-x|^{-\kappa}\bar w+w_x(z)\big).
\end{align}
The averaging of (\ref{eq:3ptIV3}) over $\{y:|y-z|\le\frac{1}{2}|z-x|\}$
ensures that on the one hand, we have for the
l.~h.~s.~that $\fint_{y:|y-z|\le\frac{1}{2}|z-x|}dy\,|(y-z)_1|\sim|z-x|$,
and that on the other hand, we may use (\ref{eq:3ptIV2}) for the r.~h.~s.~to
the effect of $\fint_{y:|y-z|\le\frac{1}{2}|z-x|}dy \,w(y)$
$\lesssim |z-x|^{-\kappa}\bar w$. Hence, once more appealing to definition
(\ref{eq:def_w_x_z}) of $w_x(z)$, we see that (\ref{eq:3ptIV3}) turns into $\eqref{eq:d_pi}_\beta$.
\end{proof}


Passing from (\ref{eq:delta_pi_minus_generic}) to (\ref{eq:mal_dual_annealed})
also means replacing the weighted norms of $\delta\xi$ by the original norm $\bar w$. 
This will be done starting from the obvious identity 
\begin{align}\label{eq:twisted_aver}
\delta \Pi_{x}^-=
(\delta\Pi_{x}^{-}-{\rm d}\Gamma^*_{xz}Q\Pi_z^-)
+{\rm d}\Gamma^*_{xz}Q\Pi_z^-
\end{align}
and averaging in the secondary base point $z$. In particular, we have the following proposition

\begin{proposition} \label{prop:delta_pi_minus_est}
Assume that $\eqref{eq:recenter_Pi_minus}_{\beta}$, $\eqref{eq:pi_minus_generic}_{\prec\beta}$,
$\eqref{eq:mal_dual_annealed}_{\prec \beta}$ and $\eqref{eq:delta_pi_minus_generic}_\beta$ 
hold, assume that $\eqref{eq:gamma}_\beta^\gamma$, $\eqref{eq:delta_gamma}_\beta^\gamma$ and $\eqref{eq:form_bound}_\beta^\gamma$ for $\gamma$ not 
purely polynomial hold.
Then $\eqref{eq:mal_dual_annealed}_\beta$ holds.
\end{proposition}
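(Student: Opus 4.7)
The approach is to start from the trivial decomposition \eqref{eq:twisted_aver}, bound each of its two summands in $\mathbb{L}^{q'}$, and then average the resulting pointwise-in-$z$ estimates over the secondary base point $z$ in a ball of the characteristic scale $\lambda := \sqrt[4]{t}+|y-x|$. The averaging is the key mechanism converting the weighted norm $w_x(z)$ into the ambient $\bar w$ via \eqref{eq:3ptIV2}, at the cost of a factor $\lambda^{-\kappa}$; this cost is precisely what absorbs the extra $\lambda^\kappa$ arising from the rough-path gain-in-regularity exponent, yielding the target exponent $\lambda^{|\beta|-\alpha}$. Integrability of the averaging is ensured by \eqref{ao25}.

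For the correction term $({\rm d}\Gamma^*_{xz}Q\Pi^-_z)_{\beta\,t}(y)$, I would expand componentwise: by the strict triangularity \eqref{eq:triangular_d_Gamma} of ${\rm d}\Gamma^*_{xz}$, the sum is restricted to $\gamma\prec\beta$ not purely polynomial. H\"older's inequality with the split $\frac{1}{q'}=\frac{1}{q}+\frac{1}{p}$, combined with $\eqref{eq:form_bound}_{\beta}^{\gamma}$ for the first factor and a semi-group-convolved version of $\eqref{eq:pi_minus_generic}_{\gamma}$ (obtained via \eqref{cw61} as in the pre-processing of \eqref{eq:intIII20}) for the second, produces a pointwise bound of the shape $|z-x|^{\kappa+|\beta|-|\gamma|}w_x(z)(\sqrt[4]{t})^{\alpha-2}(\sqrt[4]{t}+|y-z|)^{|\gamma|-\alpha}$. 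For the rough-path increment $(\delta\Pi^-_x-{\rm d}\Gamma^*_{xz}Q\Pi^-_z)_{\beta\,t}(y)$, I would apply $\eqref{eq:delta_pi_minus_generic}_{\beta}$ directly.

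I would then average both estimates over $z\in B(x,\lambda)$. In this ball every polynomial factor in $|y-z|$ and $|z-x|$ is dominated by the corresponding power of $\lambda$, and the $w_x(z)$ factors reduce to $\lambda^{-\kappa}\bar w$ by \eqref{eq:3ptIV2}. Summation over the relevant $\gamma$ using the additivity \eqref{eq:hom_add} yields $(\sqrt[4]{t})^{\alpha-2}\lambda^{|\beta|-\alpha}\bar w$ for the correction term and for the $w_x(z)$-contribution of the rough-path increment, which is precisely the target \eqref{eq:mal_dual_annealed}.

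The hard part is handling the $w_x(y)$-contribution in $\eqref{eq:delta_pi_minus_generic}_{\beta}$, which is independent of $z$ and hence unaffected by the $z$-averaging. Its treatment rests on the observation that $w_x(y)$ enters \eqref{eq:delta_pi_minus_generic} only through the weight-averaging identity \eqref{eq:intIII9}, which also yields $w_x^2(y')\lesssim w_x^2(z)$ on the convolution support $|y'-y|\lesssim\sqrt[4]{t}$ whenever $|y-z|\le\sqrt[4]{t}$. Exploiting this, either by restricting the $z$-averaging to $B(x,\lambda)\cap B(y,\sqrt[4]{t})$ or by splitting the argument into the regimes $\sqrt[4]{t}\le|y-x|$ and $\sqrt[4]{t}\ge|y-x|$, absorbs the $w_x(y)$-contribution into the averaged $w_x(z)$-term; a final application of the $\mathbb{L}^{q'}$-triangle inequality then closes the proof.
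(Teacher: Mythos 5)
Your skeleton is the right one: start from the decomposition \eqref{eq:twisted_aver}, bound the two summands, and average over the secondary base point $z$, using \eqref{eq:3ptIV2} to convert $w_x(z)$ into $\lambda^{-\kappa}\bar w$. You also correctly flag the $w_x(y)$-contribution in $\eqref{eq:delta_pi_minus_generic}_\beta$ as the problem. However, your proposed fixes do not actually work, and the gap they leave is exactly where the proof has to do something more structural.

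The weight $w_x(y)$ is $z$-independent, so no choice of averaging domain for $z$ — neither $B(x,\lambda)\cap B(y,\sqrt[4]{t})$ nor any split into regimes — can dilute it: after averaging you are still left with a bare $w(y)$, and there is no pointwise comparison $w(y)\lesssim\bar w$. The mechanism you correctly identify in \eqref{eq:intIII9} is the right one, but it has to be \emph{applied}, not just invoked: the move is a further semi-group convolution of $\eqref{eq:delta_pi_minus_generic}_\beta$ at scale $t/2$, producing the pre-processed estimate \eqref{eq:pre_process} (with ${\bf n}={\bf 0}$). This replaces $w_x(y)+w_x(z)$ by $w_x(z)$ alone — but only under the constraint $|y-z|\le\sqrt[4]{t}$.

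That constraint is what kills the one-shot strategy at general $y$. If you want to average over a $\lambda$-sized ball around $x$ with $\lambda=\sqrt[4]{t}+|y-x|\gg\sqrt[4]{t}$, you leave the region where \eqref{eq:pre_process} holds. If instead you average over $B(y,\sqrt[4]{t})$ so as to respect the constraint, then the \emph{correction} term $({\rm d}\Gamma^*_{xz}Q\Pi_z^{-})_{\beta t}(y)$ no longer closes: the bound from $\eqref{eq:form_bound}_\beta$ together with $\eqref{eq:pi_minus_generic}_{\prec\beta}$ gives $|z-x|^{\kappa+|\beta|-|\gamma|}w_x(z)(\sqrt[4]{t})^{\alpha-2}(\sqrt[4]{t}+|y-z|)^{|\gamma|-\alpha}$, and averaging this over $B(y,\sqrt[4]{t})$ produces an overshoot of $(\lambda/\sqrt[4]{t})^{\kappa+\alpha-|\gamma|}$, which is unbounded when $|\gamma|<\kappa+\alpha$ and $|y-x|\gg\sqrt[4]{t}$. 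So the averaging can only close in the regime $|y-x|\lesssim\sqrt[4]{t}$, i.e.~at $y=x$ (or up to an $O(\sqrt[4]{t})$ neighbourhood).

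The paper therefore proceeds in two stages: (i) apply \eqref{eq:pre_process}, set $y=x$, and average over the annulus $\frac{1}{2}\sqrt[4]{t}\le|z-x|\le\sqrt[4]{t}$ to obtain \eqref{eq:mal_dual_annealed_almost}; (ii) transfer to general $y$ by taking the Malliavin derivative of the re-centering relation \eqref{eq:recenter_Pi_minus}, which since $|\beta|<2$ has no polynomial defect and yields \eqref{eq:delta_recenter_pi_minus}, and then estimating term by term via $\eqref{eq:delta_gamma}_\beta^{\gamma\neq\pp}$, $\eqref{eq:pi_minus_generic}_{\prec\beta}$, $\eqref{eq:gamma}_\beta^{\gamma\neq\pp}$, $\eqref{eq:mal_dual_annealed}_{\prec\beta}$, and \eqref{eq:mal_dual_annealed_almost} with $x$ replaced by $y$. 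This second stage is the missing piece in your proposal; its necessity is signalled by the fact that $\eqref{eq:gamma}_\beta$, $\eqref{eq:delta_gamma}_\beta$, and $\eqref{eq:mal_dual_annealed}_{\prec\beta}$ appear in the hypotheses of the Proposition but play no role in your argument.
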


\begin{proof}
We apply $(\cdot)_t$ to $\eqref{eq:twisted_aver}_\beta$ and then the 
$\mathbb{E}^\frac{1}{q'}|\cdot|^{q'}$-norm;
on the first term, we use the upgrade $\eqref{eq:pre_process}_\beta$ (with ${\bf n}={\bf 0}$)
of $\eqref{eq:delta_pi_minus_generic}_\beta$;
on the second term, we use $\eqref{eq:form_bound}_\beta^{\gamma\neq\pp}$, and
$\eqref{eq:pi_minus_generic}_{\prec\beta}$ (thanks to the 
strict triangularity
(\ref{eq:triangular_d_Gamma}) of ${\rm d}\Gamma_{xz}^*$ w.~r.~t.~$\prec$), obtaining
\begin{align*}
\lefteqn{\mathbb{E}^\frac{1}{q'}|\delta\Pi_{x\beta t}^{-}(y)|^{q'}
\lesssim(\sqrt[4]{t})^{\alpha+\kappa-2} (\sqrt[4]{t}+|z-x|)^{|\beta|-\alpha} w_x(z)}\nonumber\\
&+\sum_{|\gamma|\in\mathsf{A}\cap[\kappa+|\beta|,2]}
|z-x|^{\kappa+|\beta|-|\gamma|}w_x(z)(\sqrt[4]{t})^{|\gamma|-2}\quad\mbox{provided}\;|y-z|\leq \sqrt[4]{t}.
\end{align*}
By definition (\ref{eq:def_w_x_z}), this yields
\begin{equs}
\mathbb{E}^\frac{1}{q'}|\delta\Pi_{x\beta t}^{-}(y)|^{q'}
\lesssim(\sqrt[4]{t})^{|\beta|+\kappa-2} (w(z)+(\sqrt[4]{t})^{-\kappa}\bar w)
\quad\mbox{provided}\;|y-z|\leq \sqrt[4]{t}
\;\mbox{and}\;\frac{1}{2}\sqrt[4]{t}\le|z-x|\le\sqrt[4]{t}.
\end{equs}
Using this for $y=x$ and averaging over the $z$-annulus 
$\frac{1}{2}\sqrt[4]{t}\le|z-x|\le\sqrt[4]{t}$ yields by \eqref{eq:3ptIV2}
\begin{equs}
\mathbb{E}^\frac{1}{q'}|\delta\Pi_{x\beta t}^{-}(x)|^{q'} \lesssim 
(\sqrt[4]{t})^{|\beta|-2} \bar w.
\label{eq:mal_dual_annealed_almost}
\end{equs}

\medskip

In order to replace in (\ref{eq:mal_dual_annealed_almost}) the evaluation at $x$
by a generic point $y$, we take the Malliavin derivative of 
$\eqref{eq:recenter_Pi_minus}_\beta$, see Subsection~\ref{sec:reconstr_approx}, noting that there is no polynomial defect because of 
$|\beta|<2$:
\begin{equs}\label{eq:delta_recenter_pi_minus}
\delta\Pi_{x\beta}^- = 
\big(\delta \Gamma^*_{xy} \Pi_{y}^-+\Gamma_{xy}^* \delta\Pi_{y}^-\big)_\beta. 
\end{equs}
Applying $(\cdot)_t$, evaluating at $y$, and taking the $\mathbb{E}^\frac{1}{q'}|\cdot|^{q'}$-norm
we obtain 
using $\eqref{eq:delta_gamma}_\beta^{\gamma\not=\pp}$,
$\eqref{eq:pi_minus_generic}_{\prec\beta}$,
$\eqref{eq:gamma}_\beta^{\gamma\not=\pp}$, 
$\eqref{eq:mal_dual_annealed}_{\prec\beta}$,
and $\eqref{eq:mal_dual_annealed_almost}_{\beta}$ (with $x$ replaced by $y$) that
\begin{align*}
\mathbb{E}^{\frac{1}{q'}}|\delta\Pi_{x\beta t}^-(y)|^{q'}& 
\lesssim \sum_{|\gamma|\in \mathsf{A}\cap[\alpha,|\beta|]}
|x-y|^{|\beta|-|\gamma|} \bar w (\sqrt[4]{t})^{|\gamma|-2} 
+ \sum_{|\gamma|\in \mathsf{A}\cap[\alpha,|\beta|]} 
|x-y|^{|\beta|-|\gamma|} (\sqrt[4]{t})^{|\gamma|-2} \bar w 
 \\
 & \lesssim (\sqrt[4]{t})^{\alpha-2} (\sqrt[4]{t} + |y-x|)^{|\beta|-\alpha} \bar w.\qedhere
\end{align*} 
\end{proof}


\section{Constructions} \label{sec:model_construction}

In this section, we carry out one step of the inductive construction
of $c$, $\Pi^-_x$, $\Pi_x$, and $\Gamma_{xy}^*$ such that, next to the population properties, 
the axioms
(\ref{eq:Pi_minus_def}), (\ref{eq:recenter_Pi}), (\ref{ao15}), 
(\ref{eq:model}), and (\ref{eq:recenter_Pi_minus}) are satisfied. 
In fact, instead of 
(\ref{eq:recenter_Pi}) and (\ref{eq:recenter_Pi_minus}) we shall establish the stronger
(\ref{eq:recenter_Pi_specific}) and (\ref{eq:recenter_Pi_minus_specific}).
In line with the logical order of an induction step, see Subsection \ref{sec:order},
we proceed as follows
\begin{itemize}
\item In Subsection \ref{sec:Pi_minus_constr}, 
we construct $\Pi^{-}_x$ by the choice of $c$
that amounts to the BPHZ-choice of renormalization.
\item In Subsection \ref{sec:Pi_construct}, 
we construct $\Pi_x$ by integration of $\Pi_x^{-}$.
\item In Subsection \ref{sec:Gamma_construct}, we construct 
$\Gamma_{xy}^*$ via the re-centering when passing from $\Pi_y$ to $\Pi_x$ 
by a polynomial with coefficients $\pi_{xy}^{({\bf n})}$.
\item In Subsection \ref{sec:dGamma_construction}, 
we construct ${\rm d}\Gamma_{xz}^*$ via 
${\rm d}\pi_{xz}^{({\bf n})}$ (with ${\bf n}=(1,0)$).
\end{itemize}


\subsection{Construction of \texorpdfstring{$c$}{c} and thus \texorpdfstring{$\Pi^{-}_{x}$}{Pi minus x} via BPHZ-choice of renormalization}
\label{sec:Pi_minus_constr}

According to \eqref{eq:triangular_c} and \eqref{eq:triangular_product},
the r.~h.~s.~of
(\ref{eq:Pi_minus_def}) depends on $\Pi_{x}$ only through $\Pi_{x\beta'}$ with $\beta'\prec\beta$, and on $c$ only through $c_{\beta'}$ with $\beta' \preccurlyeq\beta$. In addition, the levels $c_{\beta'}$ with $\beta'\prec \beta$ have stabilized; our task now it to choose $c_\beta$ such that $\eqref{eq:BPHZ}_\beta$ is satisfied.

\medskip

As alluded to at the end of Subsection \ref{sec:model}, we have to include
shift covariance and reflection parity of $\Pi_x$ into the induction in
order for $\eqref{eq:BPHZ}_\beta$ to hold with a $c_\beta$ independent of $x$ and meeting the population condition (\ref{eq:c_pop_cond}). By covariance
under a space-time shift $z+\cdot$, we mean 
\begin{align}\label{eq:Pi_shift}
	\Pi_{z+x\,\beta}[\xi](z+y)=\Pi_{x\,\beta}[\xi(z+\cdot)](y).
\end{align}
By parity, we understand the following covariance under the spatial reflection $Rx=(-x_1,x_2)$:
\begin{align}\label{eq:Pi_parity}
	\Pi_{Rx\,\beta}[\xi](Ry)=(-1)^{\sum_{{\bf n}}n_1\beta({\bf n})}\Pi_{x\,\beta}[\xi(R\cdot)](y).
\end{align}
Note that in view of (\ref{eq:pi_purely_pol}), 
$\eqref{eq:Pi_shift}_\beta$ and $\eqref{eq:Pi_parity}_\beta$ are tautologically satisfied for 
$\beta$ purely polynomial. For any $c_\beta$ satisfying the population constraint $\eqref{eq:c_pop_cond}_\beta$, 
	properties $\eqref{eq:Pi_shift}_{\prec \beta}$ and $\eqref{eq:Pi_parity}_{\prec \beta}$ automatically upgrade to $\eqref{eq:Pi_minus_shift}_\beta$ and $\eqref{eq:Pi_minus_parity}_\beta$, respectively, where
	\begin{align}
		\Pi_{z+x\,\beta}^{-}[\xi](z+y)&=\Pi_{x\,\beta}^{-}[\xi(z+\cdot)](y),\label{eq:Pi_minus_shift}\\
		\Pi_{Rx\,\beta}^{-}[\xi](Ry)&
		=(-1)^{\sum_{{\bf n}}n_1\beta({\bf n})}\Pi_{x\,\beta}^{-}[\xi(R\cdot)](y)
		\label{eq:Pi_minus_parity}.
	\end{align}
\begin{proposition}\label{prop:BPHZ}
Assume that $\eqref{eq:limit}_\beta$, $\eqref{eq:Pi_minus_shift}_\beta$, and $\eqref{eq:Pi_minus_parity}_\beta$ hold\footnote{for any $c$ constrained by \eqref{eq:c_pop_cond}}. Then there exists a choice of $c_\beta$ satisfying $\eqref{eq:c_pop_cond}_\beta$ such that $\eqref{eq:BPHZ}_\beta$ holds.
\end{proposition}
\begin{proof}
Let us denote\footnote{the notation $\tilde\Pi_{x}^{-}$ differs from the one used in Subsection~\ref{divc}}
	\begin{align}\label{cw66}
		\tilde\Pi_{x}^-:=P\sum_{k\ge 1}\mathsf{z}_k\Pi_x^k\partial_1^2\Pi_x
		-\sum_{l\ge 1}\frac{1}{l!}\Pi_x^l(D^{({\bf 0})})^l c,
	\end{align}
	which is defined such that
	\begin{align}\label{cw68}
		\Pi_{x\beta}^{-}=\partial_1^2\Pi_{x\beta-e_0}+\tilde\Pi_{x\beta}^{-}-c_\beta+\xi_\tau\delta_{\beta}^0,
	\end{align}
	with the understanding that the first r.~h.~s.~term is absent unless $\beta(0)\ge 1$. We use \eqref{cw68} in form of $\frac{d}{d t}\Pi_{x\beta t}^-(x)$
	$=\frac{d}{dt}(\partial_1^2\Pi_{x\beta-e_0}$ $+\tilde \Pi^-_{x\beta}+\xi \delta_\beta^0)_t(x)$. 
	By stationarity of $\xi$, we may rewrite
	\begin{equs}\label{aux56}
		\lim_{t\uparrow \infty} \E(\partial_1^2\Pi_{x\beta-e_0} + \tilde \Pi^-_{x\beta})_t(x) = \E(\partial_1^2\Pi_{x\beta-e_0} + \tilde \Pi^-_{x\beta})_\tau(x) + \int_\tau^\infty d t \frac{d}{d t} \E\Pi_{x\beta t}^-(x).
	\end{equs}
	Here the last term of the r.~h.~s. is well-defined thanks to $\eqref{eq:limit}_\beta$. Therefore, $\lim_{t\uparrow \infty} \E \Pi_{x \beta t}^- (x)$ exists for any $c_\beta \in \R$. By $\eqref{eq:Pi_minus_shift}_\beta$ and the invariance in law 
	of $\xi$ under space-time shift in Assumption~\ref{ass:spectral_gap}, 
	it is independent of $x$; in addition, by $\eqref{eq:Pi_minus_parity}_\beta$ and the invariance in law of $\xi$ 
	under spatial reflection in Assumption \ref{ass:spectral_gap}, 
	$\lim_{t\uparrow \infty} \E \Pi_{x \beta t}^- (x)=0$ if $\sum_{\n} n_1 \beta(\mathbf{n})$ is odd (recall $c_\beta$ satisfies \eqref{eq:c_pop_cond} by assumption). If $\sum_{\n}n_1\beta(\n)$ is even, then in view of $|\beta|<2$ we necessarily have $\beta(\n)=0$ for all $\n\neq\0$. 
	Consequently, we may choose $c_\beta$  
	such that $\eqref{eq:BPHZ}_\beta$ holds (note that shifting the value of $c_\beta$ by a constant is the same as shifting the value of the r.~h.~s.~of $\eqref{eq:BPHZ}_\beta$ by the same constant), still being consistent with \eqref{eq:c_pop_cond}.
\end{proof}


\subsection{Construction of \texorpdfstring{$\Pi_x$}{Pi x} via integration}
\label{sec:Pi_construct}

The construction of $\Pi_{x\beta}$ given $\Pi_{x\beta}^{-}$ is provided by
formula $\eqref{eq:Pi_minus_to_Pi}_\beta$, the convergence of which is part of Proposition 
\ref{prop:int_pi_minus}. It follows from the representation $\eqref{eq:Pi_minus_to_Pi}_\beta$ 
that $\eqref{eq:Pi_minus_shift}_\beta$ and $\eqref{eq:Pi_minus_parity}_\beta$ transmit to $\eqref{eq:Pi_shift}_\beta$ and $\eqref{eq:Pi_parity}_\beta$.


\subsection{Construction of \texorpdfstring{$\Gamma_{xy}^*$}{Gamma* xy} via re-centering encoded through \texorpdfstring{$\pi^{({\bf n})}_{xy}$}{pi n xy}}
\label{sec:Gamma_construct}

Via the exponential formula (\ref{exp}), 
$\Gamma_{xy}^*$ is determined by $\{\pi^{({\bf n})}_{xy}\}_{\bf n}$, 
so that the task is to construct the latter. For the induction step $\beta$,
this means constructing $\{\pi^{({\bf n})}_{xy\beta}\}_{{\bf n}:|{\bf n}|<|\beta|}$.
It follows from (\ref{ord12}) and (\ref{ord12b}) that
\begin{align}
(\Gamma_{xy}^*P)_\beta\quad\mbox{depends on}\;\pi_{xy\beta'}^{({\bf n})}\;
\mbox{only for}\;\beta'\prec\beta,\label{ho02}\\
(\Gamma_{xy}^*)_\beta\quad\mbox{depends on}\;\pi_{xy\beta'}^{({\bf n})}\;
\mbox{only for}\;\beta'\preccurlyeq\beta.\label{ho01}
\end{align}
In particular, at the beginning of the induction step $\beta$,
the $\beta$-th row of $\Gamma_{xy}^*P$ and all rows $\beta'\prec\beta$
of $\Gamma_{xy}^*$ have already ``stabilized''.

\medskip

The task is to construct $\pi^{({\bf n})}_{xy\beta}$
such that re-centering $\eqref{eq:recenter_Pi_specific}_{\preccurlyeq\beta}$
\& $\eqref{eq:recenter_Pi_minus_specific}_{\preccurlyeq\beta}$ and transitivity
$\eqref{ao15}_{\preccurlyeq\beta}$ hold.
By (\ref{ho01}), it is enough to verify the current component
$\eqref{eq:recenter_Pi_minus_specific}_{\beta}$
\& $\eqref{eq:recenter_Pi_specific}_{\beta}$ for re-centering.
By the triangularity (\ref{eq:triangular_Gamma}) of $\Gamma_{xy}^*$ 
w.~r.~t.~$\prec$, we likewise see that it is enough to establish
the current component $\eqref{ao15}_{\beta}$ for transitivity. 

\medskip

By the composition rule \eqref{eq:monoid}, transitivity \eqref{ao15} is a consequence of
\begin{align}\label{rec01}
\pi_{xy}^{(\n)} - \pi_{xz}^{(\n)}- \Gamma_{xz}^* \pi_{zy}^{(\n)} = 0
\quad\mbox{and}\quad\pi^{({\bf n})}_{xx}=0.
\end{align}
More precisely, in view of (\ref{ho01}),
transitivity $\eqref{ao15}_{\beta}$ is a consequence of $\eqref{rec01}_{\preccurlyeq\beta}$.
Hence we include $\eqref{rec01}_{\beta}$ into the induction step $\beta$,
and note that by (\ref{ho02}), the induction hypothesis $\eqref{rec01}_{\prec\beta}$
implies
\begin{align}\label{ho03}
\Gamma_{xy}^*P=\Gamma_{xz}^*\Gamma_{zy}^*P\quad\mbox{and}\quad \Gamma_{xx}^*P=P.
\end{align}
It follows from (\ref{ao21}) and the binomial formula
that (\ref{rec01}) holds for purely polynomial $\beta$.
The argument for the base case $\beta=0$ 
is covered by the induction step below.

\medskip

We proceed in three steps:
\begin{itemize}
\item Algebraic argument. Proposition \ref{prop:change_of_basepoint} passes from
the $\Pi$-statement $\eqref{eq:recenter_Pi_specific}_{\prec\beta}$ to the $\Pi^{-}$-statement
$\eqref{eq:recenter_Pi_minus_specific}_{\beta}$.
It is based on the definition (\ref{eq:Pi_minus_def_alt}) of $\Pi^{-}$ in terms
of $\Pi$ and on the multiplicativity (\ref{eq:mult}) of $\Gamma^*$.
The renormalization term transforms as desired because $c$ is independent
of $x$ and $\mathsf{z}_{\bf n}$. 
A difficulty in appealing to multiplicativity
for (\ref{eq:Pi_minus_def}) lies in the presence of the projection $P$
in formula (\ref{eq:Pi_minus_def_alt}). 
\item Integration. Proposition \ref{prop:change_of_basepointII} passes from the
$\Pi^{-}$-statement $\eqref{eq:recenter_Pi_minus_specific}_{\beta}$, or rather the weaker
$\eqref{eq:recenter_Pi_minus}_{\beta}$, 
to the $\Pi$-statement $\eqref{eq:recenter_Pi_specific}_{\beta}$. 
It is based on a Liouville argument for the PDE (\ref{eq:model}), which implies that under the 
growth condition (\ref{eq:pi_generic}), $(\Gamma_{xy}^*P\Pi_{y}-\Pi_{x})_{\beta}$
is a polynomial of degree $<|\beta|$, so that we may define
\begin{equation}\label{eq:new_pi_n}
(\Gamma_{xy}^*P\Pi_{y}-\Pi_{x})_{\beta}=
-\sum_{\n:|\n|<|\beta|} \pi_{xy \beta}^{(\n)} (\cdot - y)^\n.
\end{equation}
\item Three-point argument.
Proposition \ref{prop:change_of_basepointIII} uses 
$\mbox{(\ref{ho03})}_\beta$ and $\mbox{(\ref{eq:new_pi_n})}_\beta$ 
to establish $\eqref{rec01}_{\beta}$, and thus $\eqref{ao15}_{\beta}$. 
\end{itemize}

Before we address Proposition \ref{prop:change_of_basepoint},
we need to argue that the r.~h.~s.~term in $\eqref{eq:recenter_Pi_minus_specific}_\beta$
is well-defined at this stage of the induction step (regular case 2c or singular case 2a). Indeed, because of 
$\Pi^{-}\in\mathsf{\tilde T}^*$ and by the triangular structure (\ref{eq:triangular_Gamma}) 
of $\Gamma^*$ w.~r.~t.~$\prec$, the first r.~h.~s.~term $(\Gamma_{xy}^*\Pi_y^{-})_{\beta}$ involves 
$(\Gamma_{xy}^*)_\beta^\gamma$ only for $\gamma$ not purely polynomial, which has stabilized
by (\ref{ho02}), 
and $\Pi_{y\beta'}^{-}$ for $\beta'\preccurlyeq\beta$, which has been constructed.
Because of the triangular structure \eqref{eq:triangular_product}
of $\sum_{k\ge0}\mathsf{z}_k\pi^k\pi'$,
the second r.~h.~s.~term of $\eqref{eq:recenter_Pi_minus_specific}_\beta$,
namely the product $\big(\mathsf{z}_k(\Gamma^*_{xy}\Pi_y+\pi^{({\bf 0})}_{xy})^k
\partial_1^2(\Gamma^*_{xy}\Pi_y+\pi^{({\bf 0})}_{xy})\big)_\beta$,
depends on its factors only in terms of 
$(\Gamma^*_{xy}\Pi_y+\pi^{({\bf 0})}_{xy})_{\beta'\prec\beta}$.
We note that $(\Gamma_{xy}^*)_{\beta'\prec\beta}^\gamma$ has stabilized for all $\gamma$
by (\ref{ho01}).
Again, by the triangular structure (\ref{eq:triangular_Gamma}) of $\Gamma_{xy}^*$,
$\Pi_{y\beta'}$ is involved only for $\beta'\preccurlyeq\beta$,
which has been constructed.

\begin{proposition}\label{prop:change_of_basepoint}
Suppose $\eqref{eq:recenter_Pi_specific}_{\prec\beta}$ holds. 
Then $\eqref{eq:recenter_Pi_minus_specific}_{\beta}$ holds. Moreover, $\eqref{eq:recenter_Pi_minus_specific}_{\beta}$ is of the form $\eqref{eq:recenter_Pi_minus}_{\beta}$.
\end{proposition}
We highlight that this proposition is independent of the specific value of $c$ and therefore can be obtained before the BPHZ choice of renormalization $\eqref{eq:BPHZ}_\beta$.

\begin{proof}
We deal with the three terms on the r.~h.~s.~of (\ref{eq:Pi_minus_def_alt}) one by one.
In preparation of the first one, we claim that (\ref{eq:mult}) implies for any 
$\pi,\pi'\in\mathsf{T}^*$
\begin{align}\label{eq:mult4}
P\sum_{k\ge 0}\mathsf{z}_k\big(\Gamma^*\pi+\pi^{({\bf 0})}\big)^k(\Gamma^*\pi')
=\Gamma^*P\sum_{k\ge 0}\mathsf{z}_k\pi^k\pi'
+P\sum_{k\ge 0}\mathsf{z}_k\big(\Gamma^*({\rm id}-P)\pi+\pi^{({\bf 0})}\big)^k
(\Gamma^*({\rm id}-P)\pi'),
\end{align}
where $\pi^{({\bf 0})}$ is related to $\Gamma^*$ by (\ref{exp}).
Note that $P$ plays the role of the projection of $\mathbb{R}[[\mathsf{z}_k,\mathsf{z}_{\bf n}]]$
onto $\mathsf{\tilde T}^*$, and ${\rm id}-P$ the one of $\mathsf{T}^*$ onto $\mathsf{\bar T}^*$.
Here comes the argument for (\ref{eq:mult4}):
The $(k+1)$-fold iteration of (\ref{eq:mult}) component-wise reads\footnote{with
the understanding that all multi-indices are populated}
\begin{align*}
(\Gamma^*)^{\gamma}_\beta
=\sum_{\beta_0+\cdots+\beta_{k+1}=\beta}(\Gamma^*)_{\beta_0}^{\gamma_0}
\cdots(\Gamma^*)_{\beta_{k+1}}^{\gamma_{k+1}}
\quad\mbox{provided}\;\gamma=\gamma_0+\cdots+\gamma_{k+1}.
\end{align*}
This allows to characterize the commutator of $\Gamma^*$ and $P$ on a
product of arbitrary $\pi^{(0)}\in\mathsf{\tilde T}^*$, 
$\pi^{(1)},\dots,$ $\pi^{(k+1)}\in\mathsf{T}^*$ (so that below, $P$ acts like a projection from
$\mathbb{R}[[\mathsf{z}_k,\mathsf{z}_{\bf n}]]$ onto $\mathsf{T}^*$):
\begin{align}\label{eq:mult8}
\big(P(\Gamma^*\pi^{(0)})\cdots(\Gamma^*\pi^{(k+1)})\big)_\beta
&=\big(\Gamma^*P\pi^{(0)}\cdots\pi^{(k+1)}\big)_\beta\nonumber\\
&+\sum_{\beta_0+\cdots+\beta_{k+1}=\beta}\sum_{\gamma_0+\cdots+\gamma_{k+1}\;\mbox{not populated}}
(\Gamma^*)^{\gamma_0}_{\beta_0}\pi^{(0)}_{\gamma_0}\cdots
(\Gamma^*)^{\gamma_{k+1}}_{\beta_{k+1}}\pi^{(k+1)}_{\gamma_{k+1}}.
\end{align}
We use (\ref{eq:mult8}) for $\pi^{(0)}=\mathsf{z}_k$, $\pi^{(1)}=\cdots=\pi^{(k)}=\pi$,
and $\pi^{(k+1)}=\pi'$, and combine it with the following
consequence\footnote{recalling our implicit assumption that $\gamma_1,\dots,\gamma_{k+1}$
are populated} of the definitions (\ref{defpp}) \& (\ref{ao09})
\begin{align*}
e_k+\gamma_1+\cdots+\gamma_{k+1}\;\;\mbox{not populated}\quad\Longleftrightarrow\quad
\gamma_1,\dots,\gamma_{k+1}\;\;\mbox{purely polynomial}.
\end{align*}
Hence (\ref{eq:mult8}) assumes the form of
\begin{align*}
P(\Gamma^*\mathsf{z}_k)(\Gamma^*\pi)^k(\Gamma^*\pi')
=\Gamma^*P\mathsf{z}_k\pi^k\pi'
+P(\Gamma^*\mathsf{z}_k)(\Gamma^*({\rm id}-P)\pi)^k(\Gamma^*({\rm id}-P)\pi'),
\end{align*}
which we sum in $k$, 
\begin{align}\label{eq:mult2}
P\sum_{k\ge 0}(\Gamma^*\mathsf{z}_k)(\Gamma^*\pi)^k(\Gamma^*\pi')
=\Gamma^*P\sum_{k\ge 0}\mathsf{z}_k\pi^k\pi'
+P\sum_{k\ge 0}(\Gamma^*\mathsf{z}_k)(\Gamma^*({\rm id}-P)\pi)^k(\Gamma^*({\rm id}-P)\pi').
\end{align}
By (\ref{eq:Gamma_z_k}) followed by the binomial formula, we obtain
\begin{align*}
\sum_{k\ge 0}(\Gamma^*\mathsf{z}_k)(\Gamma^*\pi)^k(\Gamma^*\pi')
=\sum_{k\ge 0}\mathsf{z}_{k}(\Gamma^*\pi+\pi^{({\bf 0})})^{k}
(\Gamma^*\pi')
\end{align*}
and the same identity with $\pi,\pi'$ replaced by $({\rm id}-P)\pi,({\rm id}-P)\pi'$. 
Inserting these two identities in (\ref{eq:mult2}) yields (\ref{eq:mult4}).
We apply (\ref{eq:mult4}) with $\Gamma^*=\Gamma_{xy}^*$ (noting that the
corresponding $\pi_{xy}^{({\bf 0})}$ is a constant in space-time), $\pi=\Pi_{y}$, and
$\pi'=\partial_1^2\Pi_y$, which results in
\begin{align}\label{eq:mult3}
\lefteqn{P\sum_{k\ge 0}\mathsf{z}_k(\Gamma^*_{xy}\Pi_y+\pi_{xy}^{({\bf 0})})^k
\partial_1^2(\Gamma^*_{xy}\Pi_y+\pi_{xy}^{({\bf 0})})}\nonumber\\
&=\Gamma^*_{xy}P\sum_{k\ge 0}\mathsf{z}_k\Pi_y^k\partial_1^2\Pi_y
+P\sum_{k\ge 0}\mathsf{z}_k(\Gamma^*_{xy}({\rm id}-P)\Pi_y+\pi_{xy}^{({\bf 0})})^k
\partial_1^2(\Gamma^*_{xy}({\rm id}-P)\Pi_y+\pi_{xy}^{({\bf 0})}).
\end{align}

\medskip

We now turn to the second r.~h.~s.~contribution to (\ref{eq:Pi_minus_def_alt}), 
and claim that for any $\pi\in\mathsf{T}^*$ and $c\in\mathsf{\tilde T}^*$ satisfying the
population condition (\ref{eq:c_pop_cond}), we have 
\begin{align}\label{eq:mult5}
\sum_{k\ge 0}\frac{1}{k!}(\Gamma^*\pi+\pi^{({\bf 0})})^k(D^{({\bf 0})})^kc
=\Gamma^*\sum_{k\ge0}\frac{1}{k!}\pi^k(D^{({\bf 0})})^kc.
\end{align}
Because of (\ref{eq:mult}), it remains to show
\begin{align}\label{eq:mult6}
\sum_{k\ge 0}\frac{1}{k!}(\Gamma^*\pi+\pi^{({\bf 0})})^k(D^{({\bf 0})})^kc
=\sum_{k\ge0}\frac{1}{k!}(\Gamma^*\pi)^k(\Gamma^*(D^{({\bf 0})})^kc).
\end{align}
Note that the second item in the population condition (\ref{eq:c_pop_cond}) on $c$
can be re-expressed as $D^{({\bf n})}c=0$ for ${\bf n}\not={\bf 0}$,
cf.~(\ref{eq:def_Dn}). We now appeal to (\ref{eq:Gamma_D_c}),
which implies (\ref{eq:mult6}), once more by the binomial formula.
We apply (\ref{eq:mult5}) to $\Gamma^*=\Gamma^*_{xy}$ and $\pi=\Pi_y$, to the effect of
\begin{align}\label{eq:mult7}
\sum_{k\ge 0}\frac{1}{k!}(\Gamma^*_{xy}\Pi_y+\pi^{({\bf 0})}_{xy})^k(D^{({\bf 0})})^kc
=\Gamma^*_{xy}\sum_{k\ge0}\frac{1}{k!}\Pi_y^k(D^{({\bf 0})})^kc.
\end{align}

\medskip

We finally turn to the third r.~h.~s.~contribution to (\ref{eq:Pi_minus_def_alt}) 
and note that by the second item in (\ref{eq:mult}) we have
\begin{align*}
\xi\mathsf{1}=\Gamma^*_{xy}\xi\mathsf{1}.
\end{align*}
The sum of (\ref{eq:mult3}), (\ref{eq:mult7}) and the last identity yields
(\ref{eq:recenter_Pi_minus_specific}) by definition (\ref{eq:Pi_minus_def}). 

\medskip

Finally, in order to pass from (\ref{eq:recenter_Pi_minus_specific}) to (\ref{eq:recenter_Pi_minus}),
we need to argue that the $\beta$-component
of the second r.~h.~s.~term in (\ref{eq:recenter_Pi_minus_specific})
is a polynomial of degree $\le|\beta|-2$.
In view of the definition (\ref{eq:Tbar_def}) of $P$ and the definition
(\ref{eq:pi_purely_pol}) of $\Pi_y$,
the second r.~h.~s.~term in (\ref{eq:recenter_Pi_minus_specific}) is (componentwise) a polynomial.
In view of the triangularity \eqref{eq:triangular_Gamma} of $\Gamma^*$ w.~r.~t.~$|\cdot |$ and the structure (\ref{ord18})
of $\sum_{k\ge 0}\mathsf{z}_k\pi^k\pi'$ w.~r.~t.~$|\cdot|$,
combined with (\ref{ao20}), the degree of the $\beta$-component of the polynomial
is $\le|\beta|-2$. Hence (\ref{eq:recenter_Pi_minus_specific}) is indeed a stronger version
of (\ref{eq:recenter_Pi_minus}).
\end{proof}


Before addressing Proposition \ref{prop:change_of_basepointII},
we need to argue that $(\Gamma_{xy}^*P\Pi_y)_\beta$ appearing in
(\ref{eq:new_pi_n}) is well-defined at this stage of the induction step (regular case 5a or singular case 6c):
By (\ref{ho02}), $(\Gamma_{xy}^*P)_\beta$ has stabilized.
By the triangularity of  
(\ref{eq:triangular_Gamma}) of $\Gamma_{xy}^*$ w.~r.~t.~$\prec$, 
$(\Gamma_{xy}^*P\Pi_y)_\beta$
only involves $\Pi_{y\gamma}$'s with $\gamma\preccurlyeq\beta$, and thus already constructed.
As always in an integration step, both for the estimates in Proposition \ref{prop:int_pi_minus} 
and for uniqueness here, we have to stay away from integers, cf.~(\ref{irrational}).

\begin{proposition}[Liouville]\label{prop:change_of_basepointII}
Suppose $\eqref{eq:pi_generic}_{\preccurlyeq\beta}$ and
$\eqref{eq:recenter_Pi_minus}_\beta$ hold, and that $\eqref{eq:gamma}_\beta^\gamma$ holds for all $\gamma$ not purely polynomial. 
There exist $\{\pi_{xy\beta}^{({\bf n})}\}_{|{\bf n}|<|\beta|}$ such that
$\eqref{eq:recenter_Pi_specific}_\beta$ holds.
\end{proposition}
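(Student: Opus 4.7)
The plan is to analyze the $\beta$-component of $\Pi_x-\Gamma_{xy}^*P\Pi_y$, viewed as a random space-time function in the active variable, via the PDE $\eqref{eq:model}$ combined with the hypothesis $\eqref{eq:recenter_Pi_minus}_\beta$, to show that it is a random polynomial of parabolic degree $<|\beta|$ by an annealed Liouville argument, and finally to \emph{define} the coefficients $\pi_{xy\beta}^{({\bf n})}$ by its Taylor expansion around $y$. Set $w:=(\Pi_x-\Gamma_{xy}^*P\Pi_y)_\beta$. Since $(\Gamma_{xy}^*P)_\beta^\gamma$ vanishes whenever $\gamma$ is purely polynomial and conversely $\Pi_y^-\in\mathsf{\tilde T}^*$, the PDE $\eqref{eq:model}$ applied componentwise (which at this stage of the induction is available for all non-purely-polynomial $\gamma\preccurlyeq\beta$ by construction and Proposition~\ref{prop:int_pi_minus}) yields $(\partial_2-\partial_1^2)(\Gamma_{xy}^*P\Pi_y)_\beta=(\Gamma_{xy}^*\Pi_y^-)_\beta$. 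Combined with $(\partial_2-\partial_1^2)\Pi_{x\beta}=\Pi_{x\beta}^-$ this gives
\begin{equation*}
(\partial_2-\partial_1^2)w=(\Pi_x^--\Gamma_{xy}^*\Pi_y^-)_\beta,
\end{equation*}
which by hypothesis $\eqref{eq:recenter_Pi_minus}_\beta$ is a random polynomial $q$ in the active variable of parabolic degree $\le|\beta|-2$.

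Next comes the Liouville step. By the triangularity $\eqref{triangle}$ of $\Gamma_{xy}^*$, the already-available moment bound $\eqref{eq:gamma}_\beta^{\gamma\neq\pp}$ from Proposition~\ref{prop:gamma_npp}, and the hypothesis $\eqref{eq:pi_generic}_{\preccurlyeq\beta}$, H\"older's inequality in probability gives the annealed growth
\begin{equation*}
\mathbb{E}^{1/p}|w(y')|^p\lesssim(|y'-x|+|y-x|)^{|\beta|}.
\end{equation*}
Choose a polynomial particular solution $p_0$ of parabolic degree $\le|\beta|$ to $(\partial_2-\partial_1^2)p_0=q$ by successive primitives (inheriting the same growth control from $q$); then $v:=w-p_0$ is almost surely caloric with the same annealed polynomial growth bound of order $|\beta|$. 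Now the annealed Liouville argument: using $v=v_t$ for any $t>0$ (thanks to calority), the moment bounds $\eqref{cw61}$ on $\partial^{\bf n}\psi_t$ together with the growth bound on $v$ propagate to
\begin{equation*}
\mathbb{E}^{1/p}|\partial^{\bf n}v(y')|^p\lesssim(\sqrt[4]{t})^{|\beta|-|{\bf n}|}
\end{equation*}
uniformly in $y'$ in a bounded region, and letting $t\uparrow\infty$ forces $\partial^{\bf n}v\equiv 0$ for $|{\bf n}|>|\beta|$. Hence $v$ is a random polynomial of parabolic degree $\le|\beta|$. Since $|\beta|\notin\mathbb{N}$ by $\eqref{irrational}$, this degree is strictly less than $|\beta|$, and so is that of $w$.

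Having $w$ as a polynomial of parabolic degree $<|\beta|$, \emph{define} $\pi_{xy\beta}^{({\bf n})}$ for $|{\bf n}|<|\beta|$ as the Taylor coefficients of $-w$ around $y$, which is $\eqref{eq:new_pi_n}$; the population constraint $\pi_{xy\beta}^{({\bf n})}=0$ for $|{\bf n}|\ge|\beta|$ is then built-in. To derive $\eqref{eq:recenter_Pi_specific}_\beta$, decompose $\Pi_y=P\Pi_y+(\mathrm{id}-P)\Pi_y$ and observe that by $\eqref{eq:pi_purely_pol}$ the second summand identifies with $\sum_{{\bf n}\neq{\bf 0}}(\cdot-y)^{\bf n}\mathsf{z}_{\bf n}$; applying $\Gamma_{xy}^*$ via $\eqref{eq:Gamma_z_n}$ and taking the $\beta$-component (with $\beta$ not purely polynomial, so that the Kronecker contribution of $\mathsf{z}_{\bf n}$ drops out), one gets $\sum_{{\bf n}\neq{\bf 0}}(\cdot-y)^{\bf n}\pi_{xy\beta}^{({\bf n})}$; combining with $\eqref{eq:new_pi_n}$ yields $(\Gamma_{xy}^*\Pi_y+\pi_{xy}^{({\bf 0})})_\beta=\Pi_{x\beta}$, as required.

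The main obstacle is the annealed Liouville step: one must carefully leverage the heat semigroup moment bounds $\eqref{cw61}$ to transfer the annealed polynomial growth from $v$ to its derivatives without sacrificing the integrability exponent, and ensure the parabolic anisotropy is respected throughout; this is the technical heart of the argument, and is what justifies the exponent-free closure of the Liouville conclusion once $|\beta|\notin\mathbb{N}$ is invoked.
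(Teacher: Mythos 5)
Your proposal is correct and follows essentially the same route as the paper. You set up the same PDE for $w=(\Pi_x-\Gamma_{xy}^*P\Pi_y)_\beta$, derive the same annealed growth bound, run an annealed Liouville argument via the semi-group with the moment bounds \eqref{cw61}, conclude the degree is $\le|\beta|$ and hence $<|\beta|$ by \eqref{irrational}, define $\pi_{xy\beta}^{(\bf n)}$ through \eqref{eq:new_pi_n}, and recover $\eqref{eq:recenter_Pi_specific}_\beta$ using \eqref{eq:Gamma_z_n} and \eqref{eq:pi_purely_pol}. The only organizational difference is that you explicitly construct a polynomial particular solution $p_0$ of $(\partial_2-\partial_1^2)p_0=q$ and then run Liouville on the caloric $v=w-p_0$, whereas the paper bypasses this step by differentiating $\partial^{\bf n}$ with $|{\bf n}|>|\beta|-2$ directly, which annihilates the polynomial right-hand side and leaves $\partial^{\bf n}(\Gamma_{xy}^*P\Pi_y-\Pi_x)_\beta$ caloric. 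The paper's route is marginally more economical since it avoids having to argue that $p_0$ can be chosen with random coefficients of finite moments and with the right growth, but both are valid and arrive at \eqref{eq:Liou} by the same $t$-independence-plus-vanishing argument. You also observe a cleaner version of the paper's intermediate identity, namely that $(\partial_2-\partial_1^2)(\Gamma_{xy}^*P\Pi_y)_\beta$ equals $(\Gamma_{xy}^*\Pi_y^-)_\beta$ on the nose (the projection $P$ and $\Pi_y^-\in\mathsf{\tilde T}^*$ make the purely polynomial contributions drop out); the polynomial defect only enters from $\eqref{eq:recenter_Pi_minus}_\beta$, as you note.
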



\begin{proof}
Applying $\partial_2-\partial_1^2$ to $(\Gamma_{xy}^* P\Pi_{y})_\beta$ results in
\begin{align}\label{eq:transeqn}
(\partial_2-\partial_1^2)(\Gamma_{xy}^* P\Pi_{y})_{\beta}
&\stackrel{\hphantom{\eqref{eq:model}_{\preccurlyeq\beta}}}{=}(\Gamma_{xy}^* (\partial_2-\partial_1^2)P\Pi_{y})_\beta\nonumber\\
&\stackrel{\eqref{eq:model}_{\preccurlyeq\beta}}{=}
(\Gamma_{xy}^*\Pi_{y}^{-})_\beta \nonumber\\
&\, \stackrel{\eqref{eq:recenter_Pi_minus}_{\beta}}{=} \,
\Pi_{x\beta}^{-}+\mbox{polynomial of degree $\le|\beta|-2$},
\end{align}
where we used the triangularity w.~r.~t.~both $\prec$ and homogeneity $|\cdot|$,
the latter in order to control the degree of the polynomial.
Hence $(\Gamma_{xy}^* P\Pi_{y})_{\beta}$ solves the the same PDE (\ref{eq:model}) as $\Pi_{x\beta}$ modulo a polynomial of degree $|\beta|-2$, so that
\begin{align}\label{eq:quenched4}
(\partial_2-\partial_1^2)\partial^{\bf n}(\Gamma_{xy}^* P\Pi_{y}-\Pi_{x})_{\beta}
=0\quad\mbox{provided $|{\bf n}|>|\beta|-2$}.
\end{align}
In view of the kernel estimate (\ref{cw61}), the quantitative
$\eqref{eq:pi_generic}_{\preccurlyeq\beta}$ yields 
the qualitative $\eqref{eq:quenched}_{\preccurlyeq\beta}$, where
\begin{align}\label{eq:quenched}
\lim_{t\uparrow\infty}\mathbb{E}^\frac{1}{p}|\partial^{\bf n}\Pi_{x\beta\,t}(\cdot)|^p=0
\quad\mbox{provided $|{\bf n}|>|\beta|$}.
\end{align}
Hence by the triangularity of $\Gamma_{xy}^*$ w.~r.~t.~both $\prec$
and $|\cdot|$, and using $\eqref{eq:gamma}_\beta^{\gamma\neq\pp}$, this implies
\begin{align}\label{eq:quenched2}
\lim_{t\uparrow\infty}\mathbb{E}^\frac{1}{p}|\partial^{\bf n}
(\Gamma_{xy}^* P\Pi_{y}-\Pi_{x})_{\beta t}(\cdot)|^p=0
\quad\mbox{provided $|{\bf n}|>|\beta|$}.
\end{align}
We now argue that (\ref{eq:quenched4}) and (\ref{eq:quenched2}) imply
\begin{align}\label{eq:Liou}
\partial^{\bf n}(\Gamma_{xy}^*P\Pi_{y}-\Pi_{x})_{\beta}=0\quad\mbox{provided}\;
|{\bf n}|>|\beta|.
\end{align}
Indeed, according to the factorization (\ref{eq:factorization}) and the definition of $(\cdot)_t$, 
(\ref{eq:quenched4}) implies 
\begin{align*}
\partial_t\partial^{\bf n}(\Gamma_{xy}^*P\Pi_{y}-\Pi_{x})_{\beta t}
=0\quad\mbox{provided}\;|{\bf n}|>|\beta|.
\end{align*}
This permits to pass from (\ref{eq:quenched2}) for $t\uparrow\infty$ to (\ref{eq:Liou})
for $t\downarrow0$.

\medskip

As a consequence of \eqref{eq:Liou}, $(\Gamma_{xy}^*P\Pi_{y}-\Pi_{x})_{\beta}$ is a 
polynomial of degree $\leq |\beta|$, which by $|\beta|\notin \N$, cf. \eqref{irrational}, 
allows to define $\pi_{xy\beta}^{(\n)}$ by (\ref{eq:new_pi_n}).
By definition (\ref{eq:pi_purely_pol}), and because of
$\Gamma^*\mathsf{z}_{\bf n}$ $\stackrel{\eqref{exp}}{=}\mathsf{z}_{\bf n}+\pi^{({\bf n})}$
in form of $(\Gamma^*)_{\beta}^{e_{\bf n}}$ $=\pi^{({\bf n})}_\beta$, 
this turns into the desired $\eqref{eq:recenter_Pi_specific}_\beta$.
\end{proof}

\begin{proposition}\label{prop:change_of_basepointIII}
$\eqref{rec01}_{\prec\beta}$ and $\eqref{eq:recenter_Pi_specific}_{\preccurlyeq\beta}$ 
imply $\eqref{ao15}_\beta$ and $\eqref{rec01}_{\beta}$. 
\end{proposition}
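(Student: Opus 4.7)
The plan is to compare three instances of $\eqref{eq:new_pi_n}_\beta$, for the pairs $(x,y)$, $(x,z)$, and $(z,y)$, and to extract the identity $\eqref{rec01}_\beta$ by matching coefficients of the monomials $(\cdot-y)^{\mathbf m}$. The composition property \eqref{ho03}, which follows from $\eqref{rec01}_{\prec\beta}$ via the group law \eqref{eq:monoid}, together with $\Gamma^*P=P\Gamma^*P$ from \eqref{eq:recIII4}, gives $\Gamma_{xz}^*P\,\Gamma_{zy}^*P = \Gamma_{xy}^*P$ at the $\beta$-row. Substituting the $(z,y)$-instance $\Pi_{z\beta}=(\Gamma_{zy}^*P\Pi_y)_\beta+\sum_{\mathbf n}\pi_{zy\beta}^{(\mathbf n)}(\cdot-y)^{\mathbf n}$ into the $(x,z)$-instance and using this composition, I obtain
\[
\Pi_{x\beta}=(\Gamma_{xy}^*P\Pi_y)_\beta+\sum_{\mathbf n}(\Gamma_{xz}^*P\pi_{zy}^{(\mathbf n)})_\beta(\cdot-y)^{\mathbf n}
+\sum_{\mathbf n}\pi_{xz\beta}^{(\mathbf n)}(\cdot-z)^{\mathbf n}.
\]
Comparing with the $(x,y)$-instance of $\eqref{eq:new_pi_n}_\beta$ yields the key identity
\[
\sum_{\mathbf n}\pi_{xy\beta}^{(\mathbf n)}(\cdot-y)^{\mathbf n}
=\sum_{\mathbf n}(\Gamma_{xz}^*P\pi_{zy}^{(\mathbf n)})_\beta(\cdot-y)^{\mathbf n}
+\sum_{\mathbf n}\pi_{xz\beta}^{(\mathbf n)}(\cdot-z)^{\mathbf n}.
\]

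Next, I expand $(\cdot-z)^{\mathbf n}=\sum_{\mathbf m\le\mathbf n}\binom{\mathbf n}{\mathbf m}(y-z)^{\mathbf n-\mathbf m}(\cdot-y)^{\mathbf m}$ and re-collect in powers of $(\cdot-y)$, so that the coefficient of $(\cdot-y)^{\mathbf m}$ on the right becomes $(\Gamma_{xz}^*P\pi_{zy}^{(\mathbf m)})_\beta+\sum_{\mathbf n\ge\mathbf m}\binom{\mathbf n}{\mathbf m}(y-z)^{\mathbf n-\mathbf m}\pi_{xz\beta}^{(\mathbf n)}$. The idea is to recognize the $\mathbf n>\mathbf m$ part of the last sum as exactly the purely polynomial contribution to $\Gamma_{xz}^*\pi_{zy}^{(\mathbf m)}$: indeed, by \eqref{ho06} the purely polynomial part of $\pi_{zy}^{(\mathbf m)}$ reads $(\mathrm{id}-P)\pi_{zy}^{(\mathbf m)}=\sum_{\mathbf n>\mathbf m}\binom{\mathbf n}{\mathbf m}(y-z)^{\mathbf n-\mathbf m}\mathsf z_{\mathbf n}$, and because $\beta$ is not purely polynomial we have $(\Gamma_{xz}^*)_\beta^{e_{\mathbf n}}=\pi_{xz\beta}^{(\mathbf n)}$, as follows from $\Gamma^*\mathsf z_{\mathbf n}=\mathsf z_{\mathbf n}+\pi^{(\mathbf n)}$. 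Adding the $\mathbf n=\mathbf m$ term $\pi_{xz\beta}^{(\mathbf m)}$ and combining with the $P$-part, the coefficient of $(\cdot-y)^{\mathbf m}$ collapses to $(\Gamma_{xz}^*\pi_{zy}^{(\mathbf m)})_\beta+\pi_{xz\beta}^{(\mathbf m)}$.

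Since $|\mathbf m|<|\beta|\notin\mathbb N$, cf.~\eqref{irrational}, the monomials $\{(\cdot-y)^{\mathbf m}\}_{|\mathbf m|<|\beta|}$ are linearly independent, and matching coefficients gives the desired $\pi_{xy\beta}^{(\mathbf m)}-\pi_{xz\beta}^{(\mathbf m)}-(\Gamma_{xz}^*\pi_{zy}^{(\mathbf m)})_\beta=0$, which is the first half of $\eqref{rec01}_\beta$. The second half, $\pi_{xx\beta}^{(\mathbf n)}=0$, follows immediately by specializing $\eqref{eq:new_pi_n}_\beta$ to $y=x$: by \eqref{ho03} we have $(\Gamma_{xx}^*P)_\beta^\gamma=\delta_{\beta\gamma}\mathbf 1_{\gamma\text{ not p.p.}}$, so since $\beta$ is not purely polynomial $(\Gamma_{xx}^*P\Pi_x)_\beta=\Pi_{x\beta}$, which forces the polynomial $-\sum_{\mathbf n}\pi_{xx\beta}^{(\mathbf n)}(\cdot-x)^{\mathbf n}$ to vanish identically.

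I expect no serious obstacle: the entire argument is bookkeeping with the projection $P$ and the binomial formula. The only mildly subtle point is the consistent splitting $\pi_{zy}^{(\mathbf m)}=P\pi_{zy}^{(\mathbf m)}+(\mathrm{id}-P)\pi_{zy}^{(\mathbf m)}$ and the observation that $\Gamma_{xz}^*P\,\Gamma_{zy}^*P=\Gamma_{xy}^*P$ at the $\beta$-row, both of which are direct consequences of results already established in the paper.
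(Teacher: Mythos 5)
Your proof is correct and follows essentially the same route as the paper: both rest on the composition identity $\Gamma_{xy}^*P=\Gamma_{xz}^*\Gamma_{zy}^*P$ from \eqref{ho03}, the binomial split of $\pi_{zy}^{(\n)}$ into its $P$ and $\mathrm{id}-P$ parts via \eqref{ho06}, and the identification $(\Gamma_{xz}^*)_\beta^{e_\n}=\pi_{xz\beta}^{(\n)}$ for non-purely-polynomial $\beta$; you merely work forwards by substituting the three instances of \eqref{eq:new_pi_n}, while the paper works backwards by reducing \eqref{ho08} to the composition identity after inserting \eqref{ho07}. The one genuine (if minor) deviation is your treatment of the second item of \eqref{rec01}: you derive $\pi_{xx\beta}^{(\n)}=0$ directly from $\eqref{eq:new_pi_n}_\beta$ at $y=x$ using $\Gamma_{xx}^*P=P$, which is a touch cleaner than the paper's route via the first item at $x=y=z$ plus strict triangularity of $\Gamma_{xx}^*-\mathrm{id}$; the appeal to \eqref{irrational} for linear independence of distinct monomials is superfluous but harmless.
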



\begin{proof}
Recall that $\eqref{ao15}_\beta$ is a consequence of $\eqref{rec01}_{\preccurlyeq\beta}$ by the composition rule \eqref{eq:monoid}.
Furthermore, 
the second item in $\mbox{(\ref{rec01})}_\beta$ 
is a consequence of the first, and the induction hypothesis $\mbox{(\ref{rec01})}_{\prec\beta}$:
Indeed, the first item in $\mbox{(\ref{rec01})}_\beta$
for $x=y=z$ assumes the form of $\pi_{xx\beta}^{({\bf n})}$
$=(({\rm id}-\Gamma_{xx}^*)\pi_{xx}^{({\bf n})})_\beta$,
so that it remains to appeal to the strict triangularity of $\Gamma_{xx}^*-{\rm id}$
w.~r.~t.~$\prec$.

\medskip

Akin to the three-point argument, we will establish the first item in 
$\mbox{(\ref{rec01})}_\beta$ in form of
\begin{align}\label{ho08}
\sum_{{\bf n}:|{\bf n}|<|\beta|}(\pi_{xy}^{(\n)}- \pi_{xz}^{(\n)}- 
\Gamma_{xz}^*\pi_{zy}^{(\n)})_\beta (\cdot - y)^{\n}=0.
\end{align}
The representation $({\rm id}-P)\pi$ 
$=\sum_{{\bf m}\not={\bf 0}}\pi_{e_{\bf m}}\mathsf{z}_{\bf m}$
allows us to rewrite (\ref{ho06}) as $\pi_{zy}^{({\bf n})}=P\pi_{zy}^{({\bf n})}$
$+\sum_{{\bf m}>{\bf n}}\binom{\bf m}{\bf n}(y-z)^{{\bf m}-{\bf n}}\mathsf{z}_{\bf m}$.
Hence by (\ref{eq:Gamma_z_n}) we obtain
$\Gamma_{xz}^*\pi_{zy}^{({\bf n})}=\Gamma_{xz}^*P\pi_{zy}^{({\bf n})}$
$+\sum_{{\bf m}>{\bf n}}$ $\binom{\bf m}{\bf n}$ $(y-z)^{{\bf m}-{\bf n}}$
$(\mathsf{z}_{\bf m}+\pi_{xz}^{({\bf m})})$. Since $\beta$ is not purely polynomial,
this implies
\begin{align*}
(\pi_{xz}^{({\bf n})}+\Gamma_{xz}^*\pi_{zy}^{({\bf n})})_\beta=(\Gamma_{xz}^*P\pi_{zy}^{({\bf n})})_\beta
+\sum_{{\bf m}\ge{\bf n}}\tbinom{\bf m}{\bf n}(y-z)^{{\bf m}-{\bf n}}\pi_{xz\beta}^{({\bf m})}.
\end{align*}
Since by (\ref{eq:pi_n_pop}) the sum is restricted to $|{\bf m}|<|\beta|$
this yields by the binomial formula
\begin{align*}
\lefteqn{\sum_{{\bf n}:|{\bf n}|<|\beta|}
(\pi_{xz}^{({\bf n})}+\Gamma_{xz}^*\pi_{zy}^{({\bf n})})_\beta(\cdot - y)^{\n}}\nonumber\\
&=\sum_{{\bf n}:|{\bf n}|<|\beta|}(\Gamma_{xz}^*P\pi_{zy}^{({\bf n})})_\beta(\cdot - y)^{\n}
+\sum_{{\bf m}:|{\bf m}|<|\beta|}\pi_{xz\beta}^{({\bf m})}(\cdot-z)^{{\bf m}}.
\end{align*}
In view of the triangularity (\ref{eq:triangular_Gamma}) of $\Gamma_{xz}^*$ w.~r.~t.~$|\cdot|$
and once more (\ref{eq:pi_n_pop}), we may rewrite the first r.~h.~s.~term to obtain
\begin{align}\label{ho07}
\lefteqn{\sum_{{\bf n}:|{\bf n}|<|\beta|}
(\pi_{xz}^{({\bf n})}+\Gamma_{xz}^*\pi_{zy}^{({\bf n})})_\beta(\cdot - y)^{\n}}\nonumber\\
&=\sum_{\gamma\not=\pp}(\Gamma_{xz}^*)_\beta^\gamma
\sum_{{\bf n}:|{\bf n}|<|\gamma|}\pi_{zy\gamma}^{({\bf n})}(\cdot - y)^{\n}
+\sum_{{\bf m}:|{\bf m}|<|\beta|}\pi_{xz\beta}^{({\bf m})}(\cdot-z)^{{\bf m}}.
\end{align}
By the triangularity (\ref{eq:triangular_Gamma}) of $\Gamma_{xz}^*$ w.~r.~t.~$\prec$,
we may appeal to $\mbox{(\ref{eq:new_pi_n})}_{\preccurlyeq\beta}$ (which we recall is a consequence of $\eqref{eq:recenter_Pi_specific}_{\preccurlyeq\beta}$) to see that
(\ref{ho08}), into which we insert (\ref{ho07}), reduces to
\begin{align*}
(\Gamma_{xy}^*P\Pi_y-\Pi_x)_\beta=
(\Gamma_{xz}^*P(\Gamma_{zy}^*P\Pi_y-\Pi_z)\big)_\beta
+(\Gamma_{xz}^*P\Pi_z-\Pi_x)_\beta.
\end{align*}
This identity follows from (\ref{eq:recIII4b}) and the first item in $\mbox{(\ref{ho03})}_\beta$,
which as explained is a consequence of $\eqref{rec01}_{\prec\beta}$.
\end{proof}

\subsection{Construction of \texorpdfstring{${\rm d}\Gamma_{xz}^*$}{d Gamma* xz} through \texorpdfstring{${\rm d}\pi^{({\bf n})}_{xz}$}{d pi n xz}}
\label{sec:dGamma_construction}

Recall the Ansatz (\ref{eq:def_dGamma}) for ${\rm d}\Gamma_{xz}^*$.
We seek ${\rm d}\pi^{(1,0)}_{xz}\in Q\mathsf{\tilde T}^*$ 
such that (\ref{eq:controlled_path_qual}) holds, which since $\partial_1\Pi_z$ and $\partial_1\delta\Pi_x$ are well-defined\footnote{ see the discussions before Proposition \ref{rem:1} and after Proposition \ref{rem:1_mal}, respectively} amounts to
\begin{align}\label{eq:partial_dGamma}
Q\big(\partial_1\delta\Pi_{x}(z)-{\rm d}\Gamma^*_{xz}\partial_1\Pi_z(z)\big)=0.
\end{align}
Evaluating (\ref{eq:def_dGamma}) on $\mathsf{z}_{(1,0)}$, and
using the definitions (\ref{eq:def_Dnull}) \& (\ref{eq:def_Dn}) of $D^{({\bf n})}$
(together with the last item in (\ref{eq:mult})), we obtain
${\rm d}\Gamma_{xz}^*\mathsf{z}_{(1,0)}={\rm d}\pi^{(1,0)}_{xz}$.
Applying $Q$ and feeding in the postulate ${\rm d}\pi^{(1,0)}_{xz}\in Q\mathsf{\tilde T}^*$,
we have
\begin{align}\label{eq:dGamma_z_10}
{\rm d}\pi^{(1,0)}_{xz}=Q{\rm d}\Gamma_{xz}^*\mathsf{z}_{(1,0)}.
\end{align}
Hence in view of $({\rm id}-P)\partial_1\Pi_z(z)=\mathsf{z}_{(1,0)}$,
which follows from (\ref{eq:pi_purely_pol}), (\ref{eq:partial_dGamma}) can be re-arranged to 
\begin{align}\label{dpi}
{\rm d} \pi_{xz}^{(1,0)}=Q\big(\partial_1 \delta\Pi_{x}(z)
-{\rm d}\Gamma^*_{xz}P\partial_1 \Pi_{z}(z)\big).
\end{align}

\medskip

According to the order within an induction step stated in Subsection~\ref{sec:order} (singular case 9a),
$(\Gamma_{xz}^*P)_{\preccurlyeq\beta}$ has been constructed
(Proposition \ref{prop:gamma_npp}), and based on this and the induction
hypothesis, $({\rm d}\Gamma_{xz}^*P)_\beta$
has been constructed (Proposition \ref{prop:form_bound}). 
Also $(Q\Pi_x)_\beta$ and thus $(Q\delta\Pi_x)_\beta$ have been constructed 
(Proposition \ref{prop:int_pi_minus}).
In view of the strict triangularity (\ref{eq:triangular_d_Gamma}) of ${\rm d}\Gamma_{xz}^*$
w.~r.~t.~$\prec$, 
(\ref{dpi}) involves only $(P\Pi_{z})_{\prec\beta}$,
which has been constructed by induction hypothesis.
Hence ${\rm d}\pi_{xz\beta}^{(1,0)}$ is well-defined through (\ref{dpi}).


\section{\texorpdfstring{Divergent bounds and analyticity}{Divergent bounds and analyticity}}\label{sec:proof_rem}
\subsection{\texorpdfstring{Divergent bounds: Proof of Propositions~\ref{rem:1} and \ref{rem:1_mal}}{Divergent bounds: Proof of Propositions 2.3 and 4.13}}\label{sec:proof_div}
\begin{proof}[Proof of Proposition \ref{rem:1}]
We now embark on the proof of $(\ref{cw03})_\beta$, $\eqref{cw60}_\beta$, $(\ref{cw60_cont})_\beta$, and 
$\eqref{cw60_minus_cont}_\beta$.
In view of (\ref{eq:c_pop_cond}) and (\ref{eq:pi_purely_pol}), it is enough to consider 
(populated) $\beta$ that are not purely polynomial. 
We shall frequently appeal to $\tilde{\Pi}_x^-$ as defined in \eqref{cw66}.

\medskip

\textbf{Step 1.}
We first quantify the continuity of $\xi_\tau$, which amount to the base case $\beta=0$ in $\eqref{cw60_minus_cont}_\beta$.
By a scaling argument and the fact that $\psi_{t=1}$ is a Schwartz
function, we get $\|\partial^{\bf n}\psi_\tau\|_*$ 
$\lesssim(\sqrt[4]{\tau})^{-2-|{\bf n}|+\alpha}$, see (\ref{as02}) for the definition
of $\|\cdot\|_*$. By the mean-value theorem and translation invariance of the norm, this implies
$\|\psi_\tau(y-\cdot)-\psi_\tau(x-\cdot)\|_*$ $\lesssim (\sqrt[4]{\tau})^{-2}|y-x|^\alpha$.
Since $\psi_\tau(y-\cdot)-\psi_\tau(x-\cdot)$ is the Malliavin derivative of
the cylindrical random variable $\xi_\tau(y)-\xi_\tau(x)$, we obtain by the $\mathbb{L}^p$-version
(\ref{eq:spectral_gap_var}) of the SG inequality (and since by stationarity, 
$\mathbb{E}\xi_\tau(y)$ does not depend on $y$) 
\begin{align}\label{cw06}
\mathbb{E}^\frac{1}{p}|\xi_\tau(y)-\xi_\tau(x)|^p\lesssim(\sqrt[4]{\tau})^{-2}|y-x|^\alpha.
\end{align}

\medskip

\textbf{Step 2.}
$\eqref{eq:pi_generic}_{\prec\beta}$ \&
$\eqref{eq:BPHZ}_\beta$ \&
$\eqref{cw03}_{\prec\beta}$ \&
$\eqref{cw60}_{\prec\beta}$ \&
$\eqref{eq:limit}_\beta$ 
$\implies \eqref{cw03}_\beta$. In fact, we also need the following estimate on $\tilde \Pi_{x\beta}^-$,    
\begin{align}\label{cw65}
\mathbb{E}^\frac{1}{p}|\tilde\Pi_{x\beta}^{-}(y)|^p
\lesssim(\sqrt[4]{\tau})^{\alpha-2}(\sqrt[4]{\tau}+|y-x|)^{|\beta|-2\alpha}|y-x|^\alpha.
\end{align}
which is meant to include the statement that $\tilde\Pi_{x\beta}^{-}$ vanishes unless $|\beta|\ge 2\alpha$. 
Note that the presence of the bare factor $|y-x|^\alpha$ will be important for integration below. The proof of 
$\eqref{cw65}_\beta$ is similar to the one of $\eqref{cw65_cont}_\beta$ below, using only the estimates 
$\eqref{eq:recIII12}_{\preccurlyeq\beta}$,  $\eqref{cw60}_{\prec\beta}$, $\eqref{eq:recIII12}_{e_k+\beta_1}$ and $\mbox{(\ref{cw03})}_{\prec\beta}$.

\medskip

We now turn to the estimate $\eqref{cw03}_\beta$. For this we note that $\eqref{eq:BPHZ}_\beta$ implies
	\begin{equs}\label{fo01}
		c_\beta = \lim_{t\uparrow \infty} \E(\partial_1^2\Pi_{x\beta-e_0} + \tilde \Pi^-_{x\beta})_t(x). 
\end{equs}
Recall now the representation \eqref{aux56}. Using that $\beta - e_0\prec \beta$, we learn from $(\ref{mt98})_{\prec \beta}$ (which we recall is a consequence of $\eqref{eq:pi_generic}_{\prec\beta}$) that the first term on the 
r.~h.~s.~is estimated 
by $\sqrt[4]{\tau}^{|\beta|-2}$, as desired. 
Rewriting the second r.~h.~s.~term as $\int dy\psi_\tau(x-y)\mathbb{E}\tilde\Pi_{x\beta}^{-}(y)$, we
get the same estimate from $\mbox{(\ref{cw65})}_{\beta}$ with help
of the moment bounds \eqref{cw61}. For the third term, the estimate follows from 
$\eqref{eq:limit}_\beta$ (for $T=\tau$ and $y=x$).

\medskip

\textbf{Step 3.} 
$\eqref{eq:pi_generic}_{\prec\beta}$ \&
$\eqref{eq:gamma}_{\prec\beta}$ \&
$\eqref{cw03}_{\prec \beta}$ \&
$\eqref{cw60}_{\prec\beta}$ \& 
$\eqref{cw60_cont}_{\prec \beta}$ \&
$\eqref{eq:recenter_Pi_specific}_{\prec\beta}$
$\implies \eqref{cw60_minus_cont}_\beta$.
By \eqref{cw68}, noting that $\beta-e_0\prec \beta$, the 
required estimate follows by $\eqref{cw60_cont}_{\prec \beta}$ and the following continuity estimate on $\tilde \Pi_{x\beta}^-$,
\begin{equs}
\E^{\frac{1}{p}} |\tilde \Pi_{x\beta}^-(y) - \tilde \Pi_{x\beta}^-(z)|^p 
\lesssim(\sqrt[4]{\tau})^{-2} (\sqrt[4]{\tau}+|y-z|+|z-x|)^{|\beta|-\alpha} 
|y-z|^\alpha \label{cw65_cont}.
\end{equs}
In order to prove $\eqref{cw65_cont}_\beta$ we consider (\ref{cw66}) componentwise and prove the corresponding continuity 
estimate for each summand separately. Note that each summand has the product structure 
$(\pi\pi')_\beta = \sum_{\beta_1+\beta_2 = \beta} \pi_{\beta_1} \pi_{\beta_2}'$. In particular, summands
in the first term on the r.~h.~s.~of (\ref{cw66}) correspond to 
$\pi=\z_k\Pi_x^k$ and $\pi' = \partial_1^2\Pi_x$, 
where purely polynomial $\beta_1$ as well as $\beta_1=0$ do not contribute, 
to the effect of $\beta_2\prec\beta$ by \eqref{eq:recIII25}.
Similarly, since $\beta_2=e_{(1,0)}$ does not contribute, 
a careful inspection of \eqref{ord01} shows that $\beta_1\preccurlyeq\beta$.
Therefore, the desired estimate 
for these terms follows via H\"older's inequality in probability from the boundedness 
$\eqref{eq:recIII12}_{\preccurlyeq\beta}$ (which we recall is a consequence of $\eqref{eq:pi_generic}_{\prec\beta}$) \& $\eqref{cw60}_{\prec\beta}$
and continuity $\eqref{eq:recIII10}_{\preccurlyeq\beta}$ 
(which we recall is a consequence of $\eqref{eq:pi_generic}_{\prec\beta}$ \& $\eqref{eq:gamma}_{\prec\beta}$ \& $\eqref{eq:recenter_Pi_specific}_{\prec\beta}$) 
\& $\eqref{cw60_cont}_{\prec\beta}$
of $\z_k\Pi_x^k$ and $\partial_1^2\Pi_x$. 
The summands in the second term on the r.~h.~s.~of (\ref{cw66}) correspond 
to $\pi=\Pi_x^k$ and $\pi' = (D^{({\bf 0})})^kc$, 
where $\pi'_{\beta_2}$ depends only on $c_\gamma$ for $\gamma\prec\beta$ by \eqref{eq:triangular_c}. 
Moreover, from \eqref{Dopop} and \eqref{eq:c_pop_cond} we infer 
that $\pi'_{\beta_2}\neq0$ only for $k\leq[\beta_2]$, 
which by \eqref{ord01} implies $e_k\preccurlyeq\beta_2$ 
and hence $e_k+\beta_1\preccurlyeq\beta$.
Therefore, the desired estimate follows by continuity\footnote{
note that $\eqref{eq:recIII10}_{e_k+\beta_1}$ yields an estimate 
on $(\Pi_x^k)_{\beta_1}$} 
$\eqref{eq:recIII10}_{e_k+\beta_1\preccurlyeq\beta}$ of $\Pi_x^k$ and
the estimate $|((D^{({\bf 0})})^kc)_{\beta_2}| = \sum_\gamma |((D^{({\bf 0})})^k)_{\beta_2}^\gamma| |c_\gamma| \lesssim
(\sqrt[4]{\tau})^{|\beta_2|-\alpha k-2}$ which follows from \eqref{Dopop} and 
$\eqref{cw03}_{\prec \beta}$.

\medskip

\textbf{Step 4.}
$\eqref{eq:pi_generic}_{\preccurlyeq\beta}$ \&
$\eqref{eq:gamma}_\beta$ \&
$\eqref{cw03}_{\prec\beta}$ \&
$\eqref{cw60}_{\prec\beta}$ \&
$\eqref{eq:recenter_Pi_specific}_\beta$ \& 
$(\ref{eq:pi_minus_generic})_\beta$ \&
$\eqref{cw63_generic}_{\prec\beta}$ 
$\implies \eqref{cw60_cont}_\beta$.
We start by recalling a version of Campanato's argument, namely
that the pointwise $C^{2+\alpha}$-estimate $\eqref{cw60_cont}_\beta$ may be encoded in terms
of averages.
More precisely, we argue that $\mbox{(\ref{cw63_generic})}_{\beta}$ 
implies the pointwise control $\eqref{cw60_cont}_\beta$, where 
\begin{align}\label{cw63_generic}
\mathbb{E}^\frac{1}{p}|\partial^{\bf n}\partial^{\mathbf{m}}\Pi_{x\beta\,t}(y)|^p
\lesssim(\sqrt[4]{\tau})^{-2} (\sqrt[4]{\tau+t} + |y-x|)^{|\beta|-\alpha} (\sqrt[4]{t})^{\alpha-|{\bf n}|} 
\quad\mbox{for}\;{\bf n}\not={\bf 0},\; {\bf m}\in\{(2,0),(0,1)\}, 
\end{align}
which is part of the induction hypothesis.
For simplicity, let us restrict to the case ${\bf m} = (2,0)$.
To this purpose, we split the first term on the l.~h.~s.~of $(\ref{cw60_cont})_\beta$
into $(\partial_1^2\Pi_{x\beta}- \partial_1^2\Pi_{x\beta \, t})(y)$
$+(\partial_1^2\Pi_{x\beta \, t}(y) - \partial_1^2\Pi_{x\beta \, t}(z))$
$+(\partial_1^2\Pi_{x\beta \, t} - \partial_1^2\Pi_{x\beta})(z)$, where the choice
$t=|y-z|^4$ will turn out to be natural. Based on (\ref{eq:kernel}) we rewrite the first and last contribution as
\begin{align}\label{cw72}
\partial_1^2\Pi_{x\beta }-\partial_1^2\Pi_{x\beta \, t}
=\int_0^t ds(-\partial_2^2+\partial_1^4)\partial_1^2\Pi_{x\beta\,s}.
\end{align}
Hence on these contributions, we may use $(\ref{cw63_generic})_\beta$ 
with $|{\bf n}|=4$ and $t$ replaced by $s$, yielding control by
$(\sqrt[4]{\tau})^{-2} (\sqrt[4]{\tau+t} + |y-x|+|z-x|)^{|\beta|-\alpha} 
\int_0^t ds (\sqrt[4]{s})^{\alpha - 4} $. Note that thanks to $\alpha>0$, the last
integral converges; by the choice of $t=|y-z|^4$, and by $|y-x|\le|y-z|+|z-x|$,
this gives rise to the r.~h.~s.~of $(\ref{cw60_cont})_\beta$.
We rewrite the remaining middle term 
$\partial_1^2\Pi_{x\beta \, t}(y)-\partial_1^2\Pi_{x\beta \, t}(z)$ 
as an integral of the derivative along the connecting segment.
Since this involves $\partial^{\bf n}\partial_1^2\Pi_{x\beta \, t}$ for
${\bf n}\in\{(1,0),(0,1)\}$ we obtain from $(\ref{cw63_generic}) _\beta$ an estimate by
\begin{equs}
(\sqrt[4]{\tau})^{-2} (\sqrt[4]{\tau+t} + |y-x|+|z-x|)^{|\beta|-\alpha} \big( (\sqrt[4]{t})^{\alpha-1} |y-z| + (\sqrt[4]{t})^{\alpha-2} |y-z|^2\big). 
\end{equs}
By our choice of $t= |y-z|^4$ also this term can be subsumed into to the r.~h.~s.~of 
$(\ref{cw60_cont})_\beta$.

\medskip

We now turn to integration proper and argue
that $\mbox{(\ref{cw65})}_{\beta}$ (which we recall is a consequence of $\eqref{eq:pi_generic}_{\prec\beta}$ \&
$\eqref{cw03}_{\prec\beta}$ \&
$\eqref{cw60}_{\prec\beta}$) 
and $\mbox{(\ref{cw63_generic})}_{\prec\beta}$
imply $(\mbox{\ref{cw63_generic})}_{\beta}$. 
Again, for simplicity we restrict ourselves to the case 
${\bf m} = (2,0)$.
We first argue that it is sufficient to establish $(\mbox{\ref{cw63_generic})}_{\beta}$ for $y=x$, which assumes the form
\begin{align}\label{cw63}
\mathbb{E}^\frac{1}{p}|\partial^{\bf n}\partial_1^2\Pi_{x\beta\,t}(x)|^p
\lesssim(\sqrt[4]{\tau})^{-2} (\sqrt[4]{\tau+t})^{|\beta|-\alpha} (\sqrt[4]{t})^{\alpha-|{\bf n}|}
\quad\mbox{for}\;{\bf n}\not={\bf 0}.
\end{align}
Indeed, in order to pass from the generic point $y$ to the base point $x$, 
we appeal again to (\ref{eq:recenter_Pi_specific}) in form of $\partial^\n\partial_1^2\Pi_{x\beta}
=\sum_\gamma(\Gamma^*_{xy})_\beta^\gamma \partial^\n\partial_1^2\Pi_{y\gamma}$
and to the triangularity \eqref{eq:triangular_Gamma}.
Hence by $(\ref{eq:gamma})_\beta$,
$(\mbox{\ref{cw63_generic})}_{\beta}$ is a consequence
of $(\mbox{\ref{cw63})}_{\preccurlyeq\beta}$:
\begin{align*}
\E^{\frac{1}{p}}|\partial^\n\partial_1^2 \Pi_{x \beta \, t}(y)|^p 
& \lesssim \sum_{|\gamma|\in\mathsf{A}\cap(-\infty,|\beta|]} |y-x|^{|\beta|-|\gamma|} 
(\sqrt[4]{\tau})^{-2}  (\sqrt[4]{\tau+t})^{|\gamma|-\alpha} (\sqrt[4]{t})^{\alpha-|\n|} \\
& \lesssim (\sqrt[4]{\tau})^{-2} (\sqrt[4]{\tau+t} +|y-x|)^{|\beta|-\alpha} (\sqrt[4]{t})^{\alpha-|\n|}.
\end{align*}

\medskip

We also note that for $(\ref{cw63})_\beta$, it is enough to deal with the case of $t\le\tau$.
Indeed, we obtain from $\eqref{mt98}_\beta$ (with ${\bf n}$ replaced by ${\bf n} + (2,0)$, which we recall is a consequence of $\eqref{eq:pi_generic}_\beta$) that $\E^{\frac{1}{p}}|\partial^\n\partial_1^2 \Pi_{x \beta \, t}(x)|^p
\lesssim (\sqrt[4]{t})^{|\beta|-2-|\n|}$, which for $t\ge\tau$ is dominated by
the r.~h.~s.~of $(\ref{cw63})_\beta$.
Therefore, it remains to prove $(\ref{cw63}) _\beta$ in the reduced form of
\begin{align}\label{cw35}
\mathbb{E}^\frac{1}{p}|\partial^{\bf n}\partial_1^2\Pi_{x\beta\,t}(x)|^p
\lesssim(\sqrt[4]{\tau})^{-2+|\beta|-\alpha} (\sqrt[4]{t})^{\alpha-|{\bf n}|}
\quad\mbox{for}\;{\bf n}\not={\bf 0}\;\mbox{and}\;t\le\tau.
\end{align}

\medskip

In establishing $(\ref{cw35})_\beta$, 
we distinguish the cases $|\beta|<3$ and $|\beta|\ge 3$, starting with the former.
Upon applying $\partial^\n\partial_1^2$ to the integral representation (\ref{eq:Pi_minus_to_Pi}), 
the Taylor polynomial drops out because of $|\beta|<3$ and $\n\neq {\bf 0}$.
Also applying $(\cdot)_t$, we obtain with help of (\ref{eq:semi_group})
\begin{align*}
\partial^\n\partial_1^2\Pi_{x\beta\,t}
=-\int_t^\infty ds \partial^\n\partial_1^2(\partial_2+\partial_1^2)\Pi_{x\beta\,s}^{-}.
\end{align*}
We split the integral at $s=\tau\ge t$, substitute (\ref{cw68})
in the first part, and we evaluate at $x$:
\begin{align}\label{cw70}
\partial^{\bf n}\partial_1^2\Pi_{x\beta\,t}(x)
=&-\int_t^\tau ds\partial^{\bf n}\partial_1^2(\partial_2+\partial_1^2)
(\partial_1^2\Pi_{x\beta-e_0}+\tilde\Pi_{x\beta}^{-}+\xi_\tau\delta_\beta^0)_{s}(x)\nonumber\\
&-\int_\tau^\infty ds\partial^{\bf n}\partial_1^2(\partial_2+\partial_1^2)\Pi_{x\beta\,s}^{-}(x).
\end{align}

\medskip

We start with the first r.~h.~s.~contribution to (\ref{cw70}), 
$\int_t^\tau ds$ $\partial^{\bf n}\partial_1^2(\partial_2+\partial_1^2)
\partial_1^2\Pi_{x\beta-e_0\,s}(x)$. Our ordering 
$\beta-e_0 \prec \beta$
allows us to appeal to $(\mbox{\ref{cw35})}_{\prec\beta}$, which we use 
with ${\bf n}$ replaced by $(2,1)+{\bf n}$ and $(4,0)+{\bf n}$.
Because of $|\beta-e_0|=|\beta|$, this yields control by
$(\sqrt[4]{\tau})^{|\beta|-2-\alpha}$ $\int_t^\tau ds$ $(\sqrt[4]{s})^{\alpha-4-|{\bf n}|}$,
which due to $\alpha<1\leq|{\bf n}|$ behaves as the term on the r.~h.~s.~$(\ref{cw35})_\beta$. 
We rewrite the second r.~h.~s.~contribution to (\ref{cw70}) as
$\int_t^\tau ds$ $\partial_1^{\bf n}\partial_1^2(\partial_2+\partial_1^2)
\tilde\Pi_{x\beta\,s}^-(x)$
$=\int_t^\tau ds\int dy$ $\partial^{\bf n}\partial_1^2(\partial_2+\partial_1^2)\psi_{s}(x-y)$
$\tilde\Pi_{x\beta}^-(y)$, we appeal to $(\mbox{\ref{cw65})}_{\beta}$ (which we recall is a consequence of $\eqref{eq:pi_generic}_{\prec\beta}$ \&
$\eqref{cw03}_{\prec\beta}$ \&
$\eqref{cw60}_{\prec\beta}$) 
and the moment bounds (\ref{cw61})
to obtain control by $(\sqrt[4]{\tau})^{\alpha-2}$ $\int_t^\tau ds$
$(\sqrt[4]{s})^{\alpha-|{\bf n}|-4}$ $(\sqrt[4]{\tau+s})^{|\beta|-2\alpha}$.
Because of $s\le\tau$, the last factor
is $\sim(\sqrt[4]{\tau})^{|\beta|-2\alpha}$,
so that by $\alpha>0$ we again arrive at $(\sqrt[4]{\tau})^{|\beta|-\alpha-2}$
$(\sqrt[4]{t})^{\alpha-|{\bf n}|}$.
For the third r.~h.~s.~contribution to (\ref{cw70}),
$\int_t^\tau ds$ $\partial^{\bf n}\partial_1^2(\partial_2+\partial_1^2)
\xi_{\tau+s}(x)$, we appeal to (\ref{cw06}) to obtain an estimate by
$(\sqrt[4]{\tau})^{-2}$ $\int_t^\tau ds$
$(\sqrt[4]{s})^{\alpha-|{\bf n}|-4}$,
which acts as the previous term because of $|\beta|=\alpha$ for $\beta=0$.

\medskip

We now turn to the last contribution in (\ref{cw70}), namely $\int_\tau^\infty ds \partial^{\bf n}\partial_1^2(\partial_2+\partial_1^2)\Pi_{x\beta\,s}^{-}(x)$, 
and appeal to the standard estimates $(\ref{eq:pi_minus_generic})_\beta$ in their upgraded form
$(\ref{cw64})_\beta$.
Equipped with $(\ref{cw64})_\beta$, which we use for $y=x$ and ${\bf n}$ replaced by ${\bf n} + (2,1)$ and ${\bf n} + (4,0)$, 
we obtain control by $\int_\tau^\infty ds(\sqrt[4]{s})^{|\beta|-6-|{\bf n}|}$.
Since $|\beta|<3$, therefore $-2-|\n|+|\beta|<0$ for $\n\neq {\bf 0}$, 
this integral is $\sim(\sqrt[4]{\tau})^{-2-|{\bf n}|+|\beta|}$,
which due to $\alpha<|{\bf n}|$ for $\n\neq {\bf 0}$ and $t\le\tau$ is dominated 
by the r.~h.~s.~of $(\ref{cw35})_\beta$.

\medskip

We conclude integration proper by dealing with the case of $|\beta|\ge 3$.
In this case, we may directly infer $(\ref{cw35})_\beta$ from $(\ref{mt98})_\beta$
(with ${\bf n}$ replaced by ${\bf n} + (2,0)$, which we recall is a consequence of $\eqref{eq:pi_generic}_\beta$), gaining
control by $(\sqrt[4]{t})^{-|{\bf n}|-2+|\beta|}$, which is dominated by
the r.~h.~s.~of $(\ref{cw35})_\beta$ because $|\beta|\ge 3\ge 2+\alpha$ and $t\le\tau$.

\medskip

\textbf{Step 5.} $\eqref{eq:pi_generic}_\beta$ \& $\eqref{cw60_cont}_\beta \implies \eqref{cw60}_\beta$.
We pass from the homogeneous $C^{2+\alpha}$-estimate
$(\ref{cw60_cont})_\beta$ to the $C^2$-estimate $(\ref{cw60})_\beta$, namely by interpolation
with the $C^{\alpha-2}$-estimate $\eqref{mt98}_\beta$ (which we recall is a consequence of $\eqref{eq:pi_generic}_\beta$).
To this purpose, we write $\partial^\n\Pi_{x\beta}(y)$
$=\partial^\n\Pi_{x\beta\,\tau}(y)+\int dz\psi_\tau(y-z)\big(\partial^\n\Pi_{x\beta}(y)$
$-\partial^\n\Pi_{x\beta}(z)\big)$ for $\n\in\{(2,0),(0,1)\}$. On the first term, 
we use $\eqref{mt98}_\beta$ with $t=\tau$.
On the second term, we use $(\ref{cw60_cont})_\beta$ 
and then the moment bound (\ref{cw61}) for $t=\tau$. 
\end{proof}


\begin{proof}[Proof of Proposition~\ref{rem:1_mal}] 
The arguments follow those of the proof 
of Proposition~\ref{rem:1}, and should be read in parallel.

\medskip

\textbf{Step 1.}
$\eqref{eq:pi_generic}_{\prec\beta}$ \& 
$\eqref{eq:gamma}_{\prec\beta}$ \& 
$\eqref{cw03}_{\prec \beta}$ \&
$\eqref{cw60}_{\prec\beta}$ \& 
$\eqref{cw60_cont}_{\prec \beta}$ \&
$\eqref{eq:recenter_Pi_specific}_{\prec\beta}$ \&
$\eqref{eq:delta_gamma}_{\prec\beta}$ \& 
$\eqref{eq:delta_pi_generic}_{\prec \beta}$ \&
$\eqref{cw60_mal_dual}_{\prec \beta}$ \& 
$\eqref{cw60_cont_mal_dual}_{\prec \beta}$ 
$\implies \eqref{cw60_minus_cont_mal}_\beta$.
In order to establish (\ref{cw60_minus_cont_mal}),
we take the Malliavin derivative of \eqref{cw68}:
\begin{equs}
\delta\Pi_{x\beta}^- = \partial_1^2 \delta\Pi_{x\beta-e_0} 
+ \delta \tilde\Pi_{x\beta}^- + \delta\xi_\tau \delta_0^\beta.  
\end{equs}
The desired estimate on $\delta\xi_\tau$ follows analogously to \eqref{cw06}.
The estimate
on the first r.~h.~s.~follows from the induction hypothesis 
$\eqref{cw60_cont_mal_dual}_{\prec \beta}$.
In order to estimate the middle r.~h.~s.~term  
we take the Malliavin derivative 
of \eqref{cw66} which, similarly to \eqref{eq:recIII30}, gives
\begin{align}
 \delta\Pi_x^{-}=P\big(\sum_{k\ge1}\mathsf{z}_k\Pi_x^k\partial_1^2\delta\Pi_x
+\sum_{k\ge 0}(k+1)\mathsf{z}_{k+1}\Pi_x^k\delta\Pi_x\partial_1^2\Pi_x\big)
-\sum_{k\ge 1}\frac{1}{k!}\Pi_x^k\delta\Pi_x(D^{({\bf 0})})^{k+1}c. \nonumber
\end{align} 
Therefore, the additional ingredients w.~r.~t.~the proof of $\eqref{cw65_cont}_\beta$ are 
$\eqref{eq:delta_pi_generic}_{\prec \beta}$, $\eqref{cw60_mal_dual}_{\prec \beta}$, 
$\eqref{cw60_cont_mal_dual}_{\prec \beta}$,
and $\eqref{eq:delta_pi_generic_cont}_{\prec \beta}$, where
\eqref{eq:delta_pi_generic_cont} is given by  
\begin{equs}
\E^{\frac{1}{q'}}|\delta\Pi_{x\beta}(y) - \delta\Pi_{x\beta}(z)|^{q'}\lesssim |y-z|^\alpha (|y-x|+|z-x|)^{|\beta|-\alpha} \bar w. 
\label{eq:delta_pi_generic_cont}
\end{equs}
In order to prove $\eqref{eq:delta_pi_generic_cont}_\beta$ we take the Malliavin derivative of $\eqref{eq:recenter_Pi_specific}_{\beta}$
and evaluate at $z$, which leads to the identity 
$\delta\Pi_{x\beta}(y) - \delta\Pi_{x\beta}(z)$ 
$= -\sum_\gamma (\delta \Gamma^*_{xy})_{\beta}^\gamma \Pi_{y\gamma}(z) 
-\sum_\gamma (\Gamma^*_{xy})_{\beta}^\gamma \delta\Pi_{y\gamma}(z)$, where due to
\eqref{eq:triangular_Gamma} and \eqref{eq:triangular_delta_Gamma} the sums are 
restricted to $\gamma\preccurlyeq \beta$. Therefore, $\mbox{(\ref{eq:delta_pi_generic_cont})}_\beta$
follows from $\eqref{eq:delta_gamma}_\beta$ \& $\eqref{eq:pi_generic}_{\preccurlyeq\beta}$ and $\eqref{eq:gamma}_\beta$ \& $\eqref{eq:delta_pi_generic}_{\preccurlyeq\beta}$. 

\medskip

\textbf{Step 2.}
$\eqref{eq:pi_generic}_{\prec\beta}$ \&
$\eqref{cw03}_{\prec\beta}$ \&
$\eqref{cw60}_{\prec\beta}$ \&
$\eqref{eq:delta_pi_generic}_{\prec \beta}$ \& 
$\eqref{cw60_mal_dual}_{\prec \beta}$ \&
$\eqref{cw63_generic_mal_dual}_{\prec\beta}$ 
$\implies \eqref{cw60_cont_mal_dual}_\beta$.
We proceed as for (\ref{cw60_cont}) in Step~4 of the
proof of Proposition~\ref{rem:1}: We derive the pointwise $\eqref{cw60_cont_mal_dual}_\beta$
from the analogue of the weak $\eqref{cw63_generic}_\beta$ for $\delta \Pi_{x\beta}$, as given by
\begin{align}
&\mathbb{E}^\frac{1}{q'}|\partial^{\bf n}\partial^{\bf m}\delta \Pi_{x\beta\,t}(y)|^{q'} \nonumber\\
&\lesssim(\sqrt[4]{\tau})^{-2} (\sqrt[4]{\tau+t} + |y-x|)^{|\beta|-\alpha} (\sqrt[4]{t})^{\alpha-|{\bf n}|} \bar w
\quad
\mbox{for}\;{\bf n}\not={\bf 0},\;{\bf m}\in\{(2,0), (0,1)\}, \quad \label{cw63_generic_mal_dual}
\end{align}
which we state here since it is part of the induction hypothesis. 
The analogue of $\eqref{cw65}_\beta$ (which we recall is a consequence of $\eqref{eq:pi_generic}_{\prec\beta}$ \&
$\eqref{cw03}_{\prec\beta}$ \&
$\eqref{cw60}_{\prec\beta}$) for $\delta 
\tilde\Pi_{x\beta}^-$, from which we derive (\ref{cw63_generic_mal_dual}) by integration, 
relies on the additional ingredients
$\eqref{eq:delta_pi_generic}_{\prec \beta}$ and $\eqref{cw60_mal_dual}_{\prec \beta}$.

\medskip

\textbf{Step 3.}
$\eqref{eq:delta_pi_generic}_\beta$ \& $\eqref{cw60_cont_mal_dual}_\beta$ $\implies \eqref{cw60_mal_dual}_\beta$.
We can pass from $\eqref{cw60_cont_mal_dual}_\beta$ to $\eqref{cw60_mal_dual}_\beta$ using
the analogue of $\eqref{mt98}_\beta$ for $\partial^\n \delta\Pi_{x\beta}$, which follows from 
$\eqref{eq:delta_pi_generic}_\beta$ and \eqref{cw61}. 
\end{proof}


\subsection{\texorpdfstring{Analyticity: Proof of Proposition~\ref{rem:2}}{Analyticity: Proof of Proposition 2.7}}

\begin{proof}[Proof of Proposition \ref{rem:2}]
The first statement is easy to check:
The most substantial change is the representation (\ref{eq:representation})
of the solution operator in terms of the convolution semi-group, see
Subsection \ref{sec:semi_group}. This requires
changing the generator of the latter to $(\partial_2-a_0\partial_1^2)^*$
$(\partial_2$ $-a_0\partial_1^2)$ $=-\partial_2^2+|a_0|^2\partial_1^4$. 
Hence the new convolution kernel is a simple spatial rescaling of the standard one:
\begin{align}\label{cw17}
\psi_t(a_0,x)=\psi_t(\frac{x_1}{\sqrt{|a_0|}},x_2),
\end{align}
and thus satisfies the relevant moment bounds (\ref{cw61}) locally uniformly in $a_0$.

\medskip

For convenience of the reader, we list the annealed H\"older-type norms with respect to which we
will establish complex differentiability and thus analyticity:
the norm on random functions given by (\ref{eq:pi_generic}) of Theorem \ref{thm:main},
\begin{align}\label{cw26}
\sup_{y}|y-x|^{-|\beta|}\mathbb{E}^\frac{1}{p}|\Pi_{x\beta}(y)|^p,
\end{align}
the norm on random distributions given by \eqref{eq:pi_minus_generic} of Proposition \ref{rem:5}
\begin{align}\label{cw25}
\sup_{y,t}(\sqrt[4]{t})^{2-\alpha}(\sqrt[4]{t}+|y-x|)^{\alpha-|\beta|}
\mathbb{E}^\frac{1}{p}|\Pi_{x\beta\,t}^{-}(y)|^p,
\end{align}
and finally the stronger norm given by (\ref{cw60}) in the proof of Proposition \ref{rem:1}
\begin{align}\label{cw30}
\sup_y(\sqrt[4]{\tau}+|y-x|)^{\alpha-|\beta|}\mathbb{E}^\frac{1}{p}
|\partial_1^2\Pi_{x\beta}(y)|^p.
\end{align}
Note that in view of the argument for (\ref{eq:pi_minus_generic}), we may
revert from (\ref{cw17}) to the standard convolution kernel for (\ref{cw25}).

\medskip

Over the next several paragraphs, we establish (\ref{cw58ter}), (\ref{cw58bis}), and 
(\ref{cw44}) for $\pi\in\{c,\Pi_x\}$
by induction in $\beta$ with respect to the ordering $\prec$. 
In fact, we rephrase these statements as
\begin{align}
\partial_{a_0}c_\beta&=(\beta(0)+1)c_{\beta+e_0},\label{cw80}\\
\partial_{a_0}\Pi_{x\beta}&=(\beta(0)+1)\Pi_{x\beta+e_0}\quad\mbox{w.~r.~t.~(\ref{cw26})}
,\label{cw81}
\end{align}
with the understanding that these imply that
the functions are complex differentiable -- and thus analytic -- in $a_0$.

\medskip

We start with a reconstruction argument showing that $\mbox{(\ref{cw80})}_{\prec\beta}$ and
$\mbox{(\ref{cw81})}_{\prec\beta}$ imply $\mbox{(\ref{cw80})}_{\beta}$
and $\mbox{(\ref{cw46})}_{\beta}$, where
\begin{align}\label{cw46}
\partial_{a_0}\Pi_{x\beta}^{-}+\partial_1^2\Pi_{x\beta}
=(\beta(0)+1)\Pi_{x\beta+e_0}^{-}\quad\mbox{w.~r.~t.~(\ref{cw25})}.
\end{align}
In preparation for this, we note that in view of the locally uniform estimate (\ref{cw60}), the 
analyticity expressed by $\mbox{(\ref{cw81})}_{\beta}$ transmits from $\Pi_{x\beta}$
to $\partial_1^2\Pi_{x\beta}$ in form of $\mbox{(\ref{cw45})}_{\beta}$, where
\begin{align}\label{cw45}
\partial_{a_0}\partial_1^2\Pi_{x\beta}=(\beta(0)+1)\partial_1^2\Pi_{x\beta+e_0}
\quad\mbox{w.~r.~t.~(\ref{cw30})}.
\end{align}
Like in the proof of Proposition \ref{rem:1} we work with $\tilde\Pi_{x\beta}^{-}$, see \eqref{cw66}, 
which componentwise takes the form 
\begin{align}
\tilde\Pi_{x\beta}^{-}
&=\sum_{k\ge 1}\sum_{e_k+\beta_1+\cdots+\beta_{k+1}=\beta}
\Pi_{x\beta_1}\cdots\Pi_{x\beta_k}\partial_1^2\Pi_{x\beta_{k+1}}\nonumber\\
&-\sum_{k\ge 1}\frac{1}{k!}\sum_{\beta_1+\cdots+\beta_{k+1}=\beta}
\Pi_{x\beta_1}\cdots\Pi_{x\beta_k}\sum_{\gamma}((D^{({\bf 0})})^k)_{\beta_{k+1}}^\gamma c_\gamma.
\end{align}
By \eqref{eq:triangular_product} and \eqref{eq:triangular_c}
the two sums are restricted  to $\beta_1,\dots,\beta_{k+1},\gamma\prec\beta$. 
Therefore we obtain from the analyticity expressed 
in $\mbox{(\ref{cw80})}_{\prec\beta}$, $\mbox{(\ref{cw81})}_{\prec\beta}$, and $\mbox{(\ref{cw45})}_{\prec\beta}$ 
via Leibniz' rule
\begin{align}\label{cw84}
\partial_{a_0}\tilde\Pi_{x\beta}^{-}
&=\sum_{k\ge 1}\sum_{\substack{e_k+\beta_1+\cdots+\beta_{k+1}=\beta \\ l_1+\cdots+l_{k+1}=1}}
\partial_{a_0}^{l_1}\Pi_{x\beta_1}\cdots\partial_{a_0}^{l_k}\Pi_{x\beta_k}
\partial_1^2\partial_{a_0}^{l_{k+1}}\Pi_{x\beta_{k+1}}\nonumber\\
&-\sum_{k\ge 1}\frac{1}{k!}\sum_{\substack{\beta_1+\cdots+\beta_{k+1}=\beta \\ l_1+\cdots+l_{k+1}=1}}
\partial_{a_0}^{l_1}\Pi_{x\beta_1}\cdots\partial_{a_0}^{l_k}\Pi_{x\beta_k}
\sum_{\gamma}((D^{({\bf 0})})^k)_{\beta_{k+1}}^\gamma \partial_{a_0}^{l_{k+1}}c_\gamma.
\end{align}
Inserting the corresponding formulas, we obtain
\begin{align}\label{cw36}
\partial_{a_0}\tilde\Pi_{x\beta}^{-}=(\beta(0)+1)\tilde\Pi_{x\beta+e_0}^-
\quad\mbox{w.~r.~t.~(\ref{cw30})}.
\end{align}

\medskip

We now argue that $\mbox{(\ref{cw36})}_{\beta}$ implies $\mbox{(\ref{cw80})}_{\beta}$, 
where w.~l.~o.~g.~we restrict to $|\beta|<2$.
Rewriting the first r.~h.~s.~term of \eqref{fo01} as 
$\int dy\partial_1^2\psi_t(x-y)\mathbb{E}\Pi_{x\beta-e_0}(y)$, 
we learn from (\ref{eq:pi_generic}) that its limit vanishes.
This leaves us with 
\begin{align}\label{cw55}
c_{\beta}=\lim_{t\uparrow\infty}\mathbb{E}\tilde\Pi_{x\beta\,t}^{-}(x).
\end{align}
Since (\ref{cw36}) implies that $\mathbb{E}\tilde\Pi_{x\beta\,t}^{-}(x)$ is analytic,
this transmits to $c_{\beta}$ via the locally uniform convergence (\ref{cw55}).  

\medskip

We now may turn to (\ref{cw46}), based on the identity (\ref{cw68}) to which we
apply $\partial_{a_0}$, noting that there is no contribution from $\xi$. 
According to $\mbox{(\ref{cw45})}_{\prec\beta}$ (recall that $\beta-e_0\prec\beta$),
to $\mbox{(\ref{cw36})}_{\beta}$, and to $\mbox{(\ref{cw80})}_{\beta}$,
we obtain $\mbox{(\ref{cw46})}_{\beta}$, at first w.~r.~t.~to the stronger
topology (\ref{cw30}) and then to the weaker one (\ref{cw25}).

\medskip

We now turn to integration,
arguing that $\mbox{(\ref{cw46})}_{\beta}$ implies $\mbox{(\ref{cw81})}_{\beta}$.
We consider the (what would be) first-order Taylor terms
\begin{align*}
\Delta\Pi_{x\beta}&:=\Pi_{x\beta}(a_0')-\Pi_{x\beta}(a_0)
-(a_0'-a_0)(\beta(0)+1)\Pi_{x\beta+e_0}(a_0),\\
\Delta\Pi_{x\beta}^{-}&:=\Pi_{x\beta}^{-}(a_0')-\Pi_{x\beta}^{-}(a_0)
-(a_0'-a_0)\partial_{a_0}\Pi_{x\beta}^{-}(a_0).
\end{align*}
We now appeal to (\ref{cw20}), which we use for $\beta$, also with $a_0$ replaced by $a_0'$,
and for $\beta+e_0$. We then obtain from (\ref{cw46})
\begin{align}\label{cw19}
(\partial_2-a_0\partial_1^2)&\Delta\Pi_{x\beta}
=\Delta\Pi_{x\beta}^{-}
+(a_0'-a_0)\partial_1^2(\Pi_{x\beta}(a_0')-\Pi_{x\beta}(a_0)).
\end{align}
We note that $\Delta\Pi_{x\beta}$ 
(qualitatively) inherits the growth and anchoring estimate (\ref{eq:pi_generic})
from $\Pi_{x\beta}$ and $\Pi_{x\beta+e_0}$.
Hence by the Liouville argument from the proof of Proposition \ref{prop:change_of_basepointII}
we obtain from (\ref{cw19}) an integral representation of $\Delta\Pi_{x\beta}$ in terms of
$\Delta\Pi_{x\beta}^{-}$ $+(a_0'-a_0)\partial_1^2(\Pi_{x\beta}(a_0')-\Pi_{x\beta}(a_0))$
in form of (\ref{eq:Pi_minus_to_Pi}).
The argument of Proposition \ref{prop:int_pi_minus} then shows that
\begin{align*}
\lefteqn{\mbox{norm (\ref{cw26}) of}\;\Delta\Pi_{x\beta}(y)
\;\lesssim\;\mbox{norm (\ref{cw25}) of}\;\Delta\Pi_{x\beta\,t}^{-}}\nonumber\\
&+\;|a_0'-a_0|\times\quad\mbox{norm (\ref{cw26}) of}\;(\Pi_{x\beta}(a_0')-\Pi_{x\beta}(a_0)).
\end{align*}
Writing $\Pi_{x\beta}(a_0')-\Pi_{x\beta}(a_0)$ $=\Delta\Pi_{x\beta}$ $+(a_0'-a_0)(\beta(0)+1)
\Pi_{x\beta+e_0}(a_0)$, we absorb the second r.~h.~s.~term for $|a_0'-a_0|\ll 1$ to obtain
\begin{align*}
\lefteqn{\mbox{norm (\ref{cw26}) of}\;\Delta\Pi_{x\beta}(y)}\nonumber\\
&\lesssim\mbox{norm (\ref{cw25}) of}\;\Delta\Pi_{x\beta\,t}^{-}
\;+\;|a_0'-a_0|^2\times\;\mbox{norm (\ref{cw26}) of}\;\Pi_{x\beta+e_0}(a_0).
\end{align*}
This estimate shows that (\ref{cw46}) transmits to (\ref{cw81}).

\medskip

Let us now treat the base case $\eqref{cw80}_{\beta=0}$ and $\eqref{cw81}_{\beta=0}$.
For the former we note that $\Pi^-_{x\,0}=\xi_\tau-c_0$ and $\Pi^-_{x\,e_0}=\partial_1^2\Pi_{x\,0}-c_{e_0}$;
since $\xi$ is centered by Assumption~\ref{ass:spectral_gap}
and $\E|\partial_1^2\Pi_{x\,0\,t}(x)|\lesssim(\sqrt[4]{t})^{\alpha-2}$ by $\eqref{eq:pi_generic}_{\beta=0}$, 
the BPHZ-choice \eqref{eq:BPHZ} yields $c_0=0=c_{e_0}$.
For the latter we note that $\eqref{cw46}_{\beta=0}$ holds by the just said, 
and thus the integration argument established above shows $\eqref{cw81}_{\beta=0}$.

\medskip

We now turn to (\ref{cw59bis}). 
By (\ref{eq:def_pi_n_poly}), there is nothing to show
for purely polynomial $\beta$, so that we turn to not purely polynomial $\beta$,
which we treat by induction in $|\beta|$.  
According to (\ref{eq:pi_purely_pol}) and (\ref{eq:Gamma_z_n}), 
the $\beta$-component of (\ref{eq:recenter_Pi_specific}) can be rewritten as
\begin{align}\label{cw83}
\sum_{{\bf n}}(z-y)^{\bf n}\pi^{({\bf n})}_{xy\beta}
+\sum_{\gamma\not=\pp}(\Gamma_{xy}^*)_{\beta}^\gamma\Pi_{y\gamma}(z)
=\Pi_{x\beta}(z).
\end{align}
According to \eqref{ord12}, for not purely polynomial $\gamma$, $(\Gamma_{xy}^*)_{\beta}^{\gamma}$
depends on $\pi_{xy\beta'}^{({\bf n})}$ via (\ref{exp}) only through 
$|\beta'|<|\beta|$.
Hence by induction hypothesis, $(\Gamma_{xy}^*)_{\beta}^{\gamma}$ is analytic in $a_0$
w.~r.~t.~$\mathbb{E}^\frac{1}{p}|\cdot|^p$. Together with the
analyticity of $\Pi_{y\gamma}$ and $\Pi_{x\beta}$ w.~r.~t.~(\ref{cw26}), see (\ref{cw81}),
we obtain analyticity of $\sum_{{\bf n}}(\cdot-y)^{\bf n}\pi^{({\bf n})}_{xy\beta}$
w.~r.~t.~(\ref{cw26}). Since by (\ref{eq:pi_n_pop}), the sum is effectively
constrained to $|{\bf n}|<|\beta|$, the invertibility of the Vandermonde
matrix yields analyticity of the coefficients $\pi_{xy\beta}^{({\bf n})}$.

\medskip

It remains to establish (\ref{cw44}) for $\pi_{xy\beta}^{({\bf n})}$,
which again we do by induction in $|\beta|$.
With help of (\ref{exp}), we rewrite (\ref{cw83}) as
\begin{align*}
\Pi_{x\beta}(z)&=\sum_{{\bf n}}(z-y)^{\bf n}\pi^{({\bf n})}_{xy\beta}
+\sum_{\gamma\not=\pp}\sum_{k\ge 0}\frac{1}{k!}\sum_{{\bf n}_1,\cdots,
{\bf n}_k}\sum_{\beta_1+\cdots+\beta_{k+1}=\beta}\nonumber\\
&\pi_{xy\beta_1}^{({\bf n}_1)}\cdots\pi_{xy\beta_k}^{({\bf n}_k)}
(D^{({\bf n}_k)}\cdots D^{({\bf n}_1)})_{\beta_{k+1}}^\gamma\Pi_{y\gamma}(z).
\end{align*}
We now apply $\partial_{a_0}$, as in (\ref{cw84}), use once more the
triangular structure \eqref{ord12} and \eqref{eq:triangular_Gamma}, to feed in the induction
hypothesis of (\ref{cw44}). 

\medskip

The same argumentation applies to establish the base case $\eqref{cw59bis}_{\beta=0}$ and $\eqref{cw44}_{\beta=0}$ for $\pi_{xy}^{(\n)}$.
\end{proof}


\section{Malliavin differentiability}\label{sec:mall_approx}

In this section we prove that the quantities of interest, 
namely $\Pi_x$, $\Pi_{x}^-$, $\Gamma_{xy}^*$ and $\pi^{(\mathbf{n})}_{xy}$,
are Malliavin differentiable, so that Leibniz rule can indeed be applied.
For the definitions and basic properties of the Malliavin--Sobolev spaces $\HH^p$ for 
random variables ($\Gamma_{xy}^*$ and $\pi^{(\mathbf{n})}_{xy}$) and $C^0(\HH^p)$ for 
random fields ($\Pi_x$ and $\Pi_{x}^-$) we refer to Appendix~\ref{ss:Malle}.
As before, $p<\infty$ denotes a generic
exponent and its value may change from line to line, which is convenient when
applying Leibniz rule.

\medskip  

We proceed in line with the logical order of the induction, see Subsection~\ref{sec:order}.
\begin{enumerate}[1] 
\item \label{it:0} In Subsection~\ref{sec:base_case_approx} we establish
\footnote{always with the understanding that this holds for all $p<\infty$} $\xi_\tau\in C^0(\HH^p)$.
\item \label{it:1} Then, for any populated multi-index $\beta$ with $|\beta|<2$,
in Subsection~\ref{sec:reconstr_approx} we establish $\Pi^-_{x\beta}\in C^0(\HH^p)$.
\item \label{it:2} In Subsection~\ref{sec:int_approx} we prove $\Pi_{x\beta}$, $\partial_1^2\Pi_{x\beta}$, $\partial_2\Pi_{x\beta}$,
$\in C^0(\HH^p)$ via integration and relate $\delta\Pi_{x\beta}$ 
to $\delta\Pi^-_{x\beta}$ through the representation \eqref{eq:Pi_minus_to_Pi}.
\item \label{it:3} In Subsection~\ref{sec:gamma_three_point_approx} we establish
$\Gamma_{xy\beta}^*,\pi_{xy\beta}^{(\mathbf{n})}\in\HH^p$ by the algebraic and three-point argument.
\end{enumerate}
The identities \eqref{deltaGamma}, \eqref{eq:delta_three_point}, \eqref{eq:recIII30}
and \eqref{eq:delta_recenter_pi_minus} then follow by a now fully justified
application of Leibniz' rule.
Statements~\ref{it:2} and \ref{it:3} are ``officially'' included into the induction since
we need them in form of an induction hypothesis. Statement~\ref{it:1} is established 
inside the induction step.

\subsection{The base case: Malliavin differentiability of \texorpdfstring{$\Pi_{x0}^{-}=\xi_\tau$}{Pi minus x 0 = xi tau}}\label{sec:base_case_approx}

From the argument used to derive \eqref{cw06} we learn that for every $y$, the cylindrical random variable $\xi_\tau(y)$ is in $\mathbb{H}^p$,
with (deterministic) Malliavin derivative $\psi_\tau(y-\cdot)$. In order to argue that the random field $\xi_\tau$ is in 
$C^0(\HH^p)$, we need to show that $\xi_\tau(y)$ can be approximated w.~r.~t.~$\E^{\frac{1}{p}}|\cdot|^p$ by 
functionals of the form (\ref{cw51}) that form a Cauchy sequence w.~r.~t.~the family of semi-norms (\ref{cw50}). 
To this purpose, we note that for fixed $\tau>0$, $\psi_\tau(y-\cdot)$ can be approximated, 
locally uniformly in $y$ and w.~r.~t.~to the Schwartz topology (in the active variable),
by functions $\psi_N(y,\cdot)$ that are finite sums of functions of the form
$\psi(y)\psi'(\cdot)$, where $\psi$ is smooth and $\psi'$ a Schwartz function. 
This structure implies that $\big(\xi,\psi_N(y,\cdot)\big)$ is of the form (\ref{cw51}).
On the one hand, the convergence implies that $\xi_\tau(y)$ is even the pathwise limit of
$\big(\xi,\psi_N(y,\cdot)\big)$, locally uniformly in $y$. 
Since convergence in the Schwartz topology implies convergence w.~r.~t.~$\|\cdot\|_*$,
$\|\psi_\tau(y-\cdot)-\psi_N(y,\cdot)\|_*$ is small locally uniformly in $y$.
Since $\psi_N(y,\cdot)$ is the Malliavin derivative of $\big(\xi,\psi_N(y,\cdot)\big)$,
we obtain from the $\mathbb{L}^p$-version (\ref{eq:spectral_gap_var}) of the SG inequality
that $\big(\xi,\psi_N(y,\cdot)\big)$ forms a Cauchy sequence in $C^0(\HH^p)$,
locally uniformly in $y$. 


\subsection{Reconstruction}\label{sec:reconstr_approx}

In this subsection, we establish $\Pi_{x\beta}^{-}\in C^0(\HH^p)$. 
It follows from \eqref{eq:triangular_product} and \eqref{eq:triangular_c}
that $\Pi_{x\beta}^{-}$ depends on $\Pi_{\beta'}$ only for $\beta'\prec\beta$
and on $c_{\beta'}$ only for $\beta'\preccurlyeq\beta$.
At this stage of the induction step (singular case 4a), 
we already know that the deterministic constant $c_\beta$ is finite.
Hence by the induction hypothesis and (\ref{cw53bis}) we obtain 
$\Pi_{x\beta}^{-}\in C^0(\HH^p)$.
In more quantitative terms, we obtain from $\eqref{eq:pi_generic}_{\prec\beta}$ \& 
$\eqref{cw60}_{\prec\beta}$ on the $\mathbb{L}^p$-norms and 
$\eqref{eq:delta_pi_generic}_{\prec\beta}$ \& $\eqref{cw60_mal_dual}_{\prec\beta}$ on the
$\mathbb{L}^p(H^*)$-norms that
\begin{align}\label{ho09}
\sup_{y} \frac{1}{1+|y|^{|\beta|}} \|\Pi^-_{x\beta}(y)\|_{\HH^p} <\infty.
\end{align}

\subsection{Integration}\label{sec:int_approx} 

We now turn to integration. By (\ref{ho09}), the first part of Lemma~\ref{lem:Malle} implies
that $(\partial_1^2+\partial_2)\Pi_{x\beta\,t}^{-}(y)\in\mathbb{H}^p$ is
continuous in $(t,y)\in(0,\infty)\times\mathbb{R}^2$.
In order to deduce that $\Pi_{x\beta}\in C^0(\mathbb{H}^p)$
from the integral representation (\ref{eq:Pi_minus_to_Pi}),
we shall apply the second part of Lemma~\ref{lem:Malle} to
$F(t,y)=(1-{\rm T}_x^{|\beta|})(\partial_1^2+\partial_2)\Pi_{x\beta\,t}^{-}(y)$.
In order to check assumption (\ref{cw57}), 
we revisit the proof of Proposition \ref{prop:int_pi_minus},
this time starting from (\ref{eq:pi_minus_generic})
and the un-dualized version of (\ref{eq:mal_dual_annealed}), which combine to
\begin{align*}
\|\Pi_{x\beta\,t}^{-}(y)\|_{\mathbb{H}^p}
\lesssim(\sqrt[4]{t})^{\alpha-2}(\sqrt[4]{t}+|y-x|)^{|\beta|-\alpha}.
\end{align*}
As for (\ref{cw64}), this can be upgraded to
\begin{align}\label{ho21}
\|\partial^{\bf n}\Pi_{x\beta\,t}^{-}(y)\|_{\mathbb{H}^p}
\lesssim(\sqrt[4]{t})^{\alpha-2-|{\bf n}|}(\sqrt[4]{t}+|y-x|)^{|\beta|-\alpha}.
\end{align}
This allows us to derive, in analogy to (\ref{cw75}) and (\ref{cw76}),
\begin{align}\label{ho22}
\|{\rm T}_x^{|\beta|}\partial^{\bf n}\Pi_{x\beta\,t}^{-}(y)\|_{\mathbb{H}^p}
\lesssim\sum_{|{\bf m}|<|\beta|}
(\sqrt[4]{t})^{|\beta|-2-|{\bf n}|-|{\bf m}|}|y-x|^{|{\bf m}|}
\end{align}
and
\begin{align}\label{ho20}
\|(1-{\rm T}_x^{|\beta|})\partial^{\bf n}\Pi_{x\beta\,t}^{-}(y)\|_{\mathbb{H}^p}
&\lesssim\sum_{\substack{|{\bf m}|\ge|\beta| \\ m_1+m_2<|\beta|+1}}
(\sqrt[4]{t})^{\alpha-2-|{\bf n}|-|{\bf m}|}(\sqrt[4]{t}+|y-x|)^{|\beta|-\alpha}|y-x|^{|{\bf m}|}.
\end{align}
The far-field part of (\ref{cw57}), i.~e.~the integral over $(N,\infty)$, now follows 
directly from (\ref{ho20}) with ${\bf n}\in\{(2,0),(0,1)\}$ and thus $|{\bf n}|=2$,
because thanks to (\ref{irrational}) we have $|{\bf m}|>|\beta|$, so that all
exponents on $\sqrt[4]{t}$ are strictly less than $-4$. Hence the far-field integral
decays as some negative power of $N$, uniformly for bounded $|y-x|$.
For the near-field part of (\ref{cw57}), i.~e.~the integral over $(0,N^{-1})$, we combine (\ref{ho21}) and (\ref{ho22})
by the triangle inequality and note that all exponents on $\sqrt[4]{t}$ are
strictly larger than $-4$.

\medskip

We now turn to the argument for $\partial^{\bf m}\Pi_{x\beta}\in C^0(\mathbb{H}^p)$
for ${\bf m}\in\{(2,0),(0,1)\}$. We appeal to the integral representation (\ref{cw72}) in form of
\begin{align}\label{ho23}
\partial^{\bf m}\Pi_{x\beta}=\partial^{\bf m}\Pi_{x\beta\,\tau}
+\int_0^\tau dt(\partial_1^4-\partial_2^2)\partial^{\bf m}\Pi_{x\beta\,t}.
\end{align}
By the first part of Lemma~\ref{lem:Malle} we have 
$\partial^{\bf m}\Pi_{x\beta\,\tau}\in C^0(\mathbb{H}^p)$, and that
$(\partial_1^4-\partial_2^2)\partial^{\bf m}\Pi_{x\beta\,t}(y)\in\mathbb{H}^p$ 
is continuous in $(t,y)\in(0,\infty)\times\mathbb{R}^2$. It remains to appeal to the second part
of Lemma~\ref{lem:Malle} with $F(t,y)=(\partial_1^4-\partial_2^2)\partial^{\bf m}\Pi_{x\beta\,t}$ for $t<\tau$
(and vanishing for $t\ge\tau$)
in order to infer that the integral on the r.~h.~s.~of (\ref{ho23}) is also in $C^0(\mathbb{H}^p)$.
For this, we need to verify the near-field part of (\ref{cw57}).
We therefore combine (\ref{cw63_generic}) and (\ref{cw63_generic_mal_dual}) to
\begin{align*}
\|\partial^{\bf n}\partial^{\bf m}\Pi_{x\beta\,t}(y)\|_{\mathbb{H}^p}
\lesssim(\sqrt[4]{\tau})^{-2}(\sqrt[4]{\tau+t}+|y-x|)^{|\beta|-\alpha}
(\sqrt[4]{t})^{\alpha-|{\bf n}|},
\end{align*}
which we use for ${\bf n}\in\{(4,0),(0,2)\}$. Hence the exponent on $\sqrt[4]{t}$
is $\alpha-4>-4$, so that (\ref{cw57}) decays as $(N^{-1})^\alpha$ uniformly for 
bounded $|y-x|$, as desired.


\subsection{Algebraic and three-point arguments} \label{sec:gamma_three_point_approx}

For $(\Gamma^*_{xy}P)_\beta\in\HH^p$ we appeal to the exponential formula \eqref{exp}, 
and the fact that it involves $\pi^{(\mathbf{n})}_{x\beta'}$ only for $\beta'\prec\beta$, 
see \eqref{ord12}.
Hence we may appeal to the induction hypothesis Statement~\ref{it:3}, and use the Leibniz rule \eqref{cw53bis}. 

\medskip

For $\pi^{(\mathbf{n})}_{xy\beta}\in\HH^p$ we appeal to the three-point identity 
\eqref{eq:three_point}. By the just established $(\Gamma^*_{xy}P)_\beta\in\HH^p$ and
the previously established $\Pi_x,\Pi_y\in C^0(\HH^p)$ we obtain with help of 
Leibniz rule \eqref{cw53bis}
that $\sum_{{\bf n}}\pi_{xy}^{({\bf n})}(\cdot-y)^{\bf n}\in C^0(\HH^p)$.
Since the latter is a polynomial of order $<|\beta|$, this implies the desired statement on its 
coefficients.

\medskip

The Malliavin differentiability of the full row $\Gamma_{xy\beta}^*$ now follows from
the identity (\ref{eq:Gamma_z_n}) in form of 
$(\Gamma^*_{xy})_{\beta}^{e_\mathbf{n}}$ $=\pi^{(\mathbf{n})}_{xy\beta}$. 


\section{Triangular structures and dependencies}\label{sec:algebraic_aspects}

We start this section by providing the (elementary) arguments from Subsection
\ref{sec:model}. We first address (\ref{eq:def_Dnull}).
First of all, since $\mathbb{R}[[u]]\ni a\mapsto a(\cdot+v)\in\mathbb{R}[[u]]$
is linear, $v\mapsto\pi[a(\cdot+v)]$ is polynomial for $\pi\in\mathbb{R}[\mathsf{z}_k]$,
so that indeed, $\frac{d}{dv}_{|v=0}\pi[a(\cdot+v)]$
is well-defined as a linear form on the algebra $\mathbb{R}[\mathsf{z}_k]$
that satisfies Leibniz' rule. The same is true for the l.~h.~s.~of (\ref{eq:def_Dnull}),
when applied to $\pi$ and evaluated at $a$. Hence it is enough to check (\ref{eq:def_Dnull})
on the coordinates (\ref{ao07a}). Since $\mathsf{z}_k$ is a linear functional we learn from 
(\ref{ao07a}) that 
$\frac{d}{dv}_{|v=0}\mathsf{z}_k[a(\cdot+v)]$ 
$=\mathsf{z}_k[\frac{da}{du}]$ $=(k+1)\mathsf{z}_{k+1}[a]$,
in agreement with the r.~h.~s.~of (\ref{eq:def_Dnull}).

\medskip

We now turn to the argument for (\ref{mapping}). The r.~h.~s.~inclusion is automatically satisfied 
for the last contribution to (\ref{eq:Pi_minus_def}).
By \eqref{ord19}, the $\beta$-component of the middle term in \eqref{eq:Pi_minus_def} 
is only non-vanishing for $[\beta]\ge 0$.
Only for the first r.~h.~s.~term in (\ref{eq:Pi_minus_def})
we need the additional component in (\ref{mapping}): It is the linear combination of terms
of the form $\mathsf{z}_k\Pi_{x}^k\partial_1^2\Pi_{x}$ where $k\ge 0$.
In view of \eqref{eq:recIII29}, 
its $\beta$-component is non-vanishing if either $[\beta]\ge 0$, 
or if all its constituents are purely polynomial. 
By \eqref{eq:pi_purely_pol}, the latter case corresponds to a space-time polynomial.

\medskip

We now address (\ref{ao21}). By the binomial formula, the
postulate $(\cdot-y)^{\bf n}\Gamma_{yx}$ $=(\cdot-x)^{\bf n}$ 
translates into $(\cdot)^{\bf n}\Gamma_{yx}$
$=\sum_{{\bf m}}\binom{{\bf n}}{{\bf m}}$ $(y-x)^{{\bf n}-{\bf m}}$ $(\cdot)^{\bf m}$,
with the understanding that the binomial coefficient vanishes unless ${\bf m}\le{\bf n}$
componentwise. By definition (\ref{ao07p}), this implies
$\mathsf{z}_{\bf m}.\Gamma_{yx}(\cdot)^{\bf n}$
$=\binom{{\bf n}}{{\bf m}}(y-x)^{{\bf n}-{\bf m}}$.
By duality\footnote{with our abuse of notation} this implies on the component-wise level
$(\Gamma_{xy}^*)^{e_{\bf m}}_{e_{\bf n}}$ $=\binom{{\bf n}}{{\bf m}}(y-x)^{{\bf n}-{\bf m}}$.
This yields (\ref{ao21}).

\medskip

Finally, we turn to the (strict) triangular structure of $\Gamma^*-{\rm id}$ 
and ${\rm d}\Gamma^*$ w.~r.~t.~$|\cdot|$ and $|\cdot|_{\prec}$,
on which the entire induction argument relies on.
Equally important are the strict triangular dependencies of $\Gamma^*$ and ${\rm d}\Gamma^*$
on $\pi^{({\bf n})}$ and ${\rm d}\pi^{({\bf n})}$, respectively.
The same applies to the dependencies of the expressions $\mathsf{z}_k\pi^k\pi'$
and $\pi^k(D^{({\bf 0})})^kc$ on $\pi,\pi',c$.

\begin{lemma}[Triangular dependencies]\label{lem:dependencies}
For $\gamma$ not purely polynomial,
\begin{align}
(\Gamma^*)_\beta^\gamma 
\quad\mbox{does not depend on}\quad
\pi_{\beta'}^{(\n)}
\quad\mbox{unless}\quad
\beta'\prec\beta, \label{ord12}
\end{align}
and, for arbitrary $\gamma$,
\begin{align}
(\Gamma^*)_\beta^\gamma 
\quad\mbox{does not depend on}\quad
\pi_{\beta'}^{(\n)}
\quad\mbox{unless}\quad
\beta'\preccurlyeq\beta. \label{ord12b}
\end{align}
For $k\geq1$, $\pi\in\mathsf{T}^*$ and $c\in\tilde{\mathsf{T}}^*$,
\begin{align}
\pi^k(D^{(\0)})^kc\in\tilde{\mathsf{T}}^*, \label{ord19} \hspace{3.5cm}\\
(\pi^{k}(D^{(\0)})^{k}c)_\beta
\quad\mbox{does not depend on}\quad
\pi_{\beta'},\; c_{\beta'}
\quad\mbox{unless}\quad
\beta'\prec\beta. \label{eq:triangular_c}
\end{align}
For $k\geq0$ and $\pi^{(1)},\dots,\pi^{(k+1)}\in\mathsf{T}^*$,
\begin{align}\label{ord24}
(\z_k\pi^{(1)}\cdots\pi^{(k+1)})_\beta
= \sum_{e_k+\beta_1+\cdots+\beta_{k+1}=\beta} \pi^{(1)}_{\beta_1}\cdots\pi^{(k+1)}_{\beta_{k+1}} 
\end{align}
involves only multi-indices $\beta_1,\dots,\beta_{k+1}$ satisfying
\begin{align}
\beta_1,\dots,\beta_{k+1} &\prec\beta, \label{eq:triangular_product} \\ 
|\beta_1|+\cdots+|\beta_{k+1}|&=|\beta|, \label{ord18}
\end{align}
and for $\pi^{(k+1)}\in\tilde{\mathsf{T}}^*$,
\begin{align}\label{eq:recIII29}
\mathsf{z}_k \pi^{(1)} \cdots \pi^{(k+1)} \in \tilde{\mathsf{T}}^*.
\end{align}
\end{lemma}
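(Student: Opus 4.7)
\textbf{Proof plan for Lemma \ref{lem:dependencies}.} The plan is to package all seven claims as elementary consequences of the additivity of the functionals $[\cdot]$, $|\cdot|_p$, $\cdot(0)$ (and hence $|\cdot|_\prec$) along multi-index sums, combined with two explicit matrix computations: one for $D^{(\0)}$ via \eqref{def_Dnull_rig}, and one for $D^{(\n)}=\partial_{\mathsf{z}_\n}$ via $(D^{(\n)})_\beta^\gamma=\gamma(\n)\delta_{\gamma=\beta+e_\n}$. The key input will be the positivity bookkeeping $|e_k|_\prec=k+\frac{1}{4}\delta_{k,0}>0$ for all $k\ge 0$ together with the lower bound $|\beta'|_\prec\ge-\frac{1}{2}$ from \eqref{eq:infprec} on populated multi-indices; this is where the third digit $\frac{1}{4}\beta(0)$ in the ordinal \eqref{ord01} is essential.

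I would begin with the easier product statements \eqref{ord24}-\eqref{eq:recIII29}. Identity \eqref{ord24} is the component-wise expansion of the product, and \eqref{ord18} follows from additivity of $|\cdot|-\alpha$ on populated multi-indices, cf.~\eqref{eq:hom_add}, applied to $\beta=e_k+\beta_1+\cdots+\beta_{k+1}$ together with $|e_k|=\alpha(k+1)$. For \eqref{eq:triangular_product} I use additivity of $|\cdot|_\prec$ to obtain $|\beta|_\prec-|\beta_j|_\prec=|e_k|_\prec+\sum_{j'\ne j}|\beta_{j'}|_\prec\ge |e_k|_\prec-\tfrac{k}{2}$; since $|e_k|_\prec\ge\max\{k,\tfrac14\}>\tfrac{k}{2}$ for every $k\ge 0$, this yields $\beta_j\prec\beta$. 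For \eqref{eq:recIII29}, a purely polynomial target $\beta=e_\n$ is impossible because the prefactor $\mathsf{z}_k$ forces $\beta(k)\ge1$, ruling out the purely polynomial form; that $\beta$ is populated then follows from $[\beta]=k+\sum_j[\beta_j]\ge k-k+0=0$, where $[\beta_{k+1}]\ge0$ uses $\pi^{(k+1)}\in\mathsf{\tilde T}^*$ while the other $[\beta_j]\ge -1$ uses only that $\beta_j$ is populated.

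The core work will be for \eqref{ord12}, \eqref{ord12b}, \eqref{ord19} and \eqref{eq:triangular_c}, which all reduce to the same additivity bookkeeping once one reads off the following action of the derivations on multi-indices: $D^{(\0)}$ and $D^{(\n)}$ each increase $[\cdot]$ by $1$ per application; $D^{(\n)}$ (with $\n\neq\0$) preserves the function block (so $\cdot(0)$ is preserved and $|\cdot|_p$ decreases by $|\n|$); $D^{(\0)}$ preserves $|\cdot|_p$ and can only \emph{increase} $\gamma(0)$ by at most one per application. Hence, for $k$ applications mapping $\gamma$ (with $c_\gamma\neq0$) to $\beta_{k+1}$ one has $[\beta_{k+1}]=[\gamma]+k$, $|\beta_{k+1}|_p=|\gamma|_p-\sum_{i:\n_i\neq\0}|\n_i|$ and $\gamma(0)\le\beta_{k+1}(0)+k$. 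For \eqref{ord19} and \eqref{eq:triangular_c} the hypothesis $c\in\mathsf{\tilde T}^*$ gives $[\gamma]\ge0$ and hence $[\beta_{k+1}]\ge k$, which by the same additivity argument as for \eqref{eq:triangular_product} yields both populatedness of $\beta$ and the strict drops $\beta_j\prec\beta$ (for the $\pi$-arguments) and $\gamma\prec\beta$ (for the $c$-argument, where the bound $\gamma(0)\le\beta_{k+1}(0)+k$ is used to control the $\tfrac14\gamma(0)$ contribution). For \eqref{ord12} resp.~\eqref{ord12b} I similarly use $|\gamma|_p\ge\sum_{i:\n_i\ne\0}|\n_i|$ (forced by $(D^{(\n_i)})_{\beta_{i}}^\gamma\ne0$ requiring $\gamma(\n_i)\ge1$) together with $[\gamma]\ge0$ for $\gamma$ not purely polynomial (respectively $[\gamma]\ge-1$ in general) to bound $|\beta_{k'+1}|_\prec$ from below by $k'$ (respectively $k'-1$), and then conclude $\beta_j\prec\beta$ (respectively $\beta_j\preccurlyeq\beta$).

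The main obstacle will be the $D^{(\0)}$-bookkeeping for \eqref{eq:triangular_c}: naively one only has $[\beta_{k+1}]=[\gamma]+k$ and $|\beta_{k+1}|_\prec\ge|\gamma|_\prec-k$, which is not enough to produce a strict drop in $\prec$ once the other $|\beta_j|_\prec$ are as negative as $-\tfrac12$. The quantitative gain $\gamma(0)\le\beta_{k+1}(0)+k$, which yields $|\gamma|_\prec\le|\beta_{k+1}|_\prec-\tfrac{3k}{4}$, is precisely what makes the ordering \eqref{ord01} coercive enough to carry the induction, and this is where the choice of the three weights $(1,\tfrac12,\tfrac14)$ in $|\cdot|_\prec$ plays its role.
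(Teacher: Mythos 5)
Your proposal is correct, and it covers all seven claims of the lemma. The treatment of (\ref{ord24})--(\ref{eq:recIII29}) and of (\ref{ord19})/(\ref{eq:triangular_c}) is essentially the paper's argument, down to the observations $|e_k|_\prec=k+\tfrac14\delta_{k,0}$, the bound $|\cdot|_\prec\geq-\tfrac12$ on populated multi-indices, and the use of $\gamma(0)\leq\beta_{k+1}(0)+k$ to get the $\tfrac{3k}{4}$ gain. The genuine difference is in (\ref{ord12}): the paper proves the strict drop $\beta_{k'}\prec\beta$ by showing (via (\ref{ord15})) that each individual ``gain minus loss'' summand in (\ref{ord16}) is positive, and this step explicitly invokes the population constraint (\ref{eq:pi_n_pop}) $|\n_{k'}|<|\beta_{k'}|$. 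You instead concentrate the entire positive budget into the target $\beta_{k+1}$ of the derivation chain: from $[\gamma]\geq 0$ and $[\beta_{k+1}]=[\gamma]+k$ you get $|\beta_{k+1}|_\prec\geq[\beta_{k+1}]\geq k$ (since $|\beta_{k+1}|_p\geq 0$ and $\beta_{k+1}(0)\geq 0$ hold for any nonnegative multi-index), so that for each $j\leq k$ one has $|\beta|_\prec-|\beta_j|_\prec=|\beta_{k+1}|_\prec+\sum_{j'\leq k,\,j'\neq j}|\beta_{j'}|_\prec\geq k-\tfrac{k-1}{2}=\tfrac{k+1}{2}>0$. This route bypasses (\ref{eq:pi_n_pop}) entirely, which is a genuine simplification. (For (\ref{ord12b}) you apply the same with $[\gamma]\geq -1$; the paper instead reduces (\ref{ord12b}) to (\ref{ord12}) via (\ref{eq:Gamma_z_n}), which is shorter but equivalent.) One small imprecision: the inequality $|\gamma|_p\geq\sum_{i:\n_i\neq\0}|\n_i|$ that you invoke is true but unnecessary for the bound $|\beta_{k+1}|_\prec\geq k$ --- $[\gamma]\geq0$ alone suffices --- so you can drop it.
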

\begin{lemma}[Triangular structures]\label{lem:triangular}
For any $\Gamma^*\in\mathsf{G}^*$,
\begin{align}
(\Gamma^*-{\rm id})_\beta^\gamma \neq 0 
&\quad\implies\quad {\gamma}\prec{\beta} \quad\mbox{and}\quad |\gamma|<|\beta|, \label{eq:triangular_Gamma} \\
(\delta\Gamma^*)_\beta^\gamma \neq 0
&\quad\implies\quad {\gamma}\prec{\beta} \quad\mbox{and}\quad |\gamma|<|\beta|, \label{eq:triangular_delta_Gamma} \\
({\rm d}\Gamma^*)_\beta^\gamma \neq 0 
&\quad\implies\quad {\gamma}\prec{\beta} \quad\mbox{and}\quad |\gamma| \leq |\beta|+1-\alpha. \label{eq:triangular_d_Gamma}
\end{align}
In particular, 
\begin{align}
(\Gamma^*)_0^\gamma\neq0\quad&\implies\quad\gamma=0, \label{ord23} \\
({\rm d}\Gamma_{xy}^*-{\rm d}\Gamma_{xz}^*\Gamma_{zy}^*)_\beta^\gamma
\neq0\quad&\implies\quad \gamma\prec\beta.\label{eq:dGamma_inc_triangular}
\end{align}
\end{lemma}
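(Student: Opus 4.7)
The plan is to reduce all the triangular statements to a single elementary computation—how one step $\pi^{(\n)} D^{(\n)}$ shifts a multi-index in the two additive gradings $|\cdot|$ and $|\cdot|_{\prec}$—and then to iterate over the exponential-type formulas (\ref{exp}), (\ref{deltaGamma}), and (\ref{eq:def_dGamma}). From the explicit matrix entries (\ref{def_Dnull_rig}) for $D^{(\0)}$ and the identification $D^{(\n)} = \partial_{\mathsf{z}_{\bf n}}$ for $\n\neq\0$, the non-vanishing entries $(D^{(\n)})_\beta^\gamma$ correspond to moves $\gamma \to \beta$ with $|\beta| - |\gamma| = \alpha - |\n|$ (using $|\0| := 0$) and $|\beta|_{\prec} - |\gamma|_{\prec} = 1 - \frac{1}{2}|\n| - \frac{1}{4}\mathbf{1}_{\n=\0,\,k=0}$, where $k$ labels the elementary step of $D^{(\0)}$. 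A subsequent multiplication by $\pi^{(\n)}_\delta$ shifts the index by $\delta$, adding $|\delta| - \alpha$ to the homogeneity via (\ref{eq:hom_add}) and exactly $|\delta|_{\prec}$ to the $\prec$-grading by additivity of (\ref{ord01}). Hence the net shift per elementary step $\pi^{(\n)} D^{(\n)}$ is $|\delta| - |\n|$ in $|\cdot|$ and $1 - \frac{1}{2}|\n| - \frac{1}{4}\mathbf{1}_{\n=\0,\,k=0} + |\delta|_{\prec}$ in $|\cdot|_{\prec}$.

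The next task is to verify that both shifts are strictly positive whenever $\pi^{(\n)}_\delta \neq 0$. For $|\cdot|$ this is the population condition (\ref{eq:pi_n_pop}) in the form $|\delta| - |\n| > 0$. For $|\cdot|_{\prec}$ I would split into two cases: when $\delta$ is populated and not purely polynomial, all three summands in (\ref{ord01}) are non-negative so $|\delta|_{\prec} \geq 0$, and a short computation using $\alpha < \frac{1}{2}$ yields a positive net shift; when $\delta = e_\m$ is purely polynomial, (\ref{eq:def_pi_n_poly}) together with $|\delta| > |\n|$ forces $|\m| > |\n|$, so $|\delta|_{\prec} = \frac{1}{2}|\m| - 1 > \frac{1}{2}|\n| - 1$, giving a strictly positive shift. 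Iterating this strict monotonicity over the $k \geq 1$ factors in (\ref{exp}) delivers (\ref{eq:triangular_Gamma}); (\ref{ord23}) is then immediate, since its hypothesis $(\Gamma^*)_0^\gamma \neq 0$ forces either $\gamma = 0$ or $|\gamma| < |0| = \alpha$, the latter being impossible for populated $\gamma$. The same bookkeeping applied to (\ref{deltaGamma}) yields (\ref{eq:triangular_delta_Gamma}), since $\delta\pi^{(\n)}$ inherits the population condition of $\pi^{(\n)}$ via (\ref{eq:delta_pi_n}).

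For (\ref{eq:triangular_d_Gamma}) I would apply the same scheme to (\ref{eq:def_dGamma}), whose outer index is restricted to $\n \in \{\0,(1,0)\}$. The ${\rm d}\pi^{(\0)} = \delta\pi^{(\0)}$ summand is covered by the preceding argument; the ${\rm d}\pi^{(1,0)}$ summand is only constrained by ${\rm d}\pi^{(1,0)} \in Q\mathsf{\tilde T}^*$ (see (\ref{eq:algIII6})), so its non-vanishing components satisfy only the generic population bound $|\delta| \geq \alpha$. This produces a net $|\cdot|$-shift of $|\delta| - 1 \geq \alpha - 1$, which is the source of the weakened bound $|\gamma| \leq |\beta| + 1 - \alpha$ in (\ref{eq:triangular_d_Gamma}); the $\prec$-strict inequality is preserved because the same case distinction still gives a strictly positive $\prec$-shift. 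Finally, (\ref{eq:dGamma_inc_triangular}) is read off from the already-established formula (\ref{eq:formula_form_cont}) for ${\rm d}\Gamma_{xy}^* - {\rm d}\Gamma_{xz}^* \Gamma_{zy}^*$ by identical bookkeeping, since each summand carries a single ${\rm d}\pi$-like factor.

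The main obstacle is the $|\cdot|_{\prec}$-monotonicity: verifying that the three weights $1, \frac{1}{2}, \frac{1}{4}$ in (\ref{ord01}) really force strict positivity of the $\prec$-shift in every admissible configuration, especially the $\n = \0$ step with $k = 0$ (the only time $\beta(0)$ decreases) combined with the purely polynomial $\delta = e_{(1,0)}$ (the only $\m$ with $|\m| = 1$). This worst case yields a slack of exactly $\frac{1}{4}$, precisely the margin built into the weights of (\ref{ord01}); it is this universal strict positivity that underlies the validity of $\prec$ as the induction parameter throughout the paper and compensates for the failure of triangularity of ${\rm d}\Gamma^*$ with respect to $|\cdot|$ alone, as discussed in Subsection \ref{induct}.
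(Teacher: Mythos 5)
Your proposal is correct and takes essentially the same approach as the paper: both reduce the triangular structures to the per-step bookkeeping of the gradings $|\cdot|$ and $|\cdot|_\prec$ under $\pi^{(\n)}D^{(\n)}$, using the population constraints (\ref{eq:pi_n_pop}), (\ref{eq:def_pi_n_poly}), (\ref{eq:algIII6}) together with the additivity of $|\cdot|_\prec$ and $|\cdot|-\alpha$; the paper simply factors the elementary shift estimates into the preceding Lemma~\ref{lem:dependencies} (its (\ref{ord08}), (\ref{ord09}), (\ref{ord15}), (\ref{ord16})) and then cites them. The only cosmetic divergence is that you route (\ref{eq:dGamma_inc_triangular}) through (\ref{eq:formula_form_cont}), whereas it follows directly by composing (\ref{eq:triangular_d_Gamma}) with (\ref{eq:triangular_Gamma}) termwise in the matrix product ${\rm d}\Gamma^*_{xz}\Gamma^*_{zy}$ — but both paths work.
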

We state (and instantly prove) one further property:  
\begin{align}\label{eq:dGamma_e0}
({\rm d}\Gamma_{xz}^*)_{\beta}^\gamma=0\quad\mbox{for}\;\beta\in \mathbb{N}_0e_0
\;\mbox{and}\;\gamma\;\mbox{not purely polynomial}.
\end{align}
By definition (\ref{eq:def_dGamma}), for (\ref{eq:dGamma_e0}) it suffices to 
show that $(\Gamma^*D^{({\bf n})})_{\beta}^\gamma$ vanishes for such $\beta,\gamma$.
Since by (\ref{ao12}) and $\alpha\in(0,\frac{1}{2}]$, 
\begin{align}\label{eq:beta_e0}
\beta\in\mathbb{N}_0e_0\quad\Longleftrightarrow\quad
|\beta|=\min\mathsf{A}\quad\Longleftrightarrow\quad|\beta|<2\alpha,
\end{align}
the latter follows
from the triangularity of $\Gamma^*$ w.~r.~t.~$|\cdot|$ and the fact that
$(D^{({\bf n})})_{\beta}^{\gamma}$ vanishes for such $\beta,\gamma$,
see (\ref{eq:def_Dnull}) and (\ref{eq:def_Dn}).

\medskip

\begin{proof}[Proof of Lemma \ref{lem:dependencies}]
Proof of \eqref{ord12} \& \eqref{ord12b}.
Indeed, \eqref{ord12b} is an immediate consequence of \eqref{ord12} by \eqref{eq:Gamma_z_n}.
By \eqref{exp} in its component-wise version, we see that for \eqref{ord12} we have to show
\begin{equation}\label{ord13}
\pi_{\beta_1}^{({\bf n}_1)}\cdots \pi_{\beta_k}^{({\bf n}_k)} \big( D^{({\bf n}_1)}\cdots D^{({\bf n}_k)} \big)_{\beta_{k+1}}^\gamma \neq 0
\quad \implies \quad
\beta_1,\dots,\beta_k \prec\beta,
\end{equation}
where $k\ge 1$ and $\beta_1,\dots,\beta_{k+1}$ satisfy
\begin{equs}\label{mt33}
\sum_{k'=1}^{k+1}\beta_{k'}=\beta
\quad\mbox{and}\quad
|\n_{k'}|<|\beta_{k'}| \quad\mbox{for}\quad k'=1,\dots,k,
\end{equs}
where the second part comes from (\ref{eq:pi_n_pop}). From the definitions \eqref{eq:def_Dnull} 
and \eqref{eq:def_Dn} we infer
\begin{equs}\label{ord08}
(D^{(\0)})_\beta^\gamma&\not=0\implies
\Big( [\gamma]=[\beta]-1\mbox{ and }|\gamma|_p=|\beta|_p \mbox{ and } \gamma(0)\leq\beta(0)+1\Big),
\end{equs}
and for $\n\neq\0$
\begin{equs}\label{ord09}
(D^{({\bf n})})_\beta^\gamma&\not=0\implies
\Big([\gamma]=[\beta]-1\mbox{ and }|\gamma|_p
=|\beta|_p+|{\bf n}|\mbox{ and }\gamma(0)=\beta(0)\Big).
\end{equs}
This yields by iteration
\begin{align}
\lefteqn{\big(D^{({\bf n}_1)} \cdots D^{({\bf n}_k)} \big)_{\beta_{k+1}}^\gamma \neq 0
\quad\Longrightarrow\quad}\nonumber\\
&[\gamma]=[\beta_{k+1}]-k 
\quad\mbox{and}\quad
|\gamma|_p=|\beta_{k+1}|_p+\sum_{k'=1}^{k}|{\bf n}_{k'}|\quad\mbox{and}\quad
\gamma(0)\leq\beta_{k+1}(0)+\sum_{k'=1}^k \delta^{\n_{k'}}_{\0}, \label{mt35}
\end{align}
which by definition (\ref{ord01}) of $|\cdot|_{\prec}$ implies
\begin{align}
\big( D^{({\bf n}_1)} \cdots D^{({\bf n}_k)} \big)_{\beta_{k+1}}^\gamma \neq 0
\quad\implies \quad
|\gamma|_{\prec}\le|\beta_{k+1}|_{\prec}+\sum_{k'=1}^k 
(\tfrac{1}{2} |\n_{k'}| + \tfrac{1}{4} \delta_\0^{\n_{k'}} -1).
\end{align}
Hence by the first item in (\ref{mt33}) and the
additivity of $|\cdot|_{\prec}$, the l.~h.~s.~of \eqref{ord13} yields
\begin{equation}\label{ord16}
\length{\beta}
=\sum_{k'=1}^{k+1} \length{\beta_{k'}}
\geq \length{\gamma}+\sum_{k'=1}^k\big(\length{\beta_{k'}}-
(\tfrac{1}{2} |\n_{k'}| + \tfrac{1}{4} \delta_\0^{\n_{k'}} -1)\big).
\end{equation}
We now argue that each summand in the last term of (\ref{ord16}) is positive
\begin{equation}\label{ord15}
\tfrac{1}{2} |\n_{k'}| +\tfrac{1}{4} \delta^{\n_{k'}}_\0 -1< \length{\beta_{k'}}
\quad\mbox{for}\quad k'=1,\dots,k.
\end{equation}
Indeed, for $\n_{k'}=\0$ we have as desired
\begin{align*}
\tfrac{1}{2} |\n_{k'}| +\tfrac{1}{4} \delta^{\n_{k'}}_\0  -1
= \tfrac{1}{4}  -1 <-\tfrac{1}{2}
\stackrel{\eqref{eq:infprec}}{\leq} \length{\beta_{k'}}.
\end{align*}
For $\n_{k'}\neq\0$, we have
\begin{align*}
\tfrac{1}{2} |\n_{k'}| +\tfrac{1}{4} \delta^{\n_{k'}}_\0-1
= \tfrac{1}{2} |\n_{k'}| -1
\stackrel{\eqref{mt33}}{<}\tfrac{1}{2} |\beta_{k'}| -1,
\end{align*}
and conclude by
\begin{align*}
\tfrac{1}{2} |\beta_{k'}|-1\stackrel{\eqref{ao12}}{=}
\tfrac{1}{2}\big(\alpha([\beta_{k'}]+1)+|\beta_{k'}|_p\big)-1
\stackrel{\eqref{eq:infprec}}{\le}[\beta_{k'}]+\tfrac{1}{2}|\beta_{k'}|_p+\tfrac{1}{4}\beta_{k'}(0)
\stackrel{\eqref{ord01}}{=}|\beta_{k'}|_{\prec}.
\end{align*}

\medskip

Using \eqref{ord15}, we obtain from \eqref{ord16}
\begin{equation}\label{ord14}
\length{\beta} \geq \length{\beta_{k'}} + \length{\gamma} - 
(\tfrac{1}{2} |\n_{k'}| + \tfrac{1}{4} \delta_\0^{\n_{k'}} -1)
\quad\mbox{for any}\quad k'=1,\dots,k.
\end{equation}
In case $\n_{k'}=\0$, we obtain from \eqref{ord14} as desired 
\begin{equation*}
\length{\beta} 
\geq \length{\beta_{k'}} + \length{\gamma} - \tfrac{1}{4} +1
\stackrel{\eqref{eq:infprec}}{>}\length{\beta_{k'}}.
\end{equation*}
In case $\n_{k'}\neq\0$ we obtain from \eqref{ord14}
\begin{equation*}
\length{\beta} \geq \length{\beta_{k'}} + \length{\gamma} - \tfrac{1}{2} |\n_{k'}| +1
\stackrel{\eqref{ord01}}{=} 
\length{\beta_{k'}} + [\gamma] + \tfrac{1}{2} (|\gamma|_p-|\n_{k'}|) +\tfrac{1}{4}\gamma(0) +1.
\end{equation*}
Since $|\gamma|_p- |\n_{k'}|\geq0$ by \eqref{mt35}, $[\gamma]\geq0$ by the assumption 
that $\gamma$ is not purely polynomial, and $\gamma(0)\geq0$, this yields again 
$\length{\beta}>\length{\beta_{k'}}$, which finishes the argument for \eqref{ord12}.

\medskip

Proof of \eqref{ord19}. 
The $\beta$-component of \eqref{ord19} equals 
\begin{equation}\label{ord20}
\sum_{\substack{\beta_1+\dots+\beta_{k+1}=\beta \\ \gamma}} 
\pi_{\beta_1}\cdots\pi_{\beta_{k}} ((D^{(\0)})^{k})_{\beta_{k+1}}^{\gamma} \;c_{\gamma},
\end{equation}
where by additivity of $[\cdot]$ and the first item of \eqref{mt35} the multi-indices are restricted to
$$
[\beta]=[\beta_1]+\cdots+[\beta_{k+1}] = [\beta_1]+\cdots+[\beta_{k}]+[\gamma]+k.
$$
Since $[\cdot]\geq -1$ we obtain $[\beta]\geq[\gamma]$, and by $c\in\tilde{\mathsf{T}}^*$ we have $[\gamma]\geq0$ which yields as desired $[\beta]\geq0$.

\medskip

Proof of \eqref{eq:triangular_c}.
We use again that the $\beta$-component of \eqref{ord19} equals \eqref{ord20}.
From \eqref{mt35} we obtain $\length{\beta_{k+1}}\geq\length{\gamma}+\tfrac{3}{4}k$, and hence by additivity of $\length{\cdot}$
$$
\length{\beta}=\length{\beta_1}+\cdots+\length{\beta_{k+1}}
\geq \length{\beta_1}+\cdots+\length{\beta_{k}}+\length{\gamma}+\tfrac34 k.
$$
Since $\length{\cdot}\geq-1/2$, we obtain for any $\beta'\in\{\beta_1,\dots,\beta_{k},\gamma\}$
$$
\length{\beta}\geq \length{\beta'}+\tfrac14 k >\length{\beta'},
$$
which finishes the proof of \eqref{eq:triangular_c}.

\medskip

Proof of \eqref{eq:triangular_product} and \eqref{ord18}.
We have to show that
\begin{align*}
e_k+\beta_1+\dots+\beta_{k+1}=\beta \quad\implies\quad 
\left\{
\begin{array}{c}
\beta_1 ,\dots,\beta_{k+1} \prec \beta,\\
|\beta_1|+\dots+|\beta_{k+1}|=|\beta|. 
\end{array}\right.
\end{align*}
For the upper item we distinguish $k=0$ from $k\neq0$.
In the first case we have $\length{\beta} = \length{\beta_1} + \tfrac{1}{4} > \length{\beta_1}$. 
In the latter one, we have by additivity 
$\length{\beta}=k+\length{\beta_1}+\dots+\length{\beta_{k+1}}$, 
and since $\length{\cdot}\geq -1/2$, 
we obtain $\length{\beta}\geq k/2+\length{\beta_i}>\length{\beta_i}$ for all $i=1,\dots,k+1$.
The lower item is an immediate consequence of the definition of $|\cdot|$, cf. \eqref{ao12}.

\medskip

Proof of \eqref{eq:recIII29}.
The $\beta$-component of \eqref{eq:recIII29} equals \eqref{ord24}, hence $[\beta]=[e_k]+[\beta_1]+\cdots+[\beta_{k+1}]$. Since $[e_k]=k$, $[\cdot]\geq-1$ and $[\beta_{k+1}]\geq0$ by $\pi_{k+1}\in\tilde{\mathsf{T}}^*$, see \eqref{ao09}, we obtain as desired $[\beta]\geq0$.
\end{proof}
\begin{proof}[Proof of Lemma \ref{lem:triangular}]
Proof of \eqref{eq:triangular_Gamma}.
Recall that $(\Gamma^*-{\rm id})_\beta^\gamma$ is a linear combination of terms of the form of the l.~h.~s.~of \eqref{ord13}, involving multi-indices $\beta_1,\dots,\beta_{k+1}$ for $k\geq1$ subject to \eqref{mt33}.
Putting together \eqref{ord16} and \eqref{ord15}, we obtain $\gamma\prec\beta$.
From \eqref{mt35} and
\begin{equs}
\sum_{k'=1}^{k+1}[\beta_{k'}]=[\beta],\quad
\sum_{k'=1}^{k+1}|\beta_{k'}|_p=|\beta|_p,
\end{equs}
which is an immediate consequence of \eqref{mt33}, we see that the l.~h.~s.~of \eqref{ord13} implies
\begin{align*}
[\gamma]=[\beta]-\sum_{k'=1}^k [\beta_{k'}] -k
\quad\mbox{and}\quad
|\gamma|_p<|\beta|_p+\sum_{k'=1}^{k}(|\beta_{k'}|-|\beta_{k'}|_p).
\end{align*}
This yields
$$
|\gamma|<|\beta|+\sum_{k'=1}^k \big(-\alpha(1+[\beta_{k'}]) + |\beta_{k'}|-|\beta_{k'}|_p\big) = |\beta|,
$$
establishing the last item in \eqref{eq:triangular_Gamma}.

\medskip

Proof of \eqref{eq:triangular_delta_Gamma}.
This is an immediate consequence of \eqref{eq:triangular_Gamma}.

\medskip

Proof of \eqref{eq:triangular_d_Gamma}.				
We first note that
\begin{equation*}
D_{\gamma'}^\gamma\neq 0 \quad\implies\quad 
{\gamma}\prec{\gamma'} 
\quad\mbox{and}\quad
|\gamma|\leq|\gamma'|+1-\alpha
\quad\mbox{for}\quad
D\in\{D^{(\0)},D^{(1,0)}\},
\end{equation*}
which is an immediate consequence of \eqref{ord08} and \eqref{ord09}.
By \eqref{eq:triangular_Gamma} we therefore obtain
\begin{equation*}
(\Gamma^* D)_{\beta''}^\gamma\neq0\quad\implies\quad 
{\gamma}\prec{\beta''} 
\quad\mbox{and}\quad
|\gamma|\leq|\beta''|+1-\alpha
\quad\mbox{for}\quad
D\in\{D^{(\0)},D^{(1,0)}\}.
\end{equation*}
To establish the last item in \eqref{eq:triangular_d_Gamma}, we also note that by $\beta'+\beta''=\beta$ and $|\cdot|\geq\alpha$, we have $|\beta''|=|\beta|-|\beta'|+\alpha\leq|\beta|$, which yields as desired $|\gamma|\leq|\beta|+1-\alpha$.
The first item in \eqref{eq:triangular_d_Gamma} follows from $\beta''\preccurlyeq \beta$, which we shall establish now.
From the definition \eqref{eq:def_dGamma} of ${\rm d}\Gamma^*_{xz}$, we see that $({\rm d}\Gamma^*_{xz})_\beta^\gamma$ is a linear combination of terms of the form ${\rm d}\pi^{(\n)}_{xz\beta'}(\Gamma^*_{xz} D^{(\n)})_{\beta''}^\gamma$ with $\beta'+\beta''=\beta$ and $\n=\0,(1,0)$.
By \eqref{eq:algIII7} and the first item in \eqref{eq:algIII6} we see that purely polynomial $\beta'$ do not contribute, hence $\beta''\preccurlyeq\beta$ by \eqref{eq:recIII25}.
\end{proof}

\appendix

\section{Malliavin--Sobolev spaces for random variables and random fields} \label{ss:Malle}

In this section we recall the definitions of the classical Malliavin--Sobolev spaces and extend it to the case of random fields. 
We also show that these definitions are stable under the operations used to construct the model, which allows us to prove 
Malliavin differentiability in Section~\ref{sec:mall_approx}.

\medskip

We start with some notation. We denote by $H^*$ the Hilbert space with norm $\|\cdot\|_*$ given by \eqref{as02}. 
The space of $p$-integrable random variables is denoted by $\mathbb{L}^p$ and the space of $p$-integrable random variables with values in $H^*$ by $\mathbb{L}^p(H^*)$. The natural norms on these spaces are given by $\E^{\frac{1}{p}}|\cdot|^p$ and 
$\E^{\frac{1}{p}}\|\cdot\|_*^p$ respectively. 

\medskip

The classical Malliavin--Sobolev space\footnote{see \cite[Section~1.2]{Nu06} for the Gaussian case} $\HH^p$ is given by the completion of cylindrical functionals \eqref{eq:cyl_fnct} w.~r.~t.~the stochastic norm 
\begin{equs}
\|F\|_{\HH^p} := \E^\frac{1}{p}\left(|F|^p+\|\frac{\partial F}{\partial\xi}\|_{*}^p\right). 
\end{equs}
Since by Assumption~\ref{ass:spectral_gap} the Malliavin derivate $\frac{\partial}{\partial \xi}$ is 
closable from $\mathbb{L}^2$ to $\mathbb{L}^2(H^*)$, it is also closable from $\mathbb{L}^p$ to $\mathbb{L}^2(H^*)$ for
$p\geq 2$, and naturally extends to a bounded linear operator from $\mathbb{L}^p$ to $\mathbb{L}^p(H^*)$.
Therefore, if a sequence of cylindrical functionals $\{F_N\}_N$ of the form \eqref{eq:cyl_fnct} converges to 
$F$ in $\mathbb{L}^p$ and it is Cauchy w.~r.~t.~$\|\cdot\|_{\HH^p}$, then $F\in \HH^p$ and 
$\frac{\partial F}{\partial \xi} = \lim_{N\uparrow \infty} \frac{\partial F_N}{\partial \xi}$ in $\mathbb{L}^p(H^*)$.

\medskip

Since we deal with random functionals of the noise which are continuous in an 
annealed sense, cf. \eqref{cw60_cont}, it is convenient to work with the space $C^0(\HH^p)$. We stress that by 
$C^0(\mathbb{H}^p)$ we do not mean the Banach space endowed with the norm 
$\sup_{y\in\mathbb{R}^2}\|F(y)\|_{\mathbb{H}^p}$, but the linear space of continuous (and possibly unbounded)
functions from $\mathbb{R}^2$ with values in $\mathbb{H}^p$, endowed with the topology given by the family of semi-norms 
\begin{align}\label{cw50}
\sup_{y\in K} \|F(y)\|_{\HH^p}\quad\mbox{for all}\;K\subset \R^2 \ \text{compact}.
\end{align}
Clearly $C^0(\HH^p)$ has the property 
\begin{align}\label{cw53}
C^0(\HH^p)\ni F\mapsto F(y)\in\mathbb{H}^p\quad\mbox{is bounded for all}\;y\in\mathbb{R}^2,
\end{align}
and for a compactly supported continuous function $\psi$,\footnote{with the understanding that
this linear map is well-defined}
\begin{align}\label{cw52}
C^0(\HH^p)\ni F\mapsto \psi*F\in C^0(\HH^p)\quad\mbox{is continuous},
\end{align}
see Lemma \ref{lem:Malle} for a refinement. The latter statement follows from the fact that functionals 
$F$ of the form
\begin{equs}\label{cw51}
F[\xi](y) = \bar F \big(y;(\xi,\zeta_1),\ldots,(\xi,\zeta_N)\big),
\end{equs}
where $N\in \N$, $\bar F$ is a smooth function on $\R^2\times \R^N$ and $\zeta_1,\ldots,\zeta_N$ are Schwartz functions, 
are dense in $C^0(\HH^p)$ w.~r.~t.~the semi-norms \eqref{cw50} and (\ref{cw52}) preserves their form and is bounded  under 
(\ref{cw50}). Here comes the argument for density: By Cantor's 
diagonal sequence argument, it is enough to establish the approximation for fixed $K$ in (\ref{cw50}). Because $K$ is 
compact and $F\in C^0(K;\mathbb{H}^p)$, given $\delta>0$, there exist finitely many open sets 
$U_1,\dots,U_M\subset\mathbb{R}^2$ covering $K$ such that the oscillation of $F$ on each $U_m$ 
w.~r.~t.~$\|\cdot\|_{\mathbb{H}^p}$ is $\le\delta$. For $m=1,\dots,M$, pick an $x_m\in U_m$. Since 
$F(x_m)\in \mathbb{H}^p$ there exists a cylindrical functional $F_m$ of the form 
(\ref{cw56}), such that $\|F(x_m)-F_m\|_{\mathbb{H}^p}\le\delta$. Pick a smooth partition of unity 
$\eta_1,\dots,\eta_M\ge 0$ subordinate to the covering $\{U_i\}_{i=1}^M$. Then $\tilde F(x)=\sum_{m=1}^M\eta_m(x)F_m$ is of the
form (\ref{cw51}) and we have $\sup_{x\in K}\|F(x)-\tilde F(x)\|_{\mathbb{H}^p}$ $\le2\delta$.

\medskip

Appealing to closability we have

\begin{remark}\label{rem:3}
If a sequence $\{F_N\}_N$ in $C^0(\HH^p)$ converges to a random field $F$, 
pointwise in $y$ w.~r.~t.~$\E^{\frac{1}{p}}|\cdot|^p$, and is a Cauchy sequence w.~r.~t.~the family of semi-norms (\ref{cw50}),
then $F\in C^0(\HH^p)$ with $\frac{\partial F(y)}{\partial \xi} = \lim_{N\uparrow \infty} \frac{\partial F_N(y)}{\partial \xi}$. Furthermore, 
convergence of $\{F_N\}_N$ to $F$ takes place w.~r.~t.~these semi-norms.
\end{remark}

\medskip 

For reconstruction, and also the algebraic and the three-point argument, we need
Leibniz rule, meaning that the two bilinear maps
\begin{equation}
\left\{
\begin{array}{c}
\mathbb{H}^{p_1}\times\mathbb{H}^{p_2}\ni(F_1,F_2)\mapsto F_1F_2\in\mathbb{H}^p 
\\
C^0(\HH^{p_1})\times C^0(\HH^{p_2})\ni(F_1,F_2)\mapsto F_1F_2\in C^0(\HH^p) 
\end{array}
\right\} 
\text{ are continuous provided }\;\frac{1}{p}=\frac{1}{p_1}+\frac{1}{p_2},
\label{cw53bis}
\end{equation}
where the relation between the exponents is dictated by H\"older's inequality (in probability).
This follows from the fact that the maps (\ref{cw53bis}) preserve (\ref{cw56})
and (\ref{cw51}), respectively, and that the latter is bounded as a bilinear map under (\ref{cw50}).

\medskip

The refinement of (\ref{cw52}) we need in the integration step is given by the following lemma,

\begin{lemma}\label{lem:Malle}
Suppose for some $\theta>0$ that $F\in C^0(\HH^p)$ satisfies 
\begin{align}\label{cw58}
\sup_{y}\frac{1}{1+|y|^{\theta}}\|F(y)\|_{\mathbb{H}^p}<\infty.
\end{align}
Then we have $\psi*F\in C^0(\HH^p)$ for any Schwartz kernel $\psi$,
with the natural formula for the Malliavin derivative.

\medskip

Moreover, suppose that $(0,\infty)\times\mathbb{R}^2\ni(t,y)\mapsto F(t,y)\in\mathbb{H}^p$
is continuous and satisfies
\begin{align}\label{cw57}
\lim_{N\uparrow\infty}\int_{(0,N^{-1})\cup(N,\infty)} dt\,\|F(t,y)\|_{\mathbb{H}^p}=0\quad\mbox{locally uniformly in}\;y.
\end{align}
Then $\int_0^\infty dt \, F(t,\cdot)\in C^0(\mathbb{H}^p)$ with the natural formula for the Malliavin derivative.
\end{lemma}

\begin{proof}
We start by addressing the first statement in the lemma.
To this purpose, we select a sequence $\psi_N$ of compactly supported kernels that approximate $\psi$ in Schwartz space. 
According to (\ref{cw52}), we have $\psi_N*F\in C^0(\HH^p)$.
By Remark \ref{rem:3} it suffices to argue that
$\psi_N*F$ is a Cauchy sequence in $C^0(\HH^p)$. Indeed, using the
triangle inequality in $C^0(\HH^p)$ in form of
\begin{align*}
\|(\psi_{N}*F-\psi_{N'}*F)(y)\|_{\mathbb{H}^p} \le\sup_{z\in\mathbb{R}^2}\frac{1}{1+|z|^{\theta}}\|F(z)\|_{\mathbb{H}^p}
\int dz \, |(\psi_N-\psi_{N'})(y-z)|(1+|z|^{\theta}),
\end{align*}
we see that (\ref{cw58}) is sufficient to convert the convergence of
the kernels in Schwartz space to the Cauchy criterion w.~r.~t.~the family of semi-norms (\ref{cw50}).

\medskip

We now turn to the second statement in this lemma.
By Remark \ref{rem:3}, it is enough to establish that
$\int_{N^{-1}}^Ndt\, F(t,\cdot)$, which automatically is an element of
$C^0(\mathbb{H}^p)$, is in fact a Cauchy sequence in $C^0(\mathbb{H}^p)$.
This is precisely what assumption (\ref{cw57}) ensures.
\end{proof}
\section{\texorpdfstring{Summary of the logical order of the proof}{Summary of the logical order of the proof}}\label{app:tables}
To assist the reader, here we include a detailed description of the inductive proof according to the steps outlined in Subsection \ref{sec:order}, both for regular (Figure~\ref{fig:regular}) and singular (Figure~\ref{fig:singular}) multi-indices. We include all the statements, their inputs and outputs.	
	\begin{figure}[H]
		\small
		\begin{center}
			\begin{tabular}{|c|c|@{\hspace{0.2ex}}c@{\hspace{0.2ex}}|@{\hspace{0.2ex}}c@{\hspace{0.2ex}}|c|}
					\hline 
					{\bf Step} & {\bf Statement} & {\bf Input} & {\bf Output}\\  
					\hhline{|====|}
					1 & Proposition \ref{prop:gamma_npp} & $\eqref{eq:pi_n}_{\prec \beta}$  & $\eqref{eq:gamma}_\beta^{\gamma \neq \pp}$ \\ 
					\hline
					2.~a) & Proposition \ref{rem:1} & $\eqref{eq:pi_generic}_{\prec \beta}$, $\eqref{eq:gamma}_{\prec \beta}$, $\eqref{cw03}_{\prec \beta}$, & $\eqref{cw60_minus_cont}_\beta$\\
					& & $\eqref{cw60}_{\prec \beta}$, $\eqref{cw60_cont}_{\prec \beta}$, $\eqref{eq:recenter_Pi_specific}_{\prec\beta}$& \\
					\hline
					2.~b) & Subsection \ref{sec:Pi_minus_constr} & $\eqref{eq:Pi_shift}_{\prec \beta}$, $\eqref{eq:Pi_parity}_{\prec \beta}$ & $\eqref{eq:Pi_minus_shift}_\beta$, $\eqref{eq:Pi_minus_parity}_\beta$\\
					\hline
					2.~c) & Proposition \ref{prop:change_of_basepoint} & $\eqref{eq:recenter_Pi_specific}_{\prec \beta}$ & $\eqref{eq:recenter_Pi_minus}_\beta$, $\eqref{eq:recenter_Pi_minus_specific}_\beta$\\
					\hline
					3 & Proposition \ref{prop:reconstr_reg} & $\eqref{eq:pi_generic}_{\prec\beta}$, $\eqref{eq:gamma}_\beta^{\gamma \neq \pp}$, $\eqref{cw60_cont}_{\prec\beta}$, $\eqref{cw60_minus_cont}_\beta$ & $\eqref{eq:pi_minus_generic}_\beta$\\
					& & $\eqref{eq:pi_n}_{\prec \beta}$, $\eqref{eq:recenter_Pi_minus_specific}_\beta$, $\eqref{eq:pi_minus_generic}_{\prec \beta}$ & \\
					\hline
					4.~a) & Proposition \ref{prop:int_pi_minus} & $\eqref{eq:pi_minus_generic}_\beta$ & $\eqref{eq:pi_generic}_\beta$, $\eqref{eq:Pi_minus_to_Pi}_\beta$ \\
					\hline
					4.~b) & Subsection \ref{sec:Pi_construct} & $\eqref{eq:Pi_minus_to_Pi}_\beta$, $\eqref{eq:Pi_minus_shift}_\beta$, $\eqref{eq:Pi_minus_parity}_\beta$ & $\eqref{eq:Pi_shift}_\beta$, $\eqref{eq:Pi_parity}_\beta$\\
					\hline 
					5.~a) & Proposition \ref{prop:change_of_basepointII} & $\eqref{eq:pi_generic}_{\preccurlyeq \beta}$, $\eqref{eq:gamma}_\beta^{\gamma\neq\pp}$, $\eqref{eq:recenter_Pi_minus}_\beta$ & $\eqref{eq:recenter_Pi_specific}_\beta$ \\
					\hline
					5.~b) & Proposition \ref{prop:change_of_basepointIII} & $\eqref{eq:recenter_Pi_specific}_{\preccurlyeq\beta}$, $\eqref{rec01}_{\prec \beta}$   & $\eqref{ao15}_\beta$, $\eqref{rec01}_\beta$\\
					\hline
					5.~c) & Proposition \ref{prop:pi_n_three_point} & $\eqref{eq:pi_generic}_{\preccurlyeq\beta}$, $\eqref{eq:gamma}_{\beta}^{\gamma \neq \pp}$, $\eqref{eq:recenter_Pi_specific}_{\beta}$ & $\eqref{eq:pi_n}_\beta$ \\
					\hhline{*{1}{|~}*{3}{|-}|~|}
					& Proposition \ref{prop:gamma_pp} & $\eqref{eq:pi_n}_{\preccurlyeq\beta}$ & $\eqref{eq:gamma}_\beta$\\
					\hline
					6 & Proposition \ref{rem:1} & $\eqref{eq:pi_generic}_{\preccurlyeq \beta}$, $\eqref{eq:gamma}_\beta$, $\eqref{cw03}_{\prec\beta}$, &  $\eqref{cw60}_{\beta}$, $\eqref{cw60_cont}_\beta$, $\eqref{cw63_generic}_{\beta}$\\
					& & $\eqref{cw60}_{\prec \beta}$, $\eqref{eq:recenter_Pi_specific}_\beta$, $\eqref{eq:pi_minus_generic}_\beta$, $\eqref{cw63_generic}_{\prec \beta}$ & \\
					\hline
			\end{tabular} 
		\end{center}
		\caption{Logic of an induction step for regular $\beta$. The attentive reader may have noticed that $\eqref{cw03}_{\prec\beta}$ is contained in several hypotheses, but $\eqref{cw03}_\beta$ is not part of any output; the reason is that $\eqref{cw03}_\beta$ trivially holds by the population constraint \eqref{eq:c_pop_cond} for these multi-indices.}\label{fig:regular}
	\end{figure}
	
	\begin{figure}[H]
		\small
		\begin{center}
			\begin{tabular}{|c|c|@{\hspace{0.2ex}}c@{\hspace{0.2ex}}|@{\hspace{0.2ex}}c@{\hspace{0.2ex}}|c|}
					\hline 
					{\bf Step} & {\bf Statement} & {\bf Input} & {\bf Output}\\  
					\hhline{|====|}
					1.~a) & Proposition \ref{prop:gamma_npp} & $\eqref{eq:pi_n}_{\prec \beta}$  & $\eqref{eq:gamma}_\beta^{\gamma \neq \pp}$ \\ 
					\hline
					1.~c) & Proposition \ref{prop:delta_gamma_npp}& $\eqref{eq:gamma}_{\preccurlyeq \beta}^{\gamma \neq \pp}$, $\eqref{eq:delta_pi_n}_{\prec \beta}$ & $\eqref{eq:delta_gamma}_\beta^{\gamma\neq \pp}$\\
					\hline
					1.~d) & Proposition \ref{prop:form_bound} & $\eqref{eq:gamma}_{\preccurlyeq \beta}^{\gamma \neq \pp}$, $\eqref{eq:delta_pi_generic}_{\prec \beta}$, $\eqref{eq:d_pi}_{\prec \beta}$ & $\eqref{eq:form_bound}_\beta^{\gamma \neq \pp}$\\
					\hline
					1.~e) & Proposition \ref{prop:algIII} & $\eqref{eq:gamma}_{\preccurlyeq \beta}^{\gamma\neq \pp}$, $\eqref{eq:delta_pi_d_pi_incr}_{\prec \beta}$, $\eqref{rec01}_{\prec \beta}$ & $\eqref{eq:form_cont}_{\beta}^{\gamma \neq \pp}$\\
					\hline
					2.~a) & Proposition \ref{prop:change_of_basepoint} &  $\eqref{eq:recenter_Pi_specific}_{\prec \beta}$ & $\eqref{eq:recenter_Pi_minus}_\beta$, $\eqref{eq:recenter_Pi_minus_specific}_\beta$\\
					\hline
					2.~b) & Subsection \ref{sec:Pi_minus_constr}  & $\eqref{eq:Pi_shift}_{\prec \beta}$, $\eqref{eq:Pi_parity}_{\prec \beta}$ & $\eqref{eq:Pi_minus_shift}_\beta$, $\eqref{eq:Pi_minus_parity}_\beta$\\
					\hline
					2.~c) & Proposition \ref{prop:limit} & $\eqref{eq:gamma}_{\beta}^{\gamma \neq \pp}$, $\eqref{eq:pi_minus_generic}_{\prec \beta}$, $\eqref{eq:Pi_minus_shift}_\beta$ & $\eqref{eq:limit}_\beta$\\
					\hline
					3.~a) & Proposition \ref{prop:BPHZ} & $\eqref{eq:limit}_\beta$, $\eqref{eq:Pi_minus_shift}_\beta$, $\eqref{eq:Pi_minus_parity}_\beta$ & $\eqref{eq:BPHZ}_\beta$\\
					\hline
					3.~b) & Proposition \ref{rem:1} & $\eqref{eq:pi_generic}_{\prec \beta}$, $\eqref{eq:BPHZ}_\beta$, $\eqref{cw03}_{\prec \beta}$, $\eqref{cw60}_{\prec \beta}$, $\eqref{eq:limit}_\beta$ & $\eqref{cw03}_\beta$\\
					\hline
					3.~c) & Proposition \ref{prop:expect} & $\eqref{eq:gamma}_\beta^{\gamma \neq \pp}$, $\eqref{eq:BPHZ}_\beta$, $\eqref{eq:recenter_Pi_minus}_\beta$, $\eqref{eq:pi_minus_generic}_{\prec \beta}$, $\eqref{eq:limit}_\beta$ & $\eqref{eq:expect_est}_\beta$\\
					\hline
					& & $\eqref{eq:pi_generic}_{\prec\beta}$, $\eqref{eq:gamma}_{\prec\beta}$, $\eqref{cw03}_{\prec\beta}$, & \\
					4.~b) & Proposition \ref{rem:1_mal} & $\eqref{cw60}_{\prec\beta}$, $\eqref{cw60_cont}_{\prec\beta}$, $\eqref{eq:recenter_Pi_specific}_{\prec \beta}$, & $\eqref{cw60_minus_cont_mal}_\beta$ \\
					& & $\eqref{eq:delta_gamma}_{\prec\beta}$, $\eqref{eq:delta_pi_generic}_{\prec \beta}$, $\eqref{cw60_mal_dual}_{\prec \beta}$, $\eqref{cw60_cont_mal_dual}_{\prec \beta}$ & \\
					\hline
					&  & $\eqref{eq:pi_generic}_{\prec \beta}$, $\eqref{eq:gamma}_{\prec \beta}$, $\eqref{cw60_cont}_{\prec\beta}$, $\eqref{cw60_minus_cont}_{\prec\beta}$,& \\
					4.~c) & Proposition \ref{prop:recIII} &  $\eqref{eq:recenter_Pi_minus}_{\prec\beta}$, $\eqref{eq:recenter_Pi_specific}_{\prec \beta}$, $\eqref{eq:pi_minus_generic}_{\prec \beta}$, $\eqref{eq:form_cont}_{\preccurlyeq \beta}^{\gamma \neq \pp}$, & $\eqref{eq:delta_pi_minus_generic}_\beta$\\
					& & $\eqref{cw60_cont_mal_dual}_{\prec\beta}$, $\eqref{cw60_minus_cont_mal}_\beta$, $\eqref{eq:delta_pi_incr_generic}_{\prec \beta}$, $\eqref{eq:form_bound}_{\preccurlyeq \beta}^{\gamma \neq \pp}$ & \\
					\hline 
					4.~d) & Proposition \ref{prop:delta_pi_minus_est} & $\eqref{eq:gamma}_\beta^{\gamma \neq \pp}$, $\eqref{eq:recenter_Pi_minus}_\beta$, $\eqref{eq:pi_minus_generic}_{\prec \beta}$,  & $\eqref{eq:mal_dual_annealed}_\beta$\\
					& & $\eqref{eq:mal_dual_annealed}_{\prec \beta}$, $\eqref{eq:delta_gamma}_\beta^{\gamma \neq \pp}$, $\eqref{eq:delta_pi_minus_generic}_\beta$, $\eqref{eq:form_bound}_\beta^{\gamma \neq \pp}$ & \\
					\hline
					5 & SG inequality & $\eqref{eq:expect_est}_\beta$, $\eqref{eq:mal_dual_annealed}_\beta$ & $\eqref{eq:pi_minus_generic}_\beta$\\
					\hline
					6.~a) & Proposition \ref{prop:int_pi_minus} & $\eqref{eq:pi_minus_generic}_\beta$ & $\eqref{eq:pi_generic}_\beta$, $\eqref{eq:Pi_minus_to_Pi}_\beta$\\
					\hline
					6.~b) & Subsection \ref{sec:Pi_construct} & $\eqref{eq:Pi_minus_to_Pi}_\beta$, $\eqref{eq:Pi_minus_shift}_\beta$, $\eqref{eq:Pi_minus_parity}_\beta$ & $\eqref{eq:Pi_shift}_\beta$, $\eqref{eq:Pi_parity}_\beta$\\
					\hline
					6.~c) & Proposition \ref{prop:change_of_basepointII} & $\eqref{eq:pi_generic}_{\preccurlyeq \beta}$, $\eqref{eq:gamma}_\beta^{\gamma\neq\pp}$, $\eqref{eq:recenter_Pi_minus}_\beta$ & $\eqref{eq:recenter_Pi_specific}_\beta$\\
					\hline
					6.~d) & Proposition \ref{prop:change_of_basepointIII} & $\eqref{eq:recenter_Pi_specific}_{\preccurlyeq\beta}$, $\eqref{rec01}_{\prec \beta}$ & $\eqref{ao15}_\beta$, $\eqref{rec01}_\beta$\\
					\hline
					6.~e) & Proposition \ref{prop:pi_n_three_point} & $\eqref{eq:pi_generic}_{\preccurlyeq \beta}$, $\eqref{eq:gamma}_{\beta}^{\gamma \neq \pp}$, $\eqref{eq:recenter_Pi_specific}_\beta$ & $\eqref{eq:pi_n}_\beta$\\
					\hhline{*{1}{|~}*{3}{|-}|~|}
					& Proposition \ref{prop:gamma_pp} & $\eqref{eq:pi_n}_{\preccurlyeq \beta}$ & $\eqref{eq:gamma}_\beta$\\
					\hline
					7.~b) & Proposition \ref{prop:intII} & $\eqref{eq:mal_dual_annealed}_\beta$ & $\eqref{eq:delta_pi_generic}_\beta$\\
					\hline 
					7.~d) & Proposition \ref{prop:three_pointII} & $\eqref{eq:pi_generic}_{\prec \beta}$, $\eqref{eq:gamma}_\beta^{\gamma \neq \pp}$, & $\eqref{eq:delta_pi_n}_\beta$, $\eqref{eq:delta_gamma}_\beta$\\
					& & $\eqref{eq:recenter_Pi_specific}_\beta$, $\eqref{eq:delta_gamma}_\beta^{\gamma \neq \pp}$, $\eqref{eq:delta_pi_generic}_{\preccurlyeq \beta}$ & \\
					\hline
					8.~a) & Proposition \ref{rem:1} & $\eqref{eq:pi_generic}_{\prec \beta}$, $\eqref{eq:gamma}_{\prec \beta}$, $\eqref{cw03}_{\prec \beta}$, & $\eqref{cw60_minus_cont}_\beta$\\
					& & $\eqref{cw60}_{\prec \beta}$, $\eqref{cw60_cont}_{\prec \beta}$, $\eqref{eq:recenter_Pi_specific}_{\prec\beta}$ & \\
					\hline
					8.~b) & Proposition \ref{rem:1} & $\eqref{eq:pi_generic}_{\preccurlyeq \beta}$, $\eqref{eq:gamma}_\beta$, $\eqref{cw03}_{\prec \beta}$,  & $\eqref{cw60}_{\beta}$, $\eqref{cw60_cont}_\beta$, $\eqref{cw63_generic}_{\beta}$\\
					& & $\eqref{cw60}_{\prec \beta}$, $\eqref{eq:recenter_Pi_specific}_\beta$, $\eqref{eq:pi_minus_generic}_\beta$ $\eqref{cw63_generic}_{\prec \beta}$ & \\
					\hline
					8.~c) & Proposition \ref{rem:1_mal} & $\eqref{eq:pi_generic}_{\prec \beta}$, $\eqref{cw03}_{\prec \beta}$, $\eqref{cw60}_{\prec \beta}$, $\eqref{eq:recenter_Pi_specific}_\beta$,  &  $\eqref{cw60_mal_dual}_{\beta}$, $\eqref{cw60_cont_mal_dual}_\beta$, $\eqref{cw63_generic_mal_dual}_{\beta}$\\
					& & $\eqref{eq:delta_pi_generic}_{\preccurlyeq \beta}$, $\eqref{cw60_mal_dual}_{\prec \beta}$, $\eqref{cw63_generic_mal_dual}_{\prec \beta}$ & \\
					\hline
					9.~b) & Proposition \ref{prop:intIII} & $\eqref{eq:pi_generic}_{\prec \beta}$, $\eqref{eq:pi_minus_generic}_{\prec \beta}$,  & $\eqref{eq:intIII1}_\beta$, $\eqref{eq:delta_pi_incr_generic}_\beta$\\
					& & $\eqref{eq:mal_dual_annealed}_\beta$, $\eqref{eq:delta_pi_generic}_\beta$, $\eqref{eq:delta_pi_minus_generic}_\beta$, $\eqref{eq:form_bound}_\beta^{\gamma \neq \pp}$ & \\
					\hline
					9.~c) & Proposition \ref{prop:delta_pi_incr_three_point} & $\eqref{eq:pi_generic}_{\prec \beta}$, $\eqref{eq:gamma}_{\prec \beta}^{\gamma \neq \pp}$, $\eqref{eq:pi_n}_{\prec \beta}$, $\eqref{eq:recenter_Pi_specific}_{\prec \beta}$,  & $\eqref{eq:delta_pi_d_pi_incr}_\beta$\\
					& & $\eqref{eq:form_cont}_\beta^{\gamma \neq \pp}$, $\eqref{eq:delta_pi_incr_generic}_\beta$, $\eqref{eq:form_bound}_\beta^{\gamma \neq \pp}$ & \\
					\hline
					9.~d) & Proposition \ref{prop:d_pi} & $\eqref{eq:pi_generic}_{\prec \beta}$, $\eqref{eq:delta_pi_generic}_\beta$, $\eqref{eq:delta_pi_incr_generic}_\beta$, $\eqref{eq:form_bound}_\beta^{\gamma \neq \pp}$ & $\eqref{eq:d_pi}_\beta$, $\eqref{eq:form_bound}_\beta$\\
					\hline
			\end{tabular} 
		\end{center}
		\caption{Logic of an induction step for singular $\beta$. Steps 1b, 4a, 7a, 7c and 9a, involving only constructions or Malliavin differentiability, have been omitted.}\label{fig:singular}
	\end{figure}	
\section*{Acknowledgments}
The authors thank Scott Smith and Lucas Broux for discussions, comments and suggestions, and the referees for their careful proofreading and numerous suggestions, which improved the readability of the paper. 
PT, PL, and MT thank
the Max Planck Institute for Mathematics in the Sciences for its excellent working conditions, where this research was carried out.

\pdfbookmark{References}{references}
\addtocontents{toc}{\protect\contentsline{section}{References}{\thepage}{references.0}}

\bibliographystyle{plainurl}
\bibliography{stoch_est_qsl}{}

\medskip
\begin{flushleft}
	\footnotesize \normalfont
	\textsc{Pablo Linares\\
		Department of Mathematics, Imperial College London\\
		SW7 2AZ London, United Kingdom}\\
	\texttt{\textbf{p.linares-ballesteros@imperial.ac.uk}}
\end{flushleft}

\begin{flushleft}
\footnotesize \normalfont
\textsc{Felix Otto\\
Max Planck Institute for Mathematics in the Sciences\\ 
04103 Leipzig, Germany}\\
\texttt{\textbf{felix.otto@mis.mpg.de}}
\end{flushleft}

\begin{flushleft}
\footnotesize \normalfont
\textsc{Markus Tempelmayr\\
Mathematics M\"unster, University of M\"unster\\ 
48149 M\"unster, Germany}\\
\texttt{\textbf{markus.tempelmayr@uni-muenster.de}}
\end{flushleft}

\begin{flushleft}
\footnotesize \normalfont
\textsc{Pavlos Tsatsoulis\\
Faculty of Mathematics, University of Bielefeld\\
33615 Bielefeld, Germany}\\
\texttt{\textbf{ptsatsoulis@math.uni-bielefeld.de}}
\end{flushleft}

\end{document}